\tikzstyle{VertexStyle} = [shape = circle, draw, fill]
\tikzset{pre/.style={-}}    %hide all loop arrow heads
\tikzstyle{every node}=[circle, inner sep=0pt, minimum width=4pt]
\newtheorem{theorem}{Theorem}[section]
\newtheorem{lemma}[theorem]{Lemma}
\newtheorem{sublemma}{}[theorem]
\newtheorem{subsublemma}{}[sublemma]
\theoremstyle{definition}
\newtheorem{definition}{Definition}[section]
\newcommand{\ba}{\backslash}
\DeclareMathOperator{\cl}{cl}
\DeclareMathOperator{\fcl}{fcl}
\DeclareMathOperator{\co}{co}
\DeclareMathOperator{\si}{si}
\newcommand{\cocl}{\cl^*}
\newcommand{\seq}[1]{[#1]}
\newcommand{\lc}{\sqcap}
\newcommand{\unfortunate}{$N$-grounded}
\newcommand{\pspider}{elongated-quad $3$-separator}
\newcommand{\spider}{double-quad $3$-separator}
\newcommand{\twisted}{skew-whiff $3$-separator}
\newcommand{\spikelike}{spike-like $3$-separator}
\newcommand{\Tvamoslike}{Twisted cube-like $3$-separator}
\newcommand{\tvamoslike}{twisted cube-like $3$-separator}
\newcommand{\vamoslike}{V\'amos-like $3$-separator}
\newcommand{\psep}{particular $3$-separator}
\newcommand{\Psep}{Particular $3$-separator}
\newenvironment{slproof}[1][Subproof]{\begin{proof}[#1]}{\end{proof}}
\newenvironment{sslproof}[1][Subproof]{\begin{proof}[#1]}{\end{proof}}
\crefname{enumi}{}{}
\crefname{sublemma}{}{}
\Crefname{sublemma}{Claim}{Claims}
\begin{document}

\title[$N$-detachable pairs II: life in $X$]{$N$-detachable pairs in $3$-connected matroids II: life~in~$X$}

\thanks{The authors were supported by the New Zealand Marsden Fund.}

\author{Nick Brettell \and Geoff Whittle \and Alan Williams}
\address{School of Mathematics and Statistics, Victoria University of Wellington, New Zealand}
\email{nick.brettell@vuw.ac.nz}
\email{geoff.whittle@vuw.ac.nz}
\email{ayedwilliams@gmail.com}

\keywords{$3$-connected, splitter theorem, matroid structure}

\subjclass{05B35}
\date{\today}

\maketitle

\begin{abstract}
Let $M$ be a $3$-connected matroid, and let $N$ be a $3$-connected minor of $M$.
A pair $\{x_1,x_2\} \subseteq E(M)$ is \emph{$N$-detachable} if one of the matroids $M/x_1/x_2$ or $M \backslash x_1 \backslash x_2$ is both $3$-connected and has an $N$-minor.
This is the second in a series of three papers where we describe the structures that arise when it is not possible to find an $N$-detachable pair in $M$.
In the first paper in the series, we showed that, under mild assumptions, either $M$ has an $N$-detachable pair, $M$ has one of three particular $3$-separators that can appear in a matroid with no $N$-detachable pairs, or there is a $3$-separating set $X$ with certain strong structural properties.
In this paper, we analyse matroids with such a structured set $X$, and prove that they have either an $N$-detachable pair, or one of five particular $3$-separators that can appear in a matroid with no $N$-detachable pairs.
\end{abstract}

\pagenumbering{arabic}

\section{Introduction}

Let $M$ be a $3$-connected matroid, and let $N$ be a $3$-connected minor of $M$.
We say that a pair $\{x_1,x_2\} \subseteq E(M)$ is \emph{$N$-detachable} if 
one of the matroids $M/x_1/x_2$ or $M \ba x_1 \ba x_2$ is both $3$-connected and has an isomorphic copy of $N$ as a minor.
This is the second in a series of three papers where we describe the structures that arise when it is not possible to find an $N$-detachable pair in $M$.

Our setup is as follows.
Let %$M$ be a $3$-connected matroid with a $3$-connected matroid $N$ as a minor, where
$|E(N)| \ge 4$.
  %and $M$ has no $N$-detachable pair.
We say that a triangle or triad~$T$ of $M$ is \emph{\unfortunate} if, for all distinct $a,b \in T$, none of $M/a/b$, $M/a\ba b$, $M\ba a/b$, and $M\ba a\ba b$ have an $N$-minor.
In this paper, we assume that every triangle or triad of $M$ is \unfortunate\ (due to \cite[Theorem~3.2]{paper1}).
By Seymour's Splitter Theorem~\cite{pds} and duality, we may assume that there exists some $d \in E(M)$ such that $M \ba d$ is $3$-connected and has an $N$-minor.
Let $d' \in E(M \ba d)$ such that $M \ba d \ba d'$ has an $N$-minor.
If $M \ba d \ba d'$ is $3$-connected, then $\{d,d'\}$ is an $N$-detachable pair.
So suppose $M \ba d \ba d'$ opens up a non-trivial $2$-separation $(Y,Z)$. 
Since $N$ is $3$-connected, any $N$-minor lies primarily on one side of the $2$-separation, so we may assume, up to swapping $Y$ and $Z$, that $|Y \cap E(N)| \le 1$.
For now, we also assume that $|Y| \ge 4$.

In the first paper of the series~\cite[Theorem~7.4]{paper1}, we showed that 
  there is a $3$-separating subset~$X$ of $Y$ with $|X| \ge 4$ such that either for every $x \in X$:
  \begin{enumerate}[label=\rm(\alph*)]
    \item $M \ba d \ba x$ is $3$-connected up to series classes,
    \item $M \ba d / x$ is $3$-connected, and
    \item $M \ba d \ba x$ and $M \ba d / x$ have $N$-minors,
  \end{enumerate}
  or $X \cup \{c,d\}$ is %an \pspider, a \twisted, or a \tvamoslike\ of $M$.
  one of three \psep s that can appear in a matroid with no $N$-detachable pairs for some $c \in \cocl(X \cup d)$.  (We defer the definition of such \psep s to \cref{problem3s}.)

In this paper,
we analyse this structured set~$X$ further,
in the case where $X \cup \{c,d\}$ is not a \psep.
In \cref{seclifetriad}, we consider when the set~$X$ contains a triad; in this case we show that $M$ has an $N$-detachable pair.
In \cref{seclifenontriad}, we consider when $X$ does not contain a triad; in this case, either $M$ has an $N$-detachable pair, or $X \cup d$ %together with elements from $Z$ form
is contained in a \psep\ that can appear in a matroid with no $N$-detachable pairs.
%a \spikelike, an \pspider, a \tvamoslike, or a \spider.
Combining these results, we obtain our main result, \cref{usefulonep2}, in the final section.

Subject to \cref{usefulonep2} and the results in \cite{paper1}, it remains to consider the case when for every $d' \in E(M \ba d)$ such that $M \ba d \ba d'$ has an $N$-minor, the pair $\{d,d'\}$ is contained in a $4$-element cocircuit; 
and to show that when $M$ has a \psep~$P$ and no $N$-detachable pairs, there is at most one element of $M$ that is not in $E(N) \cup P$.
%only elements whose removal retains the $N$-minor are contained in $P$.
We analyse these cases in the third paper in the series.

We denote $\{1,2,\dotsc,n\}$ by $\seq{n}$.

\section{A taxonomy of \psep s}
\label{problem3s}

Let $M$ be a $3$-connected matroid with ground set $E$.
We say that a $4$-element set $Q \subseteq E$ is a \emph{quad} if it is both a circuit and a cocircuit of $M$.

We now define five $3$-separating sets with specific structure, illustrated in \cref{fig-pseps}.
We refer to any one of these as a \psep.

\begin{definition}
\label{def-spike-like2}
Let $P \subseteq E$ be an exactly $3$-separating set of $M$.
If there exists a partition $\{L_1,\dotsc,L_t\}$ of $P$ with $t\geq 3$ such that 
\begin{enumerate}[label=\rm(\alph*)]
  \item $|L_i|=2$ for each $i\in\seq{t}$, and
  \item $L_i\cup L_j$ is a quad for all distinct $i,j\in\seq{t}$, %and
\end{enumerate}
then $P$ is a \emph{\spikelike} of $M$.
\end{definition}

\begin{definition}
\label{def-twisted2}
  Let $P \subseteq E$ be a $6$-element exactly 3-separating set of $M$. If there exists a labelling $\{s_1,s_2,t_1,t_2,u_1,u_2\}$ of $P$ such that
  \begin{enumerate}[label=\rm(\alph*)]
    \item $\{s_1,s_2,t_2,u_1\}$, $\{s_1,t_1,t_2,u_2\}$, and $\{s_2,t_1,u_1,u_2\}$ are the circuits of $M$ contained in $P$; and
    \item $\{s_1,s_2,t_1,t_2\}$, $\{s_1,s_2,u_1,u_2\}$, and $\{t_1,t_2,u_1,u_2\}$ are the cocircuits of $M$ contained in $P$; %and
  \end{enumerate}
  then $P$ is a \emph{\twisted} of $M$.
\end{definition}

\begin{definition}
\label{def-pspider2}
  Let $P \subseteq E$ be a $6$-element exactly $3$-separating set such that $P = Q \cup \{p_1,p_2\}$, and $Q$ is a quad.  If there exists a labelling $\{q_1,q_2,q_3,q_4\}$ of $Q$ such that
  \begin{enumerate}[label=\rm(\alph*)]
    \item $\{p_1,p_2,q_1,q_2\}$, $\{p_1,p_2,q_3,q_4\}$, and $Q$ are the circuits of $M$ contained in $P$, and
    \item $\{p_1,p_2,q_1,q_3\}$, $\{p_1,p_2,q_2,q_4\}$, and $Q$ are the cocircuits of $M$ contained in $P$,
  \end{enumerate}
  then $P$ is an \emph{\pspider} of $M$. 
\end{definition}

\begin{definition}
  \label{def-spider-like2}
  Let $P \subseteq E$ be an exactly $3$-separating set such that $P = Q_1 \cup Q_2$ where $Q_1$ and $Q_2$ are disjoint quads of $M$.
  If there exist labellings $\{p_1,p_2,p_3,p_4\}$ of $Q_1$ and $\{q_1,q_2,q_3,q_4\}$ of $Q_2$ such that
  \begin{enumerate}[label=\rm(\alph*)]
    \item $\{p_1,p_2,q_1,q_2\}, \{p_1,p_2,q_3,q_4\}, \{p_3,p_4,q_1,q_2\}$, $\{p_3,p_4,q_3,q_4\}$, $Q_1$, and $Q_2$ are the circuits of $M$ contained in $P$, and
    \item $\{p_1,p_3,q_1,q_3\}, \{p_1,p_3,q_2,q_4\}, \{p_2,p_4,q_1,q_3\}$, $\{p_2,p_4,q_2,q_4\}$, $Q_1$, and $Q_2$ are the cocircuits of $M$ contained in $P$, 
  \end{enumerate}
  then $P$ is a \emph{\spider} with {\em associated partition} $\{Q_1,Q_2\}$.
\end{definition}

%\begin{definition}
  %If $Q \subseteq E$ is a quad, where $Q$ is exactly $3$-separating, and $Q$ is not contained in a \spikelike, an \pspider, or a \spider, then we say that $Q$ is a \emph{quad $3$-separator} of $M$.
%\end{definition}

These four \psep s are self-dual in the following sense:
if $P$ is a %quad $3$-separator,
\spikelike, \pspider, \spider, or \twisted\ of $M$, then $P$ is also a %quad $3$-separator,
\spikelike, \pspider, \spider, or \twisted\ of $M^*$, respectively.
The same is not true of the next %type of
\psep.

  \begin{definition}
  Let $P \subseteq E$ be an exactly $3$-separating set with $P=\{p_1,p_2,q_1,q_2,s_1,s_2\}$, and let $Y=E-P$.
  Suppose that
  \begin{enumerate}[label=\rm(\alph*)]
    \item $\{p_1,p_2,s_1,s_2\}$, $\{q_1,q_2,s_1,s_2\}$, and $\{p_1,p_2,q_1,q_2\}$ are the circuits of $M$ contained in $P$; and
    \item $\{p_1,q_1,s_1,s_2\}$, $\{p_2,q_2,s_1,s_2\}$, $\{p_1,p_2,q_1,q_2,s_1\}$, and $\{p_1,p_2,q_1,q_2,s_2\}$ are the cocircuits of $M$ contained in $P$.
  \end{enumerate}
  Then $P$ is a \emph{\tvamoslike} of $M$. %with {\em associated partition} $\{P,\{q_1,q_2\}\}$
  \end{definition}

\begin{figure}
  \begin{subfigure}{0.49\textwidth}
    \centering
    \begin{tikzpicture}[rotate=90,scale=0.8,line width=1pt]
      \tikzset{VertexStyle/.append style = {minimum height=5,minimum width=5}}
      \clip (-2.5,-6) rectangle (3.0,2);
      \node at (-1,-1.4) {\large$E-P$};
      \draw (0,0) .. controls (-3,2) and (-3.5,-2) .. (0,-4);
      \draw (0,0) -- (2.5,0.5);
      \draw (0,0) -- (2.25,-0.75);
      \draw (0,0) -- (2,-2);
      \draw (0,0) -- (1,-3);

      \SetVertexNoLabel
      \Vertex[x=1.25,y=0.25,LabelOut=true,L=$q_3$,Lpos=180]{c1}
      \Vertex[x=2.25,y=-0.75,LabelOut=true,L=$q_2$,Lpos=90]{c2}
      \Vertex[x=2.5,y=0.5,LabelOut=true,L=$q_1$,Lpos=180]{c3}
      \Vertex[x=1.5,y=-0.5,LabelOut=true,L=$q_4$,Lpos=135]{c4}

      \Vertex[x=1,y=-1,LabelOut=true,L=$q_1$,Lpos=180]{c5}
      \Vertex[x=2,y=-2,LabelOut=true,L=$q_1$,Lpos=180]{c6}

      \Vertex[x=1,y=-3,LabelOut=true,L=$q_4$,Lpos=135]{c7}
      \Vertex[x=0.67,y=-2,LabelOut=true,L=$q_4$,Lpos=135]{c8}

      \draw (0,0) -- (0,-4);

      \SetVertexNoLabel
      \tikzset{VertexStyle/.append style = {shape=rectangle,fill=white}}
      %\Vertex[x=0,y=0]{a1}
    \end{tikzpicture}
    \caption{An example of a \spikelike.} %\ with $t=4$.}
  \end{subfigure}
  \begin{subfigure}{0.49\textwidth}
    \centering
    \begin{tikzpicture}[rotate=90,scale=0.64,line width=1pt]
      \tikzset{VertexStyle/.append style = {minimum height=5,minimum width=5}}
      \clip (-2.5,-6) rectangle (4.4,2);
      \node at (-1,-1.4) {\large$E-P$};
      \draw (0,0) .. controls (-3,2) and (-3.5,-2) .. (0,-4);
      \draw (0,0) -- (4.0,0.9);
      \draw (0,-2) -- (2.5,-2.2); 
      \draw (0,-4) -- (3.8,-4.9); 

      \Vertex[x=3.0,y=0.67,LabelOut=true,Lpos=180,L=$s_1$]{a2}
      \Vertex[x=2.0,y=0.45,LabelOut=true,Lpos=180,L=$s_2$]{a3}

      \Vertex[x=2.5,y=-2.2,LabelOut=true,Lpos=90,L=$t_2$]{b1}
      \Vertex[x=0.64,y=-2.056,LabelOut=true,Lpos=-45,L=$t_1$]{b2}

      \Vertex[x=3.8,y=-4.9,LabelOut=true,L=$u_1$]{c1}
      \Vertex[x=2.8,y=-4.67,LabelOut=true,L=$u_2$]{c2}

      \draw[dashed] (3.8,-4.9) .. controls (2.0,-2) .. (4.0,0.9);
      \draw[dashed] (2.8,-4.67) .. controls (1.0,-2) .. (3.0,0.67);
      \draw[dashed] (1.8,-4.45) .. controls (0.25,-2) .. (2.0,0.45);

      \draw (0,0) -- (0,-4);

      \SetVertexNoLabel
      \tikzset{VertexStyle/.append style = {shape=rectangle,fill=white}}
      \Vertex[x=4.0,y=0.9]{a1}
      \Vertex[x=1.5,y=-2.12]{b3}
      \Vertex[x=1.8,y=-4.45]{c3}

      %\Vertex[x=0,y=0]{d1}
      %\Vertex[x=0,y=-2]{d2}
      %\Vertex[x=0,y=-4]{d3}
    \end{tikzpicture}
    \caption{A \twisted.}
  \end{subfigure}
  %\begin{subfigure}{0.49\textwidth}
    %\centering
    %\begin{tikzpicture}[rotate=90,xscale=1.1,yscale=0.55,line width=1pt]
      %\tikzset{VertexStyle/.append style = {minimum height=5,minimum width=5}}
      %\clip (-1.5,2) rectangle (2.7,-6);
      %\node at (-0.6,-1.4) {\large$E-P$};
      %\draw (0,0) .. controls (-1.6,2) and (-2,-2) .. (0,-4);

      %\draw (0.8,-1) -- (0,0) -- (1.2,1);
      %\draw (2,-4) -- (1.2,-3);
      %\draw (1.2,1) -- (1.2,-3);
      %\draw (0,-4) -- (1.2,-3);

      %\draw[white,line width=5pt] (0.8,-1) -- (2,0);
      %\draw[white,line width=5pt] (0.8,-1) -- (0.8,-5);
      %\draw (2,0) -- (0.8,-1) -- (0.8,-5);
      %\draw (1.2,-3) -- (0.8,-5);
      %\draw (1.2,1) -- (0.8,-1);
      %\draw (1.2,1) -- (2,0) -- (2,-4);
      %\draw (0,-4) -- (0.8,-5) -- (2,-4);

      %\Vertex[x=1.2,y=1,LabelOut=true,L=$q_1$,Lpos=180]{c1}
      %\Vertex[x=2,y=0,LabelOut=true,L=$s_1$,Lpos=90]{c2}
      %\Vertex[x=2,y=-4,LabelOut=true,L=$s_2$,Lpos=90]{c3}
      %\Vertex[x=1.2,y=-3,LabelOut=true,L=$q_2$,Lpos=135]{c4}
      %\Vertex[x=0.8,y=-1,LabelOut=true,L=$p_1$,Lpos=-45]{c5}
      %\Vertex[x=0.8,y=-5,LabelOut=true,L=$p_2$,Lpos=0]{c6}

      %\draw (0,0) -- (0,-4);

      %%\SetVertexNoLabel
      %%\tikzset{VertexStyle/.append style = {shape=rectangle,fill=white}}
      %%\Vertex[x=0,y=0]{a1}
      %%\Vertex[x=0,y=-4]{a2}
    %\end{tikzpicture}
    %\caption{A \vamoslike\ in $M$.}
  %\end{subfigure}

  \begin{subfigure}{0.49\textwidth}
    \centering
    \begin{tikzpicture}[rotate=90,scale=0.8,line width=1pt]
      \tikzset{VertexStyle/.append style = {minimum height=5,minimum width=5}}
      \clip (-2.5,2) rectangle (3.0,-6);
      \node at (-1,-1.4) {\large$E-P$};
      \draw (0,0) .. controls (-3,2) and (-3.5,-2) .. (0,-4);
      \draw (0,0) -- (2,-2) -- (0,-4);
      \draw (0,0) -- (2.5,0.5) -- (2,-2);
      \draw (0,0) -- (2.25,-0.75);
      \draw (2,-2) -- (1.25,0.25);

      \Vertex[x=1.25,y=0.25,LabelOut=true,L=$q_3$,Lpos=180]{c1}
      \Vertex[x=2.25,y=-0.75,LabelOut=true,L=$q_2$,Lpos=90]{c2}
      \Vertex[x=2.5,y=0.5,LabelOut=true,L=$q_1$,Lpos=180]{c3}
      \Vertex[x=1.5,y=-0.5,LabelOut=true,L=$q_4$,Lpos=135]{c4}
      \Vertex[x=1.33,y=-2.67,LabelOut=true,L=$p_1$,Lpos=45]{c5}
      \Vertex[x=0.67,y=-3.33,LabelOut=true,L=$p_2$,Lpos=45]{c6}

      \draw (0,0) -- (0,-4);

      \SetVertexNoLabel
      \tikzset{VertexStyle/.append style = {shape=rectangle,fill=white}}
      %\Vertex[x=0,y=0]{a1}
      %\Vertex[x=0,y=-4]{a2}
    \end{tikzpicture}
    \caption{An \pspider.}
  \end{subfigure}
  \begin{subfigure}{0.49\textwidth}
    \centering
    \begin{tikzpicture}[rotate=90,scale=0.8,line width=1pt]
      \tikzset{VertexStyle/.append style = {minimum height=5,minimum width=5}}
      \clip (-2.5,-6) rectangle (3.0,2);
      \node at (-1,-1.4) {\large$E-P$};
      \draw (0,0) .. controls (-3,2) and (-3.5,-2) .. (0,-4);

      \draw (0,0) -- (2,-2) -- (0,-4);

      \draw (0,0) -- (2.5,0.5) -- (2,-2);
      \draw (0,0) -- (2.25,-0.75);
      \draw (2,-2) -- (1.25,0.25);

      \draw (0,-4) -- (2.5,-4.5) -- (2,-2);
      \draw (0,-4) -- (2.25,-3.25);
      \draw (2,-2) -- (1.25,-4.25);

      \Vertex[x=1.25,y=0.25,LabelOut=true,L=$q_3$,Lpos=180]{c1}
      \Vertex[x=2.25,y=-0.75,LabelOut=true,L=$q_2$,Lpos=90]{c2}
      \Vertex[x=2.5,y=0.5,LabelOut=true,L=$q_1$,Lpos=180]{c3}
      \Vertex[x=1.5,y=-0.5,LabelOut=true,L=$q_4$,Lpos=135]{c4}

      \Vertex[x=1.25,y=-4.25,LabelOut=true,L=$p_3$]{c1}
      \Vertex[x=2.25,y=-3.25,LabelOut=true,L=$p_2$,Lpos=90]{c2}
      \Vertex[x=2.5,y=-4.5,LabelOut=true,L=$p_1$]{c3}
      \Vertex[x=1.5,y=-3.5,LabelOut=true,L=$p_4$,Lpos=45]{c4}

      \draw (0,0) -- (0,-4);

      \SetVertexNoLabel
      \tikzset{VertexStyle/.append style = {shape=rectangle,fill=white}}
      %\Vertex[x=0,y=0]{a1}
      %\Vertex[x=0,y=-4]{a2}
    \end{tikzpicture}
    \caption{A \spider.}
  \end{subfigure}

  \begin{subfigure}{0.49\textwidth}
    \centering
    \begin{tikzpicture}[rotate=90,xscale=1.1,yscale=0.55,line width=1pt]
      \tikzset{VertexStyle/.append style = {minimum height=5,minimum width=5}}
      \clip (-1.5,2) rectangle (2.7,-6);
      \node at (-0.6,-1.4) {\large$E-P$};
      \draw (0,0) .. controls (-1.6,2) and (-2,-2) .. (0,-4);

      \draw (0.8,-1) -- (0,0) -- (1.2,1) -- (0.8,-1);
      \draw (0,-4) -- (1.2,-3);
      \draw (1.2,1) -- (2.2,-1) -- (1.2,-3);
      \draw (2.2,-1) -- (2.0,-3);
      \draw (1.2,1) -- (1.2,-3);
      \draw (0,0) -- (0,-4);

      \draw[white,line width=5pt] (0.8,-1) -- (2.0,-3) -- (0.8,-5);
      \draw[white,line width=5pt] (0.8,-1) -- (0.8,-5);
      \draw (0.8,-1) -- (2.0,-3) -- (0.8,-5);
      \draw (0.8,-1) -- (0.8,-5);
      \draw (1.2,-3) -- (0.8,-5);
      \draw (0.8,-5) -- (0,-4);

      \Vertex[x=1.2,y=1,LabelOut=true,L=$q_2$,Lpos=180]{c1}
      \Vertex[x=1.2,y=-3,LabelOut=true,L=$p_2$,Lpos=225]{c4}
      \Vertex[x=0.8,y=-1,LabelOut=true,L=$q_1$,Lpos=-45]{c5}
      \Vertex[x=0.8,y=-5,LabelOut=true,L=$p_1$,Lpos=0]{c6}
      \Vertex[x=2.0,y=-3,LabelOut=true,L=$s_1$,Lpos=45]{c5}
      \Vertex[x=2.2,y=-1,LabelOut=true,L=$s_2$,Lpos=45]{c6}
    \end{tikzpicture}
    \caption{A \tvamoslike\ of $M$.}
  \end{subfigure}
  \begin{subfigure}{0.50\textwidth}
    \centering
    \begin{tikzpicture}[rotate=90,scale=0.59,line width=1pt]
      \tikzset{VertexStyle/.append style = {minimum height=5,minimum width=5}}
      \clip (-2.78,-6) rectangle (5.0,2);
      \node at (-1,-1.4) {\large$E-P$};
      \draw (0,0) .. controls (-3,2) and (-3.5,-2) .. (0,-4);
      \draw (0,0) -- (4.0,0.9);
      \draw (0,-2) -- (2.5,-2.2); 
      \draw (0,-4) -- (3.8,-4.9); 

      \Vertex[x=2.0,y=0.45,LabelOut=true,Lpos=180,L=$p_2$]{a3}

      \Vertex[x=2.5,y=-2.2,LabelOut=true,Lpos=90,L=$q_1$]{b1}
      \Vertex[x=0.61,y=-2.05,LabelOut=true,Lpos=35,L=$q_2$]{b2}

      \Vertex[x=3.00,y=-4.7,LabelOut=true,L=$s_1$]{c1}
      \Vertex[x=2.15,y=-4.5,LabelOut=true,L=$s_2$]{c2}

      \Vertex[x=4.0,y=0.9,LabelOut=true,Lpos=180,L=$p_1$]{a1}

      \draw[dashed] (3.8,-4.9) .. controls (2.0,-2) .. (4.0,0.9);
      \draw[dashed] (1.3,-4.3) .. controls (0.25,-2) .. (2.0,0.45);

      \draw (0,0) -- (0,-4);

      \SetVertexNoLabel
      \tikzset{VertexStyle/.append style = {shape=rectangle,fill=white}}
      \Vertex[x=3.8,y=-4.9]{c1}
      \Vertex[x=1.3,y=-4.3]{c3}

      %\Vertex[x=0,y=0]{d1}
      %\Vertex[x=0,y=-2]{d2}
      %\Vertex[x=0,y=-4]{d3}
    \end{tikzpicture}
    \caption{A \tvamoslike\ of $M^*$.}
  \end{subfigure}
  \caption{\Psep s that can appear in a matroid with no $N$-detachable pairs.}
  \label{fig-pseps}
\end{figure}
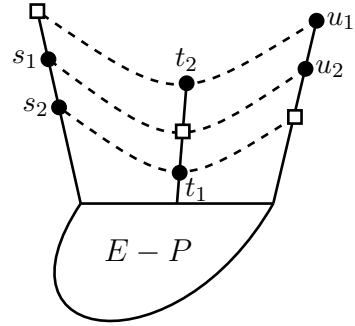
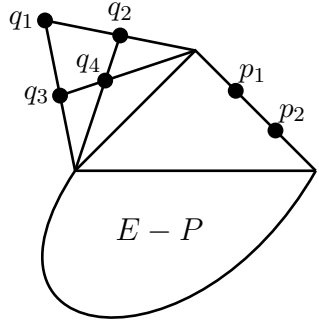
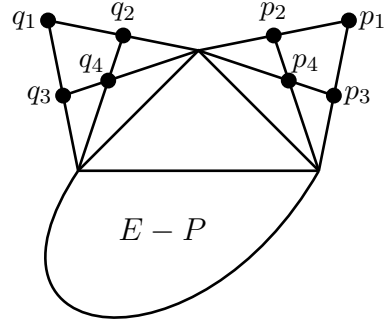
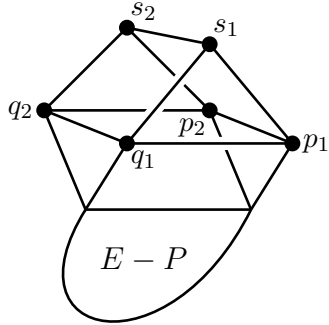
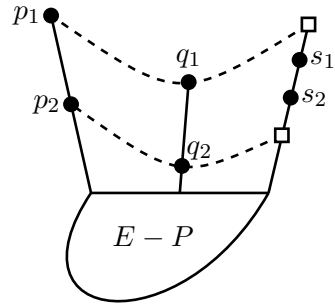

  Each of these five \psep s can appear in a $3$-connected matroid~$M$ with a $3$-connected minor~$N$ such that $E(M)-E(N) \subseteq P$ and $M$ has no $N$-detachable pairs.
  (For a \spikelike, this is shown in \cite[Section~2]{paper1}.
  For an \pspider, a \twisted, or a \tvamoslike, see the discussion in \cite[Section~5]{paper1}; the \spider\ is similar.)
  For all except the \tvamoslike, the intrinsic problem is connectivity; that is, for such a $3$-separator~$P$ in a matroid $M$, there is no pair of elements contained in $P$ for which $M$ remains $3$-connected after deleting or contracting the pair.
  On the other hand, a \tvamoslike~$P$ can appear in a matroid with no $N$-detachable pairs where $P$ contains a pair whose deletion preserves $3$-connectivity (the deletion of the pair destroys the $N$-minor).

\section{Preliminaries}
\label{presec2}

The notation and terminology in the paper follow Oxley~\cite{oxbook}.
For a set~$X$ and element~$e$, we write $X \cup e$ instead of $X \cup \{e\}$, and $X-e$ instead of $X-\{e\}$.
We say that $X$ meets $Y$ if $X \cap Y \neq \emptyset$.

%We write $x \in \clstar(Y)$ to denote that either $x \in \cl(Y)$ or $x \in \cocl(Y)$.
The phrase ``by orthogonality'' refers to the fact that a circuit and a cocircuit cannot intersect in exactly one element.
The following is a straightforward consequence of orthogonality, which is used freely without reference.

\begin{lemma}
\label{swapSepSides2}
Let $e$ be an element of a matroid~$M$, and let $X$ and $Y$ be disjoint sets whose union is $E(M) - e$.  Then $e \in \cl(X)$ if and only if $e \notin \cl^{*}(Y)$.
\end{lemma}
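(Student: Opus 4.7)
The plan is to reduce the statement to a routine computation involving the corank function. Recall that for any subset $Z$ of $E(M)$, one has the standard identity $r^*(Z) = |Z| + r(E(M) - Z) - r(M)$. Since $X$ and $Y$ partition $E(M) - e$, we have $E(M) - Y = X \cup e$ and $E(M) - (Y \cup e) = X$, so this identity applies cleanly to both $Y$ and $Y \cup e$ without any case analysis on whether $e$ is a loop or coloop.

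Using the identity, I would compute
\[
r^*(Y \cup e) - r^*(Y) = \bigl(|Y| + 1 + r(X) - r(M)\bigr) - \bigl(|Y| + r(X \cup e) - r(M)\bigr) = 1 - \bigl(r(X \cup e) - r(X)\bigr).
\]
Since $r(X \cup e) - r(X) \in \{0,1\}$, the left-hand side is either $0$ or $1$, and it vanishes precisely when $r(X \cup e) = r(X) + 1$. Unwinding the definitions, $e \in \cl^{*}(Y)$ is equivalent to $r^*(Y \cup e) = r^*(Y)$, and $e \notin \cl(X)$ is equivalent to $r(X \cup e) = r(X) + 1$. Hence the two conditions coincide.

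There is essentially no obstacle here: the lemma is a clean consequence of matroid duality. The only step deserving attention is bookkeeping with the complements, so that the corank formula is applied with the correct side on each line; once $X \cup e$ and $X$ are recognised as the complements of $Y$ and $Y \cup e$ respectively, the rest is automatic.
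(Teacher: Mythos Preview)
Your argument is correct; the corank identity reduces the equivalence to a one-line rank computation, and you have handled the bookkeeping with complements accurately.

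The paper does not actually write out a proof: it simply declares the lemma a ``straightforward consequence of orthogonality'' and moves on. The intended argument is presumably that a circuit through $e$ contained in $X \cup e$ cannot coexist with a cocircuit through $e$ contained in $Y \cup e$, since they would meet only in $e$; one then checks that at least one of the two must exist (equivalently, $e$ cannot lie outside both $\cl(X)$ and $\cl^*(Y)$), which comes down to the same rank calculation you did. Your route via the corank formula is arguably cleaner, since it avoids any separate discussion of loops or coloops and makes the ``exactly one of the two'' dichotomy immediate from $r(X \cup e) - r(X) \in \{0,1\}$.
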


%\subsection*{Connectivity}
Let $M$ be a matroid with ground set $E$.  The \emph{connectivity function} of $M$, denoted by $\lambda_M$, is defined as follows, for a subset $X$ of $E$:
\begin{align*}
  \lambda_M(X) = r(X) + r(E - X) - r(M).
\end{align*}
The following is easily shown to be equivalent:
\begin{align*}
  \lambda_M(X) = r(X) + r^*(X) - |X|.
\end{align*}
A subset $X$ or a partition $(X, E-X)$ of $E$ is \emph{$k$-separating} if $\lambda_M(X) \leq k-1$.
A $k$-separating partition $(X,E-X)$ is a \emph{$k$-separation} if $|X| \ge k$ and $|E-X|\ge k$.
A $k$-separating set $X$, a $k$-separating partition $(X,E-X)$, or a $k$-separation $(X,E-X)$ is \emph{exact} if $\lambda_M(X) = k-1$.
The matroid~$M$ is \emph{$n$-connected} if, for all $k < n$, it has no $k$-separations.
When a matroid is $2$-connected, we simply say it is \emph{connected}.

%The following two lemmas are used frequently in the paper.  The first is well-known (see, for example, \cite[Proposition 2.1.12]{oxbook}) and is a consequence of orthogonality; the second is a consequence of the first.

For subsets $X$ and $Y$ in a matroid $M$, the \emph{local connectivity between $X$ and $Y$}, denoted $\lc(X, Y)$, is defined as follows: \[\lc(X, Y ) = r(X)+r(Y)- r(X \cup Y).\] %Evidently, $\lc(Y,X) = \lc(X, Y)$. Note that if $(X,Y)$ is a partition of $E(M)$, then $\lc(X, Y) = \lambda_M(X)$.

%The following lemma is a consequence of the easily verified fact that the connectivity function is submodular.
We write ``by uncrossing'' to refer to an application of the next lemma.
\begin{lemma}
\label{onetrick2}
Let $M$ be a $3$-connected matroid, and let $X$ and $Y$ be $3$-separating subsets of $E(M)$.
\begin{enumerate}%[label=\rm(\roman*)]
\item If $|X \cap Y| \ge 2$, then $X \cup Y$ is $3$-separating.
\item If $|E(M) - (X \cup Y)| \ge 2$, then $X \cap Y$ is $3$-separating.
\end{enumerate}
\end{lemma}

The following connectivity lemmas are well known and used freely.

\begin{lemma}
\label{extendSep2}
Let $(X,Y)$ be an exactly $3$-separating partition of a $3$-connected matroid, 
and suppose that $e \in Y$.  Then $X \cup e$ is $3$-separating if and only if %$e \in \clstar(X)$.
$e \in \cl(X)$ or $e \in \cocl(X)$.
\end{lemma}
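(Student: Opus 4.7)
The plan is to reduce the statement to a direct rank computation using the rank formula $\lambda_M(Z) = r(Z) + r(E(M) - Z) - r(M)$, and then read off when $\lambda_M(X \cup e) \le 2$. Since $Y - e = E(M) - (X \cup e)$, I would first write
\[
\lambda_M(X \cup e) - \lambda_M(X) = \bigl[r(X \cup e) - r(X)\bigr] - \bigl[r(Y) - r(Y - e)\bigr].
\]
The first bracket lies in $\{0,1\}$ and equals $0$ exactly when $e \in \cl(X)$. The second bracket also lies in $\{0,1\}$ and equals $1$ exactly when $e \notin \cl(Y - e)$.

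Next I would translate the closure condition on $Y - e$ into a coclosure condition on $X$. Applying \cref{swapSepSides2} to the partition $(X, Y - e)$ of $E(M) - e$ gives $e \in \cl(Y - e)$ if and only if $e \notin \cl^*(X) = \cocl(X)$. Hence the second bracket equals $1$ precisely when $e \in \cocl(X)$, and $0$ otherwise.

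Finally I would check the four sign combinations. If neither $e \in \cl(X)$ nor $e \in \cocl(X)$ holds, the expression above equals $1 - 0 = 1$, so $\lambda_M(X \cup e) = 3$ and $X \cup e$ is not $3$-separating. In each of the remaining three cases, at least one of the two conditions holds and the expression is at most $0$, so $\lambda_M(X \cup e) \le 2$, which gives the other direction. There is no substantive obstacle here; the only point to watch is the orientation of \cref{swapSepSides2}, with $X$ playing the role of ``$X$'' and $Y - e$ playing the role of ``$Y$'' in that lemma.
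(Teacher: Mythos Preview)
Your argument is correct: the rank-difference identity, the translation via \cref{swapSepSides2}, and the four-case check all go through as you describe. The paper does not actually prove this lemma; it is listed among the ``well known'' connectivity lemmas that are ``used freely,'' so there is no paper proof to compare against, but your direct computation is the standard one.
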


\begin{lemma}
    \label{exactSeps2}
Let $(X, Y)$ be an exactly $3$-separating partition of a $3$-connected matroid. Suppose $|Y| \ge 3$ and $e \in Y$. Then %$x \in \clstar(X-x)$.
$e \in \cl(Y-e)$ or $e \in \cocl(Y-e)$.
\end{lemma}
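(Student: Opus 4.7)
The plan is to translate the conclusion into a statement about $\cl(X)$ and $\cocl(X)$ via \cref{swapSepSides2}, and then argue by contradiction using a direct computation of $\lambda_M(X\cup e)$.

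First, applying \cref{swapSepSides2} to the partition of $E(M)-e$ into $Y-e$ and $X$ yields two equivalences: $e \in \cl(Y-e)$ if and only if $e \notin \cocl(X)$, and $e \in \cocl(Y-e)$ if and only if $e \notin \cl(X)$. Consequently the conclusion of the lemma is equivalent to the assertion that $e$ does not lie in $\cl(X) \cap \cocl(X)$, so I would proceed by contradiction and assume $e \in \cl(X) \cap \cocl(X)$.

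Under this assumption, $r(X \cup e) = r(X)$ and $r^*(X \cup e) = r^*(X)$. Plugging these into the identity $\lambda_M(X \cup e) = r(X \cup e) + r^*(X \cup e) - |X \cup e|$ and using $\lambda_M(X) = 2$, I obtain
\[
\lambda_M(X \cup e) \;=\; r(X) + r^*(X) - (|X|+1) \;=\; \lambda_M(X) - 1 \;=\; 1.
\]
To finish, I would check that $(X \cup e,\,Y-e)$ is a genuine $2$-separation of $M$: the hypothesis $|Y|\ge 3$ gives $|Y-e|\ge 2$, and $|X \cup e|\ge 2$ holds since $X$ must be nonempty (otherwise $\lambda_M(X)=0$, contrary to $\lambda_M(X)=2$). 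This contradicts the $3$-connectivity of $M$ and establishes the lemma.

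I do not expect any real obstacle: the argument is essentially a direct computation with the connectivity function, and the only step requiring a moment's care is the verification that both sides of the resulting partition are large enough for the $2$-separating partition to genuinely contradict $3$-connectivity.
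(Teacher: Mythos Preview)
Your proof is correct. The paper does not supply its own proof of this lemma; it lists it among several ``well known'' connectivity facts that are ``used freely'' without proof. Your argument via \cref{swapSepSides2} and a direct computation of $\lambda_M(X\cup e)$ is a standard and complete justification. As a side remark, the contrapositive you establish, namely that $\cl(X)\cap\cocl(X)\cap Y=\emptyset$ when $|Y|\ge 3$, is precisely \cref{gutsstayguts2}, which the paper notes is a consequence of \cref{swapSepSides2,exactSeps2}; your argument in effect proves both lemmas at once.
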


\begin{lemma}
    \label{gutses2}
Let $(X, Y)$ be an exactly $3$-separating partition of a $3$-connected matroid. Suppose $|Y| \ge 3$ and $e \in Y$. Then
$(X \cup e, Y -e)$ is exactly $3$-separating if and only if $e$ is in %exactly
    one of $\cl(X) \cap \cl(Y-e)$ and $\cocl(X) \cap \cocl(Y-e)$.
\end{lemma}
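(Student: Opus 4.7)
The plan is to compute $\lambda_M(X \cup e)$ directly from the rank formula, split into four cases depending on whether $e \in \cl(X)$ and whether $e \in \cl(Y-e)$, rule out the cases inconsistent with $3$-connectivity, and finally translate the surviving conditions via \cref{swapSepSides2}.

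First I would write
\[
\lambda_M(X\cup e) - \lambda_M(X) = [r(X \cup e) - r(X)] - [r(Y) - r(Y-e)],
\]
and observe that each bracketed term lies in $\{0,1\}$, taking the value $0$ precisely when $e$ lies in $\cl(X)$ or $\cl(Y-e)$ respectively. Since $\lambda_M(X) = 2$, this gives four cases for $\lambda_M(X\cup e)$: it equals~$2$ when both brackets are $0$ (so $e \in \cl(X) \cap \cl(Y-e)$) or both are $1$ (so $e \notin \cl(X) \cup \cl(Y-e)$); it equals~$3$ when $e \in \cl(Y-e) \setminus \cl(X)$; and it equals~$1$ when $e \in \cl(X) \setminus \cl(Y-e)$. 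The last case is incompatible with $3$-connectivity, since $|X \cup e|\ge 2$ and $|Y-e|\ge 2$ (as $|Y|\ge 3$), so it cannot actually occur; this does not affect the equivalence we want to prove but is worth noting.

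To finish, I would apply \cref{swapSepSides2} to the disjoint pair $(X, Y-e)$ whose union is $E(M)-e$: it gives $e \notin \cl(X) \iff e \in \cocl(Y-e)$ and, swapping roles, $e \notin \cl(Y-e) \iff e \in \cocl(X)$. Thus the condition ``$e \notin \cl(X) \cup \cl(Y-e)$'' is equivalent to ``$e \in \cocl(X) \cap \cocl(Y-e)$'', and the characterisation of $\lambda_M(X\cup e)=2$ takes exactly the form stated in the lemma.

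There is really no obstacle here: the argument is a one-screen rank computation plus a single invocation of \cref{swapSepSides2}. The only mild point of care is to remember that $3$-connectivity forces $\lambda_M(X\cup e)\ge 2$, which explains why one of the four naive cases is vacuous and confirms that ``exactly $3$-separating'' reduces to ``$\lambda_M(X\cup e) = 2$''.
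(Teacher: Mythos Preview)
Your proof is correct. The paper does not actually give a proof of this lemma: it is listed among several ``well known'' connectivity lemmas that are ``used freely'' without proof, so there is nothing to compare against. Your rank computation together with \cref{swapSepSides2} is exactly the standard argument.
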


%If $(X, Y)$ and $(X-x, x \cup Y)$ are exactly $3$-separating partitions in a $3$-connected matroid, then we say $x$ is a \emph{guts element} if $x \in \cl(X-x) \cap \cl(Y)$, and $x$ is a \emph{coguts element} if $x \in \cocl(X-x) \cap \cocl(Y)$.

We also freely use the next three lemmas.
The first is a straightforward consequence of \cref{swapSepSides2,exactSeps2};
the second follows immediately from \cref{swapSepSides2,extendSep2,exactSeps2,gutses2};
and the third is elementary (see \cite[Proposition~8.2.7]{oxbook}).

\begin{lemma}
  \label{gutsstayguts2}
  Let $(X,Y)$ be an exactly $3$-separating partition of a $3$-connected matroid, with $|Y| \ge 3$.
  Then $\cl(X) \cap \cocl(X) \cap Y = \emptyset$.
\end{lemma}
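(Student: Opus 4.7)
The plan is to argue by contradiction using the two cited lemmas directly. Suppose some element $y \in Y$ lies in $\cl(X) \cap \cocl(X)$. I will apply \cref{swapSepSides2} with the partition of $E(M) - y$ given by $X$ and $Y - y$: from $y \in \cl(X)$ it will follow that $y \notin \cocl(Y - y)$, and from $y \in \cocl(X)$ it will follow that $y \notin \cl(Y - y)$.

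Next I would invoke \cref{exactSeps2} applied to the exactly $3$-separating partition $(X,Y)$ and the element $y \in Y$ (using the hypothesis $|Y| \ge 3$) to conclude that $y$ lies in $\cl(Y - y)$ or in $\cocl(Y - y)$. Combined with the previous step, this yields the desired contradiction, completing the argument.

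There is no real obstacle: the whole content of the proof is noticing that the two cited lemmas together immediately rule out the situation where $y$ is in both $\cl(X)$ and $\cocl(X)$. The only minor care needed is the bookkeeping: making sure the roles of $X$ and $Y - y$ in \cref{swapSepSides2} are correctly instantiated (with $y$ playing the role of the distinguished element $e$, and $X \cup (Y - y)$ being a partition of $E(M) - y$), and confirming that $|Y| \ge 3$ is exactly what is needed so that \cref{exactSeps2} applies on the side containing $y$.
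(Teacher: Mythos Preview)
Your proposal is correct and matches the paper's approach exactly: the paper records this lemma as a straightforward consequence of \cref{swapSepSides2} and \cref{exactSeps2}, and your argument spells out precisely that deduction.
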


\begin{lemma}
  \label{aggregatelemma}
  Let $(X,Y)$ be an exactly $3$-separating partition of a $3$-connected matroid, with $|Y| \ge 3$.
  If $e \in \cl(X) \cap Y$, then $e \in \cl(Y-e)$ and $(X \cup e, Y-e)$ is exactly $3$-separating.
\end{lemma}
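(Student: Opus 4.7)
The plan is to derive both conclusions directly from the three preceding lemmas, in two short steps.

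First, I would show $e \in \cl(Y-e)$. The hypothesis gives $e \in \cl(X)$. Applying \cref{swapSepSides2} to the partition of $E(M)-e$ into $X$ and $Y-e$, we obtain $e \notin \cocl(Y-e)$. On the other hand, since $(X,Y)$ is exactly $3$-separating with $|Y|\ge 3$, \cref{exactSeps2} tells us that $e \in \cl(Y-e)$ or $e \in \cocl(Y-e)$. The only option compatible with what we just derived is $e \in \cl(Y-e)$, which is the first conclusion.

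Second, I would deduce exact $3$-separation of $(X \cup e, Y-e)$. Combining the hypothesis $e \in \cl(X)$ with the conclusion of the previous step gives $e \in \cl(X) \cap \cl(Y-e)$. Then \cref{gutses2}, applied to the exactly $3$-separating partition $(X,Y)$ with $|Y|\ge 3$ and $e \in Y$, immediately yields that $(X \cup e, Y-e)$ is exactly $3$-separating.

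There is essentially no obstacle: the proof is a direct two-line assembly of the three cited lemmas, and the only subtlety is being careful to apply \cref{swapSepSides2} with the correct ground set $E(M)-e$ partitioned into $X$ and $Y-e$, so that ``not in the coclosure of one side'' translates into ``in the closure of the other.''
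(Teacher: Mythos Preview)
Your proposal is correct and matches the paper's approach: the paper does not spell out a proof but simply notes that this lemma follows immediately from \cref{swapSepSides2,extendSep2,exactSeps2,gutses2}, and your two-step argument is exactly the intended assembly of those facts.
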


\begin{lemma}
  Let $M$ be a matroid and let $d \in E(M)$.  Suppose that $M\ba d$ is $3$-connected but $M$ is not.  Then either $d$ is in a parallel pair of $M$, or $d$ is a loop or coloop of $M$.
\end{lemma}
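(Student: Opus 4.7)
The plan is to produce a small separation of $M$ from the failure of $3$-connectedness and then let the $3$-connectedness of $M \ba d$ squeeze $d$ into one of the three listed outcomes. So, assume $M$ is not $3$-connected and fix a $k$-separation $(A,B)$ with $k \le 2$; by relabelling we may take $d \in A$. If $d$ is a coloop we are done, so assume otherwise, whence $r(M \ba d) = r(M)$ and a direct computation from $\lambda(X) = r(X) + r(E - X) - r(M)$ gives $\lambda_{M \ba d}(A - d) \le \lambda_M(A) \le 1$. Since $M \ba d$ is $3$-connected, it admits no $2$-separation, so $|A - d| \le 1$ or $|B| \le 1$; swapping sides once more (and dismissing the trivial case $|E(M)| \le 2$) lets me assume $|A| \le 2$.

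If $|A| = 1$ then $\{d\}$ is a $1$-separator and $r(\{d\}) + r(E - d) = r(M)$ forces $d$ to be a loop or a coloop. If $|A| = 2$, write $A = \{d,a\}$; then $r(\{d,a\}) + r^*(\{d,a\}) \le 3$, and I would do a short case split on $r(\{d,a\})$. When $r(\{d,a\}) \le 1$ the set $\{d,a\}$ is a parallel pair or contains a loop; when $r(\{d,a\}) = 2$ we have $r^*(\{d,a\}) \le 1$, so $\{d,a\}$ is a series pair or contains a coloop. In every subcase, any loop or coloop that sits on $a$, and any series pair $\{d,a\}$, would descend to a loop or a coloop of $M \ba d$, contradicting its $3$-connectedness (after setting aside the trivial small-matroid cases). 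The only surviving outcomes are that $d$ is itself a loop, $d$ is itself a coloop, or $\{d,a\}$ is a parallel pair of $M$.

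The proof is a routine exercise in the connectivity function, so there is no serious obstacle. The only care needed is in justifying $\lambda_{M \ba d}(A - d) \le \lambda_M(A)$ directly (splitting on whether $d$ is a coloop) rather than appealing to a general monotonicity principle, and in quickly disposing of the degenerate cases where $E(M)$ or $E(M \ba d)$ is very small.
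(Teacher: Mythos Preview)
Your argument is correct and standard; the paper itself does not prove this lemma but simply calls it elementary and cites Oxley's textbook (Proposition~8.2.7), so there is nothing to compare against. One small wording issue: the phrase ``swapping sides once more'' for the case $|B| \le 1$ is not quite right, since $d \notin B$. What actually happens there is that $|B| = 1$ forces $k = 1$, so $\lambda_M(B) = 0$ and the unique element of $B$ is a loop or coloop of $M$, hence of $M \ba d$, contradicting $3$-connectedness of $M \ba d$ once $|E(M \ba d)| \ge 2$. This is a phrasing slip rather than a gap.
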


The next two lemmas are well known.  We refer to the latter as Bixby's Lemma.

\begin{lemma}
\label{rank2Remove2}
Let $M$ be a $3$-connected matroid and let $S$ be a rank-$2$ subset with at least four elements.  If $s \in S$, then $M \ba s$ is $3$-connected.
\end{lemma}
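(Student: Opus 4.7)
The plan is to argue by contradiction, lifting a hypothetical $2$-separation of $M\ba s$ to a $2$-separation of $M$.

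First I would record two easy consequences of the hypothesis that $M$ is $3$-connected with $|E(M)| \ge |S| \ge 4$. One: $M$ is simple, since a loop would yield a $1$-separation and a parallel pair would yield a $1$- or $2$-separation once $|E(M)| \ge 4$. Two: since $|S - s| \ge 3$ and $r(S) = 2$, any two elements of $S - s$ are non-parallel and therefore span the rank-$2$ flat $\cl_M(S)$; in particular $s \in \cl_M(S - s)$, so $r(M\ba s) = r(M)$.

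Now suppose for contradiction that $M \ba s$ admits a $2$-separation $(X, Y)$. Since $|S - s| \ge 3$, pigeonhole forces one of $X, Y$ to contain at least two elements of $S - s$; say $|X \cap (S - s)| \ge 2$. Those two elements span $\cl_M(S)$, so $s \in \cl_M(X)$. It follows that $r_M(X \cup s) = r_M(X) = r_{M\ba s}(X)$ and $r_M(Y) = r_{M \ba s}(Y)$, and combined with $r(M) = r(M\ba s)$ this gives
\[
  \lambda_M(X \cup s) = r_M(X \cup s) + r_M(Y) - r(M) = \lambda_{M \ba s}(X) \le 1.
\]
Since $|X \cup s| \ge 3$ and $|Y| \ge 2$, the partition $(X \cup s, Y)$ is a $2$-separation of $M$, contradicting $3$-connectivity.

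The argument is essentially bookkeeping on the connectivity function. The only step requiring even a moment's thought is the pigeonhole/simplicity observation that one side of $(X, Y)$ contains two non-parallel elements of $S$, which together span $\cl_M(S)$ and thereby absorb $s$ in the closure; this is exactly where the hypothesis $|S| \ge 4$ is used.
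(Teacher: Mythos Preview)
The paper does not prove this lemma; it is merely stated as ``well known'' immediately before Bixby's Lemma, so there is no argument to compare against. Your proof is the standard one and is correct for ruling out $2$-separations of $M\ba s$.

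One small point: to conclude that $M\ba s$ is $3$-connected you must also rule out $1$-separations, and you only explicitly treat the $2$-separation case. This is easy to patch. Any $1$-separation of $M\ba s$ with both sides of size at least $2$ is already a $2$-separation, so your argument applies. A $1$-separation with a singleton side would make that element a loop or coloop of $M\ba s$; but $M$ is simple so $M\ba s$ has no loops, and $M$ is cosimple (as $|E(M)|\ge 4$ and $M$ is $3$-connected) so $M\ba s$ has no coloops either, since a coloop of $M\ba s$ would force a coloop or a series pair in $M$. Alternatively, your same closure argument works verbatim for $1$-separations once you note that the side containing two elements of $S-s$ has size at least $2$.
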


%We say that $X \subseteq E(M)$ is a \emph{cosegment} if $X$ has corank two.

%The following is known as Bixby's Lemma~\cite{bixby}.
%We refer to the following as Bixby's Lemma.

\begin{lemma}[Bixby's Lemma~\cite{bixby}]
\label{bixbyL2}
Let $e$ be an element of a $3$-connected matroid $M$.
Then either $M/e$ is $3$-connected up to parallel pairs, or $M\ba e$ is $3$-connected up to series pairs.
\end{lemma}

A $k$-separation $(X, E-X)$ of a matroid $M$ with ground set $E$ is \emph{vertical} if $r(X) \ge k$ and $r(E-X) \ge k$.
We also say a partition $(X, \{z\}, Y)$ of $E$ is a \emph{vertical $3$-separation} when $(X \cup \{z\}, Y)$ and $(X, Y \cup \{z\})$ are both vertical $3$-separations and $z \in \cl(X) \cap \cl(Y)$.
Note that, given a vertical $3$-separation $(X,Y)$ and some $z \in Y$, if $z \in \cl(X)$, then $(X,\{z\},Y-z)$ is a vertical $3$-separation, by %\cref{extendSep2,exactSeps2}.
\cref{aggregatelemma}.

A vertical $3$-separation in $M^*$ is known as a cyclic $3$-separation in $M$.
More specifically, a $3$-separation $(X, E-X)$ of $M$ is \emph{cyclic} if $r^*(X) \ge 3$ and $r^*(E-X) \ge 3$; or, equivalently, if $X$ and $E-X$ contain circuits.
We also say that a partition $(X, \{z\}, Y)$ of $E$ is a \emph{cyclic $3$-separation} if $(X, \{z\}, Y)$ is a vertical $3$-separation in $M^*$.

We say that a partition $(X_1,X_2,\dotsc,X_m)$ of $E$ is a \emph{path of $3$-separations} if $(X_1 \cup \dotsm \cup X_i, X_{i+1} \cup \dotsm \cup X_m)$ is a $3$-separation for each $i \in \seq{m-1}$.
Observe that a vertical, or cyclic, $3$-separation $(X, \{z\},Y)$ is an instance of a path of $3$-separations.

The next two lemmas are also used freely.
A proof of the first is in \cite{stabilizers}; %. We use this lemma freely. % without reference.
the second is a straightforward corollary of Bixby's Lemma, \cref{openVertSep2}, and orthogonality. 

\begin{lemma}
  \label{openVertSep2}
  Let $M$ be a $3$-connected matroid and let $z \in E(M)$.  
  The following are equivalent:
  \begin{enumerate}%[label=\rm(\roman*)]
    \item $M$ has a vertical $3$-separation $(X, \{z\}, Y)$.\label{vsi2}
    \item $\si(M/z)$ is not $3$-connected.\label{vsii2}
  \end{enumerate}
\end{lemma}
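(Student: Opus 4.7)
The plan is to exploit the identity relating the connectivity functions of $M$ and $M/z$. Assuming $z$ is neither a loop nor a coloop of $M$ (both excluded by $3$-connectivity together with $|E(M)| \ge 3$), for any partition $(X,Y)$ of $E-z$ I would compute
\[
\lambda_{M/z}(X) = r(X\cup z) + r(Y\cup z) - r(M) - 1.
\]
Comparing with $\lambda_M(X\cup z) = r(X\cup z) + r(Y) - r(M)$, and noting $r(Y\cup z) - r(Y) \in \{0,1\}$, this yields
\[
\lambda_M(X\cup z) = \lambda_{M/z}(X) + [z \in \cl(Y)],
\]
and symmetrically $\lambda_M(Y\cup z) = \lambda_{M/z}(X) + [z \in \cl(X)]$, where $[\,\cdot\,]$ denotes the Iverson bracket.

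For (i) $\Rightarrow$ (ii), I would take a vertical $3$-separation $(X,\{z\},Y)$ of $M$. Since $z \in \cl(X)\cap\cl(Y)$ and $\lambda_M(X\cup z)=2$, the identity immediately gives $\lambda_{M/z}(X) = 1$. Moreover $r_{M/z}(X) = r(X\cup z) - 1 \ge 2$, so $X$ contains two elements that are non-parallel in $M/z$, and similarly for $Y$. Hence the partition descends to a $2$-separation of $\si(M/z)$ with both sides of size at least~$2$, so $\si(M/z)$ is not $3$-connected.

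For (ii) $\Rightarrow$ (i), I would begin with a $2$-separation $(A,B)$ of $\si(M/z)$ with $|A|,|B| \ge 2$, and lift it by adjoining the corresponding parallel classes of $M/z$ to obtain a partition $(X,Y)$ of $E-z$ with $\lambda_{M/z}(X) \le 1$ and $r_{M/z}(X), r_{M/z}(Y) \ge 2$. Since $M$ is $3$-connected, both $\lambda_M(X\cup z)$ and $\lambda_M(Y\cup z)$ are at least~$2$. The two identities then force $z \in \cl(X) \cap \cl(Y)$ and $\lambda_{M/z}(X) = 1$, so $\lambda_M(X\cup z) = \lambda_M(Y\cup z) = 2$. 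Combined with $r(X\cup z), r(Y\cup z) \ge 3$ (inherited from the rank lower bounds in $M/z$), this is exactly a vertical $3$-separation $(X,\{z\},Y)$.

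The main delicacy lies in the lift step for (ii) $\Rightarrow$ (i): I need to ensure that a $2$-separation of $\si(M/z)$ really pulls back to a partition of $E-z$ with each side of rank at least~$2$ in $M/z$, and to rule out degenerate behaviour if $z$ sits in a parallel or series pair. These are all controlled by the $3$-connectivity of $M$ but deserve explicit verification, particularly so that the rank bounds $r(X\cup z), r(Y\cup z) \ge 3$ (distinguishing a \emph{vertical} $3$-separation from a general one) are recovered from $|A|,|B|\ge 2$. Once the two identities above are in hand, both implications reduce to careful bookkeeping.
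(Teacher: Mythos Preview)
The paper does not prove this lemma itself; it simply cites \cite{stabilizers}. Your argument is correct and is essentially the standard one: the identity $\lambda_M(X\cup z)=\lambda_{M/z}(X)+[z\in\cl(Y)]$ is exactly the right tool, and both directions go through as you describe. The delicacies you flag are real but routine---since $M$ is $3$-connected (hence simple and cosimple once $|E(M)|\ge 4$), $z$ lies in no parallel or series pair, and since $\si(M/z)$ is simple, $|A|,|B|\ge 2$ forces $r_{M/z}(X),r_{M/z}(Y)\ge 2$, whence $r(X\cup z),r(Y\cup z)\ge 3$; together with $z\in\cl(X)\cap\cl(Y)$ this gives $r(X),r(Y)\ge 3$, which is what the definition of a vertical $3$-separation $(X,\{z\},Y)$ requires.
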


\begin{lemma}
    \label{orthogVertSep}
    Let $(X,\{z\},Y)$ be a vertical $3$-separation of a $3$-connected matroid~$M$.
    Then either
    \begin{enumerate}%[label=\rm(\roman*)]
        \item $M \ba z$ is $3$-connected, or
        \item $z$ is in a triad that meets $X$ and $Y$.
    \end{enumerate}
\end{lemma}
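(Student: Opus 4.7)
The plan is to combine Bixby's Lemma with \cref{openVertSep2}, then chase cocircuits. Since $(X,\{z\},Y)$ is a vertical $3$-separation, \cref{openVertSep2} implies that $\si(M/z)$ is not $3$-connected, so Bixby's Lemma forces $\co(M\ba z)$ to be $3$-connected. This already gives the ``right'' side of Bixby's dichotomy; the task is then to extract conclusion~(ii) from the mismatch between $M\ba z$ and $\co(M\ba z)$ when $M\ba z$ itself is not $3$-connected.

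Suppose conclusion~(i) fails, i.e.\ $M\ba z$ is not $3$-connected. The gap between $M\ba z$ and $\co(M\ba z)$ must be witnessed either by a coloop of $M\ba z$ or by a non-trivial series class. A coloop $e$ of $M\ba z$ would make $\{e\}$ or $\{e,z\}$ a cocircuit of $M$; both are incompatible with $M$ being $3$-connected (given that $|X|,|Y|\ge 3$ since the separation is vertical). Hence $M\ba z$ contains a $2$-element cocircuit $\{a,b\}$. Lifting back to $M$, either $\{a,b\}$ or $\{a,b,z\}$ is a cocircuit of $M$, and $3$-connectedness of $M$ rules out the former. So $\{a,b,z\}$ is a triad of $M$.

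It remains to show the triad meets both $X$ and $Y$, and this is where orthogonality enters. By definition of a vertical $3$-separation, $z \in \cl(X) \cap \cl(Y)$, so there exist circuits $C_X \subseteq X \cup z$ and $C_Y \subseteq Y \cup z$ each containing $z$. Orthogonality applied to the triad $\{a,b,z\}$ and $C_X$ forces at least one of $a,b$ into $X$; applied to $\{a,b,z\}$ and $C_Y$ it forces at least one of $a,b$ into $Y$. Hence the triad meets both sides, giving conclusion~(ii).

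As the authors themselves flag, this is essentially an assembly of the three ingredients named in the preamble to the lemma, so no real obstacle is expected. The one micro-step worth a moment of care is translating ``$\co(M\ba z)$ is $3$-connected while $M\ba z$ is not'' into the existence of a concrete $2$-element cocircuit of $M\ba z$; that step relies on first excluding coloops of $M\ba z$ via the $3$-connectedness of $M$.
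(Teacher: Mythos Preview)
Your proof is correct and follows precisely the approach the paper indicates: the paper does not write out a proof but states that the lemma ``is a straightforward corollary of Bixby's Lemma, \cref{openVertSep2}, and orthogonality,'' and your argument assembles exactly those three ingredients in the natural way.
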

%\begin{proof}
    %By \cref{openVertSep2}, $\si(M/z)$ is not $3$-connected, so $\co(M \ba z)$ is $3$-connected by Bixby's Lemma.
    %It follows that $M\ba z$ is also $3$-connected if $z$ is not in a triad.  Moreover, such a triad must intersect $X$ and $Y$, by orthogonality.
%\end{proof}

The following is known as Tutte's Triangle Lemma.

\begin{lemma}[Tutte's Triangle Lemma~\cite{tutte1966}]
    \label{ttL2}
    Let $\{a,b,c\}$ be a triangle in a $3$-connected matroid $M$. If neither $M \ba a$ nor $M \ba b$ is $3$-connected, then $M$ has a triad which contains $a$ and exactly one element from $\{b,c\}$.
\end{lemma}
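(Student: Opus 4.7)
The plan is to produce the triad from a carefully chosen $2$-separation of $M\ba a$, using the triangle $\{a,b,c\}$ together with orthogonality and a minimality argument. Since $M\ba a$ is not $3$-connected, it has a $2$-separation $(X,Y)$ with $|X|,|Y|\ge 2$. First I would show that $b$ and $c$ must lie on opposite sides: if, say, $\{b,c\}\subseteq X$, then $a\in\cl_M(\{b,c\})\subseteq \cl_M(X)$, and a rank computation (using $r(M\ba a)=r(M)$) gives $\lambda_M(X\cup a)=\lambda_{M\ba a}(X)=1$, contradicting $3$-connectivity of $M$. So we may assume $b\in X$ and $c\in Y$. The same computation shows that neither $a\in\cl_M(X)$ nor $a\in\cl_M(Y)$ can hold, so by \cref{swapSepSides2} we get $a\in\cocl_M(X)\cap\cocl_M(Y)$.

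Thus $M$ contains cocircuits $C^*_X\subseteq X\cup a$ and $C^*_Y\subseteq Y\cup a$ each containing $a$. Applying orthogonality to the triangle $\{a,b,c\}$ forces $b\in C^*_X$ and $c\in C^*_Y$. Now I would choose $(X,Y)$ so that $|X|$ is \emph{minimum} among all such $2$-separations of $M\ba a$ with $b\in X$ and $c\in Y$. In the easy case $|X|=2$, we have $X=\{b,x\}$ and $C^*_X\subseteq\{a,b,x\}$; since $M$ is $3$-connected, $|C^*_X|\ge 3$, so $C^*_X=\{a,b,x\}$ is the desired triad containing $a$ and exactly one element of $\{b,c\}$.

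It remains to rule out $|X|\ge 3$. For any $x\in X-b$, the minimality of $|X|$ says $(X-x,Y\cup x)$ is not a $2$-separation of $M\ba a$; a standard computation of $\lambda_{M\ba a}(X)-\lambda_{M\ba a}(X-x)$ as $\mathbf{1}[x\notin\cl(X-x)]+\mathbf{1}[x\notin\cocl_{M\ba a}(X-x)]-1$ forces $x\in\cl_M(X-x)\cap\cocl_{M\ba a}(X-x)$. The coclosure witness produces a cocircuit $D$ of $M\ba a$ inside $X$ with $x\in D$, and correspondingly either $D$ or $D\cup a$ is a cocircuit of $M$. I would then run the \emph{symmetric} argument using a $2$-separation $(U,V)$ of $M\ba b$ (which exists by hypothesis): the same reasoning puts $a\in U$, $c\in V$, and yields a cocircuit of $M$ containing $\{a,b\}$ inside $U\cup b$. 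Combining these two cocircuits containing $\{a,b\}$ via cocircuit elimination, and using orthogonality with $\{a,b,c\}$ to avoid $c$, shrinks the cocircuit down and ultimately produces a triad of the form $\{a,b,x\}$.

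The main obstacle is the final step: executing the uncrossing of the cocircuits/$2$-separations coming from $M\ba a$ and $M\ba b$ cleanly, so that the resulting small cocircuit contains $a$ together with exactly one of $b,c$ rather than drifting to a triad disjoint from $\{a,b,c\}$. Keeping $b$ in the cocircuit throughout the elimination (by only eliminating elements outside the triangle) is what makes orthogonality with $\{a,b,c\}$ do the work.
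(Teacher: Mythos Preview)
The paper does not prove this lemma: it is stated as a classical result with a citation to Tutte, and no argument is given. So there is no proof in the paper to compare your attempt against.

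That said, your setup is sound. The argument that any $2$-separation $(X,Y)$ of $M\ba a$ must split $b$ and $c$, that $a\in\cocl_M(X)\cap\cocl_M(Y)$, and that the resulting cocircuit $C^*_X\subseteq X\cup a$ contains $\{a,b\}$ and avoids $c$, is correct; so is the disposal of the case $|X|=2$. The gap is exactly where you locate it: the case $|X|\ge 3$. Your plan to intersect cocircuits coming from $M\ba a$ and $M\ba b$ via cocircuit elimination does not obviously terminate in a triad through $a$. Cocircuit elimination on two cocircuits both containing $\{a,b\}$, eliminating an element outside the triangle, yields a cocircuit that need only meet $\{a,b\}$, not contain both, and nothing in your sketch controls its size. ``Shrinking down'' is a hope, not an argument.

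The standard route (see Oxley, \emph{Matroid Theory}) avoids cocircuit elimination altogether: one shows that with $|X|$ chosen minimal, the minimality together with the hypothesis on $M\ba b$ forces $|X|=2$ directly, by passing between the exact $3$-separations $(X,Y\cup a)$ and $(X\cup a,Y)$ of $M$ and the induced separations of $M\ba b$. If you want to complete your write-up, that is the line to pursue rather than elimination.
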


When we refer to an application of Tutte's Triangle Lemma in this paper, the following equivalent formulation is usually more pertinent.
A set $X \subseteq E(M)$ is a \emph{$4$-element fan} if $X$ is the union of a triangle and a triad with $|X| = 4$.
%The following is an equivalent formulation of Tutte's Triangle Lemma. %~\cite{tutte1966}.

\begin{lemma}%[Tutte's Triangle Lemma~\cite{tutte1966}]
  Let $T^*$ be a triad in a $3$-connected matroid $M$.
  If $T^*$ is not contained in a $4$-element fan, then,
  for any pair of distinct elements $a,b \in T^*$, either $M / a$ or $M / b$ is $3$-connected.
\end{lemma}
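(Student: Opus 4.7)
The plan is to argue by contradiction and dualise Tutte's Triangle Lemma (\cref{ttL2}). Write $T^* = \{a,b,c\}$, and suppose the conclusion fails for the pair $a,b$, i.e., neither $M/a$ nor $M/b$ is $3$-connected. Since $M$ is $3$-connected exactly when $M^*$ is, and contraction in $M$ corresponds to deletion in $M^*$, the set $T^*$ is then a triangle of $M^*$ such that neither $M^* \ba a$ nor $M^* \ba b$ is $3$-connected.

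Apply \cref{ttL2} to the triangle $T^*$ in $M^*$. It yields a triad of $M^*$ that contains $a$ and exactly one element of $\{b,c\}$; dualising back, $M$ has a triangle $C$ containing $a$ and exactly one element $y$ of $\{b,c\}$. Let $x$ be the third element of $C$. The ``exactly one'' clause is what does the real work: it forces $x \neq b,c$ (since $y$ is the unique element of $C$ lying in $\{b,c\}$), and clearly $x \neq a$, so $x \notin T^*$. Consequently $T^* \cup \{x\}$ is a four-element set which is the union of the triad $T^* = \{a,b,c\}$ and the triangle $C = \{a,y,x\}$, and so it is a $4$-element fan containing $T^*$. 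This contradicts the hypothesis that $T^*$ lies in no $4$-element fan, completing the proof.

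The argument is essentially a direct dualisation of \cref{ttL2}, so there is no real obstacle; the only point requiring a moment's care is confirming that the third element of the triangle produced by Tutte's Triangle Lemma sits outside $T^*$, which follows from the ``exactly one of $\{b,c\}$'' conclusion of that lemma together with the fact that the triangle already contains $a$.
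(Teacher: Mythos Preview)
Your proof is correct and is precisely the approach the paper has in mind: it presents this lemma as the dual reformulation of Tutte's Triangle Lemma (\cref{ttL2}) without giving a separate proof, and your argument carries out exactly that dualisation, including the observation that the ``exactly one'' clause forces the new triangle element to lie outside $T^*$.
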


Proofs of the next two lemmas are in \cite{stabilizers} and \cite{bs2014}, respectively. 

\begin{lemma}%[Whittle 99?]
  \label{r3cocircsi2}
  Let $C^*$ be a rank-$3$ cocircuit of a $3$-connected matroid $M$.
If $x \in C^*$ has the property that $\cl_M(C^*)-x$ contains a triangle of $M/x$, then $\si(M/x)$ is $3$-connected.
\end{lemma}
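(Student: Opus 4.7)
The plan is to argue by contradiction. Suppose $\si(M/x)$ is not $3$-connected; by \cref{openVertSep2} there exists a vertical $3$-separation $(X, \{x\}, Y)$ of $M$, so $r(X), r(Y) \ge 3$ and $x \in \cl(X) \cap \cl(Y)$. Let $L = \cl_M(C^*)$, a rank-$3$ flat, and let $T \subseteq L - x$ be the given triangle of $M/x$. Since $T \cup x \subseteq L$ and $r_M(T \cup x) = 3 = r(L)$, we have $L = \cl_M(T \cup x)$.

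The first step is to arrange that $T \subseteq X$. Each $t \in T$ lies in $\cl_M((T - t) \cup x)$, so a minority element of $T$ on one side of the separation lies in the closure of the other side. If $|T \cap X| = 1$ with $t_1 \in X$, then $t_1 \in \cl_M(Y \cup x) = \cl_M(Y)$; applying \cref{aggregatelemma} to $(Y \cup x, X)$ moves $t_1$ into $Y$, and the conclusion $t_1 \in \cl(X - t_1)$ ensures that $x$ remains in $\cl(X - t_1)$, so the new partition is still a vertical $3$-separation with $x$ in the middle. The symmetric case $|T \cap Y| = 1$ is analogous; however, if the analogous move fails to preserve $x \in \cl(Y - t_3)$, then the connectivity function of the resulting partition drops to~$1$, producing a $2$-separation of $M$ and contradicting $3$-connectivity outright. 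After possibly swapping the roles of $X$ and $Y$, we may therefore assume $T \subseteq X$.

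With $T \subseteq X$, $L = \cl(T \cup x) \subseteq \cl(X \cup x) = \cl(X)$, so $C^* \subseteq \cl(X)$. If $Y \subseteq C^*$ then $Y \subseteq \cl(X)$, whence $r(M) = r(X \cup Y \cup x) = r(X)$, and the original identity $r(X) + r(Y) - r(M) = 2$ forces $r(Y) = 2$, contradicting $r(Y) \ge 3$. Otherwise $|Y \setminus C^*| \ge 1$, and iteratively applying \cref{aggregatelemma} we transfer each $c \in C^* \cap Y$ to the $X$-side, producing a $3$-separation $(X', Y')$ with $X' = X \cup (C^* \cap Y)$, $Y' = (Y \setminus C^*) \cup \{x\}$, and $C^* - x \subseteq X'$. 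Since $x \in \cl^*(C^* - x) \subseteq \cocl(X')$ by the cocircuit property, and $x \in \cl(X) \subseteq \cl(X')$, we obtain $x \in \cl(X') \cap \cocl(X') \cap Y'$. If $|Y'| \ge 3$, this contradicts \cref{gutsstayguts2}.

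The only remaining possibility is $|Y'| = 2$, i.e.\ $|Y \setminus C^*| = 1$. Here $X' \subseteq \cl(X)$ gives $r(X') = r(X)$, and $r(Y') = 2$ by $3$-connectivity (no parallel pairs); the $3$-separation identity $r(X') + r(Y') = r(M) + 2$ then yields $r(X) = r(M)$, and combining this with the original $r(X) + r(Y) = r(M) + 2$ forces $r(Y) = 2$, contradicting $r(Y) \ge 3$. The principal technical obstacle is the reduction to $T \subseteq X$ while preserving the vertical $3$-separation structure; once that is in hand, the rank-$3$ cocircuit immediately places $x$ in the forbidden intersection $\cl(X') \cap \cocl(X')$ on the side opposite to itself, and the contradiction follows from \cref{gutsstayguts2} together with routine rank bookkeeping.
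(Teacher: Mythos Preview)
The paper does not provide its own proof of this lemma; it simply cites the result from \cite{stabilizers}. So there is no paper-proof to compare against, and the question is whether your direct argument is correct. It is.

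Your approach---assume a vertical $3$-separation $(X,\{x\},Y)$ via \cref{openVertSep2}, arrange $T\subseteq X$ so that $C^*\subseteq\cl_M(T\cup x)\subseteq\cl(X)$, then push the elements of $C^*\cap Y$ across using \cref{aggregatelemma} and invoke \cref{gutsstayguts2}---is a clean way to prove the statement using only the connectivity lemmas already collected in the preliminaries. The key observation, that $r_M(T\cup x)=3$ forces $\cl_M(T\cup x)=\cl_M(C^*)$, is what makes the rank-$3$ hypothesis bite.

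One small remark: the hedge in Step~1, ``if the analogous move fails to preserve $x\in\cl(Y-t_3)$, then the connectivity function drops to~$1$,'' is unnecessary. In that case \cref{aggregatelemma} gives $t_3\in\cl(Y-t_3)$, so $\cl(Y-t_3)=\cl(Y)\ni x$ automatically; the move always succeeds. Removing that clause would tighten the exposition, but it does not affect correctness.
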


\begin{lemma}%[Whittle 99?]
    \label{r3cocirc2}
    Let $M$ be a $3$-connected matroid with $r(M) \ge 4$.
    Suppose that $C^*$ is a rank-$3$ cocircuit of $M$.
    If there exists some $x \in C^*$ such that $x \in \cl(C^*-x)$, then $\co(M \ba x)$ is $3$-connected.
\end{lemma}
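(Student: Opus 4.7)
The plan is a proof by contradiction: suppose $\co(M \ba x)$ is not $3$-connected. By the dual of \cref{openVertSep2}, $M$ has a cyclic $3$-separation $(X, \{x\}, Y)$, so $x \in \cl^*(X) \cap \cl^*(Y)$ and, by \cref{swapSepSides2}, $x \notin \cl(X) \cup \cl(Y)$. Both $X$ and $Y$ contain a circuit of $M$, so $|X|, |Y| \ge 3$. The hypothesis $x \in \cl(C^*-x)$ yields a circuit $C$ of $M$ with $x \in C \subseteq C^*$, and $|C| \ge 3$. Put $X' = C^* \cap X$ and $Y' = C^* \cap Y$. If $Y' = \emptyset$, then $C \subseteq X \cup x$, so $x \in \cl(C-x) \subseteq \cl(X)$, a contradiction; hence $Y' \neq \emptyset$, and symmetrically $X' \neq \emptyset$. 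The same reasoning shows that the circuit $C$ itself meets both $X$ and $Y$.

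The heart of the proof is the case $|X'|=1$, writing $X' = \{x'\}$ (with $|Y'|=1$ symmetric). Then $C^*-x' = \{x\} \cup Y' \subseteq Y \cup x$, and since $x' \in \cl^*(C^*-x')$ for any cocircuit $C^*$ containing $x'$, we get $x' \in \cl^*(Y \cup x)$. Because $C$ meets $X$ and $C \cap X \subseteq \{x'\}$, we have $x' \in C$; hence $x' \in \cl(C-x') \subseteq \cl(\{x\} \cup Y') \subseteq \cl(Y \cup x)$. So $x' \in X \cap \cl(Y \cup x) \cap \cl^*(Y \cup x)$, contradicting \cref{gutsstayguts2} applied to the exact $3$-separation $(X, Y \cup x)$, whose $X$-side has at least three elements.

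The remaining case is $|X'|, |Y'| \ge 2$. Since $\lambda_{M \ba x}(X) = \lambda_M(X) - 1 = 1$, the partition $(X, Y)$ is a $2$-separation of $M \ba x$, so $\lc_{M \ba x}(X, Y) = 1$ and thus $\lc(X', Y') \le 1$; combined with $r(X' \cup Y') = r(C^*-x) = 3$ and $r(X'), r(Y') \ge 2$ (from $M$ having no parallel pairs), we get $r(X') = r(Y') = 2$. Since $x \notin \cl(Y')$ (else $x \in \cl(Y)$), adding $x$ to $Y'$ forces $r(\{x\} \cup Y') = 3 = r(C^*)$, so $\cl(\{x\} \cup Y') = \cl(C^*)$; hence every element of $X'$ lies in $\cl(\{x\} \cup Y') \subseteq \cl(Y \cup x)$, producing many candidates to reapply the argument of the previous paragraph.

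The main obstacle is that the singleton argument nevertheless stalls: by \cref{aggregatelemma} each such $x' \in X'$ satisfies $x' \in \cl(X - x')$, which via \cref{swapSepSides2} gives $x' \notin \cl^*(Y \cup x)$, so \cref{gutsstayguts2} does not bite. The plan is to shift every element of $X'$ across the separation by repeated \cref{aggregatelemma}, producing an exact $3$-separation $(X - X', Y \cup X' \cup x)$, and then uncross $C^*$ with the new $3$-separating side (the size condition $|Y - Y'| \ge 2$ uses $r(M) \ge 4$) to extract a refined cyclic $3$-separation at $x$ whose intersection with $C^*$ on the $X$-side is strictly smaller. Iterating reduces to the singleton case and thereby to the contradiction above; orchestrating the bookkeeping so that cyclicity at $x$ is preserved at each step, by carefully choosing cocircuits (using the cocircuit witnessing $x \in \cl^*(Y)$) for the elimination that accompanies each shift, is the hardest step.
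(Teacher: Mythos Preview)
The paper does not give its own proof of this lemma; it attributes the result to \cite{bs2014}. So there is no in-paper argument to compare against, and the question is whether your argument stands on its own.

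Your handling of the case $|X'|=1$ (or symmetrically $|Y'|=1$) is correct. The gap is the case $|X'|,|Y'|\ge 2$: you establish $r(X')=r(Y')=2$ and $X'\subseteq\cl(Y\cup x)$, but then offer only a plan and explicitly leave ``the hardest step'' undone. As written, the plan is not coherent. After shifting all of $X'$ across, the new $X$-side meets $C^*$ in the empty set, not a singleton, so it is unclear how ``iterating reduces to the singleton case''; and uncrossing $C^*$ against $X-X'$ yields nothing, since $(X-X')\cap C^*=\emptyset$ and $(X-X')\cup C^*$ is simply the complement of $Y-Y'$. More fundamentally, cyclicity at $x$ can genuinely fail under the shift: if some $x'\in X'$ satisfies $x'\in\cl(Y\cup x)-\cl(Y)$, then by exchange $x\in\cl(Y\cup x')$, whence $x\notin\cl^*(X-x')$ and $(X-x',\{x\},Y\cup x')$ is no longer cyclic. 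Such $x'$ must exist, since $X'\subseteq\cl(Y)$ would force $x\in\cl(X'\cup Y')\subseteq\cl(Y)$, a contradiction. The side claim that $|Y-Y'|\ge 2$ follows from $r(M)\ge 4$ is also unjustified (for instance $Y=Y'$ with $r(Y)=2$ is not excluded). You need a genuinely different idea to close this case.
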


A set $X$ in a matroid $M$ is {\em fully closed} if it is closed and coclosed; that is, $\cl(X)=X=\cl^*(X)$.
The {\em full closure} of a set $X$, denoted $\fcl(X)$, is the intersection of all fully closed sets that contain $X$.
It is easily seen that the full closure is a well-defined closure operator, and that one way of obtaining the full closure of a set $X$ is to take the closure of $X$, then the coclosure of the result, and repeat until neither the closure nor coclosure introduces new elements.

We use the next lemma frequently.  The straightforward proof is omitted.%

\begin{lemma}
\label{aquickaside1}
Let $(X,Y)$ be a $2$-separation in a connected matroid $M$ where $M$ contains no series or parallel pairs.
Then $(\fcl(X),Y-\fcl(X))$ is also a $2$-separation of $M$.
\end{lemma}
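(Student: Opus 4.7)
My plan is to argue by induction on $|Y|$, treating the base case $|Y| = 2$ and the inductive step $|Y| \ge 3$ separately. The inductive step is essentially bookkeeping: if $|Y| \ge 3$ and some $e \in Y \cap (\cl(X) \cup \cocl(X))$, then using $\lambda_M(A) = r(A) + r^*(A) - |A|$ one checks that adjoining $e$ to $X$ does not increase $\lambda_M$, so $(X \cup e, Y - e)$ is itself a $2$-separation. Since $e \in \fcl(X)$ forces $\fcl(X \cup e) = \fcl(X)$, the inductive hypothesis applied to $(X \cup e, Y - e)$ yields the conclusion; the subcase $\fcl(X) \cap Y = \emptyset$ is immediate. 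The same connectivity check simultaneously gives $\lambda_M(\fcl(X)) \le 1$ in general.

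The real work is the base case $|Y| = 2$, where the goal is to show $\fcl(X) \cap Y = \emptyset$. Write $Y = \{e, f\}$. Since $M$ has no parallel pairs and (being connected with $|E| \ge 4$) no loops, the pair $\{e, f\}$ is independent, so $r(\{e, f\}) = 2$; combined with $\lambda_M(X) \le 1$ this forces $r(X) \le r(M) - 1$. Suppose $e \in \cl(X)$; the dual case $e \in \cocl(X)$ follows by applying the same argument to $M^*$, where the no-series-pair hypothesis plays the role of the no-parallel-pair hypothesis. Then $r(X \cup e) = r(X)$, while $r(M) = r(X \cup \{e, f\}) = r(X \cup f) \le r(X) + 1$, which forces $r(M) = r(X) + 1$. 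A direct calculation then gives $\lambda_M(X \cup e) = r(X) + 1 - r(M) = 0$, contradicting the connectedness of $M$ on the nontrivial partition $(X \cup e, \{f\})$.

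I expect the base case to be the only real obstacle; the inductive step is routine once the connectivity-function identity is in hand. The connectedness hypothesis is used precisely to rule out $\lambda_M(X \cup e) = 0$ (and to exclude loops and coloops), while the no-series/parallel-pair hypothesis is what forces $r(\{e, f\}) = 2$ (or, dually, $r^*(\{e, f\}) = 2$) in the base case; these are the only places the hypotheses are used.
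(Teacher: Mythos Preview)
Your proof is correct. The paper omits the proof entirely, calling it ``straightforward,'' so there is nothing to compare against; your induction on $|Y|$, with the base case handled by showing that any $e \in Y \cap (\cl(X) \cup \cocl(X))$ would yield a $1$-separation $(X \cup e,\{f\})$, is a clean way to fill in the details.
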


We say that a $2$-separation $(U,V)$ is \emph{trivial} if $U$ or $V$ is a series or parallel class.

We say that $M$ \emph{has an $N$-minor} if $M$ has an isomorphic copy of $N$ as a minor.
For a matroid $M$ with a minor $N$ and $e \in E(M)$, we say $e$ is \emph{$N$-contractible} if $M/e$ has an $N$-minor, we say $e$ is \emph{$N$-deletable} if $M \ba e$ has an $N$-minor, and we say $e$ is \emph{doubly $N$-labelled} if $e$ is both $N$-contractible and $N$-deletable.

The dual of the following is proved in \cite{bs2014,ben}.  

\begin{lemma}
    \label{doublylabelII}
Let $N$ be a $3$-connected minor of a $3$-connected matroid $M$. Let $(X, \{z\}, Y)$ be a cyclic $3$-separation of $M$ such that $M\ba z$ has an $N$-minor with $|X \cap E(N)| \le 1$. Let $X' = X-\cocl(Y)$
and $Y' = \cocl(Y) - z$.
Then
\begin{enumerate}%[label=\rm(\roman*)]
  \item each element of $X'$ is $N$-deletable; and\label{dlIIi}
  \item at most one element of $\cocl(X)-z$ is not $N$-contractible, and if such an element~$x$ exists, then $x \in X' \cap \cl(Y')$ and $z \in \cocl(X' - x)$.\label{dlIIii}
\end{enumerate}
\end{lemma}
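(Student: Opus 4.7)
The plan is to derive this result as the matroid-dual of the lemma cited from \cite{bs2014, ben}. That cited statement --- call it the \emph{primal} version --- makes the analogous claim with ``cyclic'' replaced by ``vertical'', ``$M \ba z$'' replaced by ``$M / z$'', $N$-deletability swapped with $N$-contractibility, and each closure operator swapped with its coclosure. Applying the primal version to $M^*$ and $N^*$ should produce exactly the conclusions required here for $M$ and $N$.

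First I would check that the primal hypotheses transfer. By definition, $(X, \{z\}, Y)$ is a cyclic $3$-separation of $M$ if and only if it is a vertical $3$-separation of $M^*$; and ``$M \ba z$ has an $N$-minor with $|X \cap E(N)| \le 1$'' is equivalent to ``$M^* / z$ has an $N^*$-minor with $|X \cap E(N^*)| \le 1$'', since taking duals preserves the ground set of the minor. Next I would translate the auxiliary sets: using $\cocl_M(Y) = \cl_{M^*}(Y)$, the sets $X' = X - \cocl_M(Y)$ and $Y' = \cocl_M(Y) - z$ agree with the sets $X - \cl_{M^*}(Y)$ and $\cl_{M^*}(Y) - z$ that appear in the primal statement applied to $M^*$. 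Finally I would read off the conclusions: $N^*$-contractibility in $M^*$ is $N$-deletability in $M$, which gives clause~(i); and ``not $N^*$-deletable in $M^*$'' means ``not $N$-contractible in $M$'', which, together with the identities $\cl_{M^*}(X) - z = \cocl_M(X) - z$, $\cocl_{M^*}(Y') = \cl_M(Y')$, and $\cl_{M^*}(X' - x) = \cocl_M(X' - x)$, gives clause~(ii).

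The argument is essentially a bookkeeping exercise; the only pitfall is conflating the closure operators of $M$ and of $M^*$. I would state the dictionary $\cocl_M = \cl_{M^*}$ explicitly at the start of the proof so that the transfer of the hypotheses, the definition of $X'$ and $Y'$, and each of the two conclusions is unambiguous. Beyond that, there is no genuine obstacle, since all the matroid-theoretic content is already carried by the primal lemma.
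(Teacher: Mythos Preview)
Your proposal is correct and matches the paper's treatment exactly: the paper does not give a proof but simply states that the dual of this lemma is proved in \cite{bs2014,ben}, which is precisely the duality argument you have spelled out. Your explicit dictionary between $M$ and $M^*$ (cyclic versus vertical, $\cocl_M = \cl_{M^*}$, $N$-deletable versus $N^*$-contractible, etc.) is the right bookkeeping and contains no errors.
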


%By \cref{presingle}, if $|\cl(Y')-Y'| \neq 1$ or $|\cocl(Y')-Y'| \neq 1$, then every element of $\cl(X)$ is $N$-deletable.
%A careful analysis of case (ii) yields the following:
%\begin{lemma}
    %\label{doublylabel2}
%Let $N$ be a $3$-connected minor of a $3$-connected matroid $M$. Let $(X, \{z\}, Y)$ be a vertical $3$-separation of $M$ such that $M/z$ has an $N$-minor, where $|X \cap E(N)| \le 1$.
%%If there exists some $x' \in X$ for which $x \in \cl(X-x') \cap \cl(Y)$, then every element of $\cl(X)$ is $N$-deletable.
%If there exists some $z' \in X$ for which $z' \in \cl(X-z') \cap \cl(Y)$, then every element of $\cl(X)-z$ is $N$-deletable.
%\end{lemma}

Let $M$ be a matroid with $d \in E(M)$.
Suppose $X \subseteq E(M\ba d)$ is exactly $k$-separating in $M \ba d$.  We say that $d$ \emph{blocks} $X$ if $X$ is not $k$-separating in $M$.
If $d$ blocks $X$, then it follows that $d \notin \cl(E(M\ba d)-X)$, so $d \in \cocl(X)$ by \cref{swapSepSides2}.
We say that $d$ \emph{fully blocks} $X$ if neither $X$ nor $X \cup d$ is $k$-separating in $M$.
It is easily shown that $d$ fully blocks $X$ if and only if $d \notin \cl(X) \cup \cl(E(M \ba d)-X)$.
Usually, when we use this terminology, we are considering elements that block a $3$-separating set $X$; for example, when $X$ is a triad in a $3$-connected matroid.
On the other hand, if $X$ is a series class of $M \ba d$ of size at least two, then we say $d$ blocks $X$ if $X$ is not $2$-separating in $M$ (so $X$ is not a series class in $M$), and $d$ fully blocks $X$ if neither $X$ nor $X \cup d$ is $2$-separating in $M$.

Recall that we typically work under the assumption that every triangle or triad of $M$ is \unfortunate.  In this setting, the following lemma shows that an $N$-contractible (or $N$-deletable) element is not in a triangle (or triad, respectively).

\begin{lemma}[{\cite[Lemma~3.1]{paper1}}]
  \label{freegrounded}
  Let $M$ be a $3$-connected matroid with a $3$-connected minor $N$ where 
$|E(N)| \ge 4$.
If $T$ is an \unfortunate\ triangle of $M$ with $x \in T$, 
then $x$ is not $N$-contractible.
\end{lemma}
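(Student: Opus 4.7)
The plan is to argue by contradiction. Suppose $T = \{x, y, z\}$ is an \unfortunate\ triangle of $M$ and, contrary to the claim, $M / x$ has an $N$-minor. Since $\{x,y,z\}$ is a triangle, $\{y,z\}$ is a circuit of $M/x$, i.e.\ a parallel pair in $M/x$.

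Write the $N$-minor of $M/x$ as $(M/x)/A \ba B$ for some disjoint subsets $A,B \subseteq E(M)-x$. Since $N$ is $3$-connected and $|E(N)| \ge 4$, $N$ contains no parallel pair, so its isomorphic copy in $(M/x)/A \ba B$ likewise contains no parallel pair. The first key step is to observe that $\{y,z\} \cap (A \cup B) \neq \emptyset$: otherwise $y$ and $z$ both survive into the minor, and since a $2$-element circuit persists under contraction and deletion of elements outside it, $\{y,z\}$ would still be a parallel pair of the minor, a contradiction.

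Hence at least one of $y$ or $z$ lies in $A \cup B$. This splits into four symmetric cases, giving that one of $M/x/y$, $M/x \ba y$, $M/x / z$, or $M/x \ba z$ has an $N$-minor. But each of these is one of the four matroids $M/a/b$, $M/a\ba b$, $M\ba a/b$, $M\ba a\ba b$ for distinct $a,b \in T$ that the definition of \unfortunate\ forbids from having an $N$-minor. This contradiction completes the proof.

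There is really no main obstacle here; the argument is essentially a one-line observation about the persistence of parallel pairs under minor operations, combined with the definition of an \unfortunate\ triangle. The only care needed is the (elementary) justification that if neither $y$ nor $z$ is contracted or deleted on the way down to $N$, then the circuit $\{y,z\}$ of $M/x$ remains a circuit of the final minor.
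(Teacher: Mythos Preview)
Your proof is correct. Note that this lemma is not proved in the present paper; it is quoted from the companion paper \cite{paper1}, so there is no proof here to compare against. Your argument is the natural one: the parallel pair $\{y,z\}$ in $M/x$ must be destroyed on the way down to the simple matroid $N$, forcing one of $y,z$ into the contracted or deleted set, which directly contradicts the definition of an \unfortunate\ triangle. One minor sharpening: when you say the circuit $\{y,z\}$ ``persists'' in $(M/x)/A\ba B$ if $y,z\notin A\cup B$, strictly speaking $\{y,z\}$ is dependent in $(M/x)/A$ and hence contains a circuit there (possibly a loop), and deletion of $B$ preserves that circuit; either way the resulting minor fails to be simple, giving the contradiction. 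This is exactly the kind of routine observation one expects for such a lemma.
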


\section{The triad case}
\label{seclifetriad}

In this section, we prove the following:

\begin{theorem}
  \label{basilica}
  Let $M$ be a $3$-connected matroid with an element $d$ such that $M\ba d$ is $3$-connected.
  Let $N$ be a $3$-connected minor of $M$, % and $M \ba d$,
  where every triangle or triad of $M$ is \unfortunate, and $|E(N)| \ge 4$.
  Suppose that $M\ba d$ has a cyclic $3$-separation $(Y, \{d'\}, Z)$ with $|Y| \ge 4$, where $M\ba d \ba d'$ has an $N$-minor with $|Y \cap E(N)| \le 1$.
  Suppose $Y$ contains a subset $X$ that is $3$-separating in $M \ba d$, where $|X| \ge 4$ and, for each $x \in X$,
  \begin{enumerate}[label=\rm(\alph*)]
    \item $\co(M\ba d \ba x)$ is $3$-connected,\label{deletionpair}
    \item $M\ba d/x$ is $3$-connected, and\label{mixedpair}
    \item $x$ is doubly $N$-labelled in $M\ba d$.\label{doublylabelled}
  \end{enumerate}
  Let $X$ be minimal subject to these conditions.
  If $X$ contains a triad of $M \ba d$,
  then $M$ has an $N$-detachable pair.
\end{theorem}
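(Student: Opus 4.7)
The plan is to first pin down the exact cocircuit structure of $T^*$ in $M$, and then exhibit an $N$-detachable pair obtained by contraction.

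First, I would show that the triad $T^*=\{a,b,c\}\subseteq X$ of $M\ba d$ cannot itself be a triad of $M$, which forces $T^*\cup d$ to be a $4$-element cocircuit of $M$. Suppose for contradiction $T^*$ is a triad of $M$; then $T^*$ is $N$-grounded, so for distinct $a',b'\in T^*$ none of $M\ba a'\ba b'$, $M\ba a'/b'$, $M/a'\ba b'$, $M/a'/b'$ has an $N$-minor. Pick $a\in T^*$; by condition~(c), $M\ba d\ba a$, and hence $M\ba a$, has an $N$-minor. In $M\ba a$, $\{b,c\}=T^*\setminus a$ is a cocircuit (series pair), and as $N$ is $3$-connected with $|E(N)|\ge 4$, $N$ has no series pair. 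So any $N$-minor of $M\ba a$ must delete or contract one of $b,c$, contradicting the $N$-groundedness of $T^*$. Hence $T^*$ is not a triad of $M$, and therefore $T^*\cup d$ is a cocircuit of $M$. Along the way, I would also note that $T^*$ is not a triangle of $M\ba d$: otherwise $M\ba d/a$ would contain the parallel pair $\{b,c\}$, contradicting condition~(b).

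With $\{a,b,c,d\}$ established as a cocircuit of $M$, observe that no pair $\{x_1,x_2\}\subseteq\{a,b,c,d\}$ can be $N$-detachable by deletion, since $M\ba x_1\ba x_2$ contains the size-$2$ cocircuit $\{a,b,c,d\}\setminus\{x_1,x_2\}$ and so is not $3$-connected. Any $N$-detachable pair within $\{a,b,c,d\}$ must therefore come from contraction, and contracting two elements destroys this cocircuit. The natural candidates are pairs $\{a',b'\}\subseteq T^*$, or mixed pairs $\{a',e\}$ with $a'\in T^*$ and $e\in X\setminus T^*$ (the latter nonempty since $|X|\ge 4$).

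Finally, I would verify that one such candidate is $N$-detachable. For the $N$-minor condition, I would use condition~(c) together with \cref{doublylabelII} applied to the cyclic $3$-separation $(Y,\{d'\},Z)$ of $M\ba d$: this controls the distribution of $N$-contractibility, so that successive contractions of $X$-elements retain an $N$-minor. For $3$-connectedness of $M/x_1/x_2$, I would combine condition~(b), Bixby's Lemma, orthogonality with the cocircuit $\{a,b,c,d\}$ and the triad $T^*$, and the minimality of $X$ to rule out $2$-separations. The hardest part will be this last $3$-connectedness check: conditions~(a)--(c) control $M\ba d$, but we now must work in $M$, with $d$ restored, and carefully track how the $4$-cocircuit $\{a,b,c,d\}$ and the element~$d$ itself interact with potential $2$-separations of $M/x_1/x_2$.
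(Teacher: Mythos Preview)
Your setup is correct: the triad $T^*\subseteq X$ of $M\backslash d$ is indeed blocked by $d$, so $T^*\cup d$ is a cocircuit of $M$, and your argument for this is clean. But the proposal stops precisely where the real work begins.

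The gap is that your final paragraph hand-waves the entire content of the theorem. You write that you would ``verify that one such candidate is $N$-detachable'' using Bixby's Lemma, orthogonality, and minimality of $X$, but there is no single candidate that obviously works, and the obstruction is structural rather than local. Since every $x\in X$ satisfies $\co(M\backslash d\backslash x)$ is $3$-connected, if any $x\in X$ were \emph{not} in a triad of $M\backslash d$ we would be done immediately; so in the hard case \emph{every} element of $X$ lies in some triad of $M\backslash d$. The proof therefore cannot focus on a single triad $T^*$: one must analyse how the various triads covering $X$ intersect one another. The paper's proof splits into five configurations of three distinct triads meeting $X$ (pairwise disjoint; one disjoint from an intersecting pair; three sharing a common point; and two further overlapping patterns), each requiring substantial case analysis, and then a separate endgame when $X$ is contained in the union of just two disjoint triads.

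A second issue is that restricting to contraction pairs inside $T^*\cup(X\setminus T^*)$ is not sufficient. When you contract $s,t$ from distinct triads, $3$-connectedness of $M/s/t$ typically fails because $\{s,t,d\}$ lies in a $4$-element circuit of $M$; tracking these circuits via orthogonality with the several cocircuits $T^*\cup d$ is what drives the case analysis, and in some branches the eventual detachable pair is found by \emph{deletion} of two elements outside the original triad (for instance, the paper exhibits $M\backslash t_1\backslash r_2$ as $3$-connected in one configuration). Your outline does not account for any of this, and Bixby's Lemma plus minimality of $X$ are not strong enough on their own to rule out the $2$-separations that arise.
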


\subsection*{Some preparatory lemmas}

Let $M$ be a $3$-connected matroid and let $(P_1,P_2,P_3)$ be a partition of $E(M)$ where $P_i$ is $3$-separating for each $i \in \seq{3}$.
If $\lc(P_i,P_j) = 2$ for all distinct $i,j \in \seq{3}$, then we say $(P_1,P_2,P_3)$ is a \emph{paddle}.
The following is proved in \cite[Lemma~7.2]{osw04}.

\begin{lemma}
  \label{petalsarecoclosed}
  Let $(P_1,P_2,P_3)$ be a paddle in a $3$-connected matroid $M$. 
  Then $\cocl(P_i) = P_i$ for each $i \in \seq{3}$.
\end{lemma}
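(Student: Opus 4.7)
The plan is to pass to the dual matroid, where the paddle condition becomes the stronger statement that the petals are pairwise skew, and then to deduce coclosedness via submodularity and the absence of coloops in $M$.

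First, I note that $\lambda(P_i) = 2$ exactly for each $i$. Indeed, $\lc(P_i,P_j) = 2$ forces $r(P_i), r(P_j) \geq 2$, and hence $|P_i|, |P_j| \geq 2$; since $M$ is $3$-connected with both sides of $(P_i, E - P_i)$ of size at least~$2$, we cannot have $\lambda(P_i) \leq 1$. Combining $\lambda(P_k) = 2$ with $r(P_i \cup P_j) = r(P_i) + r(P_j) - \lc(P_i,P_j) = r(P_i) + r(P_j) - 2$ yields
\[
  r(P_1) + r(P_2) + r(P_3) = r(M) + 4.
\]

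Next, using $r^*(X) = |X| + r(E - X) - r(M)$ and expanding, I compute
\[
  r^*(P_i) + r^*(P_j) - r^*(P_i \cup P_j) = r(P_i \cup P_k) + r(P_j \cup P_k) - r(P_k) - r(M).
\]
Substituting $r(P_i \cup P_k) = r(M) + 2 - r(P_j)$ (from $\lambda(P_j) = 2$) and the analogous identity for $r(P_j \cup P_k)$, and applying the rank identity above, this simplifies to $0$. So any two petals are skew in $M^*$.

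To conclude, I apply submodularity of $r^*$ to $\cocl(P_i)$ and $\cocl(P_j)$, using that local connectivity is unchanged when arguments are replaced by their closures:
\[
  r^*(\cocl(P_i) \cap \cocl(P_j)) \leq r^*(\cocl(P_i)) + r^*(\cocl(P_j)) - r^*(\cocl(P_i) \cup \cocl(P_j)) = 0.
\]
Hence every element of $\cocl(P_i) \cap \cocl(P_j)$ is a loop of $M^*$, i.e., a coloop of $M$. Since $M$ is $3$-connected on $|E(M)| \geq 6$ elements, it has no coloops, so $\cocl(P_i) \cap \cocl(P_j) = \emptyset$. Because $P_j \subseteq \cocl(P_j)$ and $P_k \subseteq \cocl(P_k)$, this gives $\cocl(P_i) \cap (P_j \cup P_k) = \emptyset$, whence $\cocl(P_i) \subseteq P_i$; the reverse inclusion is immediate. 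The only mild obstacle is the dual-rank bookkeeping, but once pairwise skewness in $M^*$ is established, everything else is routine.
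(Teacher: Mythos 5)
Your proof is correct. Note that the paper does not actually prove this lemma: it is quoted from \cite[Lemma~7.2]{osw04}, so there is no in-text argument to compare against. Your derivation stands on its own, and every step checks out: $\lc(P_i,P_j)=2$ forces $r(P_i)\ge 2$ and hence $|P_i|\ge 2$, so $3$-connectivity gives $\lambda(P_i)=2$ exactly and $r(P_1)+r(P_2)+r(P_3)=r(M)+4$; the identity $r^*(P_i)+r^*(P_j)-r^*(P_i\cup P_j)=r(P_i\cup P_k)+r(P_j\cup P_k)-r(P_k)-r(M)$ then evaluates to $0$, so the petals are pairwise skew in $M^*$. The closure step is also sound, since $P_i\cup P_j\subseteq \cl_{M^*}(P_i)\cup\cl_{M^*}(P_j)\subseteq\cl_{M^*}(P_i\cup P_j)$ guarantees that local connectivity in $M^*$ is unchanged when the petals are replaced by their $M^*$-closures, and submodularity then gives $r^*\bigl(\cocl(P_i)\cap\cocl(P_j)\bigr)=0$; the absence of coloops in a $3$-connected matroid on at least two elements finishes the argument. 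This is essentially the standard route to this fact (dualise, exploit pairwise skewness of the petals of the induced copaddle), so there is nothing to flag beyond the observation that you have reproved a cited result rather than matched an argument in the paper.
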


We first handle the following case that arises in the proof of \cref{cathedral}.

\begin{lemma}
  \label{triad-paddle}
  Let $M$ be a $3$-connected matroid with a $3$-connected matroid~$N$ as a minor. %, where every triangle or triad of $M$ is \unfortunate.
  Suppose that $M \ba d$ is $3$-connected. % and has an $N$-minor.
  Let $(S,T,Z)$ be a paddle in $M\backslash d$ such that %the following hold:
  \begin{enumerate}[label=\rm(\alph*)]
    \item $S$ and $T$ are triads of $M\backslash d$ that are blocked by $d$,
    \item $|Z| \ge 3$, and
    \item for all distinct $s,t \in S \cup T$ such that $\{s,t\} \subseteq \cl((S \cup T) - \{s,t\})$, the matroid $M \ba s \ba t$ has an $N$-minor.\label{tpc}
  \end{enumerate}
  Then $M$ has an $N$-detachable pair.
\end{lemma}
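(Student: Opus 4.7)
The plan is to exhibit an $N$-detachable pair $\{s,t\}$ with $s \in S$ and $t \in T$, relying on condition~(c) for the $N$-minor and a structural argument for $3$-connectivity. First I would show that $S \cup d$ and $T \cup d$ are $4$-cocircuits of~$M$: since $S$ is a triad of $M\ba d$ blocked by~$d$, we have $d \in \cocl_M(S)$, so by the cocircuit axiom $S$ or $S \cup d$ is a cocircuit of~$M$; the former is ruled out because then $S$ would be $3$-separating in~$M$, contradicting that $d$ blocks it. Similarly $T \cup d$ is a $4$-cocircuit of~$M$. The triads $S$ and $T$ are independent with $r(S) = r(T) = 3$, and combined with $\lc(S,T) = 2$ this gives $r(S \cup T) = 4$; applying cocircuit elimination to $S\cup d$ and $T\cup d$ with respect to~$d$ also produces a cocircuit of~$M$ contained in $S\cup T$.

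Next I would dispose of pairs contained entirely in one triad. If $\{x,y\} \subseteq S$, write $S = \{x,y,s\}$; then the $4$-cocircuit $S \cup d$ restricts to a $2$-cocircuit $\{s,d\}$ of $M \ba x \ba y$ (no cocircuit of~$M$ fits properly inside $\{x,y,s,d\}$, as this would refine $S\cup d$), so $M \ba x \ba y$ has a series pair and cannot be $3$-connected. Symmetrically for pairs inside~$T$. Thus the only candidates for an $N$-detachable pair inside $S \cup T$ have one element in each triad. For such a pair $\{s,t\}$ with $s \in S$, $t \in T$, the $4$-cocircuits $S \cup d$ and $T \cup d$ restrict to triads $(S-s) \cup d$ and $(T-t) \cup d$ of $M \ba s \ba t$, sharing~$d$; these triads on their own do not obstruct $3$-connectivity.

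The remaining task is to find a pair $\{s,t\}$ satisfying the closure hypothesis of~(c) for which $M \ba s \ba t$ is $3$-connected. I would argue by contradiction, assuming that for every such $\{s,t\}$ there is a $2$-separation of $M \ba s \ba t$. This $2$-separation cannot come trivially from the restrictions of $S \cup d$ or $T \cup d$, so it must arise from some other small cocircuit or circuit of~$M$ meeting $\{s,t\}$, or from a genuine nontrivial $2$-separation. Using orthogonality with the $4$-cocircuits $S \cup d$, $T \cup d$, and the cocircuit of~$M$ found inside $S \cup T$, together with the paddle structure of $(S,T,Z)$ in $M \ba d$ and the hypothesis $|Z| \ge 3$ (which rules out degeneracies on the $Z$-side), I would pare down the possibilities. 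Key tools here will be Tutte's Triangle Lemma applied when $4$-element fans appear and \cref{r3cocircsi2,r3cocirc2} when rank-$3$ cocircuits arise.

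The main obstacle I expect is the combinatorial case analysis driven by the two-dimensional dependency space of $M\ba d$ restricted to $S \cup T$: the admissible circuit patterns (pairs of $4$- or $5$-circuits straddling $S$ and $T$) each need to be processed, and for each one I must pin down a pair~$\{s,t\}$ satisfying the closure condition for which $M\ba s\ba t$ is $3$-connected. Once this pair is identified, condition~(c) supplies the $N$-minor and we obtain the desired $N$-detachable pair.
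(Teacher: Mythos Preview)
Your outline starts well and matches the paper's strategy: pick $s\in S$, $t\in T$ with $\{s,t\}\subseteq\cl((S\cup T)-\{s,t\})$, invoke~(c) for the $N$-minor, then prove $M\ba s\ba t$ is $3$-connected. But the last step is where your plan goes vague, and the tools you reach for --- Tutte's Triangle Lemma and \cref{r3cocircsi2,r3cocirc2} --- are about contracting from triads or rank-$3$ cocircuits, not about double deletion, and there is no evident way to make them bite here. The ``combinatorial case analysis on circuit patterns'' you anticipate is not how the argument closes.

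The key idea you are missing is that the role of $d$ is not exhausted by noting the triads $(S-s)\cup d$ and $(T-t)\cup d$; rather, $d$ \emph{fully blocks} every $2$-separation of $M\ba d\ba s\ba t$. The paper works in $M'=M\ba d$: since $\{s_2,s_3,t_2,t_3\}$ spans $\{s,t\}$, any $2$-separation $(P,Q)$ of $M'\ba s\ba t$ cannot have all of $\{s_2,s_3,t_2,t_3\}$ on one side, and a short submodularity/uncrossing argument (using $|Z|\ge 3$) forces, up to symmetry, $\{s_2,s_3\}\subseteq P$ and $\{t_2,t_3\}\subseteq\cocl(Q)$. Then in $M\ba s\ba t$ the triads $\{s_2,s_3,d\}$ and $\{t_2,t_3,d\}$ put $d$ in the coclosure of both sides, so $d\notin\cl(P)$ and $d\notin\cl(Q)$; thus $d$ fully blocks $(P,Q)$ and $M\ba s\ba t$ is $3$-connected. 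No case analysis on circuits of $M|(S\cup T)$ is needed --- only the single observation (your Sublemma would be: for each $s\in S$ at most one $t'\in T$ makes $(S-s)\cup(T-t')$ a circuit) guaranteeing a pair with the independence/closure property exists.
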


\begin{proof}
Let $M'=M\backslash d$.

\begin{sublemma}
\label{justanother}
For $s\in S$, there is at most one element $t'\in T$ such that
$(S-s) \cup (T-t')$ is a circuit in $M'$.
\end{sublemma}

\begin{slproof}
  Let $T=\{t_1,t_2,t_3\}$ and suppose that $(S -s) \cup (T - t')$ is a circuit for each $t' \in \{t_1,t_2\}$.  Then $t_1,t_2 \in \cl((S-s) \cup t_3)$, so $r((S-s) \cup T)=3$.
  But $r(S \cup T) = 4$, so $s \in \cocl(Z)$, contradicting \cref{petalsarecoclosed}.
\end{slproof}

Let $S=\{s,s_2,s_3\}$ and $T=\{t,t_2,t_3\}$. By~\ref{justanother} we may assume that $\{s_2,s_3,t_2,t_3\}$ is independent.
In particular, $\{s,t\} \subseteq \cl_{M'}(\{s_2,s_3,t_2,t_3\})$.
This implies that $M \ba s \ba t$ has an $N$-minor, by \cref{tpc}.
We work towards proving that $\{s,t\}$ is an $N$-detachable pair in $M$.

\begin{sublemma}
\label{justanother2}
$M'\backslash s \ba t$ is connected.
\end{sublemma}

\begin{slproof}
  Suppose that $(P,Q)$ is a separation of $M'\backslash s \ba t$. As $\{s_2,s_3\}$ and $\{t_2,t_3\}$ are series pairs in $M'\backslash s \ba t$,
we may assume that $\{s_2,s_3\}\subseteq P$ and $\{t_2,t_3\}$ is contained in either $P$ or $Q$.
If $\{t_2,t_3\}\subseteq P$, then
$(P,Q)$ is a separation in the $3$-connected matroid $M'$, as $\{s,t\}\subseteq\cl_{M'}(\{s_2,s_3,t_2,t_3\})$; a contradiction.
Therefore, we may assume that $\{s_2,s_3\}\subseteq P$ and $\{t_2,t_3\}\subseteq Q$.
Moreover, since $r(Z) = r(M')-2$, it follows that $|P\cap Z|,|Q\cap Z| \ge 1$.
Let $\lambda=\lambda_{M'\backslash s \ba t}$.
Since $\lambda(P) = \lambda(Q)=0$, by the submodularity of $\lambda$ we have
\begin{align*}
\lambda(P\cap Z)+\lambda(Q\cap Z)
&\le\lambda(P)+\lambda(Q)+2\lambda(Z)-\lambda(P\cup Z)-\lambda(Q\cup Z)\\
&=2\lambda(Z)-\lambda(P\cup Z)    -\lambda(Q\cup Z)    \\
&=4          -\lambda(\{t_2,t_3\})-\lambda(\{s_2,s_3\})=2.
\end{align*}
If either $\lambda(P\cap Z)=0$ or $\lambda(Q\cap Z)=0$, then, as $\{s,t\}\subseteq\cl_{M'}(\{s_2,s_3,t_2,t_3\})$, the set $P \cap Z$ or $Q \cap Z$ is also $1$-separating in $M'$; a contradiction.
Thus $\lambda(P\cap Z)=\lambda(Q\cap Z)=1$. As $|Z|\geq 3$, we may assume, without loss of generality, that $|P\cap Z|\geq 2$.
But it follows that $(P\cap Z,E(M')-(P\cap Z))$ is a contradictory $2$-separation in $M'$.
\end{slproof}

\begin{sublemma}
\label{justanother9}
If $(P,Q)$ is a $2$-separation of $M'\backslash s \ba t$, then, up to swapping $S$ and $T$, and $P$ and $Q$, we have $\{s_2,s_3\} \subseteq P$ and $\{t_2,t_3\} \subseteq \cocl(Q)$.
\end{sublemma}

\begin{slproof}
  Firstly, observe that if $(P,Q)$ is a $2$-separation of $M'\backslash s \ba t$ where $\{s_2,s_3,t_2,t_3\} \subseteq P$, then, as $\{s,t\}\subseteq\cl_{M'}(\{s_2,s_3,t_2,t_3\})$, the partition $(P\cup \{s,s_2\},Q)$ is a $2$-separation in $M'$; a contradiction.
  Thus we may assume that no $2$-separation $(P,Q)$ of $M'\backslash s \ba t$ has $\{s_2,s_3,t_2,t_3\}$ contained in either $P$ or $Q$.

  Let $(P,Q)$ be a $2$-separation of $M'\backslash s \ba t$.
  As $|Z|\geq 3$, we may assume that $|P\cap Z|\geq 2$.
  Since $\{s_2,s_3,t_2,t_3\} \nsubseteq P$, by possibly swapping $S$ and $T$, we may assume that $|Q \cap T| \ge 1$.
  Suppose that $|Q \cap T| = 1$;
  say $P \cap T = \{t'\}$ and $Q \cap T = \{t''\}$ where $\{t',t''\}=T-t$.
  Since $t'\in\cocl_{M'\backslash s \ba t}(\{t''\})$, the partition $(P-\{t'\},Q\cup\{t'\})$ is a $2$-separation of $M'\backslash s \ba t$.
  Since $\{s_2,s_3,t_2,t_3\} \nsubseteq Q \cup t'$, we deduce that $|Q \cap S| \le 1$.
  Now, similarly,
  if $|Q \cap S| = \{s'\}$, then $(P-\{s',t'\},Q\cup\{s',t'\})$ is a $2$-separation of $M'\backslash s \ba t$.
  But then $\{s_2,s_3,t_2,t_3\} \subseteq Q\cup\{s',t'\}$; a contradiction.
  So $Q \cap S = \emptyset$ and $\{t_2,t_3\} \subseteq \cocl(Q)$ when $|Q \cap T| = 1$.
  A similar argument gives that $Q \cap S = \emptyset$ when $|Q \cap T|=2$.
\end{slproof}

Since $M' \ba s\ba t$ is connected, by \cref{justanother2}, $M \ba s \ba t$ is also connected.
Suppose that $(P,Q)$ is a $2$-separation of $M'\backslash s \ba t$. 
By \ref{justanother9}, we may assume that $\{s_2,s_3\}\subseteq P$ and $\{t_2,t_3\}\subseteq \cocl(Q)$. Thus $(P',Q') = (P-\{t_2,t_3\},Q \cup \{t_2,t_3\})$ is also a $2$-separation of $M'\backslash s \ba t$.
As $T\cup d$ is a $4$-element cocircuit in $M$, we have that $\{t_2,t_3,d\}$ is a triad of $M\backslash s \ba t$.
Hence $d \in \cocl_{M\backslash s \ba t}(Q') = \cocl_{M\backslash s \ba t}(Q)$,
so $d\not\in\cl_{M\backslash s \ba t}(P)$.
Likewise, since $d \in \cocl_{M\ba s \ba t}(\{s_2,s_3\})$, we have $d \notin \cl_{M\ba s \ba t}(Q)$.
We conclude that $(P\cup d,Q)$ and $(P,Q\cup d)$ are $3$-separating in $M\backslash s \ba t$.
That is, $d$ fully blocks $(P,Q)$ for each $2$-separation $(P,Q)$ of $M' \ba s \ba t$.
Thus $M \ba s \ba t$ is $3$-connected.
\end{proof}

\begin{lemma}
\label{aquickaside2}
Let $M$ be a $3$-connected matroid with a pair of disjoint triads $S$=$\{s_1,s_2,s_3\}$ and $T$=$\{t_1,t_2,t_3\}$.
If
\begin{enumerate}
\item $\{s_1,s_2,t_1,t_2\}$ is a circuit of $M$, and
\item $s_3$ is not in a triangle of $M$,
\end{enumerate}
then $M/s_3$ is $3$-connected.
\end{lemma}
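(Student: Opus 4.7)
The plan is to assume for contradiction that $M/s_3$ is not $3$-connected and derive a contradiction by orthogonality. Since $s_3$ lies in no triangle, $M/s_3$ has no parallel pairs, so $\si(M/s_3) = M/s_3$, and by \cref{openVertSep2} there is a vertical $3$-separation $(X, \{s_3\}, Y)$ of $M$, with $|X|,|Y| \ge 3$.

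The first observation is that, by orthogonality of the cocircuit $\{s_1,s_2,s_3\}$ with any circuit through $s_3$ contained in $X \cup s_3$ (respectively $Y \cup s_3$), the pair $\{s_1,s_2\}$ must meet both $X$ and $Y$. After relabelling, $s_1 \in X$ and $s_2 \in Y$, and the triad $\{s_1,s_2,s_3\}$ then gives $s_2 \in \cocl(X \cup s_3)$. Next I split on where the triad $T$ sits relative to $(X,Y)$. If $\{t_1,t_2\} \subseteq X$ (or, symmetrically, $\subseteq Y$), the circuit $\{s_1,s_2,t_1,t_2\}$ forces $s_2 \in \cl(X) \subseteq \cl(X \cup s_3)$, and combined with $s_2 \in \cocl(X \cup s_3) \cap Y$ this contradicts \cref{gutsstayguts2} for the exactly $3$-separating partition $(X \cup s_3, Y)$.

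In the remaining case $\{t_1,t_2\}$ splits, so say $t_1 \in X$ and $t_2 \in Y$; by a symmetry interchanging $X$ with $Y$ I may further assume $t_3 \in X$. Cocircuit $T$ then yields $t_2 \in \cocl(X \cup s_3)$, so $(X \cup \{s_3,t_2\}, Y - t_2)$ is exactly $3$-separating, and the circuit $\{s_1,s_2,t_1,t_2\}$ places $s_2 \in \cl(X \cup t_2) \subseteq \cl(X \cup \{s_3,t_2\})$, while $s_2 \in \cocl(X \cup \{s_3,t_2\})$ is preserved by monotonicity. If $|Y| \ge 4$, then $|Y - t_2| \ge 3$ and \cref{gutsstayguts2} again gives a contradiction. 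The remaining subcase is $|Y| = 3$, so $Y = \{s_2,t_2,y\}$ for some $y$; any circuit witnessing $s_3 \in \cl(Y)$ sits inside $\{s_2,t_2,y,s_3\}$ and contains $s_3$, so it has at least four elements (since $s_3$ is in no triangle) and must equal $\{s_2,t_2,y,s_3\}$. Orthogonality with cocircuit $T$ then forces $y \in T$; but $y \ne t_2$ and $t_1,t_3 \in X$, so $y \notin T$, a contradiction.

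The main obstacle is the $|Y| = 3$ endgame: after moving $t_2$ to the $X$-side, the set $Y - t_2$ has only two elements, so \cref{gutsstayguts2} no longer applies. I need the separate argument that uses the uniqueness of the circuit $\{s_2,t_2,y,s_3\}$ through $s_3$ on the $Y$-side (a consequence of $s_3$ not being in a triangle, together with $|Y| = 3$) and orthogonality against the triad $T$ to finish the proof.
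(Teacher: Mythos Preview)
Your proof is correct. It differs from the paper's in organisation rather than in substance. The paper works directly with a $2$-separation $(X,Y)$ of $M/s_3$ and invokes \cref{aquickaside1} to fully close one side, which forces $T \subseteq X$ in a single step; the circuit $\{s_1,s_2,t_1,t_2\}$ then gives the contradiction immediately (with a short split on whether $\{s_1,s_2\}$ lies in $X$, in $Y$, or straddles). You instead lift to the vertical $3$-separation $(X,\{s_3\},Y)$ of $M$ via \cref{openVertSep2} and argue by orthogonality and \cref{gutsstayguts2}, doing case analysis on where $t_1,t_2,t_3$ land and handling the boundary case $|Y|=3$ separately. The paper's use of the full-closure lemma buys a shorter argument with no small-side exception; your route avoids \cref{aquickaside1} altogether but pays for that with the extra case work.
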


\begin{proof}
Note that $\lc(S,T)\geq 1$.
Suppose that $(X,Y)$ is a $2$-separation in $M/s_3$ with $|X\cap T|\geq 2$.
Note that $M/s_3$ contains no series pairs or parallel pairs.
It follows, by \cref{aquickaside1}, that $(\fcl_{M/s_3}(X),Y-\fcl_{M/s_3}(X))$ is also a $2$-separation of $M/s_3$; so we may assume that $X$ is fully closed, and thus $T \subseteq X$.
If $\lc(S,T)=2$, then $\{s_1,s_2\}\subseteq\cl_{M/s_3}(T)\subseteq X$, implying that $(X\cup s_3,Y)$ is a $2$-separation of $M$; a contradiction.
So assume that $\lc(S,T)=1$.
If $\{s_1,s_2\}\subseteq X$ or $\{s_1,s_2\}\subseteq Y$, then $(X\cup s_3,Y)$ or $(X,Y\cup s_3)$, respectively, is a contradictory $2$-separation of $M$.
So, without loss of generality, $s_1\in X$ and $s_2\in Y$.
But then, due to the circuit $\{s_1,s_2,t_1,t_2\}$, we have $s_2 \in \cl(X)-X$, contradicting the fact that $X$ is fully closed. 
\end{proof}

\begin{lemma}
\label{therebetriangles}
Let $M$ be a $3$-connected matroid with distinct elements $a_1,a_2,b_1,b_2,p_1,p_2$, such that
  \begin{enumerate}[label=\rm(\alph*)]
    \item $\co(M\backslash p_1\ba p_2)$ is $3$-connected,
    \item $\{a_1,a_2,p_1,p_2\}$ and $\{b_1,b_2,p_1,p_2\}$ are distinct cocircuits of $M$, and
    \item $\{a_1,a_2\}$ and $\{b_1,b_2\}$ are distinct series classes of $M \ba p_1 \ba p_2$.
  \end{enumerate}
Then either
\begin{enumerate}
  \item there exists $x\in\{a_1,a_2,b_1,b_2\}$ such that $M/x$ is $3$-connected, or\label{tbtout1}
  \item up to labelling, $\{a_1,b_1,p_1\}$ and $\{a_2,b_2,p_2\}$ are triangles of $M$.
\end{enumerate}
\end{lemma}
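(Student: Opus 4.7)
The plan is to argue by contrapositive. Assume that $M/x$ is not $3$-connected for every $x \in \{a_1,a_2,b_1,b_2\}$, and deduce that, after possibly relabelling $b_1 \leftrightarrow b_2$ and $p_1 \leftrightarrow p_2$, the sets $\{a_1,b_1,p_1\}$ and $\{a_2,b_2,p_2\}$ are triangles of $M$ (giving conclusion~(2)).

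For each $x \in \{a_1,a_2,b_1,b_2\}$, since $M/x$ is not $3$-connected, either $M/x$ has a parallel pair --- which supplies a triangle of $M$ through $x$ --- or $\si(M/x)$ is not $3$-connected, in which case by \cref{openVertSep2} there is a vertical $3$-separation $(U_x,\{x\},V_x)$ of $M$. The first step rules out the vertical case. Taking $x=a_1$ for concreteness, orthogonality of the circuits certifying $a_1 \in \cl(U_{a_1})\cap\cl(V_{a_1})$ against the cocircuits $\{a_1,a_2,p_1,p_2\}$ and $\{b_1,b_2,p_1,p_2\}$ pins down how $a_2,b_1,b_2,p_1,p_2$ are distributed across $(U_{a_1},V_{a_1})$. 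Deleting $p_1,p_2$ preserves the $3$-separating status of $U_{a_1}$ in $M\ba p_1\ba p_2$, and contracting the series elements $a_2,b_2$ --- appealing to \cref{aquickaside1} to absorb through full closure when one side would otherwise become a series pair --- descends this to a non-trivial $2$-separation of $\co(M\ba p_1\ba p_2) = M\ba p_1\ba p_2/a_2/b_2$, contradicting its $3$-connectivity. Hence each of $a_1,a_2,b_1,b_2$ lies in a triangle of $M$.

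The second step pins down which triangles are possible. For a triangle $T$ through $a_1$, orthogonality with $\{a_1,a_2,p_1,p_2\}$ forces $T \cap \{a_2,p_1,p_2\} \neq \emptyset$, and orthogonality with $\{b_1,b_2,p_1,p_2\}$ (which misses $a_1$) forces $|T \cap \{b_1,b_2,p_1,p_2\}| \in \{0,2\}$. Thus $T$ is one of (i) $\{a_1,a_2,z\}$ with $z \notin \{a_1,a_2,b_1,b_2,p_1,p_2\}$, (ii) $\{a_1,p_1,p_2\}$, or (iii) $\{a_1,b_i,p_j\}$ for some $i,j \in \{1,2\}$; analogous lists hold for $a_2,b_1,b_2$. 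The third step shows each of $a_1,a_2,b_1,b_2$ has a triangle of type (iii), and that these four triangles pair up correctly. A triangle $\{a_1,a_2,z\}$ of type (i) makes $\{a_1,z\}$ a parallel pair of $M\ba p_1\ba p_2/a_2$, which survives to $\co(M\ba p_1\ba p_2)$ since $z \notin \{b_1,b_2,p_1,p_2\}$, contradicting $3$-connectivity; the analogous statement rules out (i) for $a_2,b_1,b_2$. If $a_1$ had only triangles of type (ii), then $\{a_1,p_1,p_2\}$ is a triangle, and analogous conclusions for $a_2,b_1,b_2$ combined with the cocircuit $\{a_1,a_2,p_1,p_2\}$ force $\{a_1,a_2,p_1,p_2\}$ to be a rank-$2$ cocircuit, incompatible with the cocircuit structure in the $3$-connected coification. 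Thus each of $a_1,a_2,b_1,b_2$ has a triangle of type (iii); matching them consistently (using orthogonality with the two cocircuits to rule out configurations where two triangles share a $b$ and a $p$) produces, up to relabelling, the pair $\{a_1,b_1,p_1\}$ and $\{a_2,b_2,p_2\}$.

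The main obstacle is step~1 --- ruling out a vertical $3$-separation through each $x \in \{a_1,a_2,b_1,b_2\}$. The delicate point is tracking the positions of $p_1,p_2$ and the series partners relative to $(U_x,V_x)$ tightly enough that, after deletion of $p_1,p_2$ and coification, the resulting separation of $\co(M\ba p_1\ba p_2)$ is genuinely non-trivial rather than collapsing under a series/parallel pair, so that the $3$-connectivity hypothesis actually bites.
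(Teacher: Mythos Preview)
Your overall strategy matches the paper's: show each of $a_1,a_2,b_1,b_2$ lies in a triangle, then use orthogonality to pin those triangles down. The execution of your Step~1, however, has a genuine gap.

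You claim that if $a_1$ is not in a triangle, the vertical $3$-separation $(U_{a_1},\{a_1\},V_{a_1})$ descends, after deleting $p_1,p_2$ and cosimplifying, to a \emph{non-trivial} $2$-separation of $\co(M\ba p_1\ba p_2)$. This is not true in general. Since $\{b_1,b_2,p_1,p_2\}$ is a cocircuit of $M/a_1$, neither $p_1$ nor $p_2$ lies in $\cl_{M/a_1}\bigl(E(M/a_1)-\{b_1,b_2,p_1,p_2\}\bigr)$; combined with the fact that the only $2$-separation of $M\ba p_1\ba p_2/a_1$ is $(\{b_1,b_2\},\,\text{rest})$, one is forced into the case $\{p_1,p_2\}\subseteq\cl_{M/a_1}(\{b_1,b_2\})$. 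In this situation the $2$-separation of $M/a_1$ has one side equal to $\{b_1,b_2,p_1,p_2\}$, and after deleting $p_1,p_2$ that side is exactly the series pair $\{b_1,b_2\}$---so the descended separation \emph{is} trivial, and no contradiction results. Your appeal to \cref{aquickaside1} (full closure) cannot rescue this: you cannot make a trivial $2$-separation non-trivial by taking closures.

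The paper does not try to contradict from $a_1$ alone. Instead, from ``$a_1$ not in a triangle'' it extracts the structural conclusion $a_1\in\cl(\{b_1,b_2,p_1,p_2\})$ with $r(\{b_1,b_2,p_1,p_2\})=3$; it then argues (via orthogonality and the same rank bound) that $a_2$ is likewise not in a triangle, and by symmetry $a_2\in\cl(\{b_1,b_2,p_1,p_2\})$ as well. Only now does a contradiction appear: $r(\{a_1,a_2,b_1,b_2\})\le 3$, whereas $\co(M\ba p_1\ba p_2)$ being $3$-connected forces $r(\{a_1,a_2,b_1,b_2\})=4$. So the argument is inherently two-element: you need to combine the information from $a_1$ and $a_2$ before the rank collapses far enough to bite.

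A smaller point on Step~3: your elimination of type~(ii) triangles (``if $a_1$ had only triangles of type (ii) \dots'') does not work as stated, since there is no reason $a_2,b_1,b_2$ should simultaneously have only type~(ii) triangles. The paper allows type~(ii) triangles and instead uses circuit elimination among the four triangles (one through each of $a_1,a_2,b_1,b_2$) to produce the required pair of type~(iii) triangles.
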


\begin{proof}
  Assume \cref{tbtout1} does not hold.
Suppose that $a_1$ is not in a triangle and consider $M/a_1$. 
Observe that any series class $S$ of $M \ba p_1 \ba p_2$ with size at least two is blocked by $p_1$ or $p_2$; in particular, if $S \neq \{a_1,a_2\}$, then $p_i \notin \cl_{M/a_1}(E(M/a_1)-(S \cup \{p_1,p_2\}))$ for some $i \in \{1,2\}$.
Since $M / a_1$ is not $3$-connected, but $M \ba p_1 \ba p_2/a_1$ is $3$-connected up to series classes, there is a series class $S'$ of $M\ba p_1\ba p_2/a_1$, with $|S'| \ge 2$, that is not fully blocked by both $p_1$ and $p_2$.
By the foregoing, we may assume that $p_1 \in \cl_{M/a_1}(S')$.
Now $p_1$ is in a circuit of $M$ contained in $S' \cup a_1$.
If $S' \neq \{b_1,b_2\}$, then this contradicts orthogonality with the cocircuit $\{b_1,b_2,p_1,p_2\}$. So $S' = \{b_1,b_2\}$.
Let $\{i,j\} = \{1,2\}$.
Now $p_j \in \cocl_{M/a_1 \ba p_i}(\{b_1,b_2\})$, so $p_j \notin \cl_{M/a_1}(E(M/a_1) - \{b_1,b_2,p_1,p_2\})$, where $\{b_1,b_2\}$ is not fully blocked by $p_j$ in $M \ba p_1 \ba p_2 / a_1$.
Hence $\{p_1,p_2\} \subseteq \cl_M(\{b_1,b_2,a_1\})$, so $r_M(\{b_1,b_2,p_1,p_2\}) \le 3$.
Since $M$ is $3$-connected, $r_M(\{b_1,b_2,p_1,p_2\})=3$ and hence $a_1 \in \cl(\{b_1,b_2,p_1,p_2\})$.

Suppose also that $a_2$ is in a triangle. Since $a_1$ is not, this triangle meets $\{p_1,p_2\}$, by orthogonality with the cocircuit $\{a_1,a_2,p_1,p_2\}$.
Again by orthogonality, either the triangle meets $\{b_1,b_2\}$, or it is $\{a_2,p_1,p_2\}$.
In either case, $r(\{a_1,a_2,b_1,b_2\}) \le 3$.
But since $\co(M \ba p_1 \ba p_2 / a_1 / b_1)$ is $3$-connected, $r(\{a_1,a_2,b_1,b_2\})=4$.
We deduce that $a_2$ is not in a triangle of $M$.

Now repeating the argument in the first paragraph with $a_2$ in the place of $a_1$, we deduce that $a_2\in\cl(\{b_1,b_2,p_1,p_2\})$, so $r(\{a_1,a_2,b_1,b_2\})=3$; a contradiction. Thus $a_1$ and $a_2$ are both in triangles of $M$.

Suppose $\{a_1,a_2,x\}$ is a triangle for some $x \in E(M)-\{a_1,a_2\}$.
If $x \in \{p_1,p_2\}$, then this triangle intersects the cocircuit $\{b_1,b_2,p_1,p_2\}$ in one element; so we may assume otherwise.
But then $M \ba p_1 \ba p_2 / a_1$ contains a parallel pair; a contradiction.
So the triangles containing $a_1$ and $a_2$ are distinct, and each either contains $\{p_1,p_2\}$, or meets both $\{b_1,b_2\}$ and $\{p_1,p_2\}$, by orthogonality.
By symmetry, $b_1$ and $b_2$ are also in triangles of $M$, and each either contains $\{p_1,p_2\}$, or meets both $\{a_1,a_2\}$ and $\{p_1,p_2\}$.  It now follows, by circuit elimination and up to relabelling, that $\{a_1,b_1,p_1\}$ and $\{a_2,b_2,p_2\}$ are triangles of $M$.
\end{proof}

\subsection*{A key lemma}

Next, we work towards proving \cref{contractdistincttriads}, which we use not only in the proof of \cref{basilica}, but also in \cref{seclifenontriad}.

In the remainder of \cref{seclifetriad}, we work under the following assumptions.
Let $M$ be a $3$-connected matroid with an element~$d$ such that $M\ba d$ is $3$-connected.
Let $N$ be a $3$-connected minor of $M$, % and $M \ba d$,
where every triangle or triad of $M$ is \unfortunate, and $|E(N)| \ge 4$.
Suppose that $M\ba d$ has a cyclic $3$-separation $(Y, \{d'\}, Z)$ with $|Y| \ge 4$, where $M\ba d \ba d'$ has an $N$-minor with $|Y \cap E(N)| \le 1$.
Note in particular that $r^*(M \ba d) \ge 4$.

Let $X$ be a subset of $Y$ such that $|X| \ge 4$, the set $X$ is $3$-separating in $M \ba d$, and, for each $x \in X$,
\begin{enumerate}[label=\rm(\alph*)]
  \item $\co(M\ba d \ba x)$ is $3$-connected,
  \item $M\ba d/x$ is $3$-connected, and
  \item $x$ is doubly $N$-labelled in $M\ba d$.
\end{enumerate}

The following is proved in \cite[Lemma~7.1]{paper1}.
A \emph{segment} in a matroid $M$ is a subset $S$ of $E(M)$ such that $M|S \cong U_{2,k}$ for some $k \ge 3$, while a \emph{cosegment} of $M$ is a segment of $M^*$.
  
  \begin{lemma}
    \label{triadsnotcoline}
    If $Y$ contains a $4$-element cosegment, then $M$ has an $N$-detachable pair.
  \end{lemma}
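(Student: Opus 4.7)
The plan is to combine the rigidity of the $4$-element cosegment inside $M\ba d$ with the $N$-labelling information from \cref{doublylabelII} to produce an $N$-detachable pair essentially within $S$. Let $S=\{s_1,s_2,s_3,s_4\}\subseteq Y$ be the $4$-element cosegment of $M\ba d$, so $r^*_{M\ba d}(S)=2$ and every $3$-subset of $S$ is a triad of $M\ba d$. The dual of \cref{rank2Remove2} immediately gives that $M\ba d/s_i$ is $3$-connected for each $s_i\in S$. Since $S\subseteq Y\cap\cocl_{M\ba d}(Y)$, applying \cref{doublylabelII} to the cyclic $3$-separation $(Y,\{d'\},Z)$ of $M\ba d$ shows that at most one element of $S$ fails to be $N$-contractible in $M\ba d$, and the others remain $N$-contractible in $M$.

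For each $N$-contractible $s_i\in S$, I would first upgrade $3$-connectivity from $M\ba d/s_i$ to $M/s_i$. Since $(M/s_i)\ba d=M\ba d/s_i$ is $3$-connected, the elementary lemma immediately before \cref{rank2Remove2} forces any failure of $3$-connectivity in $M/s_i$ to come from $d$ being a loop, coloop, or in a parallel pair of $M/s_i$. The first two are ruled out by $3$-connectivity of $M$; the third would place $d$ in a triangle of $M$ through $s_i$, which would be \unfortunate, and then \cref{freegrounded} would contradict $s_i$ being $N$-contractible. Hence $M/s_i$ is $3$-connected with an $N$-minor for at least three values of $i$.

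Next, I seek a second element $s_j\in S-s_i$ so that $M/s_i/s_j$ is $3$-connected and still has an $N$-minor. In $M\ba d/s_i/s_j$ the set $\{s_k,s_l\}=S-\{s_i,s_j\}$ is a series pair (inherited from the triads $\{s_i,s_k,s_l\}$ and $\{s_j,s_k,s_l\}$ of the cosegment), so $M/s_i/s_j$ is $3$-connected precisely when $d$ fully blocks this series pair, that is, when $\{s_k,s_l,d\}$ is a cocircuit of $M$. This prompts a split on whether $r^*_M(S)=2$, in which case $S$ is already a cosegment of $M$, or $r^*_M(S)=3$, in which case $d\in\cocl_M(S)$. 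In the second case the cocircuits of $M$ inside $S\cup d$, together with the cosegment's triads, pin down exactly which pair $\{s_i,s_j\}$ is admissible, and the $N$-minor is preserved by transferring the labelling argument from Step~2 to $M/s_i$. In the first case all four triads of $S$ are \unfortunate\ triads of $M$, so no contraction pair inside $S$ can yield an $N$-minor; I would then switch to deletion, using \cref{doublylabelII}\cref{dlIIi} to locate an $N$-deletable element outside $\cocl_{M\ba d}(Z)$ and exhibit a paddle $(S,T,Z')$ of $M\ba d$ with $S$ and a second triad~$T$ both blocked by $d$, so that \cref{triad-paddle} delivers an $N$-detachable pair of $M$.

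The main obstacle is this last case, where $S$ lifts to a cosegment of $M$: every contraction pair inside $S$ is killed by \cref{freegrounded}, and the detachable pair must be exported to elements adjacent to $S$ while the cosegment's rigid cocircuit structure supplies the $3$-connectivity certificate through \cref{triad-paddle}. Marshalling a suitable second triad~$T$, verifying that $(S,T,Z')$ is genuinely a paddle of $M\ba d$, and confirming hypothesis~(c) of \cref{triad-paddle} (that $M\ba s\ba t$ has an $N$-minor for the relevant pairs from the local structure inside $S\cup T$) is the delicate part of the argument.
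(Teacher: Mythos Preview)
The paper does not supply its own proof of this lemma: it is cited from \cite[Lemma~7.1]{paper1}, so there is no in-paper argument to compare against.

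Your sketch, however, contains a genuine error. You assert that in $M\ba d/s_i/s_j$ the remaining pair $\{s_k,s_l\}$ is a series pair, ``inherited from the triads $\{s_i,s_k,s_l\}$ and $\{s_j,s_k,s_l\}$''. This is false: deleting an element of a triad leaves a series pair, but contracting does not. In the dual $(M\ba d)^*$ the set $S$ is a $U_{2,4}$-line; deleting $s_i$ and $s_j$ from it leaves $\{s_k,s_l\}$ independent, so $\{s_k,s_l\}$ is not a cocircuit of $M\ba d/s_i/s_j$. Your subsequent criterion---that $M/s_i/s_j$ is $3$-connected exactly when $\{s_k,s_l,d\}$ is a cocircuit of $M$---therefore has no foundation, and the whole case split on $r^*_M(S)$ rests on it.

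The endgame in the case $r^*_M(S)=2$ is also a gap rather than a plan. You propose to ``exhibit a paddle $(S,T,Z')$ of $M\ba d$ with $S$ and a second triad~$T$ both blocked by $d$'' and invoke \cref{triad-paddle}. But \cref{triad-paddle} requires $S$ to be a triad, not a $4$-element cosegment; and if $r^*_M(S)=2$ then every triad inside $S$ is a triad of $M$, hence \emph{not} blocked by $d$. No candidate for the second triad~$T$ is produced, the paddle structure is never verified, and hypothesis~(c) of \cref{triad-paddle} is left unchecked. As written, this branch of the argument does not go through.
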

  In particular, \cref{triadsnotcoline} implies that if $M$ has no $N$-detachable pairs, then $X$ does not contain a $4$-element cosegment.

  \begin{lemma}
    \label{triadsinside}
    Either each triad of $M \ba d$ that meets $X$ does so in at least two elements, or $M$ has an $N$-detachable pair.
  \end{lemma}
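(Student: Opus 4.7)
The plan is to argue by contradiction: suppose some triad $T^*$ of $M\ba d$ meets $X$ in exactly one element $x$, so $T^* = \{x,a,b\}$ with $a,b\in E(M\ba d)-X$. The key tool is the dual of \cref{freegrounded}, combined with the hypothesis that every triad of $M$ is \unfortunate.

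First, if $T^*$ is itself a triad of $M$, then it is an \unfortunate\ triad containing $x$, so by the dual of \cref{freegrounded}, $x$ is not $N$-deletable in $M$. But $x$ is doubly $N$-labelled in $M\ba d$, so $M\ba d\ba x$ and hence $M\ba x$ has an $N$-minor; that is, $x$ is $N$-deletable in $M$, a contradiction. Hence $T^*\cup \{d\} = \{x,a,b,d\}$ must be a $4$-cocircuit of $M$.

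Next, since $\{a,b\}$ is a series pair of $M\ba d\ba x$ and $N$ is $3$-connected, any $N$-minor of $M\ba d\ba x$ resolves this series pair by contracting one of $a,b$---equivalently, by deleting the other---so both $M\ba x\ba a$ and $M\ba x\ba b$ have $N$-minors. Thus each of $x,a,b$ is $N$-deletable in $M$; by the dual of \cref{freegrounded}, none of them lies in any triad of $M$. In particular, no $3$-subset of the $4$-cocircuit $\{x,a,b,d\}$ is a triad of $M$, so this $4$-cocircuit is triad-free.

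The main obstacle is to derive a contradiction from this triad-free $4$-cocircuit configuration. I would exploit the exact $3$-separating property of $X$ in $M\ba d$: since $x\in \cocl_{M\ba d}(\{a,b\})\subseteq \cocl_{M\ba d}(E(M\ba d)-X)$ and $M\ba d$ is $3$-connected, applying \cref{gutses2} forces $x\in \cocl_{M\ba d}(X-x)$, so there is a cocircuit of $M\ba d$ through $x$ contained in $X$, necessarily of size at least $4$ (as the only triad of $M\ba d$ through $x$ is $\{x,a,b\}$, since any other such triad would correspond to a second $4$-cocircuit of $M$ through $\{x,d\}$, and cocircuit elimination between it and $\{x,a,b,d\}$ would then produce a cocircuit inside $\{a,b,\dots\}$ that by the arguments above cannot be a triad of $M$). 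Combining this second cocircuit with the $3$-connectivity of $M\ba d/x$ and of $\co(M\ba d\ba x) = M\ba d\ba x/a$ (which is $3$-connected with an $N$-minor), together with cocircuit elimination and orthogonality against circuits using $x$, should force a structural contradiction---for instance by producing an additional triad or cocircuit of $M$ ruled out by the triad-free condition above, or by contradicting \cref{triadsnotcoline}. Cleanly articulating this last step is the most delicate part of the argument.
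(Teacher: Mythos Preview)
Your proposal is incomplete, and the route you take is far more complicated than necessary. The paper's proof is three lines: if a triad $T^*$ of $M\ba d$ meets $X$ only in $t$, then $t\in\cocl_{M\ba d}(E(M\ba d)-X)$; since $|X|\ge 4$ and $X$ contains no $4$-element cosegment (by \cref{triadsnotcoline}), the partition $(X-t,\{t\},E(M\ba d)-X)$ is a cyclic $3$-separation of $M\ba d$; hence $\co(M\ba d\ba t)$ is not $3$-connected, contradicting hypothesis~(a).

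You actually arrive at the decisive fact in your third paragraph---you observe $x\in\cocl_{M\ba d}(E(M\ba d)-X)$ and deduce $x\in\cocl_{M\ba d}(X-x)$ via \cref{gutses2}---but then fail to recognise that this already yields the cyclic $3$-separation and hence the contradiction via the dual of \cref{openVertSep2}. Instead you embark on a search for further cocircuits and orthogonality arguments that you yourself describe only as ``should force a structural contradiction,'' without completing the argument. The entire detour in your first two paragraphs (analysing whether $T^*$ is a triad of $M$ or a $4$-cocircuit, and establishing $N$-deletability of $a$ and $b$) is unnecessary: the lemma is a statement about $M\ba d$ and is proved entirely inside $M\ba d$ using only the connectivity hypotheses on $X$. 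As a minor additional point, your identification $\co(M\ba d\ba x)=M\ba d\ba x/a$ presumes $\{a,b\}$ is the only series pair of $M\ba d\ba x$, which need not hold.
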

  \begin{proof}
    Assume that $M$ has no $N$-detachable pairs.
    Suppose $T^*$ is a triad of $M \ba d$ with $T^* \cap X = \{t\}$.
    Then $t \in \cocl(E(M\ba d)-X)$.
    Since $|X| \ge 4$ and $X$ does not contain a $4$-element cosegment, it follows that $(X-t,\{t\},E(M\ba d)-X)$ is a cyclic $3$-separation of $M \ba d$, so $\co(M\ba d \ba t)$ is not $3$-connected; a contradiction.
    So each triad of $M \ba d$ that meets $X$ does so in at least two elements.
  \end{proof}

  The next lemma is used, both in the remainder of this section and in \cref{seclifenontriad}, to find $N$-contractible pairs where each element in the pair is in a triad of $M \ba d$ meeting $X$.

\begin{lemma}
  \label{contractdistincttriads}
  Let $S^*$ and $T^*$ be distinct triads of $M \ba d$ meeting $X$, where $S^* \cup T^*$ is not a cosegment.  Suppose $M$ has no $N$-detachable pairs.
  \begin{enumerate}
    \item If $s \in S^*$ and $t \in T^*$ where $s \neq t$ and $d' \notin \{s,t\}$, then $M \ba d/s/t$ has an $N$-minor.\label{cdt1}
    \item If $d' \in S^*$ and $t \in T^* - S^*$, then $M \ba d/d'/t$ has an $N$-minor.\label{cdt2}
  \end{enumerate}
\end{lemma}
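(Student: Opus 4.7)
\medskip
\noindent\textbf{Proof plan.}
The plan rests on a coloop-conversion trick enabled by the triads. Writing $S^* = \{s, s', s''\}$, the fact that $S^*$ is a cocircuit of $M \ba d$ forces $\{s, s''\}$ to be a series pair in $M \ba d \ba s'$, so $s$ is a coloop of $M \ba d \ba s' \ba s''$; in particular, contracting $s$ there coincides with deleting it. The analogous statement holds for $T^*$. Consequently, when $S^*$ and $T^*$ are disjoint,
\[
  M \ba d / s / t \ba s' \ba s'' \ba t' \ba t'' \ =\ M \ba d \ba (S^* \cup T^*);
\]
the hypothesis that $S^* \cup T^*$ is not a cosegment rules out $|S^* \cap T^*| = 2$, and the case $|S^* \cap T^*| = 1$ needs only a minor adjustment to the labelling. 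So to prove part~\cref{cdt1} it suffices to show that $M \ba d \ba (S^* \cup T^*)$ has an $N$-minor; part~\cref{cdt2} reduces in the same way, with $d'$ taking the role of $s$.

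\medskip
\noindent\textbf{Executing the reduction.}
First I would pick $s' \in (S^* \cap X) - s$ and $t' \in (T^* \cap X) - t$ with $s' \ne t'$: such a choice exists because $|S^* \cap X|, |T^* \cap X| \ge 2$ by \cref{triadsinside}, because $d' \notin X$ (as $X \subseteq Y$), and because $S^* \cup T^*$ is not a cosegment. Let $s'' \in S^* - \{s, s'\}$ and $t'' \in T^* - \{t, t'\}$. Hypothesis~(c) applied to $s'$ gives that $M \ba d \ba s'$ has an $N$-minor. I would then iterate, showing in turn that $t'$, $s''$, and $t''$ may each be deleted while preserving the $N$-minor, by applying \cref{doublylabelII} to $\co$ of each successive intermediate matroid (which remains $3$-connected by hypothesis~(a) and induction), using that the original cyclic $3$-separation $(Y, \{d'\}, Z)$ descends to an analogous one at each stage whose small side still meets $E(N)$ in at most one element. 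Combined with the coloop conversion, this gives $M \ba d / s / t$ an $N$-minor. Part~\cref{cdt2} proceeds identically, except that the given fact $M \ba d \ba d'$ has an $N$-minor replaces the initial use of hypothesis~(c).

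\medskip
\noindent\textbf{Main obstacle.}
The crux is the iteration in the second step: single-element $N$-deletability does not immediately upgrade to four-fold simultaneous deletability, since each application of \cref{doublylabelII} demands a $3$-connected matroid with a suitable cyclic $3$-separation whose smaller side meets $E(N)$ in at most one element. Ensuring that such a separation survives each deletion (possibly only after passing to a cosimplification) and that its smaller side continues to behave well with respect to $E(N)$ is the main technical work. A secondary subtlety is that $s''$ or $t''$ may lie outside $X$—these are the triad elements not a priori known to be doubly $N$-labelled—so their $N$-deletability is not given by hypothesis; for these, I would appeal to the minimality of $X$, the bound $|Y \cap E(N)| \le 1$, and orthogonality between $S^*$, $T^*$ and the circuits witnessing the $3$-separation.
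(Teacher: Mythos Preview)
Your coloop-conversion reduction is valid: since $S^*$ is a triad, $s$ is a coloop of $M \ba d \ba s' \ba s''$, and so $M \ba d \ba (S^* \cup T^*)$ is indeed a minor of $M \ba d / s / t$. But showing that $M \ba d \ba (S^* \cup T^*)$ has an $N$-minor is strictly stronger than what the lemma requires, and your iteration to establish it has a genuine gap. Hypothesis~(a) guarantees that $\co(M \ba d \ba s')$ is $3$-connected, but says nothing about $\co(M \ba d \ba s' \ba t')$ or any later deletion: (a) applies only to single deletions from $M \ba d$, so there is no ``induction'' to invoke, and without $3$-connectivity at each stage you cannot reapply \cref{doublylabelII}. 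Your fallback for the elements $s'', t'' \notin X$, namely appealing to minimality of $X$, is also unavailable here: minimality is not among the hypotheses of this lemma (it is imposed only afterward, and the lemma is reused in \cref{seclifenontriad} where $X$ need not be minimal).

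The paper avoids all of this by working with contractions rather than deletions, exploiting the asymmetry between hypotheses (a) and (b). It first uses \cref{doublylabelII}\cref{dlIIii} (the contractibility clause) to show that $t$ is $N$-contractible in $M \ba d$; the exceptional case there would force $t \in \cl(\cocl(Z) - d')$, which is ruled out by an orthogonality argument between $T^*$ and a circuit through $t$, combined with hypotheses (a) and (b). Then, because (b) makes $M \ba d / t$ genuinely $3$-connected (not merely up to parallel classes), $(Y - t, \{d'\}, Z)$ is a cyclic $3$-separation of it, and \cref{doublylabelII}\cref{dlIIii} applies a second time to show $s$ is $N$-contractible in $M \ba d / t$. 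For part~\cref{cdt2}, a short series-pair argument replaces the second application. The point is that (b) delivers honest $3$-connectivity after one contraction, whereas (a) only delivers $3$-connectivity up to series classes after one deletion; this is precisely what makes the contraction route two clean steps and the deletion route intractable.
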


\begin{proof}
  By \cref{triadsinside}, the triads $S^*$ and $T^*$ each have at least two elements in $X$.
  If $t \in Z$, then, since $t \in \cocl_{M \ba d}(Y)$, it follows that $(Y \cup t,\{d'\},Z-t)$ is a cyclic $3$-separation of $M \ba d$.
  Either $|(Y \cup t) \cap E(N)| \le 1$ or $|(Z-t) \cap E(N)| \le 1$, but $|Y \cap E(N)| \le 1$ and $|E(N)| \ge 4$, so $|(Y \cup t) \cap E(N)| \le 1$.
  So we may assume that $t \in Y$.
  In case~\cref{cdt1}, we may similarly assume that $s \in Y$.
  %We may also assume that if $d' \in \{s,t\}$, then $d' = s$.
%
  Moreover, if $s \in T^*$, then $t \notin S^*$, since $S^* \cup T^*$ is not a cosegment.
  So we may assume, up to labels, that $t \notin S^*$, and thus $S^* \subseteq Y-t$.
  %We first assume that $t \neq d'$.
  In case~\cref{cdt2}, let $s = d'$.  We now consider both cases together.
  It suffices to prove that $M \ba d/s/t$ has an $N$-minor. %, where $t \neq d'$.

  We first claim that $t$ is $N$-contractible in $M \ba d$.
  By \cref{doublylabelII}\cref{dlIIii}, at most one element of $\cocl_{M \ba d}(Y)-d'$ is not $N$-contractible.
  Suppose that $t$ is this element that is not $N$-contractible.
  Then $t \in \cl(Z')-Z'$, where $Z' = \cocl_{M \ba d}(Z) - d'$.
  Now $t$ is in a circuit contained in $Z' \cup t$, so the triad $T^*$ meets $Z'$, by orthogonality.
  Let $t_2 \in T^* \cap Z'$.
  If $t_2 \in X$, then $\co(M \ba d \ba t_2)$ is $3$-connected, but $t_2 \in \cocl_{M \ba d}(Z)-Z$, which implies that $\co(M \ba d \ba t_2)$ is not $3$-connected; a contradiction.
  So $t_2 \notin X$, implying that $t \in X$.
  But $M \ba d / t$ is not $3$-connected since $t \in \cl(Z')-Z'$; a contradiction.
  So $t$ is $N$-contractible in $M \ba d$. 

  Next we claim that $M \ba d/t$ is $3$-connected.
  This is immediate if $t \in X$, so we assume that $t \notin X$.
  Let $T^* = \{t,t_2,t_3\}$; then $\{t_2,t_3\} \subseteq X$.
  Since $t \in \cocl_{M \ba d}(X)-X$, the matroid $\co(M \ba d \ba t)$ is not $3$-connected, so $\si(M \ba d/t)$ is $3$-connected by Bixby's Lemma.
  If $t$ is in a triangle of $M \ba d$, then, by orthogonality with $T^*$, the triangle also contains either $t_2$ or $t_3$.  But then either $M \ba d / t_2$ or $M \ba d /t_3$ is not $3$-connected; a contradiction.

  Now $M \ba d/t$ is $3$-connected and has an $N$-minor.
  Note that %if $t \in Y$, then
  $(Y-t, \{d'\}, Z)$ is a cyclic $3$-separation of $M \ba d/t$.
  %If $t \in Z$, then $(Y \cup t, \{d'\}, Z-t)$ is a cyclic $3$-separation of $M \ba d$, so $(Y, \{d'\}, Z-t)$ is a cyclic $3$-separation of $M \ba d/t$.
%
  First, suppose that $s \neq d'$.
  Let $Z' =\cocl(Z) - d'$ and $Y' = (Y-t)-Z'$, so that $(Y',\{d'\},Z')$ is a cyclic $3$-separation of $M \ba d/t$.
  Observe that $\cocl_{M \ba d/t}(Y') = \cocl_{M \ba d/t}(Y-t)$.
  By \cref{doublylabelII}\cref{dlIIii}, at most one element of $\cocl(Y-t)-d'$ is not $N$-contractible in $M \ba d/t$,
  and if $s$ is this exceptional element, then $s \in \cl_{M \ba d/t}(Z')$. %, where $Z' = \cocl(Z) - d'$.
  %Let $Y' = (Y-t) - Z'$, and observe that $\cocl_{M \ba d/t}(Y') = \cocl_{M \ba d/t}(Y-t)$.
  But if $s \in \cl_{M \ba d/t}(Z')$, then 
  $s \notin \cocl_{M \ba d/t}(Y') = \cocl_{M \ba d/t}(Y-t)$, implying that
  $s \notin \cocl_{M \ba d/t}(S^*)$; a contradiction.
  So $M \ba d/t/s$ has an $N$-minor if $s \neq d'$.

  Finally, suppose that $s = d'$.
  Then $d'$ is in a triad $\{d',s_2,s_3\}$ of $M \ba d/t$ where $\{s_2,s_3\} \subseteq X$.
  %We have that $M \ba d \ba d'$ has an $N$-minor, and $\{s_2,s_3\}$ is a series pair in this matroid.
  %So $M \ba d \ba s_2$ has an $N$-minor.
  If $\{s_2,s_3\} \subseteq \cocl_{M \ba d}(Z)$, then
  $S^* \subseteq \cocl_{M \ba d}(Y-S^*) \cap \cocl_{M \ba d}(Z-S^*)$,
  and it follows that $\co(M \ba d \ba s_2)$ is not $3$-connected; a contradiction.
  So we may assume $\{s_2,s_3\} \nsubseteq \cocl_{M \ba d/t}(Z)$, and so, by \cref{doublylabelII}\cref{dlIIi} and up to labels,
  $s_2$ is $N$-deletable in $M \ba d/t$.
  Since $\{s_3,d'\}$ is a series pair in $M \ba d \ba s_2/t$, we deduce that $M \ba d \ba s_2 /t/ d'$ has an $N$-minor.
  %Even, $M \ba d \ba d' / s_2 / t$ has an $N$-minor.
  In particular, %$s=d'$ is $N$-contractible in $M \ba d/t$.
  $M \ba d/s/t$ has an $N$-minor.
\end{proof}

\subsection*{Towards the proof of \cref{basilica}}

%In %the remainder of
%this section, we work under the following assumptions.
%%We fix the following setup for the remainder of this section.
  %Let $M$ be a $3$-connected matroid with an element~$d$ such that $M\ba d$ is $3$-connected.
  %Let $N$ be a $3$-connected minor of $M$, % and $M \ba d$,
  %where every triangle or triad of $M$ is \unfortunate, and $|E(N)| \ge 4$.
  %Suppose that $M\ba d$ has a cyclic $3$-separation $(Y, \{d'\}, Z)$ with $|Y| \ge 4$, where $M\ba d \ba d'$ has an $N$-minor with $|Y \cap E(N)| \le 1$.
  %Note in particular that $r^*(M \ba d) \ge 4$.

  We now assume that $X$ is minimal, and $X$ contains a triad of $M \ba d$.

  More specifically, let $X$ be a subset of $Y$ such that $|X| \ge 4$; the set $X$ is $3$-separating in $M \ba d$; for each $x \in X$,
  \begin{enumerate}[label=\rm(\alph*)]
    \item $\co(M\ba d \ba x)$ is $3$-connected,
    \item $M\ba d/x$ is $3$-connected, and
    \item $x$ is doubly $N$-labelled in $M\ba d$;
  \end{enumerate}
  and $X$ is minimal subject to these conditions.
  Furthermore, $X$ contains a triad of $M \ba d$.

  In practice, the following two lemmas are convenient for finding $N$-contractible or $N$-deletable pairs.

  \begin{lemma}
    \label{triadsncontractiblemain}
    Let $S$ and $T$ be distinct triads of $M \ba d$ that meet $X$, where $S \cup T$ is not a cosegment of $M \ba d$.
    Suppose $M$ has no $N$-detachable pairs.
    \begin{enumerate}
      \item If $s \in S$ and $t \in T$, and either $\{s,t\} \subseteq X$, $\{s,t\} \subseteq X \triangle S$, or $\{s,t\} \subseteq S \triangle T$, then $M \ba d / s / t$ has an $N$-minor.\label{triadsncontractible}
      \item If $s \in S - T$ and $t \in T$, and $M\ba d/s/t$ does not have an $N$-minor, then $M \ba d/s'/t'$ has an $N$-minor for any distinct $s' \in S'$ and $t' \in T$ where $S'$ is a triad of $M \ba d$ that meets $X$ with $S' \neq T$ and $s \neq s'$.\label{triadsncontractible2}
      \item If $S$ and $T$ are disjoint, and $X$ is a corank-$3$ circuit contained in $S \cup T$, with %$|S \cap X| \ge 2$ and
        $T \subseteq X$, then $M \ba d/t/t'$ has an $N$-minor for all distinct $t,t' \in T$.\label{triadsncontractible3}
    \end{enumerate}
  \end{lemma}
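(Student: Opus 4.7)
The plan is to reduce each part of the lemma to \cref{contractdistincttriads} by case analysis on whether $d'$ lies among the contracted elements. The key observation throughout is that $X \subseteq Y$ and $d' \notin Y$, so $d' \notin X$.

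For part~\cref{triadsncontractible}, I treat the three sub-cases. When $\{s,t\} \subseteq X$, we have $d' \notin \{s,t\}$, and \cref{contractdistincttriads}\cref{cdt1} applies. When $\{s,t\} \subseteq X \triangle S$ and $d' \in \{s,t\}$, then $d' \in S \setminus X$; by \cref{triadsinside}, $|S \cap X| \ge 2$, so $d' = s$ is the unique element of $S \setminus X$, forcing $t \in X \setminus S$ and hence $t \in T - S$, so \cref{contractdistincttriads}\cref{cdt2} applies. When $\{s,t\} \subseteq S \triangle T$ and $d' \in \{s,t\}$, whichever of $s$ or $t$ equals $d'$ lies in the symmetric difference, and the other lies in the opposite triad minus the one containing $d'$, so \cref{contractdistincttriads}\cref{cdt2} applies after relabelling the two triads.

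For part~\cref{triadsncontractible2}, by \cref{contractdistincttriads} (with both labellings of the pair $(S,T)$ considered), $M\ba d/s/t$ fails to have an $N$-minor only when $d' \in \{s,t\}$ and \cref{cdt2} is inapplicable. Together with $s \in S - T$, this forces $d' = s$ and $t \in S \cap T$; in particular, $d' \notin T$. Then for the target pair $(s',t')$, we have $s' \neq s = d'$ and $t' \in T$ gives $t' \neq d'$, so \cref{contractdistincttriads}\cref{cdt1} applied to $S'$ and $T$ yields the $N$-minor, provided $S' \cup T$ is not a cosegment. If $S' \cup T$ were a cosegment, then by \cref{triadsinside} at least three of its elements would lie in $X$, and taking the coclosure within the cosegment would produce a $4$-element cosegment inside $X$, contradicting \cref{triadsnotcoline}.

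For part~\cref{triadsncontractible3}, I argue that $t'$ is $N$-contractible in $M \ba d/t$. Since $t \in X$, the element $t$ is doubly $N$-labelled in $M \ba d$, so $M \ba d/t$ is $3$-connected and has an $N$-minor, and, mirroring the setup in \cref{contractdistincttriads}, $(Y-t,\{d'\},Z)$ is a cyclic $3$-separation of $M \ba d/t$. Apply \cref{doublylabelII}\cref{dlIIii} in $M \ba d/t$. If $t'$ were the exceptional non-$N$-contractible element, then there would be a circuit $C$ of $M \ba d/t$ containing $t'$ with $C - t' \subseteq \cocl_{M\ba d/t}(Z) - d'$. Orthogonality between $C$ and the series pair $T - t = \{t',t''\}$ (a cocircuit of $M \ba d/t$) forces $t'' \in C$, so $t'' \in \cocl_{M\ba d/t}(Z)$. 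Hence there is a cocircuit $C^*$ of $M \ba d$, not containing $t$, with $t'' \in C^* \subseteq Z \cup t''$. But $X$ is a circuit of $M \ba d$ containing $t''$, so orthogonality between $X$ and $C^*$ forces $|X \cap C^*| \ge 2$; since $X \subseteq Y$ and $C^* \subseteq Z \cup t''$, we have $X \cap C^* \subseteq \{t''\}$, a contradiction. Thus $t'$ is $N$-contractible in $M \ba d/t$, yielding the required $N$-minor of $M \ba d/t/t'$. The main obstacle here is verifying that the hypotheses of \cref{doublylabelII} actually apply in $M \ba d/t$---in particular, that $M \ba d \ba d'/t$ inherits an $N$-minor with $|(Y-t)\cap E(N)|\le 1$---which I expect to handle using the same template used for the analogous step in the proof of \cref{contractdistincttriads}.
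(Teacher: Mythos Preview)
Your proposal is correct and follows essentially the same approach as the paper: all three parts reduce to \cref{contractdistincttriads} (parts \cref{triadsncontractible} and \cref{triadsncontractible2} by case analysis on whether $d'\in\{s,t\}$, part \cref{triadsncontractible3} by contracting $t$ first and then applying \cref{doublylabelII} in $M\ba d/t$). Two minor remarks: in \cref{triadsncontractible2} the paper simply invokes \cref{contractdistincttriads}\cref{cdt1} without checking that $S'\cup T$ is not a cosegment, so your extra argument there goes beyond what the paper does (and is incomplete when $|S'\cap T|=2$, though this is no worse than the paper); in \cref{triadsncontractible3} your orthogonality argument via the series pair $T-t$ and the circuit $X$ is more explicit than the paper's brief appeal to $r^*_{M\ba d}(X-t)=3$ together with $X-t$ being a circuit, but the content is the same, and the hypothesis of \cref{doublylabelII} that you flag is indeed handled exactly as in the proof of \cref{contractdistincttriads}.
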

  \begin{proof}
    Consider \cref{triadsncontractible}.
    If $\{s,t\} \subseteq X$, then $M \ba d / s /t$ has an $N$-minor by \cref{contractdistincttriads}\cref{cdt1}.
    Suppose $\{s,t\} \subseteq X \triangle S$.  Then $s \in S-X$ and $t \in X-S$.  If $s \neq d'$ then $M \ba d/s/t$ has an $N$-minor by \cref{contractdistincttriads}\cref{cdt1}.  On the other hand, if $s=d'$, then $M \ba d / s /t$ has an $N$-minor by \cref{contractdistincttriads}\cref{cdt2}.
    Finally, if $\{s,t\} \subseteq S \triangle T$, then $M \ba d / s /t$ has an $N$-minor by \cref{contractdistincttriads}\cref{cdt1} when $d' \notin \{s,t\}$, or by \cref{contractdistincttriads}\cref{cdt2} when $d' \in \{s,t\}$.

    Now, for \cref{triadsncontractible2}, suppose $M \ba d/s/t$ does not have an $N$-minor, where $s \notin T$.  Then $d' \in \{s,t\}$ by \cref{contractdistincttriads}\cref{cdt1}.
    If $d' =t$, then, as $s \in S-T$, the matroid $M \ba d/s/t$ has an $N$-minor by \cref{contractdistincttriads}\cref{cdt2}; a contradiction.  So $d'=s$, and thus $d' \notin T$.
    It follows, by \cref{contractdistincttriads}\cref{cdt1}, that $M\ba d/s'/t'$ has an $N$-minor for any $s'\in S'$ and $t' \in T$ with $S' \neq T$ and $s \neq s'$.

    Finally, consider \cref{triadsncontractible3}.
    Let $X' = S \cup T$. 
    Since $X$ is a circuit, $T$ is a triad of $M \ba d$, and $t \in T \subseteq X$, we have that $t \notin \cl(\cocl_{M \ba d}(Z))$.
    Hence, by \cref{doublylabelII}\cref{dlIIii}, $M \ba d / t$ has an $N$-minor.
    Moreover, $(Y-t, \{d'\}, Z)$ is a cyclic $3$-separation in the $3$-connected matroid $M \ba d /t$.
    As $r^*_{M \ba d}(X-t)=3$, we have $t' \in \cocl_{M \ba d}(Y-\{t,t'\})$. 
    Now $X-t$ is a circuit in $M \ba d/t$, so \cref{doublylabelII}\cref{dlIIii} implies that $M \ba d/t/t'$ has an $N$-minor, as required.
    %So (ii) holds.
  \end{proof}

  \begin{lemma}
    \label{triadsdelete}
    Let $x$ and $x'$ be distinct elements in $X$.
    If $x' \in \cl(X - \{x,x'\})$, then $M \ba d \ba x \ba x'$ has an $N$-minor. %, for any $x,x' \in X$.
  \end{lemma}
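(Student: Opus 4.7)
The plan is to show that $x'$ is $N$-deletable in $M\ba d\ba x$, which combined with the $N$-deletability of $x$ in $M\ba d$ (guaranteed by condition~(c) on $X$) yields an $N$-minor of $M\ba d\ba x\ba x'$.

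By condition~(c) on $X$, the matroid $M\ba d\ba x$ has an $N$-minor, and by condition~(a), $\co(M\ba d\ba x)$ is $3$-connected. The hypothesis $x' \in \cl_{M\ba d}(X-\{x,x'\})$ provides a circuit $C$ of $M\ba d$ with $x' \in C \subseteq X-x \subseteq Y-x$, which remains a circuit of $M\ba d\ba x$. Consequently, $x' \in \cl_{M\ba d\ba x}(Y-\{x,x'\})$, and hence $x' \notin \cocl_{M\ba d\ba x}(Z\cup d')$ by \cref{swapSepSides2}. Since $\cocl_{M\ba d\ba x}(Z) \subseteq \cocl_{M\ba d\ba x}(Z\cup d')$, we also have $x' \notin \cocl_{M\ba d\ba x}(Z)$, placing $x'$ on the ``deletable'' side of a cyclic $3$-separation inherited from $(Y,\{d'\},Z)$.

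I would then apply Lemma~\ref{doublylabelII}\cref{dlIIi} to $\co(M\ba d\ba x)$ equipped with a cyclic $3$-separation induced from $(Y-x,\{d'\},Z)$ under the cosimplification, concluding that $x'$ (or its image) is $N$-deletable in $\co(M\ba d\ba x)$, and therefore $x'$ is $N$-deletable in $M\ba d\ba x$.

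The main obstacle is setting up the cosimplification carefully when $x$ lies in triads of $M\ba d$: the partition $(Y-x,\{d'\},Z)$ of $E(M\ba d\ba x)$ must descend to a cyclic $3$-separation of $\co(M\ba d\ba x)$, and an $N$-minor $N_1$ with $|(Y-x)\cap E(N_1)| \le 1$ must be exhibited to apply Lemma~\ref{doublylabelII}. This bookkeeping splits into subcases depending on whether $x'$ itself lies in a series pair of $M\ba d\ba x$, which occurs precisely when $x$ and $x'$ share a triad of $M\ba d$; in that subcase, deleting $x'$ creates a coloop that must be contracted, and one checks the $N$-minor is preserved because the circuit $C$ witnesses that $x'$ is not essential to the minor.
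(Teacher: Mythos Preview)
Your approach is essentially the same as the paper's. The paper sidesteps the subcase split you anticipate by choosing, once and for all, a set $S$ of series-class representatives in $M\ba d\ba x$ with $x' \notin S$; then $x'$ survives into $\co(M\ba d\ba x)=(M\ba d\ba x)/S$ regardless of whether it lay in a series pair, and once \cref{doublylabelII}\cref{dlIIi} gives that $x'$ is $N$-deletable in $(M\ba d\ba x)/S$, the resulting $N$-minor of $(M\ba d\ba x)/S\ba x' = (M\ba d\ba x\ba x')/S$ is automatically an $N$-minor of $M\ba d\ba x\ba x'$, with no separate coloop argument needed. The only other cosmetic difference is that the paper first checks $x \notin \cocl_{M\ba d}(Z)$ (else $(X-x,\{x\},E(M\ba d)-X)$ would be a cyclic $3$-separation, contradicting~(a)) and derives $N$-deletability of $x$ from \cref{doublylabelII}\cref{dlIIi} rather than citing~(c) directly.
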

  \begin{proof}
    If $x \in \cocl_{M \ba d}(Z)$, then $(X-x, \{x\}, E(M \ba d)-X)$ is a cyclic $3$-separation in $M \ba d$, implying $\co(M \ba d \ba x)$ is not $3$-connected; a contradiction.
    So $x \notin \cocl_{M \ba d}(Z)$.
    Now it follows from \cref{doublylabelII}\cref{dlIIi} that $x$ is $N$-deletable in $M \ba d$.
    %Moreover, $(Y-x, \{d'\}, Z)$ is a path of $3$-separations $M \ba d \ba x$ where $d' \in \cocl_{M \ba d \ba x}(Y-x) \cap \cocl_{M \ba d \ba x}(Z)$.
    Let $S$ be a set containing all but one element in each series class of $M \ba d \ba x$, with $x' \notin S$.
    Let $Y'=(Y-x) -S$ and $Z'=Z -S$.
    Now $(Y',\{d'\},Z')$ is a cyclic $3$-separation in $\co(M \ba d \ba x)$.
    %$4$-element cocircuit containing $d$ and $x$ and $x'$?
%
    Since $x' \in \cl(X-\{x,x'\})$, we have that $x' \notin \cocl_{\co(M\ba d\ba x)}(Z')$.
    So $x'$ is $N$-deletable in $\co(M \ba d \ba x)$, by \cref{doublylabelII}\cref{dlIIi}.
    In particular, $M \ba d \ba x \ba x'$ has an $N$-minor, as required.
  %\end{slproof}
  \end{proof}

  \begin{lemma}
    \label{closuresubst}
    If there is an element $w \in \cl_{M \ba d}(X) - X$ that is $N$-deletable in $M \ba d$, then $M$ has an $N$-detachable pair.
  \end{lemma}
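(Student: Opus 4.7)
My plan is to verify the two conditions defining an $N$-detachable pair. That $M \ba d \ba w$ has an $N$-minor is immediate since $w$ is $N$-deletable in $M \ba d$, so the task reduces to proving $M \ba d \ba w$ is $3$-connected.

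I begin by ruling out parallel pairs in $M \ba d \ba w$. If $\{w,a,b\}$ were a triangle of $M \ba d$ (equivalently of $M$, since $d \notin T$ for any triangle $T$ of the $3$-connected $M \ba d$), then $T$ would be \unfortunate. In $M \ba d \ba w$, $\{a,b\}$ would be parallel, so an $N$-minor requires either $M \ba d \ba w \ba a$ or $M \ba d \ba w / a$ to have an $N$-minor; these are minors of $M \ba w \ba a$ and $M / a$ respectively, neither of which has an $N$-minor --- the former by the \unfortunate\ property of $T$, and the latter by \cref{freegrounded} applied to $a \in T$. So $w$ lies in no triangle of $M \ba d$.

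Next, I apply Bixby's Lemma. By \cref{gutsstayguts2}, since $w \in \cl_{M \ba d}(X)$ and $|E(M \ba d) - X| \ge 3$, we have $w \notin \cocl_{M \ba d}(X)$, which by the swap lemma gives $w \in \cl_{M \ba d}((E(M \ba d) - X) - w)$. Together with the rank conditions (which follow from $X$ containing an independent triad and genericity on the complement), the partition $(X, \{w\}, E(M \ba d) - X - w)$ is a vertical $3$-separation of $M \ba d$. By \cref{openVertSep2}, $\si(M \ba d / w)$ is not $3$-connected, so by \cref{bixbyL2}, $\co(M \ba d \ba w)$ is $3$-connected. Combined with the absence of parallel pairs, this reduces the goal to showing $M \ba d \ba w$ has no series pair.

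Finally, I rule out series pairs. A series pair in $M \ba d \ba w$ corresponds to a triad $\{w,a,b\}$ of $M \ba d$; by orthogonality with a circuit $C \subseteq X \cup w$ through $w$ and by \cref{triadsinside}, both $a,b \in X$. If $\{w,a,b\}$ is a triad of $M$ itself, it is \unfortunate, and the $N$-minor of $M \ba d \ba w$ must break the series pair, requiring either $M \ba d \ba w \ba a$ or $M \ba d \ba w / a$ to have an $N$-minor --- but these are minors of $M \ba w \ba a$ and $M \ba w / a$ respectively, both excluded by \unfortunate. The main obstacle is the remaining possibility that $\{w,a,b,d\}$ is a $4$-cocircuit of $M$, so that $\{w,a,b\}$ is a triad of $M \ba d$ but not of $M$; here \unfortunate\ does not apply directly. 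I expect to resolve this case by tracking how the $N$-minor of $M \ba d \ba w$ arises, using that $a$ and $b$ are doubly $N$-labelled elements of $X$ and that $\co(M \ba d \ba a)$ is $3$-connected, ultimately deriving a contradiction with either $|Y \cap E(N)| \le 1$ or the minimality of $X$.
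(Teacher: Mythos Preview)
Your argument has the right shape but contains one confusion and one genuine gap that is actually a missed easy finish.

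First, the confusion: a triangle $\{w,a,b\}$ in $M\ba d$ makes $\{a,b\}$ a parallel pair in $M\ba d/w$, not in $M\ba d\ba w$. Deletion cannot create parallel pairs in a simple matroid, and $M\ba d$ is simple since it is $3$-connected with at least four elements. So the entire first step (ruling out triangles through $w$) is both misargued and unnecessary.

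Second, the gap. You correctly establish, via \cref{gutsstayguts2}, that $w\notin\cocl_{M\ba d}(X)$. Later, for a triad $\{w,a,b\}$ of $M\ba d$, you correctly deduce (using orthogonality and \cref{triadsinside}) that $\{a,b\}\subseteq X$. But this immediately gives $w\in\cocl_{M\ba d}(\{a,b\})\subseteq\cocl_{M\ba d}(X)$, contradicting what you already proved. You are done right there: $w$ lies in no triad of $M\ba d$, so $M\ba d\ba w=\co(M\ba d\ba w)$ is $3$-connected. The subsequent case split on whether $d$ blocks $\{w,a,b\}$, and the ``main obstacle'' you leave unresolved, are superfluous.

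The paper's proof follows the same outline but does not invoke \cref{triadsinside}. Instead, after getting one element of the triad into $X$ by orthogonality, it handles the case where the other element lies in $W=E(M\ba d)-X$ directly: it shows this forces $x\in\cocl_{M\ba d}(X-x)\cap\cocl_{M\ba d}(W)$, giving a cyclic $3$-separation $(X-x,\{x\},W)$ and contradicting that $\co(M\ba d\ba x)$ is $3$-connected. Your route via \cref{triadsinside} is slightly shorter once you spot the immediate contradiction.
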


  \begin{proof}
    We may assume that $r^*_{M \ba d}(X) \ge 3$, otherwise $M$ has an $N$-detachable pair by \cref{triadsnotcoline}.
    We work towards showing $\{d,w\}$ is an $N$-detachable pair.
    As $X$ and $X \cup w$ are exactly $3$-separating in $M \ba d$, the matroid $\co(M \ba d \ba w)$ is $3$-connected, by \cref{openVertSep2} and Bixby's Lemma.
    So the lemma holds unless $w$ is in a triad of $M \ba d$.
    Suppose $\{x,w,y\}$ is such a triad, and let $W = E(M \ba d) -X$.
    We may assume, by \cref{orthogVertSep}, that $x \in X$ and $y \in W$.
    Thus $x \in \cocl_{M \ba d}(W)$.
    Now $X-x$ and $X$ are exactly $3$-separating in $M \ba d$, so, by \cref{gutses2}, $x \in \cocl_{M \ba d}(X-x)$.
    But then $(X-x,\{x\},W)$ is a cyclic $3$-separation of $M \ba d$, so $\co(M \ba d \ba x)$ is not $3$-connected; a contradiction.
    We deduce that $w$ is not in a triad of $M \ba d$, so $M \ba d \ba w$ is $3$-connected and has an $N$-minor.
  \end{proof}

  \begin{lemma}
    \label{triadsintersecting}
    Suppose $M$ has no $N$-detachable pairs.
    If $S$ and $T$ are triads of $M \ba d$ that meet $X$, the set $S \cup T$ is not a cosegment of $M \ba d$, and $|S \cap T| = 1$, then $r_{M \ba d}(S \triangle T) = 4$ and $S \cup T$ is not $3$-separating in $M \ba d$.%\label{triadsintersecting}
  \end{lemma}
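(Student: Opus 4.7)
My plan is to prove the stronger statement that $S\cup T$ is independent in $M\ba d$; both conclusions then follow immediately. Independence yields $r(S\triangle T)\ge r(S\cup T)-1=4$, forcing $r(S\triangle T)=4$. Moreover, since $S,T$ are $3$-cocircuits of $M\ba d$ sharing only $e$, we have $r^*_{M\ba d}(S\cup T)\le r^*(S)+r^*(T)-r^*(\{e\})=3$, with equality because $S\cup T$ is not a cosegment. Hence $\lambda_{M\ba d}(S\cup T)=5+3-5=3$, and $S\cup T$ is not $3$-separating.

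Suppose for contradiction that $S\cup T$ is dependent, and fix a circuit $C\subseteq S\cup T$. Write $S=\{e,s_1,s_2\}$ and $T=\{e,t_1,t_2\}$. Orthogonality with $S,T$ forces $|C\cap S|,|C\cap T|\ne 1$, which after enumeration leaves only: (a) a triangle $\{e,s_i,t_j\}$; (b) a $4$-circuit $\{e,s_1,s_2,t_j\}$ or $\{e,s_i,t_1,t_2\}$; (c) $S\triangle T$; or (d) $S\cup T$. In case (a), the triangle is \unfortunate\ (by the standing hypothesis), so by \cref{freegrounded} none of its elements is $N$-contractible in $M$; but \cref{triadsinside} gives $|S\cap X|\ge 2$ and $|T\cap X|\ge 2$, so at least one of $e,s_i,t_j$ lies in $X$ and is therefore $N$-contractible in $M\ba d$ (hence in $M$), a contradiction. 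In case (b), circuit elimination with $S$ (or $T$) at $e$ extracts a $3$-circuit inside $S\triangle T$ that meets $T$ (or $S$) in a single element, violating orthogonality. In case (d), the same elimination produces a circuit equal to $S\triangle T\subsetneq C$, contradicting minimal dependence of $C$. Only case (c) survives: $S\triangle T$ is a $4$-circuit.

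Now apply cocircuit elimination to $S,T$ at $e$ to obtain a cocircuit $C^*\subseteq S\triangle T$ of size $3$ or $4$. If $|C^*|=3$---say $C^*=\{s_1,t_1,t_2\}$ by symmetry---then the three $3$-cocircuits $S,T,C^*$ yield $e\in\cocl(\{t_1,t_2\})$ (via $T$), $s_1\in\cocl(\{t_1,t_2\})$ (via $C^*$), and $s_2\in\cocl(\{e,s_1\})\subseteq\cocl(\{t_1,t_2\})$ (via $S$), so $S\cup T\subseteq\cocl(\{t_1,t_2\})$; hence $r^*_{M\ba d}(S\cup T)=2$, making $S\cup T$ a cosegment and contradicting the hypothesis. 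Thus $|C^*|=4$, so $Q:=S\triangle T$ is both a $4$-circuit and a $4$-cocircuit, i.e., a quad of $M\ba d$. Moreover $r(S\cup T)\ne 3$, else $\lambda(S\cup T)=1$ would contradict $3$-connectivity of $M\ba d$; so $e\notin\cl(Q)$.

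The main obstacle is to derive a contradiction from this final configuration: a quad $Q$ with triads $S,T$ meeting at the single element $e\notin\cl(Q)$. I expect this will be achieved by exhibiting an $N$-detachable pair of $M$, contradicting the working assumption (within the proof of \cref{basilica}) that $M$ has no such pair. Using \cref{contractdistincttriads} and \cref{triadsncontractiblemain} one obtains $N$-contractible pairs drawn from $S-T$ and $T-S$; but contracting a natural pair like $\{s_1,t_1\}$ creates a parallel pair $\{s_2,t_2\}$ in the minor via the quad, so $3$-connectivity of the resulting minor of $M$ must be checked after simplification. I would combine Bixby's Lemma with \cref{therebetriangles}, or apply \cref{aquickaside2} after first contracting $e$ (which converts $S,T$ into disjoint series structures), to locate a pair whose contraction or deletion in $M$ is both $3$-connected and $N$-preserving.
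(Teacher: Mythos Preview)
Your reformulation---prove $S\cup T$ is independent in $M\ba d$, whence $r(S\cup T)=5$, $r(S\triangle T)=4$, and $\lambda_{M\ba d}(S\cup T)=5+3-5=3$---is a valid restatement of the goal, and your orthogonality-based enumeration of possible circuits $C\subseteq S\cup T$ into cases (a)--(d) is correct. Case (a) is handled correctly. The remaining cases are where the argument breaks down.

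In cases (b) and (d) you write ``circuit elimination with $S$ (or $T$) at $e$''. But $S$ and $T$ are \emph{triads}---cocircuits, not circuits---so circuit elimination does not apply; there is no second circuit to eliminate against. Neither case can be dismissed this way. Case (b), where the circuit is $S\cup t$ for some $t\in T-S$, is handled in the paper by noting that $t\in\cl(S)$ gives a vertical $3$-separation $(S,\{t\},\ldots)$ of $M\ba d$, contradicting that $M\ba d/t$ is $3$-connected (property (b), once one checks $t\in X$). Case (d), where $S\cup T$ is a $5$-element circuit, is in fact the main case and occupies most of the paper's proof: one shows via uncrossing and \cref{triadsinside} that $S\cup T\subseteq X$; then for each pair $s\in S-u$, $t\in T$ one applies the dual of \cref{r3cocirc2} twice to get $\si(M\ba d/s/t)$ $3$-connected, forcing (in the absence of a detachable pair) a $4$-circuit through $\{s,t,d\}$; the resulting system of circuits is then shown to be contradictory by a case split on whether $d$ fully blocks $X$.

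Your case (c), which you correctly identify as the quad case and leave open, is exactly the statement $r(S\triangle T)=3$. The paper handles this first and directly, using the standing hypotheses on $X$ rather than any detachable-pair argument: if $u\in X$, then in $M\ba d\ba u$ the set $S\triangle T$ is a rank-$3$, corank-$2$ set consisting of two distinct series pairs, so $\lambda_{M\ba d\ba u}(S\triangle T)=1$ and $\co(M\ba d\ba u)$ is not $3$-connected, contradicting property (a); if $u\notin X$, then $S\triangle T\subseteq X$ by \cref{triadsinside}, and $S\triangle T$ is a $4$-element $3$-separating proper subset of $X$ (proper because $X$ contains a triad), contradicting the minimality of $X$. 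Your suggested route via \cref{aquickaside2} or \cref{therebetriangles} is not needed here and would not obviously close the gap.
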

  \begin{proof}
    %If $|S \cap T|=2$, then $S \cup T$ is a $4$-element cosegment in $M \ba d$.
    %By \ref{triadsinside}, $S \cup T \subseteq X$, but this 
    %contradicts \cref{triadsnotcoline}.  So
    Let $S\cap T = \{u\}$.
    We claim that $r_{M\ba d}(S\triangle T)=4$.
    First, suppose that $u \in X$. %, and observe that $u \in X$ by \cref{triadsinside}.
    If $r_{M\ba d}(S\triangle T)=3$, then
    $(S \triangle T, E(M \ba d \ba u)-(S \triangle T))$ is a $2$-separation of $M \ba d \ba u$.  % where neither $S \triangle T$ nor its complement is contained in a series class, contradicting \ref{deletionpair}.
    But %, by \cref{triadsnotcoline}, 
    $S \triangle T$ is not contained in a series class in $M \ba d \ba u$, contradicting that $\co(M\ba d \ba u)$ is $3$-connected. 
    Now suppose that $u \notin X$.  
    If $r_{M\ba d}(S\triangle T)=3$, then $X'=S \triangle T$ is a $4$-element subset of $X$ (by \cref{triadsinside}) that is $3$-separating.
    Since $X$ contains a triad, $X' \subsetneqq X$, contradicting the minimality of $X$.
    Thus $r_{M\ba d}(S\triangle T)=4$.

    Suppose that $\lambda_{M\ba d}(S\cup T)=2$.
    If $S\cup T$ contains a $4$-element circuit of $M \ba d$, then, as $r_{M\ba d}(S\triangle T)=4$, this circuit contains one of $S$ or $T$.
    Without loss of generality we may assume that the circuit is $S \cup t$ for $t \in T-u$.
    Then $(S,\{t\},E(M \ba d)-(S \cup t))$ is a vertical $3$-separation of $M \ba d$, so $\si(M\ba d/t)$ is not $3$-connected.
    This implies that $t \notin X$.
    Let $T = \{u,t,t'\}$; then $t' \in X$, by \cref{triadsinside}.
    Moreover, $(S \cup t, \{t'\}, E(M \ba d)-(S \cup T))$ is a cyclic $3$-separation of $M \ba d$, so $\co(M\ba d \ba t')$ is not $3$-connected; a contradiction.
    So $S \cup T$ does not contain a $4$-element circuit of $M \ba d$.

    By uncrossing, $(S \cup T)\cap X$ is $3$-separating in $M \ba d$.
    If $u \notin X$, then $S \triangle T \subseteq X$ by \cref{triadsinside}, so $S \triangle T$ is $3$-separating, and hence $r_{M \ba d}(S \triangle T) = 3$; a contradiction.
    Now, if neither $S$ nor $T$ is contained in $X$, then $|(S \cup T)\cap X|=3$, so $(S \cup T)\cap X$ is a triangle or a triad.  But this is contradictory, since $S \cup T$ is not a cosegment, and no triangle of $M \ba d$ meets $X$ (since $M \ba d/x$ is $3$-connected for each $x \in X$).
    Next, suppose $S \subseteq X$ and $T=\{t_1,t_2,u\}$ where $T-X = \{t_2\}$.
    Then $(S \cup T) \cap X = S \cup t_1$
    is exactly $3$-separating.  
    As $t_1 \notin \cocl_{M \ba d}(S)$, we have $t_1 \in \cl_{M \ba d}(S)$ by \cref{exactSeps2}, so $S \cup t_1$ is a circuit; a contradiction.
    So $S \cup T \subseteq X$.  Moreover, $S \cup T$ is a circuit, since
    $S \cup T$ does not contain a $4$-element circuit, and $r^*_{M \ba d}(S \cup T) = 3$, so $r_{M \ba d}(S \cup T) = 4$.

    Let $s$ and $t$ be distinct elements such that $s \in S$ and $t \in T$.
    By \cref{triadsncontractiblemain}\cref{triadsncontractible}, the matroid $M \ba d / s / t$ has an $N$-minor. 
    Without loss of generality, we may assume that $s \neq u$.
    Since, in $M \ba d$, the set $S \cup T$ is a corank-$3$ circuit, $S$ is a triad, and $s$ is not in a triangle, it follows from the dual of \cref{r3cocirc2} that $M \ba d / s$ is $3$-connected.
    Moreover, $(S \cup T) -s$ is a corank-$3$ circuit in $M \ba d / s$, and $t \in \cocl_{M \ba d/s}((S \cup T) - \{s,t\})$, so we can apply \cref{r3cocirc2} a second time to deduce that $\si(M \ba d / s / t)$ is $3$-connected.
    Now, either $\{s,t\}$ is an $N$-detachable pair, or $\{s,t\}$ is contained in a $4$-element circuit~$C_{s,t}$ of $M$ that could contain $d$.
    As $S \cup T$ does not contain a $4$-element circuit in $M \ba d$, the circuit $C_{s,t}$ either contains $d$ or meets $E(M\ba d)-(S \cup T)$.
    Suppose that $d \notin C_{s,t}$, and let $w \in C_{s,t} - (S \cup T)$.
    Then, by orthogonality $C_{s,t} = \{s,t,x,w\}$ for $x \in (S \cup T) - \{s,t\}$.
    Since $M\ba d /s/t$ has an $N$-minor and $\{x,w\}$ is a parallel pair in this matroid, $w$ is $N$-deletable in $M \ba d$, contradicting \cref{closuresubst}.
    So for all distinct $s \in S$ and $t \in T$, there is a $4$-element circuit containing $\{s,t,d\}$.

  Let $X' = S \cup T$.
  Suppose that $d$ fully blocks $X'$.
  Since $X'$ is a circuit, there are certainly no $4$-element circuits of $M \ba d$ contained in $X'$.
  Moreover, there are no $4$-element circuits of $M$ contained in $X' \cup d$, otherwise $d \in \cl(X')$, contradicting that $d$ fully blocks $X'$.
  Let $S = \{s_1,s_2,t_3\}$ and $T = \{t_1,t_2,t_3\}$.
  %Let $t_3 = u$.
  For each $i \in \seq{3}$, there are elements $v_i, w_i \in \cl(X' \cup d)-(X' \cup d)$ such that $\{s_1, t_i, d, v_i\}$ and $\{s_2, t_i, d, w_i\}$ are circuits.

  Next, we claim that $\{v_1,v_2,v_3,w_1,w_2,w_3\}$ is a $6$-element rank-$3$ set, and if $\{v_1,v_2,v_3,w_1,w_2,w_3\}$ contains a triangle, then this triangle is either $\{v_i,v_j,w_k\}$ or $\{v_i,w_j,w_k\}$ for some $\{i,j,k\}=\{1,2,3\}$.
If $v_i=v_{i'}$ for distinct $i,i' \in \seq{3}$, then $\{s_1, t_i,t_{i'},d\}$ contains a circuit, by the circuit elimination axiom, contradicting the fact that $d$ fully blocks $X'$.
Similarly, the $w_i$ are pairwise distinct for $i \in \seq{3}$.
Say $v_i = w_j$ for some $i, j \in \seq{3}$. Then, again by circuit elimination, there is a circuit contained in $\{s_1, s_2, t_i, t_j, v_i\}$.
If $v_i \in \cl(\{s_1, s_2, t_i, t_j\})$, then $d \in \cl(X')$; a contradiction.
So $\{s_1, s_2, t_i, t_j\}$ is a circuit of $M$, but this contradicts the fact that $X'$ is a circuit.
Hence the elements $v_1, v_2, v_3, w_1, w_2, w_3$ are pairwise distinct.
Now $\cl(X' \cup d)-(X' \cup d)$ has rank at most 3.
If $r(\{v_1,v_2,v_3\}) \le 2$, then $\{s_1,d,v_1,v_2,v_3\}$ has rank at most four, but spans the rank-$5$ set $X' \cup d$; a contradiction.
A similar argument applies if $r(\{w_1,w_2,w_3\}) \le 2$, or, for some distinct $i,j \in [3]$ either $r(\{v_i,v_j,w_i\}) \le 2$ or $r(\{v_i,w_i,w_j\}) \le 2$.
It now follows from \cite[Lemma~7.2]{paper1} that $M$ has an $N$-detachable pair; a contradiction.

  Now suppose $d$ does not fully block $X'$.
  Then $d \in \cl(X')$, and, for each of the $4$-element circuits containing $\{s,t,d\}$, the fourth element is in $\cl(X')$.
  Let $S = \{s_1,s_2,u\}$ and $T = \{t_1,t_2,u\}$, and let the $4$-element circuits be $\{s_1,t_i,d,x_i\}$, $\{s_2,t_i,d,w_i\}$, $\{u,t_i,d,p_i\}$, and $\{s_i,u,d,q_i\}$, for $i \in \{1,2\}$.
  %Since $d' \in \cocl(Y)$, it follows from \cref{presingle2} that $|\cl(Y)-Y| \le 1$.
  %So at most one of $\{p_1,p_2,q_1,q_2,w_1,w_2,x_1,x_2\}$ is not in $Y$.
  Let $e \in \{p_i,q_i,w_i,x_i\}$ for some $i \in \{1,2\}$.
  Since $d \in \cl(X')$, we have $e \in \cl_{M \ba d}(Y-e)$, so $e \notin \cocl_{M \ba d}(Z)$.
  It follows, by \cref{doublylabelII}\cref{dlIIi}, that $e$ %each such element %of $Y-\cocl(Z)$
  is $N$-deletable in $M \ba d$.

  Suppose there exists some $e \in \{p_i,q_i,w_i,x_i\}-X'$ for $i \in \{1,2\}$.
  Then $(Y-e, \{e\}, Z \cup d')$ is a vertical $3$-separation, so $\co(M \ba d \ba e)$ is $3$-connected by Bixby's Lemma.
  In the case that $\{d,e\}$ is not an $N$-detachable pair,
  $\{d,e\}$ is contained in a $4$-element cocircuit~$C^*$ of $M$.
  Since $e \notin \cocl_{M \ba d}(X')$, the cocircuit contains at most one element of $X'$.
  But since $X'$ is a circuit in $M$, any element $x \in X'$ is not in $\cocl_{M \ba d}(E(M\ba d)-X')$.  So $C^* \subseteq E(M)-X'$, implying $d \notin \cl(X')$; a contradiction.
  So $\{p_i,q_i,w_i,x_i\} \subseteq X'$ for each $i \in \{1,2\}$.

  Now, for each pair of distinct elements $s \in S$ and $t \in T$, the set $\{d,s,t\}$ is contained in a $4$-element circuit that is contained in $X' \cup d$.  Moreover, any two of these circuits intersect in at most two elements, otherwise, by circuit elimination, $X'$ properly contains a circuit; a contradiction.
  %But there are at most two $3$-element subsets of $\seq{5}$ that intersect in at most one element, so .. contradiction
  Suppose $\{d,s,t,u\}$ is a circuit for some labelling $\{s,t,u,v,w\}$ of $X'$ with $s \in S$ and $t \in T$.
  Then, up to relabelling $\{s,t,u\}$, there is a circuit $\{d,u,v,w\}$.
  Up to swapping the labels on $v$ and $w$, there is also a $4$-element circuit containing $\{d,s,v\}$.  But any such circuit intersects either $\{d,s,t,u\}$ or $\{d,u,v,w\}$ in three elements; a contradiction.
\end{proof}

We now prove the main result of this section: \cref{basilica}.
\setcounter{theorem}{0}

\begin{theorem}
  \label{cathedral}
  If $X$ contains a triad of $M \ba d$, then $M$ has an $N$-detachable pair.
\end{theorem}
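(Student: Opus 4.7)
The plan is to exploit the triad structure of $X$ in $M\ba d$ to locate an $N$-contractible or $N$-deletable pair, and then to verify this pair is $N$-detachable in $M$. First, I would fix a triad $T^* \subseteq X$ of $M \ba d$. Since every element of $T^*$ is doubly $N$-labelled and every triad of $M$ is \unfortunate, the dual of \cref{freegrounded} forces $T^*$ not to be a triad of $M$, so $T^* \cup d$ is a $4$-element cocircuit of $M$. Thus every triad of $M\ba d$ contained in $X$ is blocked by $d$ via a $4$-cocircuit; this is the hook that links triads in $X$ to $d$.

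The next step is a case analysis on what additional triads of $M \ba d$ meet $X$. By \cref{triadsinside} any such triad meets $X$ in at least two elements, and \cref{triadsnotcoline} forbids $4$-element cosegments inside $X$. If a second triad $S^*$ of $M\ba d$ meets $X$ with $S^*\cup T^*$ not a cosegment, then \cref{contractdistincttriads} produces an $N$-contractible pair $\{s,t\}$ with $s\in S^*$ and $t\in T^*$. Subcases then arise: if $S^*$ and $T^*$ are disjoint and arrange themselves with $Z$ into a paddle of $M\ba d$, I would invoke \cref{triad-paddle}; if they intersect in exactly one element, \cref{triadsintersecting} provides enough rigidity (namely $r_{M\ba d}(S^*\triangle T^*)=4$ and $S^*\cup T^*$ not $3$-separating) to drive the argument. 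Complementarily, \cref{triadsdelete} delivers $N$-deletable pairs whenever circuits lie inside $X$, and \cref{closuresubst} promotes any $N$-deletable element of $\cl_{M\ba d}(X)$ into an $N$-detachable pair with $d$.

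When $T^*$ is the only triad of $M\ba d$ meeting $X$, the minimality of $X$ should force $X$ to be very tight, essentially a corank-$3$ $4$-element set containing $T^*$ and one further element in $\cl_{M\ba d}(T^*)$ or $\cocl_{M\ba d}(T^*)$. In this situation \cref{triadsncontractiblemain}\cref{triadsncontractible3} yields $N$-contractible pairs inside $T^*$, and the rank-$3$ cocircuit machinery (\cref{r3cocircsi2,r3cocirc2}) together with \cref{aquickaside2} ensures that successive contractions preserve $3$-connectivity.

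The main obstacle will be verifying $3$-connectivity of $M/s/t$ (or $M\ba s\ba t$) for the pair we ultimately produce: the candidate pair could lie in a further $4$-cocircuit through $d$ (from the blocking structure), or in a $4$-circuit forced by the triangle configurations of \cref{therebetriangles}. Orthogonality with the $4$-cocircuit $T^*\cup d$, combined with these lemmas, should rule out all such obstructions; in any case where an obstruction survives, it either falls to \cref{closuresubst} or produces the detachable pair directly, giving the result.
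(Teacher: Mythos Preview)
Your plan has the right ingredients but contains a genuine error and underestimates the main work.

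First, the case ``$T^*$ is the only triad of $M\ba d$ meeting $X$'' never arises. Once you observe (as the paper does in its first line) that every $x\in X$ is in a triad of $M\ba d$ --- otherwise $\{d,x\}$ is already $N$-detachable --- you are forced to have at least two triads meeting $X$, since $|X|\ge 4$ and $|T^*|=3$. So your description of this case as a corank-$3$ $4$-element set, and your appeal to \cref{triadsncontractiblemain}\cref{triadsncontractible3} (which is about a circuit contained in two \emph{disjoint} triads), is misdirected.

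Second, and more substantially, you gloss over the real engine of the argument. The paper's proof is a five-case analysis on configurations of \emph{three} triads meeting $X$: pairwise disjoint; two intersecting in one point with a third disjoint; all three through a common point; and two further overlapping patterns. Each of these (Claims 4.1.2 through 4.1.6) requires its own nontrivial argument combining Tutte's Triangle Lemma, \cref{therebetriangles}, and delicate circuit elimination to force contradictions or produce the detachable pair. Invoking \cref{triad-paddle} handles only the paddle subcase; \cref{triadsintersecting} gives rigidity but not the conclusion. After this case analysis, the paper deduces that $X$ lies in the union of exactly two disjoint triads, and then a further substantial argument (using \cref{triadslocalconn} and more circuit elimination) finishes. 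Your final paragraph, hoping orthogonality and \cref{closuresubst} ``rule out all such obstructions,'' does not cover this: the obstructing $4$-circuits through $d$ genuinely exist in many subcases, and the work is tracking exactly which ones force contradictions with \cref{triadsintersecting} or \cref{triadslocalconn}.
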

\begin{proof}
  Let $x \in X$.  Since $\co(M \ba d \ba x)$ is $3$-connected and $M \ba d \ba x$ has an $N$-minor, either $\{d,x\}$ is an $N$-detachable pair or $x$ is in a triad of $M \ba d$.
  So we may assume $x$ is in a triad of $M \ba d$ for every $x \in X$.

  \begin{sublemma}
    \label{triadfans}
    Let $R$ and $S$ be disjoint triads of $M\ba d$ that meet $X$.
    If $M$ has no $N$-detachable pairs, then $\lc(R,S) =1$, and there exists some $r \in R$ such that $S$ is not contained in a $4$-element fan in $M\ba d/r$.
  \end{sublemma}
  \begin{slproof}
    By \cref{triadsnotcoline,triadsinside}, $\lc(R,S) \neq 0$.
    Suppose that $\lc(R,S) = 2$.  Then $(R,S,E(M\ba d)-(R \cup S))$ is a paddle.  If $|R \cap X| = 2$ and $|S \cap X| = 2$, then $X \cup R$ and $X \cup R \cup S$ are $3$-separating, by uncrossing, and it follows that $(X, \{r\}, \{s\}, E(M\ba d)-(X \cup \{r,s\}))$ is a path of $3$-separations where $R-X=\{r\}$ and $S-X = \{s\}$.
    But then $s \in \cocl(E(M\ba d)-(R \cup S))$, contradicting \cref{petalsarecoclosed}.
    So, by \cref{triadsinside}, at least one of $R$ and $S$ is contained in $X$; in fact, by a similar argument, $R \cup S \subseteq X$.
    Now it follows that
    %in which case
    $M$ has an $N$-detachable pair by \cref{triad-paddle,triadsdelete}.
    So $\lc(R,S) =1$.

    Suppose $S$ is contained in a $4$-element fan of $M\ba d/r'$ for some $r' \in R$.
    Since each $s \in S\cap X$ is $N$-contractible in $M \ba d$, %by \cref{doublylabelled},
    it follows from \cref{triadsinside} and orthogonality that $s$ is not contained in an \unfortunate\ triangle.
    Thus, %up to relabelling, and
    there is a $4$-element circuit~$C$ of $M\ba d$ with $r' \in C$ and, by orthogonality, $|C \cap R|=2$ and $|C \cap S| = 2$.
    Let $R - C = \{r\}$.
    If $S$ is also contained in a $4$-element fan of $M\ba d/r$, then there is a $4$-element circuit~$C'$ with $r \in C'$ and $|C' \cap R|=2$ and $|C' \cap S|=2$, implying that $\lc(R,S)=2$; a contradiction.
    Thus, %up to a potential relabelling of the elements of $R$, we may assume that
    the triad $S$ is not contained in a $4$-element fan in $M\ba d/r$ for some $r \in R$.
  \end{slproof}

  We now consider,
  in \cref{triads3disjoint,triadsseparate,triadsatu,triadsdirty,triadsfinal}, %we consider all
  each possible %configuration
  arrangement
  of three distinct triads in $X$, and, in each case, we prove the existence of an $N$-detachable pair.
  These configurations, as they appear in $(M\ba d)^*$, are illustrated in \cref{lifeinxtriadfigs}.
  Note that the intention is to show how these triads interact, and the illustrations are not indicative of the rank of these sets in $(M \ba d)^*$ (in particular, the union may have rank more than three in $(M \ba d)^*$).

  Note also that every triad of $M \ba d$ that meets $X$ is blocked by $d$.
  To see this, let $\Gamma$ be a triad of $M \ba d$ that meets $X$ and is not blocked by $d$.
  Then $\Gamma$ is a triad of $M$, and $\Gamma$ contains an element $x \in X$ that is $N$-deletable.
  But this implies that $\Gamma$ is not $N$-grounded; a contradiction.
  %So each triad of $M \ba d$ that meets $X$ is blocked by $d$.

  \begin{figure}
  \begin{subfigure}{0.45\textwidth}
      \centering
    \begin{tikzpicture}[rotate=90,scale=0.55,line width=1pt]
      %\SetVertexNoLabel
      \draw (0,0) -- (4.0,0.0);
      \draw (0,-2) -- (4.0,-2); 
      \draw (0,-4) -- (4,-4); 

      \Vertex[x=4.0,y=0,LabelOut=true,L=$r_1$]{a1}
      \Vertex[x=2.0,y=0,LabelOut=true,L=$r_2$]{a2}
      \Vertex[x=0.0,y=0,LabelOut=true,L=$r_3$]{a3}

      \Vertex[x=4,y=-2,LabelOut=true,L=$s_1$]{b1}
      \Vertex[x=2,y=-2,LabelOut=true,L=$s_2$]{b3}
      \Vertex[x=0,y=-2,LabelOut=true,L=$s_3$]{b2}

      \Vertex[x=4,y=-4,LabelOut=true,L=$t_1$]{c1}
      \Vertex[x=2,y=-4,LabelOut=true,L=$t_2$]{c2}
      \Vertex[x=0,y=-4,LabelOut=true,L=$t_3$]{c3}
    \end{tikzpicture}
      \caption{\cref{triads3disjoint}}
      \label{triadfigs1}
  \end{subfigure}
  \begin{subfigure}{0.45\textwidth}
      \centering
	\begin{tikzpicture}[rotate=90,scale=0.55,line width=1pt]
	  %\SetVertexNoLabel
	  \draw (4.0,-2.0) -- (0,-3) -- (4.0,-4); 
	  \draw (0,0) -- (4,0); 

      \Vertex[x=4,y=0,LabelOut=true,L=$r_1$,Lpos=180]{c1}
      \Vertex[x=2,y=0,LabelOut=true,L=$r_2$,Lpos=180]{c2}
      \Vertex[x=0,y=0,LabelOut=true,L=$r_3$,Lpos=180]{c3}

      \Vertex[x=4.0,y=-2,LabelOut=true,Lpos=180,L=$s_1$]{a1}
      \Vertex[x=2.0,y=-2.5,LabelOut=true,Lpos=180,L=$s_2$]{a3}

      \Vertex[x=4,y=-4,LabelOut=true,L=$t_1$]{b1}
      \Vertex[x=2,y=-3.5,LabelOut=true,L=$t_2$]{b3}
      \Vertex[x=0,y=-3,LabelOut=true,L=$u$]{b2}
	\end{tikzpicture}
      \caption{\cref{triadsseparate}}
      \label{triadfigs2}
  \end{subfigure}

  \begin{subfigure}{0.3\textwidth}
      \centering
	\begin{tikzpicture}[rotate=90,scale=0.55,line width=1pt]
	  %\SetVertexNoLabel
	  \draw (0,-2) -- (4.0,0.0);
	  \draw (0,-2) -- (4.0,-2); 
	  \draw (0,-2) -- (4,-4); 

      \Vertex[x=4.0,y=0,LabelOut=true,Lpos=180,L=$r_1$]{a1}
      \Vertex[x=2.0,y=-1,LabelOut=true,Lpos=180,L=$r_2$]{a3}

      \Vertex[x=4,y=-2,LabelOut=true,L=$s_1$]{b1}
      \Vertex[x=2,y=-2,LabelOut=true,L=$s_2$,Lpos=45]{b3}
      \Vertex[x=0,y=-2,LabelOut=true,L=$u$]{b2}

      \Vertex[x=4,y=-4,LabelOut=true,L=$t_1$]{c1}
      \Vertex[x=2,y=-3,LabelOut=true,L=$t_2$]{c2}
	\end{tikzpicture}
      \caption{\cref{triadsatu}}
      \label{triadfigs3}
  \end{subfigure}
  \begin{subfigure}{0.3\textwidth}
      \centering
    \begin{tikzpicture}[rotate=90,scale=0.55,line width=1pt]
      %\SetVertexNoLabel
      \draw (4,0) -- (0,0) -- (0,-4) -- (4,-4); 

      \Vertex[x=4.0,y=0,LabelOut=true,L=$r_1$,Lpos=180]{a1}
      \Vertex[x=2.0,y=0,LabelOut=true,L=$r_2$,Lpos=180]{a2}
      \Vertex[x=0.0,y=0,LabelOut=true,L=$r_3$,Lpos=180]{a3}

      \Vertex[x=0,y=-2,LabelOut=true,L=$s$,Lpos=-90]{b2}

      \Vertex[x=4,y=-4,LabelOut=true,L=$t_1$]{c1}
      \Vertex[x=2,y=-4,LabelOut=true,L=$t_2$]{c2}
      \Vertex[x=0,y=-4,LabelOut=true,L=$t_3$]{c3}
    \end{tikzpicture}
      \caption{\cref{triadsdirty}}
      \label{triadfigs4}
  \end{subfigure}
  \begin{subfigure}{0.3\textwidth}
      \centering
    \begin{tikzpicture}[rotate=90,scale=0.55,line width=1pt]
      %\SetVertexNoLabel
      \draw (-4,0) -- (0,-2) -- (-4,-4) -- (-4,0);

      \Vertex[x=-4.0,y=0,LabelOut=true,Lpos=180,L=$s_1$]{a1}
      \Vertex[x=-2.0,y=-1,LabelOut=true,Lpos=180,L=$r$]{a3}

      \Vertex[x=-4,y=-2,LabelOut=true,L=$s_2$,Lpos=-90]{b1}
      \Vertex[x=0,y=-2,LabelOut=true,L=$t_1$]{b2}

      \Vertex[x=-4,y=-4,LabelOut=true,L=$t_3$]{c1}
      \Vertex[x=-2,y=-3,LabelOut=true,L=$t_2$]{c2}
    \end{tikzpicture}
      \caption{\cref{triadsfinal}}
      \label{triadfigs5}
  \end{subfigure}
  \caption{Each configuration of three distinct triads in $X$ as they appear in $(M \ba d)^*$, and the claim in which the configuration is considered.}
  \label{lifeinxtriadfigs}
  \end{figure}
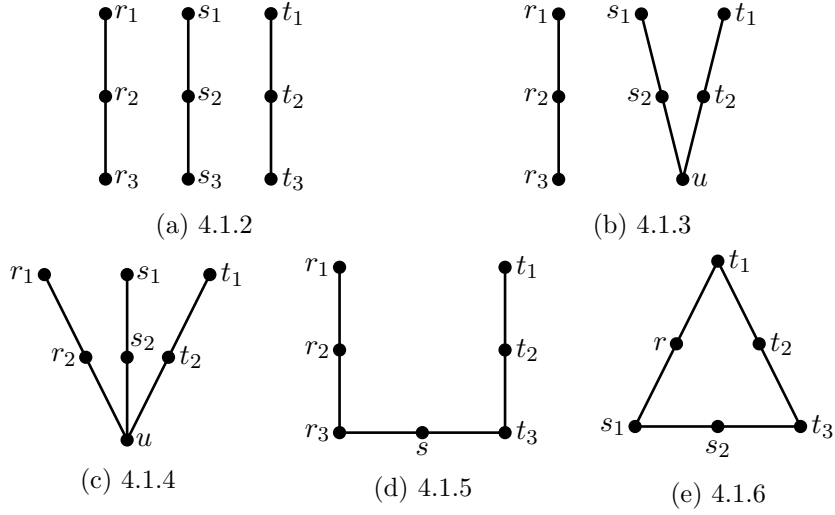

  \begin{sublemma}
    \label{triads3disjoint}
    Let $R$, $S$, and $T$ be distinct triads of $M\ba d$ that meet $X$.
    If these three triads are pairwise disjoint, then $M$ has an $N$-detachable pair.
  \end{sublemma}
  \begin{slproof}
    Suppose $R$, $S$, and $T$ are pairwise disjoint.
    Let $R=\{r_1,r_2,r_3\}$, $S=\{s_1,s_2,s_3\}$, and $T=\{t_1,t_2,t_3\}$ (see \cref{triadfigs1}).
    By \cref{triadfans}, we may assume that the triad~$S$ is not contained in a $4$-element fan in $M\ba d/r_1$, say.
    Now, by Tutte's Triangle Lemma, both $M\ba d/r_1/s_1$ and $M\ba d/r_1/s_2$ are $3$-connected, up to relabelling the elements of $S$.
    By \cref{triadsncontractiblemain}\ref{triadsncontractible}, either $M$ has an $N$-detachable pair, or %$M/r_1/s_1$ and $M/r_1/s_2$ have $N$-minors, so
    there are elements $\alpha$ and $\beta$ such that $\{d,r_1,s_1,\alpha\}$ and $\{d,r_1,s_2,\beta\}$ are circuits of $M$.
    Moreover, $\alpha,\beta\in T$, as otherwise $d$ does not block the triad~$T$ of $M \ba d$.
    So we may assume that $\{d,s_1,t_1\}$ and $\{d,s_2,t_2\}$ are triangles in $M/r_1$, and
    it follows, by circuit elimination, that $\{s_1,s_2,t_1,t_2\}$ contains a circuit of $M/r_1$.
    Since $r_1 \in \cocl(\{r_2,r_3,d\})$, this circuit is also a circuit of $M$.
    As each element of $S \cup T$ is not in a triangle, %an \unfortunate\ triangle,
    $\{s_1,s_2,t_1,t_2\}$ is a circuit of $M/r_1$ and $M$.

    If $\{d,t_3\}$ is also contained in a triangle of $M / r_1$, then the triangle must contain an element $s$ in $\{s_1,s_2,s_3\}$, by orthogonality.
    Thus, by circuit elimination with the triangle $\{d,s_1,t_1\}$, we see that $\{s,s_1,t_1,t_3\}$ contains a circuit of $M / r_1$, and $r_1 \notin \cl(\{s,s_1,t_1,t_3\})$, so it follows that $\{s_1,s,t_1,t_3\}$ is a circuit of $M$.
    But then $S$ is contained in a $4$-element fan in $M \ba d/t$ for each $t \in T$, which, by \cref{triadfans}, implies that $M$ has an $N$-detachable pair.

    Recall that $\{s_1,s_2,t_1,t_2\}$ is a circuit of $M/r_1$, and $S$ is not in a $4$-element fan of $M \ba d / r_1$, so, in particular, $s_3$ is not in a triangle of $M\ba d / r_1$.
    Suppose $s_3$ is in a triangle of $M / r_1$.
    By orthogonality, and the previous paragraph, this triangle is $\{d,s_3,t_i\}$ for some $i \in \{1,2\}$.
    %Then this triangle contains $d$ and an element in $T$, by orthogonality. 
    %But $\{t_3,d\}$ is not contained in a triangle of $M/r_1$, so the triangle is $\{d,s_3,t_i\}$ for some $i \in \{1,2\}$.
    By circuit elimination with the triangle $\{d,s_i,t_i\}$, we deduce that $\{s_3,t_i,s_i\}$ contains a circuit in $M/r_1$; a contradiction.
    So $s_3$ is not in a triangle of $M/r_1$.
    By \cref{aquickaside2}, $M/r_1/s_3$ is $3$-connected, so $M$ has an $N$-detachable pair by \cref{triadsncontractiblemain}\ref{triadsncontractible}.
  \end{slproof}
    
  \begin{sublemma}
    \label{triadsseparate}
    Let $R$, $S$, and $T$ be distinct triads of $M\ba d$ that meet $X$.
    If $|S\cap T|=1$, the set $S \cup T$ is not a cosegment of $M \ba d$, and $R\cap(S\cup T) = \emptyset$, then $M$ has an $N$-detachable pair.
  \end{sublemma}
  \begin{slproof}
    Let $R=\{r_1,r_2,r_3\}$, $S=\{s_1,s_2,u\}$ and $T=\{t_1,t_2,u\}$, where the elements $r_1,r_2,r_3,s_1,s_2,t_1,t_2,u$ are pairwise distinct (see \cref{triadfigs2}).
    Suppose that $M$ has no $N$-detachable pairs.
    By \cref{triadfans}, $S$ is not contained in a $4$-element fan in $M\ba d/r_1$, say.
    By Tutte's Triangle Lemma, there are distinct elements $s,s' \in \{s_1,s_2,u\}$ such that $M\ba d/r_1/s$ and $M\ba d/r_1/s'$ are $3$-connected.
    Since $R$ and $S \cup T$ are disjoint, $M \ba d/r_1/x$ has an $N$-minor for each $x \in S \cup T$, by \cref{triadsncontractiblemain}\cref{triadsncontractible}.

    First, suppose that $M\ba d/r_1/s_1$ and $M\ba d/r_1/s_2$ are $3$-connected.
    Since $M/r_1/s_1$ and $M/r_1/s_2$ have $N$-minors, either $M$ has an $N$-detachable pair, or 
    $M/r_1$ has a triangle containing $\{s_1,d\}$ and a triangle containing $\{s_2,d\}$.
    As $T\cup d$ is a cocircuit of $M/r_1$, each of these triangles meets $T$.
    Suppose these triangles are $\{s_1,d,t\}$ and $\{s_2,d,t'\}$, for $t,t' \in T$.
    Then $\{s_1,s_2,t,t'\}$ contains a circuit $C$ of $M/r_1$, by circuit elimination.
    Since $r_1 \in \cocl_M(\{r_2,r_3,d\})$, the circuit $C$ is also a circuit of $M$, so $C = \{s_1,s_2,t,t'\}$.
    Now $r_{M \ba d}(S \cup T) \le 4$, and $r^*_{M \ba d}(S \cup T) = 3$, so $S \cup T$ is $3$-separating in $M \ba d$, contradicting \cref{triadsintersecting}.

    Now suppose that $M\ba d/r_1/s_1$ and $M\ba d/r_1/u$ are $3$-connected.
    In this case, either $M$ has an $N$-detachable pair, %by \cref{triadsncontractible,triadsncontractible2},
    or $M/r_1$ has triangles containing $\{d,s_1\}$ and $\{d,u\}$.
    By orthogonality, the first of these triangles meets $\{t_1,t_2,u\}$.
    Suppose $\{d,s_1,u\}$ is a triangle of $M/r_1$.
    It follows that $S \cup d$ is $3$-separating, and
    $(\{d,s_1,u\},\{s_2\},E(M/r_1)-(S\cup d))$ is a cyclic $3$-separation of $M/r_1$, implying that $\si(M/r_1/ s_2)$ is $3$-connected.
    Since $s_2$ is not in an \unfortunate\ triangle, and 
    $S$ is not contained in a $4$-element fan in $M\ba d/r_1$,
    either $\{r_1,s_2\}$ is an $N$-detachable pair, %by \cref{triadsncontractiblemain}\cref{triadsncontractible},
    or $\{d,s_2\}$ is contained in a triangle of $M/r_1$ that, by orthogonality, meets $T$.
    If this triangle is $\{d,s_2,u\}$, then $\{s_1,s_2,u\}$ contains a triangle by circuit elimination; a contradiction.  So $\{d,s_2,t\}$ is a triangle of $M/r_1$ for some $t \in \{t_1,t_2\}$.

    Now we may assume, up to swapping $s_1$ and $s_2$, and $t_1$ and $t_2$, that $\{d,s_1,t_1\}$ is a triangle of $M/r_1$.
    If $M/r_1/s_i$ or $M/r_1/t_i$ is $3$-connected for some $i \in \{1,2\}$, then $M$ has a contradictory $N$-detachable pair, % by \cref{triadsncontractiblemain}\cref{triadsncontractible},
    so we may assume otherwise.
    Observe that $\{s_1,s_2\}$ and $\{t_1,t_2\}$ are distinct series classes of $M/r_1\ba d \ba u$, otherwise $X$ contains a $4$-element cosegment of $M \ba d$, contradicting \cref{triadsnotcoline}.
    Applying \cref{therebetriangles} on $M/r_1$, 
    the element $s_2$ is also in a triangle that meets both $\{d,u\}$ and $\{t_1,t_2\}$.
    If this triangle contains $u$, then $S$ is contained in a $4$-element fan of $M/r_1$; a contradiction.  But otherwise we have that $r_{M/r_1}(\{s_1,s_2,u,t_1,t_2\})=4$.
    Since $r_1 \notin \cl(E(M)-\{r_2,r_3,d\})$, the set $\{s_1,s_2,u,t_1,t_2\}$ also has rank four in $M$,
    implying that $S\cup T$ is $3$-separating in $M\ba d$,
    which contradicts \cref{triadsintersecting}.
  \end{slproof}
    
  \begin{sublemma}
    \label{triadsatu}
    Let $R$, $S$, and $T$ be distinct triads of $M\ba d$ that meet $X$, where the union of any two of these triads is not a cosegment of $M \ba d$.
    %If $S\cap T=\{u\}$, and $R\cap(S\cup T) = \{u\}$,
    If $|R \cap S \cap T| = 1$,
    then $M$ has an $N$-detachable pair.
  \end{sublemma}
  \begin{slproof}
    Suppose that $R=\{r_1,r_2,u\}$, $S=\{s_1,s_2,u\}$ and $T=\{t_1,t_2,u\}$
    (see \cref{triadfigs3}).
    Consider $M/t_1\ba d\ba u$.
    Since $\{t_1,t_2\}$ is a series pair in $M\ba d \ba u$, the matroid $\co(M/t_1\ba d\ba u)$ is $3$-connected.
    Observe that $\{r_1,r_2\}$ and $\{s_1,s_2\}$ are distinct series classes of $M/t_1\ba d\ba u$. 
    Now applying \cref{therebetriangles} to $M/t_1$, either $M$ has an $N$-detachable pair by \cref{triadsncontractiblemain}\cref{triadsncontractible}, or $M/t_1$ has triangles $\{r_1,s_1,u\}$ and $\{r_2,s_2,d\}$, up to relabelling.
    So $\{r_1,s_1,t_1,u\}$ and $\{r_2,s_2,t_1,d\}$ are circuits in $M$.
    By \cref{triadsintersecting}, we may assume that $r(\{r_1,r_2,t_1,t_2\}) =4$.  Since $s_1 \in \cocl(\{s_2,u,d\})$, it follows that $r(\{r_1,r_2,s_1,t_1,t_2\}) =5$.
    By symmetry, any $5$-element subset of $\{r_1,r_2,s_1,s_2,t_1,t_2\}$ is independent.

    Consider now $M/t_2$. 
    Applying \cref{therebetriangles}, we see that $M$ contains circuits $\{r_i,s_j,t_2,u\}$ and $\{r_{i'},s_{j'},t_2,d\}$, where $\{i,i'\}=\{1,2\}$ and $\{j,j'\}=\{1,2\}$.
    Suppose $j=1$. Then $\{r_1,s_1,t_2,u\}$ or $\{r_2,s_1,t_2,u\}$ is a circuit.
    In either of these cases $\{t_1,t_2\}\subseteq\cl(\{r_1,r_2,s_1,u\})$, thus $r(\{r_1,r_2,s_1,t_1,t_2\})\leq 4$; a contradiction.
    Similarly, if $i=1$, then $\{t_1,t_2\} \subseteq \cl(\{s_1,s_2,r_1,u\})$; a contradiction.
    So $\{r_2,s_2,t_2,u\}$ and $\{r_1,s_1,t_2,d\}$ are circuits.

    Now consider $M/r_1$.
    Applying \cref{therebetriangles} once more, we arrive at the conclusion that $\{r_1,s_2\}$ is an $N$-detachable pair unless $s_2$ is in a triangle with exactly one element from each of $\{d,u\}$ and $\{t_1,t_2\}$.
    But the existence of this triangle implies that $s_2\in\cl_{M/r_1}(\{s_1,d,u,t_1,t_2\})$, and, due to the circuits $\{r_1,s_1,t_1,u\}$ and $\{r_1,s_1,t_2,d\}$ of $M$, that $r_{M/r_1}(\{s_1,s_2,t_1,t_2\})=3$.
    Hence, by \cref{triadsintersecting}, $M$ has an $N$-detachable pair.
  \end{slproof}

  \begin{sublemma}
    \label{triadsdirty}
    Let $R$, $S$, and $T$ be distinct triads of $M\ba d$ that meet $X$, where the union of any two of these triads is not a cosegment of $M \ba d$.
    If $|S\cap T|=1$, and $|R \cap (S\triangle T)| = 1$, then $M$ has an $N$-detachable pair.
  \end{sublemma}
  \begin{slproof}
    %Suppose \cref{triadsdirty} does not hold.
    We may assume that $T\cap R=\emptyset$ and $|S\cap R|=1$.
    Thus, we have triads $R=\{r_1,r_2,r_3\}$, $T=\{t_1,t_2,t_3\}$ and $S=\{r_3,s,t_3\}$ (see \cref{triadfigs4}).
    We begin by handling the case where $X$ consists of more than these three triads.

    \begin{subsublemma}
      \label{thatsallfolks}
      %If $X \neq R\cup S\cup T$,
      If $X - (R\cup S\cup T) \neq \emptyset$,
      then $M$ has an $N$-detachable pair.
    \end{subsublemma}

    \begin{sslproof}
      Suppose that $q\in X-(R\cup S\cup T)$.
      Recall that every $x \in X$ is in a triad of $M \ba d$, so $q$ is in a triad~$Q$. 
      Suppose $Q$ intersects one of the triads $R$, $S$, or $T$, in two elements.
      Then $R \cup S \cup T \cup q$ is the union of three triads that meet $X$: two triads that intersect in one element, and a third triad disjoint from the other two.
      In this case \cref{triadsseparate} implies that $M$ has an $N$-detachable pair.
      So we may assume that $Q$ intersects each of the triads $R$, $S$ and $T$ in no more than a single element.
      By \cref{triadsnotcoline,triadsinside}, we may also assume that $Q$ is not in a cosegment with $R$, $S$, or $T$, otherwise $M$ has an $N$-detachable pair.

      By \cref{triadsatu}, we have that $\{r_3,t_3\}\cap Q=\emptyset$.
      If $Q\cap(R\cup S\cup T)=\{s\}$, or $Q\cap(R\cup S\cup T)=\emptyset$, then $Q$, $R$ and $T$ are disjoint, so $M$ has an $N$-detachable pair by \cref{triads3disjoint}.
      So we may assume, without loss of generality, that $Q \cap \{t_1,t_2\} \neq \emptyset$.
      Now, if $Q \cap R = \emptyset$, then $R$ is disjoint from $Q \cup T$, so $M$ has an $N$-detachable pair by \cref{triadsseparate}.
      Thus we may assume, without loss of generality, that $Q=\{q,r_2,t_2\}$.
      %Now \cref{triadsinside} implies that $R \cup S \cup T \cup q \subseteq X$.
      If there exists some other $q'\in X-(R\cup S\cup T\cup q)$, with corresponding triad $Q'$, then again we deduce that $Q' = \{q',r,t\}$ for $r \in \{r_1,r_2\}$ and $t \in \{t_1,t_2\}$.
      Suppose $\{r,t\} = \{r_2,t_2\}$.
      Then $\{q,q',t_2\}$ is a triad, so, by applying \cref{triadsseparate} using the triads $R$, $\{q,q',t_2\}$, and $T$, we deduce that $M$ has an $N$-detachable pair.
      So we may assume that $\{r,t\} \neq \{r_2,t_2\}$.
      But now, due to the disjoint triad $S$, we obtain an $N$-detachable pair at the hands of either \cref{triads3disjoint} or \cref{triadsseparate}.
      Therefore $X \subseteq R\cup S\cup T\cup q$.

      \medskip
      Next, we claim that, up to a cyclic shift on the labels given to $R$, $S$, $T$, and $Q$, we may assume that $S$ is not in a $4$-element fan in $M \ba d /q$.
      Observe that $R$ and $T$ are disjoint triads, as are $S$ and $Q$.
      By \cref{triadfans}, we may assume that $\lc(R,T) = \lc(S,Q) = 1$.
      If $S$ is in a $4$-element fan in $M \ba d /q$, then, by orthogonality, there is a $4$-element circuit $C=\{q,q_2,s',s''\}$ in $M \ba d$ for $q_2 \in Q-q$ and $s',s'' \in S$.
      Moreover, $s \in \{s',s''\}$, for otherwise $C$ intersects either $R$ or $T$ in a single element, contradicting orthogonality.
      %If $s \notin \{s',s''\}$, then $Q$ is not in a $4$-element fan in $M \ba d/s$, since $\lc(S,Q)=1$, and so, up to swapping labels on $S$ and $Q$, the claim holds.  So we may assume that $s \in \{s',s''\}$.
      Again by orthogonality, $C$ contains either $\{t_2,t_3\}$ or $\{r_2,r_3\}$.

      Due to symmetry,
      if $R$ is not in a $4$-element fan in $M \ba d / t_1$, then, after a cyclic shift on the labels $R$, $S$, $T$, and $Q$, the claim holds.  So, repeating the argument used on $S$ and $Q$, but this time for $R$ and $T$, we reveal a circuit containing $\{r_1,t_1\}$ and either $\{r_2,t_2\}$ or $\{r_3,t_3\}$.
      Without loss of generality, we may assume that $M \ba d$ has circuits
      $\{s,q,t_2,t_3\}$ and $\{t_1,t_2,r_1,r_2\}$.

      By \cref{triadsintersecting} we may assume that $r_{M \ba d}(Q \triangle T)=4$.
      So $r_{M \ba d}(Q \cup T) \le 5$, and it follows, due to the existence of the circuits $\{s,q,t_2,t_3\}$ and $\{t_1,t_2,r_1,r_2\}$, that $r_{M \ba d}(X'-r_3) \le 5$ where $X' = R \cup S \cup T \cup Q$.
      But since each element of $\{s,t_1,r_1,q\}$ is in a triad of $M \ba d$ where the other elements are in $X'-\{s,t_1,r_1,q\}$, we have $r(E(M\ba d)-X') \le r(M\ba d) - 4$.
      Since $\lambda_{M \ba d}(X)=2$, it follows, by uncrossing, that $\lambda_{M \ba d}(X')=2$, and we deduce that $(X'-r_3,\{r_3\},E(M\ba d)-X')$ is a cyclic $3$-separation of $M \ba d$.
      Thus $\co(M \ba d \ba r_3)$ is not $3$-connected, implying that $r_3 \notin X$.
      Now, for every $x \in X'-r_3$, we have $x \in \cl(X'-\{r_3,x\}) \cap \cocl_{M \ba d}(X'-\{r_3,x\})$, so $X'-\{r_3,x\}$ is not $3$-separating.
      As $X$ is $3$-separating, $|X| < |X'-r_3|-1 = 6$.
      By \cref{triadsinside}, $X = \{r_1,r_2,s,t_3,t_i\}$ for some $i \in \{1,2\}$.  But then $X$ does not contain a triad; a contradiction.
      %By \cref{triadsinside}, we may assume that $X = \{r_1,r_2,s,t_3,t_2\}$.
      %Now $r^*_{M \ba d}(X) =r^*_{M \ba d}(X') = 4$, and $r_{M \ba d}(X) =r_{M \ba d}(X'-r_3) = 5$.
      %Since $|X|=5$, we have that $\lambda_{M \ba d}(X) = 4$; a contradiction.
%
      This proves the claim, % that, up to a cyclic shift on the labels given to $R$, $S$, $T$, and $Q$, we may
      so henceforth we
      assume that $S$ is not in a $4$-element fan in $M \ba d /q$.

      \medskip

      Now we are in a position where we can apply Tutte's Triangle Lemma on the $3$-connected matroid $M \ba d/q$.
      We have two possible scenarios: either $M\ba d/q/r_3$ and $M\ba d/q/t_3$ are $3$-connected, or $M\ba d/q/s$ is $3$-connected.
      Assume that the first of these possibilities holds.
      Then $M/q$ contains triangles $\{d,r',r_3\}$ and $\{d,t_3,t'\}$, say.

      Suppose that the triangles $\{d,r',r_3\}$ and $\{d,t_3,t'\}$ coincide; that is, $r'=t_3$ and $t'=r_3$.
      Now $\{d,r_3,t_3\}$ is a triangle and a triad of $M/q \ba s$, so it is $2$-separating and, by Bixby's Lemma, $M/q/s$ is $3$-connected up to parallel pairs.
      However, $s$ is not in a triangle of $M/q$ and thus $M/q/s$ is $3$-connected.
      Hence, by \cref{triadsncontractiblemain}\cref{triadsncontractible}, $M$ has an $N$-detachable pair.
      Therefore we may assume that $\{d,r',r_3\}$ and $\{d,t_3,t'\}$ are distinct. 

      Now, by the circuit elimination axiom, there is a circuit of $M/q$ contained in $\{r_3,t_3,r',t'\}$.
      By orthogonality with $R$ and $T$, we deduce that $\{r_3,t_3,r',t'\}$ is a circuit of $M \ba d/q$ where $r' \in \{r_1,r_2\}$ and $t' \in \{t_1,t_2\}$.

      We now switch to the dual: let $M' = M^*/d$ and consider $M' \ba q \ba s$.
      Since the triangle $S$ is not in a $4$-element fan in $M'\ba q$, the matroid $M'\ba q\ba s$ does not have any series pairs.
      Suppose that $(A,B)$ is a $2$-separation of $M'\ba q\ba s$.
      We may assume that $r_3\in A$ and $t_3\in B$, for otherwise we would have $s\in\cl_{M'\ba q}(A)$ or $s\in\cl_{M'\ba q}(B)$, which would imply that $M'\ba q$ has a $2$-separation; a contradiction.
      So $|R \cap A| \ge 2$ and $|T \cap B| \ge 2$, for otherwise $r_3 \in \cl_{M'\ba q}(B)$ or $t_3 \in \cl_{M'\ba q}(A)$ in which case again $M'\ba q$ has a $2$-separation.
      By \cref{aquickaside1}, %Since $M'\ba q$ has no series pairs or parallel pairs,
      $(\fcl(A),B-\fcl(A))$ and $(A-\fcl(B),\fcl(B))$ are also $2$-separations.
      Hence we may assume that $R\subseteq A$ and $T\subseteq B$.

      Let $X' = X \cup (R \cup S \cup T)$, and observe that $X'$ is $3$-separating in $M'$ by uncrossing.
      Let $W=E(M'\ba q\ba s)-X'$.
      As $|W|\geq 3$, we may assume that $|A\cap W|\geq 2$.
      Denote $\lambda_{M'\ba q\ba s}$ by $\lambda$.
      By the submodularity of $\lambda$, we have
      \begin{align*}
        \lambda(A\cap W)+\lambda(A\cup W) 
        \leq \lambda(A)+\lambda(W) %\leq 1+2 =
        = 3.
      \end{align*}
      So either $A\cap W$ or $A\cup W$ is $2$-separating in $M'\ba q\ba s$.
      The first possibility implies that $(X'-\{q,s\})\cup B$ is $2$-separating, but since $s\in\cl_{M'\ba q}(X'-\{q,s\})$, this implies that %$M'\ba q$ has a contradictory $2$-separation, by virtue of the fact that $|A\cap W|\geq 2$.
      $A \cap W$ is a $2$-separation in $M'\ba q$; a contradiction.
      So $A\cup W$ is $2$-separating in $M'\ba q\ba s$.

      Now $B \cap X'$ is a triangle in $M'\ba q \ba s$, implying that $r_{M'}(A\cup W) = r(M')-1$.
      Note that since $X'-q$ is $3$-separating in the $3$-connected matroid $M'\ba q$, and $s \in \cl_{M'}(X'-\{q,s\})$, we have that $X'-\{q,s\}$ is exactly $3$-separating in $M'\ba q\ba s$, otherwise $M'\ba q$ is not $3$-connected.
      %Note that, in $M'\ba q\ba s$, the $3$-separating set $X'-\{q,s\}$ is exactly $3$-separating, otherwise $M'\ba q$ is not $3$-connected.
      If $X'-\{q,s\}$ has rank three in $M'$, then $r_{M'}(R \cup S) =3$, contradicting \cref{triadsintersecting}.
      So $X'-\{q,s\}$ has rank four, and $r_{M'}(W) = r(M')-2$.
      Pick $r$ so that $\{r,r'\} = \{r_1,r_2\}$.  
      If $r \notin X$, then $r \in \cocl_{M'}(X)$, so $r \notin \cl_{M'}(W)$.
      On the other hand, if $r \in X$, then
      $r\not\in\cl_{M'}(W)$, for otherwise $\si(M'/r)$ fails to be $3$-connected; a contradiction.
      So $r_{M'}(W\cup r) = r(M')-1$.
      As $r_3$ is in the cocircuit $\{r',t',r_3,t_3\}$, % of $M'\ba q$, and this cocircuit is disjoint from $W\cup r$,
      it then follows that $r_{M'}(W\cup\{r,r_3\})=r(M')$.
      But this is contradictory, since %$B\cap X$ is a triangle in $M'\ba q$, implying that
      $r_{M'}(A\cup W) = r(M')-1$.
      We are left to conclude that $M'\ba q\ba s$ is $3$-connected.

      Returning to the application of Tutte's Triangle Lemma,
      we now have that $M\ba d/ q/ s$ is $3$-connected.
      So either $M/ q/ s$ is $3$-connected, in which case $M$ has an $N$-detachable pair by \cref{triadsncontractiblemain}\cref{triadsncontractible}, or $\{d,s,r'\}$ is a triangle in $M/q$.
      But since $\{d,r_1,r_2,r_3\}$ and $\{d,t_1,t_2,t_3\}$ are cocircuits of $M/q$, orthogonality implies that $r' \in R \cap T$; a contradiction.
    \end{sslproof}

    Let $X' = R \cup S \cup T$ and $W'=E(M\ba d)-X'$.
    With \ref{thatsallfolks} in hand, we henceforth assume that $X \subseteq X'$.
    The next step is to show the following:

    \begin{subsublemma}
      \label{gross}
      Suppose that $M \ba d / t_2/t_3$ has an $N$-minor.
      If the triad $S$ is closed in $M\ba d/t_2$,
      then $M$ has an $N$-detachable pair.
    \end{subsublemma}

    \begin{sslproof}
      Suppose that $S$ is closed in $M\ba d/t_2$.
      Clearly $M \ba d / t_2$ is $3$-connected when $t_2 \in X$.
      Suppose $t_2 \notin X$.
      We may assume that $T-t_2 \subseteq X$, by \cref{triadsinside}.
      Then $(X, \{t_2\}, E(M \ba d) - (X \cup t_2))$ is a cyclic $3$-separation of $M \ba d$, so $\si(M \ba d / t_2)$ is $3$-connected, by Bixby's Lemma.
      But $t_2$ is $N$-contractible in $M \ba d$, so it is not contained in an \unfortunate\ triangle.
      Since every triangle of $M$ is \unfortunate, we deduce that $M \ba d / t_2$ is $3$-connected.

      Now, by Tutte's Triangle Lemma, either
      \begin{enumerate}[label=\rm(\Roman*)]
        \item $M\ba d/t_2/t_3$ and $M\ba d/t_2/s$ are $3$-connected, or\label{cI}
        \item $M\ba d/t_2/r_3$ and $M\ba d/t_2/s'$ are $3$-connected, but $M \ba d/t_2/s''$ is not $3$-connected, for some $\{s',s''\} = \{s,t_3\}$.\label{cII}
      \end{enumerate}
      We first establish some properties that hold in either case.
      Observe that $M / t_2$ is $3$-connected, since $M \ba d / t_2$ is $3$-connected and $\{t_2,d\}$ is not contained in a triangle of $M$.
      We may assume that $r_{M \ba d}^*(X) = r_{M \ba d}^*(X')$, by \cref{triadsinside}.
      Hence $r_{M \ba d}^*(W') \ge 3$.

      \medskip
      Our first claim is that we may assume that $d$ is not in a triangle of $M/t_2$ with two elements from $S$.
      Suppose $d$ is in such a triangle $U$.
      By orthogonality, $r_3 \in U$.
      So let $S - U = \{s'\}$ and let $U = \{d,r_3,s''\}$, where $\{s',s''\} = \{s,t_3\}$.
      Since $E(M/t_2) - (S\cup d)$ is a hyperplane of $M/t_2\ba s'$, we have that $(U,E(M/t_2\ba s')-U)$ is a $2$-separation in $M/t_2\ba s'$, so $\co(M/t_2\ba s')$ is not $3$-connected.
      By Bixby's Lemma, $\si(M/t_2/s')$ is $3$-connected.
      If $M/t_2/s'$ is $3$-connected, then $M$ has an $N$-detachable pair as required (using \cref{triadsncontractiblemain}\cref{triadsncontractible} when $s' \neq t_3$).
      Otherwise, if $M/t_2/s'$ has a parallel pair that does not contain $d$, then, by orthogonality, $S$ is not closed in $M \ba d / t_2$; a contradiction.
      Suppose $M/t_2/s'$ has a parallel pair $\{d,q\}$.
      Then $M/t_2$ has triangles $U$ and $\{s',d,q\}$, so, by circuit elimination, $S \cup q$ contains a circuit of $M/t_2$.
      Since $S$ is closed in $M \ba d / t_2$, the set $S$ contains a circuit of $M/t_2$.
      But then $S$ is a triangle and a triad of $M \ba d/t_2$, so this matroid is not $3$-connected; a contradiction.
      This proves the first claim.
      \medskip

      Let $s' \in \{t_3,s\}$ such that $M \ba d / t_2 / s'$ is $3$-connected.
      Then either $M / t_2 / s'$ is $3$-connected, in which case $M$ has an $N$-detachable pair (using \cref{triadsncontractiblemain}\cref{triadsncontractible} when $s'=s$), or there exists a triangle $\{s',d,\alpha\}$ in $M/t_2$.
      As $d$ blocks the triad $R$ of $M\ba d$, the set $R\cup d$ is a cocircuit in $M/t_2$, and so, by orthogonality, $\alpha\in R$.
      Since $d$ is not in a triangle of $M/t_2$ with $\{s',r_3\} \subseteq S$, we have that $\alpha\neq r_3$.  Thus $\{d,s',\alpha\}$ is a triangle of $M/t_2$ for some $\alpha \in \{r_1,r_2\}$.
      \medskip

      Suppose (I) holds.
      Since $M \ba d/t_2/t_3$ and $M \ba d/t_2/s$ are $3$-connected, we may assume that $M/t_2$ has triangles $\{d,t_3,\alpha\}$ and $\{d,s,\beta\}$ for some $\alpha,\beta \in \{r_1,r_2\}$.
      By circuit elimination, $\{\alpha,\beta,s,t_3\}$ contains a circuit of $M/t_2$.
      It follows by orthogonality that $\{\alpha,\beta\} = \{r_1,r_2\}$, so we may assume that $\{d,t_3,r_1\}$, $\{d,s,r_2\}$, and $\{r_1,r_2,s,t_3\}$ are circuits in $M/t_2$.
      As $S$ and $R$ are triads of $M/t_2\ba d$, we have that $r_{M/t_2\ba d}(W'\cup t_1)\leq r(M/t_2\ba d)-2$.
      But now $r_{M/t_2}(W'\cup t_1)\leq r(M/t_2\ba r_3)-2$ and $r_{M/t_2}(\{r_1,r_2,s,t_3,d\})=3$, so that $\lambda_{M/t_2\ba r_3}(\{r_1,r_2,s,t_3,d\}) \le 1$.

      By Bixby's Lemma, $\si(M/t_2/r_3)$ is $3$-connected, hence either $M/t_2/r_3$ is $3$-connected, in which case $M$ has an an $N$-detachable pair by \cref{triadsncontractiblemain}\cref{triadsncontractible}, or $r_3$ is contained in some triangle~$U$ of $M/t_2$.
      Since $R\cup d$ and $S\cup d$ are both cocircuits of $M/t_2$ containing $r_3$, orthogonality and the fact that $S$ is closed in $M \ba d / t_2$ implies that $U$ also contains $d$.
      The final element of $U$ cannot be in $\{s,t_3\}$, and also cannot be in $\{r_1,r_2\}$, otherwise $(R \cup S \cup d, W' \cup t_1)$ is a $2$-separation of $M/t_2$.
      So either $U$ contains $t_1$, or $U$ meets $W'$.

      We first consider the latter case.  Let $U = \{d,r_3,w\}$ for $w \in W'$.
      Then $\{t_2,d,r_3,w\}$ is a circuit of $M$, and, by circuit elimination with $\{t_2,d,s,r_2\}$, we have that $\{r_3,r_2,d,s,w\}$ contains a circuit.
      But $d \in \cocl(T)$, so $\{r_3,r_2,s,w\}$ is a circuit.
      Since $\{w,r_3\}$ is a parallel pair in $M\ba d/s/r_2$, which has an $N$-minor by \cref{triadsncontractiblemain}\cref{triadsncontractible}, $w$ is $N$-deletable in $M \ba d$.
      As $\co(M \ba d \ba w)$ is $3$-connected and $M$ has no $N$-detachable pairs, $w$ is in a triad of $M \ba d$ that contains an element $y \in \{s,r_2,r_3\}$ and an element $w' \in W'-w$.
      If $y \in X$, then $\co(M\ba d \ba y)$ is $3$-connected; a contradiction.
      So $y \in \{s,r_2,r_3\}-X$.
      By orthogonality between the triad $\{y,w,w'\}$ and the circuit $\{r_1,r_2,s,t_3\}$ of $M/t_2$, we deduce that $y=r_3$.
      Now $(R \cup S)-r_3 \subseteq X$.
      If $R\cup T$ contains a $4$-element cosegment, then by orthogonality with the circuit $\{r_3,r_2,s,w\}$, the cosegment is $T \cup r_1$.  But then $\{d,r_1,t_2,t_3\}$ is a cocircuit of $M$ that intersects the triangle $\{d,s,r_2\}$ in a single element; a contradiction.
      As $X$ contains a triad, it now follows that $T \subseteq X$, and hence $X=X'-r_3$.

      Now $M / r_1 /r_3$ has an $N$-minor by \cref{triadsncontractiblemain}\cref{triadsncontractible}.
      In what follows, we frequently use the fact that no triangle of $M$ meets $X$, and $r_3 \notin \cl(X)$, for otherwise $\lambda_{M \ba d}(X \cup r_3)=1$.
      Observe that $(X-r_1, \{r_3\}, W')$ is a cyclic $3$-separation in the $3$-connected matroid $M \ba d / r_1$, so $\si(M \ba d / r_1/r_3)$ is $3$-connected.
      If $\{r_1,r_3\}$ is contained in a $4$-element circuit~$C$ of $M$, then, by orthogonality, either $d \in C$, or $C$ contains an element in $\{w,w'\}$ and an element in $\{s,t_3\}$.
      If $w \in C$, then by circuit elimination with $\{r_2,r_3,s,w\}$, the set $R \cup S$ contains a circuit, in which case, by orthogonality, $R \cup s$ is a circuit; a contradiction.
      If $w' \in C$, then $w' \in \cl(X' \cup w) \cap \cocl_{M \ba d}(X' \cup w)$, where $X' \cup w$ is $3$-separating since $w \in \cl(X')$; a contradiction.
      So $d \in C$.
      Now $C = \{r_1,r_3,d,t_i\}$ for some $i \in [3]$.
      If $i=2$, then $R \cup \{s,t_2\}$ contains a circuit, by circuit elimination with $\{d,s,r_2,t_2\}$, in which case, by orthogonality, this circuit is $R \cup s$; a contradiction.
      On the other hand, if $i=3$, then, by circuit elimination with $\{d,t_3,r_1,t_2\}$, the set $\{r_1,r_3,t_2,t_3\}$ is a circuit, which is again contradictory.
      So $C = \{r_1,r_3,d,t_1\}$, in which case $T \cup \{r_1,r_3\}$ is a circuit, by circuit elimination with $\{d,t_3,r_1,t_2\}$.
      This circuit cannot contain $r_3$, so $T \cup r_1$ is a circuit, contradicting orthogonality.
      We deduce that $\{r_1,r_3\}$ is not contained in a $4$-element circuit of $M$, so $M/r_1/r_3$ is $3$-connected, and hence $M$ has an $N$-detachable pair.

      Now we may assume that the triangle~$U$ is $\{d,t_1,r_3\}$.
      Note that $W'$ and $W'\cup t_1$ are each exactly $3$-separating in $M/t_2$, and thus $t_1\in\cl_{M/t_2}(R \cup S \cup d)\cap\cl_{M/t_2}(W')$.
      Suppose that $M/t_2\ba r_2$ is not $3$-connected. Then, as $M/t_2$ has no series pairs, $M/t_2\ba r_2$ has a non-trivial $2$-separation $(P,Q)$.
      By \cref{aquickaside1}, we may assume that the triad $\{d,r_1,r_3\}$ is contained in $P$.
      Likewise, as $\{d,t_3,r_1\}$ is a triangle in $M/t_2\ba r_2$, we may assume that $t_3\in P$, and, as $S\cup d$ is a cocircuit, that $s \in P$. 
      But $r_2\in\cl_{M/t_2}(\{s,d\})$, so $(P\cup r_2,Q)$ is a $2$-separation in the $3$-connected matroid $M/t_2$; a contradiction.
      So $M/t_2\ba r_2$ is $3$-connected.

      By Bixby's Lemma, $M/t_2\ba r_2\ba t_1$ is now $3$-connected unless $t_1$ is in a triad~$\Gamma$ of $M/t_2\ba r_2$ that meets both $W'$ and $\{r_1,r_3,d,s,t_3\}$.
      Let $\Gamma \cap W' = \{w\}$.
      If $d \notin \Gamma$, then, as $\Gamma \cup r_2$ is a cocircuit of $M/t_2$, orthogonality implies that this cocircuit intersects the triangles $\{d,s,r_2\}$ and $\{d,t_1,r_3\}$ in at least two elements; a contradiction. 
      So we may assume that $\Gamma = \{w,d,t_1\}$.
      But, recalling that $\{d,t_3,r_1\}$ is a triangle of $M/t_2$, this also contradicts orthogonality.
      Therefore $M/t_2\ba r_2\ba t_1$ is $3$-connected.

      Suppose that $M\ba r_2\ba t_1$ is not $3$-connected.
      Then $M$ has a cocircuit $C^*=\{t_1,t_2,r_2,\delta\}$ where $\delta \in \{r_1,t_3,d\}$, by orthogonality.
      Recall that $M$ has the following cocircuits: $C_T=T\cup d$, $C_R=R\cup d$, and $C_S=S\cup d$.
      If $\delta\in C_S$, then $r_1\not\in C^*\cup C_S\cup C_T$ so that $W'=E(M)-(C^*\cup C_S\cup C_T\cup C_R)$ is a flat of rank at most $r(M)-4$.
      If $\delta\not\in C_S$, then $\delta=r_1$, so that $s\not\in C^*\cup C_R\cup C_T$, again implying that $W'$ is a flat of rank at most $r(M)-4$.
      But $W'$ is exactly $3$-separating in $M$, and, due to the triangles $\{d,t_3,r_1\}$, $\{d,s,r_2\}$, and $\{d,t_1,r_3\}$ of $M/t_2$, we have $r_M(X') \le 5$, contradicting the fact that $M$ is $3$-connected.
      So $M \ba r_2 \ba t_1$ is $3$-connected.

      It remains to show that $M \ba r_2 \ba t_1$ has an $N$-minor.
      First we show that $X=X'$.
      Recall that $\{r_1,r_2,s,t_3\}$ is a circuit of $M/t_2$.
      By orthogonality, $\{r_1,r_2,s,t_3,t_2\}$ is a circuit of $M$.
      Recall also that $\{d,t_1,r_3\}$ and $\{d,s,r_2\}$ are triangles of $M/t_2$.
      By circuit elimination, $M$ has a circuit contained in $\{t_1,t_2,s,r_2,r_3\}$.
      By orthogonality, and since each triangle of $M$ is \unfortunate, $\{t_1,t_2,s,r_2,r_3\}$ is a circuit of $M$.
      So $r(X') \le 5$.
      Now 
      \begin{align*}
          \lambda_{M \ba d}(X') &= r(X') + r^*_{M \ba d}(X') - |X'| \\
          &\le 5 + 4 - 7 = 2.
      \end{align*}
      Hence $r(X') = 5$ and $r^*_{M \ba d}(X') = 4$.
      Moreover, for each $x \in X'$, we have $x \in \cl(X'-x) \cap \cocl_{M \ba d}(X'-x)$, so $X'-x$ is not $3$-separating.
      Suppose $X \subsetneqq X'$.
      Then, by \cref{triadsinside} and since $X$ contains a triad, we may assume that $|X| = 5$, and hence $r(X) \ge 4$.
      But $r^*(X) = r^*(X')=4$, so $\lambda_{M \ba d}(X) \ge 3$; a contradiction.
      We deduce that $X = X'$.

      Now $r_2,t_1 \in X$ and $r_2 \in \cl(X-\{r_2,t_1\})$.
      Hence $\{r_2,t_1\}$ is an $N$-detachable pair by \cref{triadsdelete}.
      This proves \cref{gross} when (I) holds.
      \medskip

      Now suppose (II) holds.
      Since $M\ba d/t_2/r_3$ is $3$-connected, either $M$ has an $N$-detachable pair, or there exists a triangle $\{d,r_3,\gamma\}$ in $M/t_2$.
      For some $\{s',s''\}=\{s,t_3\}$, the matroid $M\ba d/t_2/s'$ is $3$-connected but $M\ba d/t_2/s''$ is not $3$-connected.
      Since $M\ba d/t_2/s'$ is $3$-connected we may assume that $M/t_2$ has a triangle $\{d,s',\alpha\}$ for some $\alpha \in \{r_1,r_2\}$.  Without loss of generality let $\alpha = r_1$.

      Consider $M/t_2/s''$.
      If this matroid is $3$-connected, then $M$ has an $N$-detachable pair (by \cref{triadsncontractiblemain}\cref{triadsncontractible} when $s'' = s$).
      So we may assume that $M/t_2/s''$ is not $3$-connected.
      Note that there are no triangles in $M/t_2$ that contain $s''$ but not $d$, by orthogonality and since $S$ is closed in $M \ba d /t_2$.
      Thus, if $M/t_2/s''$ is $3$-connected up to parallel classes, then this matroid has only a single parallel pair, which contains $d$, so $M\ba d/t_2/s''$ is $3$-connected; a contradiction.
      So $M/t_2/s''$ has a $2$-separation $(P,Q)$ where we may assume that either $P$ or $Q$ is fully closed, by \cref{aquickaside1}.
      Thus, to begin with, we may assume that the triangle $\{d,s',r_1\}\subseteq P$ and $P$ is fully closed.
      Now $r_3\notin P$, as otherwise $s''\in\cl^*_{M/t_2}(P)$, which would result in a $2$-separation $(P\cup s'',Q)$ in $M/t_2$.
      So $r_3\in Q$ and thus $\{\gamma,r_2\} \subseteq Q$ as well, since $\{d,r_3,\gamma\}$ is a triangle and $R\cup d$ is a cocircuit.
      But now $d\in\cl_{M/t_2/s''}(Q)$, and $r_1\in\cl^*_{M/t_2/s''}(Q\cup d)$, %and thus $X-\{t_2,s''\} \subseteq\fcl_{M/t_2/s''}(Q)$,
      so that $(P',Q') = (P-\fcl_{M/t_2/s''}(Q),\fcl_{M/t_2/s''}(Q))$ is a $2$-separation of $M/t_2/s''$ in which
      $R \cup \{s',\gamma,d\} \subseteq Q'$.
      But now $s''\in\cl^*_{M/t_2}(Q')$ and $(P',Q'\cup s'')$ is a $2$-separation in the $3$-connected matroid $M/t_2$; a contradiction.
      This completes the proof of \cref{gross}.
    \end{sslproof}

    \begin{subsublemma}
      \label{newfix}
      Suppose that $M /t_1/t_3$ and $M/t_2/t_3$ have $N$-minors, and $M$ has no $N$-detachable pairs.
      Then, for each $t \in \{t_1,t_2\}$, there exists $w_{t} \in E(M \ba d)$ such that $S \cup \{t,w_t\}$ contains a circuit.
      Moreover, if $w_{t_2} \notin X'$, then 
      \begin{itemize}
        \item $\{s,t_2,t_3,w_{t_2}\}$ is a circuit,
        \item $\{y,w_{t_2},w'\}$ is a triad of $M \ba d$ for some $y \in \{s,t_2,t_3\}-X$ and $w' \in W'-w_{t_2}$,
        \item $w_{t_1} \in X'$, and
        \item either $R \cup t_1$ is a cosegment of $M \ba d$, or $X = X'-y$.
      \end{itemize}
    \end{subsublemma}
    \begin{sslproof}
      By \cref{gross} and symmetry, there exists $w_t \in E(M \ba d)-(S \cup t)$ such that $S \cup \{w_t,t\}$ contains a circuit, for each $t \in \{t_1,t_2\}$.
      Suppose that $w_{t_2} \notin X'$.
      Then, by orthogonality, $\{s,t_2,t_3,w_{t_2}\}$ is a circuit.
      Note that $M\ba d/s/t_2$ has an $N$-minor by \cref{triadsncontractiblemain}\cref{triadsncontractible}, so $w_{t_2}$ is $N$-deletable in $M \ba d$.
      Since $\co(M \ba d \ba w_{t_2})$ is $3$-connected and $M$ has no $N$-detachable pairs, $w_{t_2}$ is in a triad of $M \ba d$ that contains an element $y \in \{s,t_2,t_3\}$ and an element $w' \in W'-w_{t_2}$.
      If $y \in X$, then $\co(M\ba d \ba y)$ is $3$-connected; a contradiction.
      So $y \in \{s,t_2,t_3\}-X$.

      Suppose that $w_{t_1} \notin X'$.
      Then, by symmetry, $\{s,t_1,t_3,w_{t_1}\}$ is a circuit, and $w_{t_1}$ is in a triad $\{y'',w_{t_1},w''\}$ for some $y'' \in \{s,t_1,t_3\}$ and $w'' \in W'-w_{t_1}$.
      If $w_{t_1} = w_{t_2}$, then $T \cup s$ contains a circuit, by circuit elimination.  By orthogonality with the triad $\{y,w_{t_2},w'\}$, this circuit is a triangle.  But this triangle meets $X$; a contradiction.  So $w_{t_1} \neq w_{t_2}$.
      If $y \neq y''$, then, as $y,y'' \notin X$ and by \cref{triadsinside}, we may assume that $\{y,y''\} = \{s,t_2\}$, so $y = t_2$ and $y'' = s$.
      But then the triad $\{s, w_{t_1}, w''\}$ meets the circuit $\{s,t_2,t_3,w_{t_2}\}$, so $w_{t_2} = w''$ by orthogonality.
      Now $w_{t_1},w'' \in \cl(X')$.
      Hence $X' \cup w_{t_1}$ is $3$-separating, but then, due to the triad $\{y'',w_{t_1},w''\}$, we have that $w'' \in \cl(X' \cup w_{t_1}) \cap \cocl_{M \ba d}(X' \cup w_{t_1})$.
      Since $M \ba d$ is $3$-connected, this implies $|W'| = 3$, but then $W'$ is a triangle, contradicting that $r^*(W') \ge 3$.
      We deduce that $y = y''$, and hence $y \in \{s,t_3\}$.
      Now, by orthogonality between the circuit $\{s,t_1,t_3,w_{t_1}\}$ and the triad $\{y,w_{t_2},w'\}$, we have $w_{t_1} = w'$.
      As before, since $w_{t_1},w_{t_2} \in \cl(X')$ and $w_{t_1} \in \cocl_{M \ba d}(X' \cup w_{t_2})$, this is contradictory.
      This proves that $w_{t_1} \in X'$.

      Recall that $r^*_{M \ba d}(X)=r^*_{M \ba d}(X')$.
      Suppose that $r^*_{M \ba d}(X) \ge 4$.
      Then the triad $\Gamma$ contained in $X$ is either $R$, $S$, or $T$.
      Suppose also that $|X| \le 5$.
      Then $r(X) \le 3$, since $\lambda_{M \ba d}(X) = 2$.
      Since $X$ does not contain any triangles, $\Gamma \cup x$ is a circuit for every $x \in X-\Gamma$; but this contradicts orthogonality.
      We deduce that $|X|=6$, and hence $X=X'-y$.

      Now suppose $r^*_{M \ba d}(X) = 3$.
      Then $\{t_3,r_3,r_1\}$ cospans $X'$ in $M \ba d$, so $\{t_3,r_3,r_1,t_1\}$ contains a cocircuit.
      By orthogonality, this cocircuit does not meet the circuit $\{s,t_2,t_3,w_{t_2}\}$, so $\{r_1,r_3,t_1\}$ is a triad, and hence $R \cup t_1$ is a cosegment of $M \ba d$ as required.
    \end{sslproof}

    \begin{subsublemma}
      \label{newfix2}
      Either $M$ has an $N$-detachable pair, or, up to swapping $R$ and $T$, for each $i \in \{1,2\}$ there exists $w_{t_i} \in X'$ such that $S \cup \{t_i,w_{t_i}\}$ contains a circuit.
    \end{subsublemma}
    \begin{sslproof}
      Suppose that $M/t_i/t_3$ does not have an $N$-minor for some $i \in \{1,2\}$.
      Then, by \cref{triadsncontractiblemain}\cref{triadsncontractible2}, we may assume that $M/r_1/r_3$ and $M/r_2/r_3$ have $N$-minors, for otherwise $M$ has an $N$-detachable pair.
      So, up to swapping $R$ and $T$, we may assume that $M/t_1/t_3$ and $M/t_2/t_3$ have $N$-minors.

      Now, by \cref{gross}, we may assume that for each $i \in \{1,2\}$ there exists $w_{t_i}$ such that $S \cup \{t_i,w_{t_i}\}$ contains a circuit.
      If $\{w_{t_1},w_{t_2}\}\subseteq X'$, then \cref{newfix2} holds; so assume that $w_{t_2} \notin X'$.
      Then, by \cref{newfix}, $w_{t_1} \in X'$, and either $R \subseteq X$, or $R \cup t_1$ is a cosegment of $M \ba d$.
      In either case, \cref{triadsncontractiblemain}\cref{triadsncontractible} implies that $M/r_i/r_3$ has an $N$-minor for each $i \in \{1,2\}$.
      By \cref{gross} and symmetry, for each $i \in \{1,2\}$ there exists $w_{r_i}$ such that $S \cup \{r_i,w_{r_i}\}$ contains a circuit.
      If $\{w_{r_1},w_{r_2}\}\subseteq X'$, then \cref{newfix2} holds, after swapping $R$ and $T$.
      So we may assume, without loss of generality, that $w_{r_2} \notin X'$.

      We can now apply \cref{newfix} a second time, with $R$ in the role of $T$.
      Then $\{s,r_2,r_3,w_{r_2}\}$ is a circuit and $\{y'',w_{r_2},w''\}$ is a triad of $M \ba d$ for some $y'' \in \{s,r_2,r_3\}-X$ and $w'' \in W'-w_{r_2}$.
      By orthogonality, $R \cup t_1$ is not a cosegment of $M \ba d$, so $R \subseteq X$, and hence $y'' = s$.
      If $w_{r_2} \neq w_{t_2}$, then, as $\{s,t_2,t_3,w_{t_2}\}$ is a circuit, $w'' = w_{t_2}$, so $\{s,w_{r_2},w_{t_2}\}$ is a triad.
      Then $X' \cup w_{r_2}$ is $3$-separating, and $w_{t_2} \in \cl(X' \cup w_{r_2}) \cap \cocl_{M\ba d}(X' \cup w_{r_2})$; a contradiction.
      So $w_{r_2} = w_{t_2}$.
      Now, by circuit elimination, $S \cup \{t_2,r_2\}$ contains a circuit.
      Since $w_{t_1} \in X'$, this completes the proof of \cref{newfix2}.
    \end{sslproof}

    \begin{subsublemma}
      \label{theressomecocircuits}
      We may assume that both $S\cup\{r_1,t_1\}$ and $S\cup\{r_2,t_2\}$ contain circuits of $M \ba d$.
    \end{subsublemma}

    \begin{sslproof}
      Suppose that $S \cup t_1$ contains a circuit in $M\ba d/t_2$.
      As $r_3\in\cl^*_{M\ba d/t_2}(\{r_1,r_2\})$, the set $\{t_1,t_3,s\}$ is a triangle in $M\ba d/t_2$.
      Hence $\{t_1,t_2,t_3,s\}$ is a circuit of $M\ba d$.
      But now $(T, \{s\}, E(M\ba d)-(T \cup s))$ is a vertical $3$-separation, and $M\ba d /s$ is not $3$-connected; so $s \notin X$.
      By \cref{triadsinside}, we may assume that $S-s \subseteq X$ and $|T \cap X| \ge 2$.  Then, by uncrossing, $X \cup T$ is $3$-separating.
      But $s \in \cl(X \cup T) \cap \cocl_{M \ba d}(X \cup T)$; a contradiction.

      By \cref{newfix2}, we may assume that, for each $i \in \{1,2\}$, there exists $w_{t_i} \in X'$ such that $S \cup \{t_i,w_{t_i}\}$ contains a circuit.
      From the previous paragraph, and symmetry, $w_{t_1},w_{t_2} \in \{r_1,r_2\}$.
      Now, up to swapping the labels on $r_1$ and $r_2$, \cref{theressomecocircuits} holds unless $r_1$ is in a circuit $C_1$ contained in $S \cup \{t_1,r_1\}$ and a circuit $C_2$ contained in $S \cup \{t_2,r_1\}$.
      Suppose we are in the exceptional case.
      Note that $S \cup r_1$ does not contain a circuit, by orthogonality and since no element of $X$ is in a triangle.
      So $C_1 \neq C_2$.
      By circuit elimination, there is a circuit contained in $S \cup \{t_1,t_2\}$; a contradiction.
      This completes the proof. % of \cref{theressomecocircuits}.
    \end{sslproof}

    By \ref{theressomecocircuits}, $\{r_1,r_3,t_1,t_3\}$ and $\{r_2,r_3,t_2,t_3\}$ each contain circuits in $M\ba d/ s$.
    In fact, by orthogonality and since no triangles meet $X$, $\{r_1,r_3,t_1,t_3\}$ and $\{r_2,r_3,t_2,t_3\}$ are circuits of $M\ba d/ s$.
    If $\{r_1,r_3,t_1,t_3\}$ and $\{r_2,r_3,t_2,t_3\}$ are circuits of $M \ba d$, then $R$ and $T$ are disjoint triads of $M \ba d$ with $\lc(R,T)=2$, so $M$ has an $N$-detachable pair by \cref{triadfans}.
    So we may also assume that $s \in \cl(X'-s)$.

    Observe now that $x \in \cl(X'-x)$ for each $x \in X'$.
    If $X \neq X'$, then by \cref{triadsinside} and uncrossing, there exists some element $x \in X'-X$ for which $X'-x$ is $3$-separating.
    But $X'$ is $3$-separating and $x \in \cl(X'-x) \cap \cocl_{M \ba d}(X'-x)$ for each $x \in X'$; a contradiction.
    We deduce that $X=X'$.

    \begin{subsublemma}
      \label{therebemorecocircuits}
      We may assume that
      %$S\cup d$, $\{r_1,t_1,s,d\}$ and $\{r_2,t_2,s,d\}$
      $\{r_1,t_1,s,d\}$, $\{r_2,t_2,s,d\}$ and $\{r_3,t_3,s,d\}$ are circuits of $M$.
    \end{subsublemma}

    \begin{sslproof}
      Suppose $C$ is a $4$-element circuit of $M \ba d$ containing $s$.
      By orthogonality, $|C \cap X| \ge 3$.  Suppose $C - X = \{w\}$.  Then $w \in \cl_{M \ba d}(X)-X$.  By \cref{triadsncontractiblemain}\cref{triadsncontractible}, $w$ is $N$-deletable in $M \ba d$, so $M$ has an $N$-detachable pair by \cref{closuresubst}.
      So we may assume that $C \subseteq X$.
      If $T \subseteq C$, then $T$ is a triangle and a triad of $M \ba d / s$, contradicting that this matroid is $3$-connected.  So $T \nsubseteq C$ and, similarly, $R \nsubseteq C$.
      But now either $|T\cap C|=1$ or $|R\cap C|=1$, contradicting orthogonality.
      Hence, for each $x\in X-s$, the matroid $M \ba d/s/x$ has no parallel pairs.

      Suppose $M\ba d/s/t_2$ is not $3$-connected.
      Then it has a non-trivial $2$-separation $(P,Q)$.
      In what follows, \cref{aquickaside1} will be used freely.
      We may assume that $R\subseteq P$ and $P$ is fully closed in $M\ba d / s / t_2$.
      Now $t_3\in Q$, as otherwise $(P\cup s,Q)$ is $2$-separating in $M\ba d / t_2$.
      Moreover, $t_1\in Q$ as $\{t_1,t_3,r_1,r_3\}$ is a circuit in $M \ba d/s/t_2$.
      But now $t_2\in\cl_{M\ba d/s}(Q)$ and $(P,Q\cup t_2)$ is $2$-separating in $M\ba d/s$.
      Therefore $M\ba d/s/t_2$ is $3$-connected.

      By symmetry, $M\ba d/s/t_1$, $M\ba d/s/r_1$ and $M\ba d/s/r_2$ are $3$-connected.
      A similar argument also gives that both $M\ba d/s/r_3$ and $M\ba d/s/t_3$ are $3$-connected.
      Thus, by \cref{triadsncontractiblemain}\cref{triadsncontractible} and since $X = X'$, the element $d$ is in some triangle with every element from $X-s$ in the matroid $M/s$.
      By orthogonality, these triangles intersect $R$ and $T$ in a single element each.
      As $\{r_1,t_1,r_3,t_3\}$ and $\{r_2,t_2,r_3,t_3\}$ are circuits in $M/s$, the only possible arrangement is that $\{r_1,t_1,d\}$, $\{r_2,t_2,d\}$ and $\{r_3,t_3,d\}$ are triangles of $M/s$.
    \end{sslproof}

    We now work towards showing that $M \ba t_1 \ba r_2$ is an $N$-detachable pair.
    First, suppose that $M \ba t_1 \ba r_2$ has a series pair.  Then there is a $4$-element cocircuit of $M$ containing $\{t_1,r_2\}$.  By orthogonality, either this cocircuit meets $\{d,s\}$, in which case the other two elements are from the circuit $\{d,s,t_3,r_3\}$, or the cocircuit is $\{t_1,t_2,r_1,r_2\}$.
    In the latter case, $X \subseteq \cocl_{M \ba d}(\{t_1,t_2,r_1\})$, so $r^*_{M \ba d}(X) =3$.  But as $r(X) = 5$ and $|X| = 7$, the set $X$ is $2$-separating in $M$; a contradiction.
    Similarly, if $\{t_1,r_2,z_1,z_2\}$ is a cocircuit for distinct $z_1,z_2 \in \{s,t_3,r_3\}$, then again $r^*_{M \ba d}(X) =3$; a contradiction.
    So we may assume that $\{t_1,r_2,d,z\}$ is a cocircuit for $z \in \{s,t_3,r_3\}$.
    By orthogonality with the circuits $\{r_1,r_3,t_1,t_3\}$ and $\{r_2,r_3,t_2,t_3\}$, we see that $z \neq s$.
    Now $\{s,r_1,t_1,t_2\} \subseteq \cocl_{M \ba d}(\{r_2,r_3,t_3\})$, so $r^*_{M \ba d}(X) =3$; a contradiction.
    %In either case, \ldots $r^*_{M \ba d}(X) < 4$ \ldots

    Now we may assume that if $M \ba t_1 \ba r_2$ is not $3$-connected, it has a non-trivial $2$-separation $(P,Q)$.
    By \cref{aquickaside1}, we may assume that $\{r_1,r_3,d\} \subseteq P$.
    If $t_3 \in P$, then $\{s,t_2\} \subseteq P$, and it follows that $(P \cup \{t_1,r_2\},Q)$ is a $2$-separation in $M$; a contradiction.
    So $t_3 \in Q$, and, similarly, $\{s,t_2\} \subseteq Q$.
  But now, $(Q', P')= (\fcl(Q), P-\fcl(Q))$ is also a $2$-separation, where $r_3 \in Q'$, hence $r_1 \in Q'$, so $(Q' \cup \{t_1,r_2\}, P')$ is a $2$-separation of $M$; a contradiction.  We deduce that $M \ba t_1 \ba r_2$ is $3$-connected.
    By \cref{triadsdelete}, $\{t_1,r_2\}$ is an $N$-detachable pair.
    This completes the proof of \cref{triadsdirty}.
  \end{slproof}

  \begin{sublemma}
    \label{triadsfinal}
    Let $R$, $S$, and $T$ be distinct triads of $M\ba d$ that meet $X$, where the union of any two of these triads is not a cosegment of $M \ba d$.
    If $|S\cap T|=1$, and $|R\cap(S\cup T)|= 2$,
    %If $|R \cap S| = |S\cap T| = |R \cap T|=1$,
    then $M$ has an $N$-detachable pair.
  \end{sublemma}
  \begin{slproof}
    Since the union of any two of $R$, $S$, and $T$ is not a cosegment, $|R \cap S| = |S\cap T| = |R \cap T|=1$ and $R \cap S \cap T = \emptyset$.
    Let $S=\{s_1,s_2,t_3\}$, $T=\{t_1,t_2,t_3\}$ and $R=\{s_1,t_1,r\}$ (see \cref{triadfigs5}).

    \begin{subsublemma}
      \label{case4sublemma}
      %If $X \neq R\cup S\cup T$,
      If $X - (R\cup S\cup T) \neq \emptyset$,
      then $M$ has an $N$-detachable pair.
    \end{subsublemma}

    \begin{sslproof}
      Suppose $x\in X-(R\cup S\cup T)$.
      As $\co(M\ba d\ba x)$ is $3$-connected, we may assume $x$ is in a triad $\Gamma\subseteq X$, otherwise $\{d,x\}$ is an $N$-detachable pair.
      By \cref{triadsseparate,triadsdirty}, we may assume that $\Gamma$ intersects each of $R$, $S$ and $T$.
      If $\Gamma$ intersects $R$, $S$, or $T$ in two elements, then $M$ has an $N$-detachable pair by \cref{triadsdirty}. Now, $\Gamma\in\{\{x,t_3,r\},\{x,s_1,t_2\},\{x,t_1,s_2\}\}.$
      By \cref{triadsatu}, $M$ has an $N$-detachable pair in each case.
    \end{sslproof}

    Now suppose that $M$ has no $N$-detachable pairs.

    \begin{subsublemma}
      \label{case4sublemma1}
      Either $X = R \cup S \cup T$ or $X = (R \cup S \cup T) - z$ for some $z \in \{t_1,t_3,s_1\}$.
    \end{subsublemma}
    \begin{slproof}
      By \cref{case4sublemma}, $X \subseteq R \cup S \cup T$.
      %We claim that either $X = R \cup S \cup T$ or $X = (R \cup S \cup T) - z$ for $z \in \{t_1,t_3,s_1\}$.
      Suppose $X \subsetneqq R \cup S \cup T$, and $r \notin X$.  Then $X \cup S \cup T$ is $3$-separating, by \cref{triadsinside} and uncrossing, but this $3$-separating set is just $S \cup T$, contradicting \cref{triadsintersecting}.
      By symmetry, we deduce $\{r,s_2,t_2\} \subseteq X$.
      Now, if $z \notin X$ for some $z \in \{t_1,t_3,s_1\}$, then $X = (R \cup S \cup T)-z$ by \cref{triadsinside}, thus proving %the claim.
      \cref{case4sublemma1}
    \end{slproof}

    Let $X' = R \cup S \cup T$, and observe that $\lambda_{M \ba d}(X')=2$.
    \begin{subsublemma}
      \label{case4sublemma2}
      For each $x\in X$, %no element of $X-x$ is in a triad of $M^*/d\backslash x$.
      if $M \ba d / x$ contains a triangle that meets $X-x$, then this triangle is $\{x',z,w\}$ where $x' \in X-x$, $z \in X' - X$, and $w \notin X'$.
    \end{subsublemma}

    \begin{sslproof}
      Suppose that for some $x\in X$, there is a triangle~$U$ of $M \ba d / x$ that meets $X-x$.
      Then $U \cup x$ is a $4$-element circuit $C$ of $M\ba d$.
      By orthogonality with $R$, $S$, and $T$, we have $|C\cap X'|\geq 3$.
      Suppose $|C\cap X'|=3$ and let $C -X' = \{w\}$.
      If $C \cap X' \subseteq X$, then $w \in \cl(X)-X$, and $w$ is $N$-deletable in $M \ba d$ by \cref{triadsncontractiblemain}\cref{triadsncontractible}, contradicting \cref{closuresubst}.
      By \cref{case4sublemma1}, $C \cap X' = \{x',z,x\}$ where $x' \in X-x$ and $z \in X'-X$, as required.

      Now suppose that $C \subseteq X'$.
      If $C \nsubseteq X$, then
      $z \in C$ where $z \in X'-X$, and $z \in \cl_{M \ba d}(X)-X$.  %It follows, by \cref{triadsncontractible}, that $z is $N$-deletable.
      But $z \in \cocl_{M \ba d}(X)-X$, contradicting \cref{gutsstayguts2}.
      So $C \subseteq X$.
      By orthogonality and \cref{triadsintersecting}, $C$ contains one of $R$, $S$, or $T$.
      Let $y \in X-C$.
      Since $\lambda_{M \ba d}(X')=2$, we have $r_{M \ba d}(X') = 5$.
      So $y \notin \cl_{M \ba d}(X'-y)$.
      As $y \in \cocl_{M \ba d}(X'-z) \cap\cocl_{M \ba d}(E(M\ba d)-X')$, we see that
      $\co(M\backslash d\ba y)$ is not $3$-connected; a contradiction.
      So $C \nsubseteq X'$.
    \end{sslproof}

    Since $r^*_{M\ba d}(X)=3$ and $\lambda_{M \ba d}(X)=2$, the set $X$ contains a circuit of $M \ba d$.
    Suppose that $X$ properly contains a circuit~$C$.
    By \cref{case4sublemma2}, $|C| \ge 5$, so $|X| = 6$ and $|C|=5$.
    Let $X-C = \{y\}$.
    Then $y \in \cocl(C)$ and $y \notin \cl(C)$, so $(C,\{y\},E(M\ba d)-X)$ is a cyclic $3$-separation of $M \ba d$.
    Hence $\co(M\ba d \ba y)$ is not $3$-connected; a contradiction.
    We deduce that $X$ is a corank-$3$ circuit.

    Combining \cref{case4sublemma2} and two applications of the dual of \cref{r3cocirc2}, it now follows that,
    for all distinct $x,x'\in X$, either $M\ba d/ x/ x'$ is $3$-connected, or there is a $4$-element circuit $\{x,x',z,w\}$ of $M\ba d$, where $z \in X'-X$ and $w \in E(M \ba d)-X'$.
    By symmetry, we may assume that $X=X'-s_1$, so $z = s_1$. %$X'-s_1 \subseteq X$.
    By orthogonality, $\{t_1,t_2,z,w\}$ is not a circuit of $M \ba d$ for any $w \in E(M \ba d)-X'$.
    Similarly, neither $\{t_1,r,z,w\}$ nor $\{t_2,r,z,w\}$ is a circuit of $M \ba d$ for any $w \in E(M \ba d)-X'$.
    Since neither $\{t_1,t_2\}$, $\{t_1,r\}$, nor $\{t_2,r\}$ is an $N$-detachable pair, there are distinct $4$-element circuits $C_1$, $C_2$, and $C_3$ of $M$ containing $\{d,t_1,t_2\}$, $\{d,t_1,r\}$, and $\{d,t_2,r\}$, respectively.
    By orthogonality with the cocircuit $\{s_1,s_2,t_3,d\}$ of $M$, the circuits $C_1$, $C_2$, and $C_3$ each meet $\{s_1,s_2,t_3\}$. %, and $C_2$ meets $\{s_1,s_2,t_3\}$.
    There exists an element $y \in \{s_2,t_3\}$ that is in at most one of these three circuits.  By circuit elimination on two circuits not containing $y$, the set $X'-y$ contains a circuit of $M \ba d$, so $r_{M\ba d}(X'-y) \le 4$.  
    As $r_{M \ba d}(X') = 5$, it follows that $y \notin \cl_{M\ba d}(X'-y)$, so $y \in \cocl_{M \ba d}(X'-y)$ by \cref{exactSeps2}. %\cap\cocl_{M \ba d}(E(M\ba d)-X')$.
    Now $(X'-y,\{y\},E(M\ba d)-X')$ is a cyclic $3$-separation of $M \ba d$.
    Hence $\co(M\backslash d\ba y)$ is not $3$-connected, where $y \in X$; a contradiction.
  \end{slproof}

  We now return to the proof of \cref{cathedral}.
  Suppose that $M$ has no $N$-detachable pairs.
  Then every $x \in X$ is in a triad of $M \ba d$.
  As $|X|\geq 4$, there are distinct triads~$S$ and~$T$ that meet $X$, and $S \cup T$ is not a cosegment, by \cref{triadsnotcoline}.
  Suppose %that $S$ and $T$ are such triads, %that meet $X$
  %with $S \neq T$, where
  $S$ and $T$ meet at an element in $X$.
  As $|S\cap T|=1$, \cref{triadsintersecting} implies that $\lambda_{M\ba d}(S\cup T)>2$.
  By uncrossing and \cref{triadsinside}, the set $X \cup S \cup T$ is $3$-separating.
  Thus, there exists some $r\in X-(S\cup T)$, where $r$ is in a triad $R$.

  First, suppose that every such $r$ is such that either $S \cup r$ or $T \cup r$ is a cosegment.
  %If $|R \cap S| = 2$ say, then %$X \cup S \cup T \cup r$ is $3$-separating, so there exists some $r' \in X-(S\cup T \cup r)$.
  %$R \cup S$ is a $4$-element cosegment that meets $T$ in one element $z$ say.
  Without loss of generality, let $S \cup r$ be a cosegment. %, and let $z$ be the element in the intersection of $T$ and $R \cup S$.
  Now $S \cup r \nsubseteq X$, by \cref{triadsnotcoline}, so $S$ contains an element $z$ not in $X$.
  %If $z \in X$, then $(R \cup S) \cap X$ is a triad that meets $T$ in one element, $z$.
  Since $(S - z) \cup r$ and $T$ are triads that intersect in one element, 
  $T \cup (S-z) \cup r$ is not $3$-separating by \cref{triadsintersecting}.
  As the union of this set and $X$ is $3$-separating, by uncrossing,
  there exists some $r'\in X-(S \cup T \cup r)$, where either $S \cup r'$ or $T \cup r'$ is a cosegment.  If $S \cup r'$ is a cosegment, then $(S-z) \cup \{r,r'\}$ is a $4$-element cosegment contained in $X$, contradicting \cref{triadsnotcoline}.  So $T \cup r'$ is a cosegment.
  Now $T \cup r'$ is not contained in $X$, by \cref{triadsnotcoline}, so there is an element $z' \in T-X$.
  As $(T-z') \cup r'$ and $(S-z) \cup r$ are triads that intersect in one element, repeating the argument above we deduce an element $r'' \in X$ such that either $(T-z') \cup \{r',r''\}$ or $(S-z) \cup \{r,r''\}$ is a $4$-element cosegment contained in $X$; a contradiction.

  Now we may assume that neither $S \cup r$ nor $T \cup r$ is a cosegment.
  So $r$ is in a triad whose intersection with $S$ or $T$ has size at most one.
  By \cref{triadsseparate}, $R$ intersects $S \cup T$; then
  by \cref{triadsdirty}, $R$ intersects both $S$ and $T$.
  Now $|R\cap S|=|R\cap T|=1$, so
  $|R \cap (S \cup T)| \neq 1$ by \cref{triadsatu}, and $|R \cap (S \cup T)| \neq 2$ by \cref{triadsfinal}.
  This contradiction implies there are no two triads $S$ and $T$ that meet $X$, and intersect at a single element in $X$.

  Next, we claim that either $X$ is the disjoint union of two triads, or $X$ is a $5$-element subset of the disjoint union of two triads.
  Certainly $X$ contains a triad $S$ of $M \ba d$, and there is a triad $T$ that meets $X$ and is disjoint from $S$.
  By \cref{triadsinside}, $|T \cap X| \ge 2$.
  If $X-(S\cup T)=\emptyset$, then the claim holds.
  So suppose that $X-(S\cup T)$ is non-empty. 
  Then there is a triad $R$, distinct from $S$ and $T$, that meets $X$.
  So $|R \cap X| \ge 2$.
  If $R$ and $T$ are disjoint, then $M$ has an $N$-detachable pair by \cref{triads3disjoint}; whereas if $R$ intersects $T$ in one element not in $X$, then $R \cup T$ is not a cosegment, by \cref{triadsnotcoline}, so $M$ has an $N$-detachable pair by \cref{triadsseparate}.
  Hence $|R \cap T| = 2$, and $R \cup T$ is a $4$-element cosegment.
  By \cref{triadsinside,triadsnotcoline}, $|(R \cup T)-X|=1$.
  Let $T' = (R \cup T) \cap X$.
  Now $T'$ and $S$ are disjoint triads contained in $X$.
  If $X-(S\cup T')$ is non-empty, then there is another triad $R'$ that meets $X$, and neither $R' \cup S$ nor $R' \cup T'$ is a cosegment, by \cref{triadsnotcoline}.
  So if $R'$ meets $S$ or $T'$, it does so at a single element in $X$; a contradiction.  On the other hand, if this triad is disjoint from $S$ and $T'$, then this contradicts \cref{triads3disjoint}.  We deduce that $S \cup T' = X$, as required.

So we may assume that $X$ is contained in the disjoint union of two triads~$S$ and $T$, and $|X| \in \{5,6\}$.
By \cref{triadfans}, $\sqcap(S,T)=1$.
Let $X' = S \cup T$, let $W'=E(M\ba d)-X'$, and observe that $\lambda_{M \ba d}(X')=2$ and $r(X')=5$.

\begin{sublemma}
\label{triadslocalconn}
For all $2$-element subsets $S' \subseteq S$ and $T' \subseteq T$ such that $S' \neq S \cap X$ and $T' \neq T \cap X$,
\[\sqcap(S',T)=\sqcap(S',W')=\sqcap(T',S)
=\sqcap(T',W')=0.\]
\end{sublemma}

\begin{slproof}
  Let $S-S'=\{s\}$, and note that $s \in X$.
  Suppose $\sqcap(S',T)=1$, so $r(T\cup S')=4$.
  As $r(W'\cup s)=r(W')+1$, the set
  $W'\cup s$ is $3$-separating in $M \ba d$, implying that $s\in\cl^*_{M \ba d}(W')$.
  But then %$(T\cup S',W')$ is a $2$-separation of $M\ba d \ba s$, contradicting that $\co(M \ba d \ba s)$ is $3$-connected.
$\co(M \ba d \ba s)$ is not $3$-connected; a contradiction.
  So $\sqcap(S',T)=0$.

  Similarly, suppose $\sqcap(S',W')=1$, so $r(W' \cup S')=r(W')+1$.
  As $r(T \cup s) = 4$ and $r(X')=5$, we have $\lambda_{M \ba d}(T \cup s) = \lambda_{M \ba d}(X') = 2$, implying that $s\in\cl^*_{M \ba d}(T)$; a contradiction.
  %A similar argument, with the roles of $W$ and $T$ swapped, shows that $\sqcap(S',W)=0$, and
  By symmetry, %we also obtain
  $\sqcap(T',S)=\sqcap(T',W')=0$.
\end{slproof}

Now, if $|X| = 6$, then \cref{triadslocalconn} implies that $X$ is a corank-$3$ circuit in $M \ba d$.
Suppose that $|X|=5$.  Without loss of generality, let $T \subseteq X$ and $s\in S-X$.
If $X$ is not a circuit, it contains a $4$-element circuit, since no element of $X$ is in a triangle.
It follows that $(S-s) \cup (T-t)$ is a circuit for some $t \in T$.
But then $((S-s) \cup (T-t), \{t\}, E(M \ba d)-X)$ is a cyclic $3$-separation of $M \ba d$, implying that $\co(M \ba d \ba t)$ is not $3$-connected; a contradiction.
Moreover, note that $s \notin \cl(X)$ in this case.
So, if $|X| \in \{5,6\}$, then $X$ is the only circuit contained in $X'$.

Let $S = \{s_1,s_2,s_3\}$ and let $T = \{t_1,t_2,t_3\}$, where $T \subseteq X$ and $S-s_2 \subseteq X$.
Since $X$ is a circuit,
$T$ is not contained in a $4$-element fan in $M\ba d/s$, for each $s \in S$.
By Tutte's Triangle Lemma,
at least two of $M\ba d/s/t_1$, $M\ba d/s/t_2$, and $M\ba d/s/t_3$ are $3$-connected, for each $s \in S$.
Up to relabelling the elements of $T$, we may assume that $M \ba d/s_1/t_1$, $M \ba d/s_1/t_2$, and $M \ba d/s_2/t_1$ are $3$-connected.
As each of these matroids also has an $N$-minor, by \cref{triadsncontractiblemain}\ref{triadsncontractible}, we see that either $M$ has an $N$-detachable pair, or there are $4$-element circuits $\{d,s_1,t_1,\alpha\}$, $\{d,s_1,t_2,\beta\}$, and $\{d,s_2,t_1,\gamma\}$ in $M$.

By circuit elimination, $\{t_1,t_2,\alpha,\beta\}$ contains a circuit of $M/s_1$,
so $\sqcap_{M/s_1}(\{t_1,t_2\},\{\alpha,\beta\})=1$.
%
%\begin{sublemma}
%\label{111}
%$\{\alpha,\beta\}\cap(S\cup T)\neq\emptyset$ and $\{\alpha,\beta\}\neq\{s_2,s_3\}$.
%\end{sublemma}
%
%\begin{slproof}
If $\{\alpha,\beta\}\subseteq W'$, then $\sqcap_M(\{t_1,t_2\},\{\alpha,\beta\})=1$, and so
$\sqcap(\{t_1,t_2\},W')\geq 1$, contradicting \ref{triadslocalconn}.
On the other hand, if $\{\alpha,\beta\}=\{s_2,s_3\}$,
then $\sqcap(S,\{t_1,t_2\})\geq 1$, which again contradicts \ref{triadslocalconn}.
So $\{\alpha,\beta\}$ meets both $\{s_2,s_3\}$ and $W'$.

Now, by a similar argument, $\{s_1,s_2,\alpha,\gamma\}$ contains a circuit of $M/t_1$, where $\{\alpha,\gamma\}$ meets both $\{t_2,t_3\}$ and $W'$,
by \cref{triadslocalconn} and since $S \cap X \neq \{s_1,s_2\}$.
So $\alpha \in W'$, $\beta \in \{s_2,s_3\}$ and $\gamma \in \{t_2,t_3\}$.
Again by circuit elimination, $\{s_1,s_2,t_1,t_2,\beta,\gamma\}$ contains a circuit, where $\beta \in \{s_2,s_3\}$ and $\gamma \in \{t_2,t_3\}$.
Since the only circuit contained in $X'$ is $X$,
we deduce that $\beta = s_3$ and $\gamma = t_3$. %, and $S \cup T$ is a circuit.

Now, either $M \ba d/s_2/t_2$ or $M \ba d/s_2/t_3$ is also $3$-connected.
If $M \ba d/s_2/t_3$ is $3$-connected, then, as $M$ has no $N$-detachable pairs, $\{d,s_2,t_3,\zeta\}$ is a circuit, and, by circuit elimination with $\{d,s_2,t_1,t_3\}$, the set $\{s_2,t_1,t_3,\zeta\}$ contains a circuit in $M$.  By orthogonality, $\zeta \in \{s_1,s_3\}$, but this contradicts \cref{triadslocalconn}.
So we may assume that $M \ba d/s_2/t_2$ is $3$-connected.

As $M$ has no $N$-detachable pairs, there is a circuit $\{d,s_2,t_2,\eta\}$ in $M$.
By circuit elimination, and since $\gamma = t_3$, the set $\{t_1,t_2,t_3,\eta\}$ contains a circuit in $M/s_2$.
Since $X$ is the only circuit contained in $X'$, %By the foregoing,
we have $\eta \in W'$.
By orthogonality with $S$, the set $\{t_1,t_2,t_3,\eta\}$ is a circuit of $M$.
But $M\ba d/t_1/t_3$ has an $N$-minor, by \cref{triadsncontractiblemain}\cref{triadsncontractible3}, %(since $X$ is has corank three and is the union of disjoint triads in $M \ba d$),
so $\eta$ is $N$-deletable in $M \ba d$, contradicting \cref{closuresubst}.
This final contradiction completes the proof of \cref{cathedral}.
\end{proof}

\section{The non-triad case}
\label{seclifenontriad}

In this section, we prove \cref{mosque}.
We first prove a lemma that guarantees either the existence of a detachable pair, or specific structured outcomes.
We then consider these structured outcomes relative to an $N$-minor later in the section.

\subsection*{Preserving $3$-connectivity}

\begin{lemma}
  \label{temple}
  Let $M$ be a $3$-connected matroid with an element $d$ such that $M\ba d$ is $3$-connected.
  Let $(X,W)$ be a $3$-separation of $M\ba d$ with $|X|\geq 4$, $r(W) \ge 3$, $r^*_{M \ba d}(W) \ge 4$, and, for each $x \in X$,
  \begin{enumerate}[label=\rm(\alph*)]
    \item $\co(M\ba d\ba x)$ is $3$-connected, 
    \item $M\ba d/x$ is $3$-connected, and 
    \item $x$ is not contained in a triangle or triad of $M$.
  \end{enumerate}
  Suppose $X$ is minimal subject to these conditions, and $X$ does not contain a triad of $M \ba d$.
  Then either:
  \begin{enumerate}
    \item $M \ba d \ba x$ is $3$-connected for some $x \in X$;\label{templefirstcase}
    \item $M / s/t$ is $3$-connected for distinct $s,t \in \cocl_{M \ba d}(X)$ such that $s \in S^*$ and $t \in X \cap (T^*-S^*)$ for distinct triads $S^*$ and $T^*$ of $M \ba d$ that meet $X$;\label{templecontractpair}
    \item $\{x,x',c,w\}$ is a $4$-element circuit of $M \ba d$ where $\{x,x'\} \subseteq X$, $c \in \cocl_{M \ba d}(X)-X$, $w \in W-c$, and $x$ and $x'$ are in distinct triads of $M \ba d$ contained in $X \cup c$; or\label{templehardcase}
    \item $X=\{x_1,x_1',x_2,x_2'\}$ is a quad in $M\ba d$,
      there exists an element $c \in W$ such that $\{x_1,x_1',c\}$ and $\{x_2,x_2',c\}$ are triads of $M \ba d$, and
      for each $x \in X$ there is a $4$-element circuit of $M$ containing $\{x,c,d\}$.\label{templequadcase}
    %\item $X\cup\{c,d\}$ is a \spikelike\ of $M$, where $c \in \cocl_{M \ba d}(X)-X$.\label{templespikecase}
  \end{enumerate}
\end{lemma}

\begin{proof}
  We assume that \cref{templefirstcase} does not hold, and show that one of \cref{templecontractpair,templehardcase,templequadcase} holds.
  We consider two cases: \cref{coclosed} and \cref{notcoclosed}. 
  We first prove two claims that hold in either case.

  \begin{sublemma}
    \label{j1}
    $W$ is fully closed in $M \ba d$.
  \end{sublemma}

  \begin{slproof}
    If $W$ is not closed, then there exists some $x\in X$ such that $M\ba d/x$ fails to be $3$-connected; a contradiction.
    %As $X$ contains no triads, it contains a circuit of $M\backslash d$.
    Suppose $x\in X \cap \cocl_{M \ba d}(W)$.
    %Since $r^*_{M \ba d}(X) \ge 3$,
    Since $|X| \ge 4$ and $X$ does not contain a triad,
    $X$ contains a circuit, but
    $x\notin \cl_{M \ba d}(X-x)$,
    so this circuit does not contain $x$.
    %Then $X$ cannot be a circuit and so $X-\{x\}$ contains a circuit of $M\backslash d$.
    Thus $(X-x,\{x\},W)$ is a cyclic $3$-separation, implying that $\co(M \ba d \ba x)$ is not $3$-connected; a contradiction.
    %But this implies that $\co(M\backslash d,x)$ has a $2$-separation $(X-\{x\},W)$.
  \end{slproof}

  \begin{sublemma}
    \label{j2}
    In $M\backslash d$, every element of $X$ is in a triad, and every triad that meets $X$ contains exactly one element of $W$.
  \end{sublemma}

  \begin{slproof}
    It is clear that every element of $X$ is in a triad, as otherwise \cref{templefirstcase} holds.
    Let $T^*$ be a triad that contains some $x\in X$.
    Then $T^* \nsubseteq X$. %, by (a).
    If $\{x\}=T^*\cap X$, then $x\in\cocl_{M \ba d}(W)$, which contradicts \cref{j1}.
    So $|T^* \cap X| = 2$ and $|T^* \cap W| = 1$, as required.
  \end{slproof}

  %Consider a triad that meets $X$ and contains the element $c \in W$.
  We now consider two cases. 
  We begin by analysing the situation where %$X\cup c$ is coclosed.
  there is some element $c \in W$ such that every element of $X$ is in a triad contained in $X \cup c$.

  \begin{sublemma}
    \label{coclosed}
    Suppose there exists some $c \in W$ such that each element in $X$ is in a triad contained in $X \cup c$. Then one of \cref{templecontractpair,templequadcase,templehardcase} holds.
  \end{sublemma}

  \begin{slproof}
    %Assume that $\cocl_{M\backslash d}(X\cup c)=X\cup c$.
    Since $X$ does not contain a triad, every element of $X$ is
    %By \cref{j2}, each element of $X$ is contained
    in a triad with $c$ and exactly one other element of $X$.
    The intersection of any two such triads is $\{c\}$, otherwise $X \cup c$ contains a $4$-element cosegment, implying that $X$ contains a triad.
    It follows that $|X|$ is even, and there is a partition of $X$ into pairs $\{x_i,x_i'\}$ such that $\{x_i,x_i',c\}$ is a triad, for $i \in \seq{|X|/2}$.
    The element $d$ blocks each of these triads, % in $M$,
    by (c), and so this partition of $X$ extends to a collection of cocircuits $\{x_i,x_i',c,d\}$ of $M$.
    Moreover, by the cocircuit elimination axiom and (c), $\{x_i,x_i',x_j,x_j'\}$ is a cocircuit in $M\backslash d$ for any distinct $i,j \in \seq{|X|/2}$.

    \begin{subsublemma}
      \label{subby0}
      Either \cref{templecontractpair} or \cref{templehardcase} holds, or, for each $x\in X$, there is a $4$-element circuit of $M$ containing $\{x,c,d\}$.
    \end{subsublemma}

    \begin{sslproof}
      Let $x \in X$.
      The matroid $M\backslash d/x$ is $3$-connected. % for all $x\in X$.
      As $|W| \ge 3$ and
      $c \in \cocl_{M \ba d/x}(X-x)$, it follows from \cref{exactSeps2,openVertSep2} that $\co(M \ba d/x\ba c)$ is not $3$-connected.
      By Bixby's Lemma, $\si(M\backslash d/x/c)$ is $3$-connected. % for all $x\in X$ unless $c$ is in a triangle of $M\backslash d/x$.
      But 
      %such a triangle would imply that $X\cup c$ is not fully closed in $M\backslash d$.
      if $c$ is in a triangle~$T$ of $M\backslash d/x$, then $T$ meets $X$ and $W-c$.
      Let $T=\{x',c,w\}$ with $x' \in X$ and $w \in W-c$.
      As $M \ba d$ has at least one triad contained in $(X-x) \cup c$ that also meets $T$, by orthogonality $\{x',c\}$ is contained in a triad contained in $(X-x) \cup c$.
      So \ref{templehardcase} holds if $c$ is in a triangle of $M \ba d /x$.
      Thus we may assume that $M\backslash d/x/c$ is $3$-connected. % for all $x\in X$.
      Now, if $M/x/c$ is $3$-connected, then \cref{templecontractpair} holds.
      As $x$ was chosen arbitrarily, $d$ is in a triangle with every element of $X$ in $M/c$.
      %Thus, for all $x\in X$, there is a $4$-element circuit $C_x$ of $M$ containing $\{x,c,d\}$.
      Since each element of $X$ is not in a triangle, \cref{subby0} follows.
      %This completes the proof of \cref{subby0}.
    \end{sslproof}

    Suppose $|X| = 4$.  Then $X$ is a $3$-separating cocircuit in $M \ba d$.
    It follows that $X$ is also a circuit, so
    %Now if $|X| = 4$, then
    $X$ is a quad of $M \ba d$, and \ref{templequadcase} holds by \cref{subby0}.
    Thus, in what follows, we may assume that $|X| \ge 6$.
    We also assume that \cref{templecontractpair} does not hold.

    \begin{subsublemma}
      \label{subby2}
      If %$|X| \ge 6$ and
      %Assuming \ref{templehardcase} doesn't hold, if
      $\{a,b\}$ is contained in a $4$-element circuit of $M \ba d$ for $a\in\{x_i,x_i'\}$ and $b\in\{x_j,x_j'\}$ where $i\neq j$,
      then this circuit is $\{x_i,x_i',x_j,x_j'\}$.
    \end{subsublemma}

    \begin{sslproof}
      Suppose $a\in\{x_1,x_1'\}$, $b\in\{x_2,x_2'\}$, and $\{a,b\}$ is contained in a $4$-element circuit $C$ of $M \ba d$ that is not $\{x_1,x_1',x_2,x_2'\}$.
      It follows from orthogonality that %this circuit must contain $c$, and, consequently, one of $\{x_3,x_3'\}$ as well.
      $c \in C$.
      %The claim now holds if $|X| = 4$, %since $X \cup c$ is closed,
      %otherwise \ref{templehardcase} holds,
      %so we may assume that $|X| \ge 6$.
      By orthogonality, $C$ meets $\{x_3,x_3'\}$.
      But now %$c \in \cl_{M \ba d}(X)$, contradicting the fact that $c\in\cocl_{M\ba d}(W-c)$.
      $c \in \cl_{M \ba d}(X) \cap \cocl_{M \ba d}(X)$, contradicting \cref{gutsstayguts2}.
    \end{sslproof}

    \begin{subsublemma}
      \label{subby3}
      If $|X|=6$, then
      %either \cref{templecontractpair} holds or
      $r^*_{M\backslash d}(X)=4$.
    \end{subsublemma}

    \begin{sslproof}
      Clearly $X=\{x_1,x_1',x_2,x_2',x_3,x_3'\}$ and $r_{M^*/d}(X)\in\{3,4\}$. Assume that $r_{M^*/d}(X)=3$.
      Then, as $X$ is $3$-separating in $M^*/d$, and $W$ is closed, $W$ is a hyperplane and $X$ is a rank-$3$ cocircuit in $M^*/d$.
      Take any $a\in\{x_1,x_1'\}$ and $b\in\{x_2,x_2'\}$.
      By \cref{r3cocirc2}, the matroid $\co(M^*/d\backslash a\ba b)$ is $3$-connected.
      Since $X$ is a cocircuit of $M^* /d$, 
      \cref{subby2} implies that $M^*/d\backslash a\ba b$ has no series pairs, thus $M^*/d\backslash a\ba b$ is $3$-connected. 
      It follows that %either \cref{templecontractpair} holds, or
      since \cref{templecontractpair} does not hold,
      there exists a $4$-element cocircuit $C_{ab}$ of $M^*$ containing $\{a,b,d\}$ for each $a\in\{x_1,x_1'\}$ and $b\in\{x_2,x_2'\}$.

      Consider $C_{x_1x_2}$. This cocircuit meets the circuit $\{c,d,x_3,x_3'\}$, and so, by orthogonality,
      $C_{x_1x_2}\subseteq X\cup\{c,d\}$ with $C_{x_1x_2}\cap\{x_1,x_1',x_2,x_2'\}=\{x_1,x_2\}$.
      %Similarly, $C_{x_1x_2'}\subseteq X\cup\{c,d\}$ %with $C_{x_1x_2'}\cap\{x_1,x_1',x_2,x_2'\}=\{x_1,x_2'\}$
      %and $C_{x_1'x_2}\subseteq X\cup\{c,d\}$, % with $C_{x_1'x_2}\cap\{x_1,x_1',x_2,x_2'\}=\{x_1',x_2\}$.
      %and these three cocircuits are distinct.
      Similarly, $C_{x_1x_2'}$ and $C_{x_1'x_2}$ are cocircuits contained in $X\cup\{c,d\}$, and of these three cocircuits, only $C_{x_1x_2'}$ contains $x_2'$, and only $C_{x_1'x_2}$ contains $x_1'$.
      Now
      \begin{align*}
        E(M^*)-(C_{x_1x_2}\cup C_{x_1x_2'}\cup C_{x_1'x_2})
      \end{align*}
      is a flat in $M^*$ of rank at most $r(M^*)-3$, and so
      \begin{align*}
        r_{M^*}\big(E(M^*)-(X\cup\{c,d\})\big)\leq r(M^*)-3.
      \end{align*}
      But then $\lambda(X\cup\{c,d\})\leq r_{M^*}(X\cup\{c,d\})-3=1$, contradicting the fact that $M$ is $3$-connected.
      We conclude that $r^*_{M\backslash d}(X)=4$.
    \end{sslproof}

    \begin{subsublemma}
      \label{subby1}
      $\{x_1,x_1',x_2,x_2'\}$ is a circuit of $M\backslash d$. % for each $i\neq j$.
    \end{subsublemma}

    \begin{sslproof}
      Suppose $|X|=6$ and that $\{x_1,x_1',x_2,x_2'\}$ is independent.
      %Now $r_{M\backslash d}(X)= \lambda_{M \ba d}(X) + |X| - r^*_{M \ba d}(X) = 4$, by \ref{subby3},
      Now $r_{M\backslash d}(X)= 4$, by \cref{subby3} and since $X$ is $3$-separating in $M \ba d$.
      %Since $r_{M\backslash d}(X)=4$, a fact which follows from~\ref{subby3}.
      Then $x_3\in\cl_{M\backslash d}(\{x_1,x_1',x_2,x_2'\})$.
      But this contradicts that $\{x_3,x_3',c\}$ is a triad in $M\backslash d$. %So the result holds in the case that $|X|=6$ and

      So we may assume that $|X|\geq 8$. Again, suppose that $\{x_1,x_1',x_2,x_2'\}$ is independent in $M\backslash d$.
      By \cref{subby2}, each element $b\in\{x_2,x_2'\}$ is not contained in a triangle of $M \ba d / x_1$.
      Thus, the triad $\{x_2,x_2',c\}$ of $M \ba d / x_1$ is not contained in a $4$-element fan. 
      It follows, by Tutte's Triangle Lemma, that either $M \ba d / x_1 / x_2$ or $M \ba d / x_1 / x_2'$ is $3$-connected.
      Assume without loss of generality that $M \ba d / x_1 / x_2$ is $3$-connected.
      Now $M$ has a $4$-element circuit $C_1=\{x_1,x_2,d,\alpha\}$ for some $\alpha$, since $M/x_1/x_2$ is not $3$-connected.
      As $\{c,d,x_3,x_3'\}$ and $\{c,d,x_4,x_4'\}$ are cocircuits of $M$, we deduce that $\alpha =c$, by orthogonality.
      By repeating this argument in $M \ba d / x_1'$, we obtain a distinct circuit $C_2$ of $M$ which is either $\{c,d,x_1',x_2\}$ or $\{c,d,x_1',x_2'\}$.
      By circuit elimination on $C_1$ and $C_2$, there is a circuit contained in $\{x_1,x_2,x_1',x_2',c\}$.
      By orthogonality with $\{c,d,x_3,x_3'\}$, and since no element in $X$ is contained in a triangle of $M$, the circuit is $\{x_1,x_2,x_1',x_2'\}$; a contradiction.
    \end{sslproof}

    It now follows from \cref{subby1} that $\{x_1,x_2,x_1',x_2'\}$ is $3$-separating in $M \ba d$.  But $|X| \ge 6$, contradicting the minimality of $X$.
    This completes the proof of \cref{coclosed}.
  \end{slproof}

  We now turn our attention to the case where, for every $c \in W$, some element of $X$ is not in a triad of $M \ba d$ that is contained in $X \cup c$.
  Recall that we are under the assumption that (i) does not hold.

  \begin{sublemma}
    \label{notcoclosed}
    Suppose that for each $c \in W$, there is some element $x \in X$ such that $x$ is not in a triad of $M \ba d$ contained in $X \cup c$.
    Then either \cref{templecontractpair} or \cref{templehardcase} holds.
  \end{sublemma}

  \begin{slproof}
    We start by showing the following:

    \begin{subsublemma}
      \label{jj0}
      Let $c$ and $c'$ be distinct elements in $W$ such that there are two triads of $M \ba d$ that meet $X$, one containing $c$, and the other containing $c'$.
      Then either \cref{templecontractpair} holds, or there is a $4$-element circuit of $M$ containing $\{d,c,c'\}$.
    \end{subsublemma}
    \begin{sslproof}
      Let $T_c$ and $T_{c'}$ be the triads containing $c$ and $c'$ respectively.
      By \cref{j2}, $T_c-c \subseteq X$, so $(X, \{c\}, W-c)$ is a cyclic $3$-separation of $M \ba d$.
      By \cref{orthogVertSep}, either $M \ba d/c$ is $3$-connected, or $c$ is in a triangle that meets $X$.  But each $x \in X$ is not in a triangle, so $M \ba d/c$ is $3$-connected.
      Now $T_{c'}$ is a triad of $M \ba d/c$, so, similarly, $(X,\{c'\},W-\{c,c'\})$ is a vertical $3$-separation.
      By \cref{orthogVertSep} again, if $M \ba d/c/c'$ is not $3$-connected, then $\{c,c'\}$ is contained in a $4$-element circuit of $M \ba d$ that meets $X$ and $W - \{c,c'\}$. 
      But this contradicts \cref{j1}, which says that $W$ is fully closed.
      So $M \ba d/c/c'$ is $3$-connected.
      Now, either \cref{templecontractpair} holds, or there exists some $\alpha$ such that $\{d,c,c',\alpha\}$ is a $4$-element circuit of $M$.
    \end{sslproof}

    \begin{subsublemma}
      \label{jjnew0}
      Either \cref{templecontractpair} holds, or, for each $c \in W$ in a triad $T_c^*$ that meets $X$, and each $x \in X - T_c^*$, there is a $4$-element circuit of $M$ containing $\{x,c\}$.
    \end{subsublemma}
    \begin{sslproof}
      Let $c$ be an element in a triad $T^*$ that meets $X$, and consider the $3$-connected matroid $M \ba d/x$ for any $x \in X - T^*$.
      Since $T^*$ is a triad in $M \ba d/x$, and $T^*-c \subseteq X$ by \cref{j2}, $c \in \cocl_{M \ba d/x}(X-x)$, and hence $(X-x, \{c\}, W-c)$ is a cyclic $3$-separation of $M \ba d/x$.
      Thus $\si(M\ba d/x/c)$ is $3$-connected, by Bixby's Lemma. 
      Suppose there is no $4$-element circuit containing $\{x,c\}$.
      If $c$ is in a triangle, then this triangle meets $X$; a contradiction.
      Since neither $x$ nor $c$ is in a triangle, $M/x/c$ is $3$-connected. 
      Thus \cref{templecontractpair} holds.
    \end{sslproof}

    We now assume that \cref{templecontractpair} does not hold.
    Let $W' = \cocl_{M \ba d}(X)-X$.  Observe that, for any $c \in W'$, the partition $(X \cup (W'-c), \{c\}, W-W')$ is a cyclic $3$-separation, so %$c \notin \cl(\cocl_{M \ba d}(X)-c)$.
    $c \notin \cl(X \cup (W' - c))$.
    We use this often in what follows.

    \begin{subsublemma}
      \label{jjnew1}
      There are distinct elements $c_1,c_2 \in W$ such that every element $x \in X$ is in a triad of $M \ba d$ contained in $X \cup \{c_1,c_2\}$.
    \end{subsublemma}
    \begin{sslproof}
      Suppose \cref{jjnew1} does not hold.
      Let $T^*_1$ and $T^*_2$ be triads of $M \ba d$ that meet $X$, with $c_1 \in T^*_1$ and $c_2 \in T^*_2$ for distinct $c_1,c_2 \in W$, and let $x \in X-(T^*_1 \cup T^*_2)$ where $x$ is in a triad~$T_3^*$ of $M \ba d$ and there is an element $c_3 \in T_3^* \cap (W-\{c_1,c_2\})$.
      Note that $T_i^*-c_i \subseteq X$ for each $i \in \{1,2,3\}$, by \cref{j2}.
      We may assume that for distinct $i,j \in \{1,2,3\}$, the set $T^*_i \cup T^*_j$ is not a cosegment, for otherwise we can let $c_i = c_j$; in particular, $|T^*_i \cap T^*_j| \le 1$.

      By \cref{jjnew0}, there are $4$-element circuits of $M$ containing $\{x,c_1\}$ and $\{x,c_2\}$.
      Suppose neither of these circuits contains $d$.
      If $T_1^*$ and $T_3^*$ are disjoint, then, by orthogonality, the $4$-element circuit containing $\{x,c_1\}$ is contained in $X \cup \{c_1,c_3\}$, so $c_1 \in \cl(X \cup c_3)$; a contradiction.
      So $T^*_3$ meets $T^*_1$ and, similarly, $T^*_3$ meets $T_2^*$.
      Since $|T^*_1 \cap T^*_2| \le 1$, observe that $T_3^* = \{x,x',c_3\}$ for some $x' \in X-x$ such that $T_1^* \cap T_2^* \cap T_3^* = \{x'\}$.
      Then, by orthogonality, the circuit containing $\{x,c_1\}$ is contained in $X \cup \{c_1,c_2,c_3\}$, so $c_1 \in \cl(X \cup \{c_2,c_3\})$; a contradiction.
      So we may assume that $M$ has a $4$-element circuit containing $\{x,c_1,d\}$.

      Now, for some choice of $\{c',c''\} = \{c_2,c_3\}$, the matroid $M$ has $4$-element circuits $\{x,c_1,d,\beta\}$ and $\{d,c_1,c',\alpha\}$ where $\beta \neq c'$, by \cref{jj0}.
      By circuit elimination, $\{x,c_1,c',\alpha,\beta\}$ contains a circuit.  It follows that $\{\alpha,\beta\} \nsubseteq X$, otherwise $c' \in \cl(X \cup c_1)$, $c_1 \in \cl(X \cup c')$, or $x$ is in a triangle of $M$.  Moreover, $\{\alpha,\beta\} \nsubseteq W$, otherwise $W$ is not closed, $c_1 \notin \cocl_{M \ba d}(X)$, or $c' \notin \cocl_{M \ba d}(X)$.
      So $\{\alpha,\beta\}$ meets $X$ and $W$.

      By orthogonality, $\alpha \in X \cup c''$.
      Suppose that $\alpha = c''$.  Then $\beta \in X$, and $\{x,c_1,c',c'',\beta\}$ contains a circuit.
      Since this circuit meets $\{c_1,c',c''\}$, we obtain a contradiction.
      So $\alpha \in X$. 
      Now $X \cup c''$ also contains a triad of $M \ba d$, so there is a $4$-element circuit $\{d,c_1,c'',\gamma\}$ of $M$, by \cref{jj0}, where $\gamma \in X \cup c'$, by orthogonality.
      By circuit elimination with $\{d,c_1,c',\alpha\}$, we again obtain a contradictory circuit contained in $X \cup \{c_1,c',c''\}$ and meeting $\{c_1,c',c''\}$.
    \end{sslproof}

    Let $c,c' \in W$ be distinct elements such that every element of $X$ is in a triad of $M \ba d$ contained in $X \cup \{c,c'\}$.

    \begin{subsublemma}
      \label{jjnew2}
      If $|X| > 4$, then \cref{templehardcase} holds.
    \end{subsublemma}
    \begin{sslproof}
      Suppose that $|X| > 4$.
      Then there are at least three distinct triads contained in $X \cup \{c,c'\}$, and it follows that, up to labels, there are distinct triads $T^*_1$ and $T_2^*$ containing $c$.
      Let $T^*_1 = \{x_1,x_1',c\}$ and $T^*_2 = \{x_2,x_2',c\}$.
      Since $X$ does not contain a triad of $M \ba d$, the elements $x_1,x_1',x_2,x_2'$ are distinct.
      There exists $x_3  \in X - \{x_1,x_1',x_2,x_2'\}$ such that $x_3$ is not in a triad contained in $X \cup c$.  It follows that $\{x_3,c'\}$ is contained in a triad~$T_3^*$.

      At least one of $T_1^*$ and $T_2^*$ does not meet $T_3^*$, so we may assume that $T_1^* \cap T_3^* = \emptyset$.
      By \cref{jjnew0}, there is a $4$-element circuit containing $\{x_1,c'\}$.
      If this circuit does not contain $d$, then, by orthogonality, $c' \in \cl(X \cup c)$; a contradiction.
      So the circuit is $\{x_1,c',d,p\}$, where $p \in T_2^*$, by orthogonality.
      Similarly, there is a circuit $\{x'_1,c',d,p'\}$, where $p' \in T_2^*$.
      By \cref{jj0}, $M$ also has a $4$-element circuit $\{d,c,c',\alpha\}$ for some $\alpha \in E(M) - \{d,c,c'\}$.

      We consider two cases depending on whether or not $T_2^* \cap T_3^* = \emptyset$.
      First consider the case where $T_2^* \cap T_3^* = \emptyset$.
      Suppose $\{x_1,c,c',d\}$ is not a circuit.
      Then, by circuit elimination on the circuits $\{x_1,c',d,p\}$ and $\{d,c,c',\alpha\}$, there is a circuit contained in $\{x_1,p,c,c',\alpha\}$, where $p \in T_2^*$ and $\alpha \neq d$. 
      If $\alpha \in X$, then either $c \in \cl(X \cup c')$ or $c' \in \cl(X \cup c)$; a contradiction.
      So $\alpha \in W$.
      Now, by orthogonality with $T_3^*$, the circuit does not contain $c'$.
      It follows that $\{x_1,p,c,\alpha\}$ is a circuit for $p \in \{x_2,x_2'\}$, so \cref{templehardcase} holds.
      So we may also assume that $\{x_1,c,c',d\}$ is a circuit.
      By the same argument with $x_1'$ in the role of $x_1$, we deduce that $\{x_1',c,c',d\}$ is a circuit.
      But then $\{x_1,x_1',c,c'\}$ contains a circuit; a contradiction.

      Now we may assume that $T_2^* \cap T_3^* = \{x_2\}$, so $T_3^* = \{x_2,x_3,c'\}$.
      By \cref{jjnew0}, $M$ has a $4$-element circuit $\{x_3,c,\beta,z\}$, for some $\{\beta,z\} \subseteq E(M) - \{x_3,c\}$.
      By orthogonality, $\{\beta,z\}$ meets $\{x_1,x_1',d\}$ and $\{x_2,x_2',d\}$.
      Thus, if $d \notin \{\beta,z\}$, then $c \in \cl(X)$; a contradiction.
      So let $z = d$.
      Now $\{d,c,\beta,x_3\}$ and $\{d,c,c',\alpha\}$ are circuits, so if $\{\beta,x_3\} \neq \{c',\alpha\}$, then, by circuit elimination, there is a circuit contained in $\{x_3,c,c',\alpha,\beta\}$, where $d \notin \{\alpha,\beta\}$.  If this circuit contains $c$, then, by orthogonality, $\{\alpha,\beta\} \subseteq \{x_1,x_1',x_2,x_2'\}$.
      But now $c \in \cl(X \cup c')$; a contradiction.
      So either $\{d,c,c',x_3\}$ or $\{x_3,c',\alpha,\beta\}$ is a $4$-element circuit of $M$.

      If $\{d,c,c',x_3\}$ is a circuit, then, by circuit elimination with $\{x_1,c',d,p\}$, the set $\{x_1,x_3,p,c,c'\}$ contains a circuit.
      Since each element in $X$ is not in a triangle, the circuit has at least four elements.
      Thus $c \in \cl(X \cup c')$ or $c' \in \cl(X \cup c)$; a contradiction.
      So $\alpha \neq x_3$ and $\{x_3,c',\alpha,\beta\}$ is a $4$-element circuit of $M$, where $d \notin \{\alpha,\beta\}$.

      Observe that $\{\alpha,\beta\}$ meets both $X$ and $W$, by \cref{j1} and since $c' \notin \cl(X)$.  
      Let $\{\alpha,\beta\} \cap X = \{x_4\}$. 
      By orthogonality between $\{x_3,c',\alpha,\beta\}$ and either $T_1^*$ or $T_2^*$, we have $x_4 \notin \{x_1,x_1',x_2,x_2'\}$. 
      Moreover $x_4 \neq x_3$, since $x_3 \notin \{\alpha,\beta\}$.

      Now $\{x_4,c\}$ contains a circuit by \cref{jjnew0}, and this circuit contains $d$, by orthogonality with $T_1^*$ and $T_2^*$, and since $c \notin \cl(X)$.
      By orthogonality with $T_3^*$, either $\{x_4,c,d,c'\}$ or $\{x_4,c,d,x_j\}$ is a $4$-element circuit for $j \in \{2,3\}$.
      Recall that $\{x_1,c',d,p\}$ is a circuit for some $p \in T_2^*$.
      By circuit elimination, either $\{x_1,p,x_4,c,c'\}$ or $\{x_1,p,x_j,x_4,c,c'\}$ contains a circuit, respectively.
      In the former case, $c \in \cl(X \cup c')$ or $c' \in \cl(X \cup c)$; a contradiction.
      In the latter case, the only other possibility is that $\{x_1,p,x_j,x_4\}$ is a $4$-element circuit contained in $X$.
      But then this set intersects $T_1^*$ in a single element, contradicting orthogonality.
    \end{sslproof}

    \begin{subsublemma}
      \label{jjnew3}
      $|X| \neq 4$.
    \end{subsublemma}
    \begin{sslproof}
      Let $X = \{x_1,x_1',x_2,x_2'\}$, where
      %Now, up to labels,
      $\{x_1,x_2,c\}$ and $\{x_1',x_2',c'\}$ are triads of $M \ba d$.
      Since $\lambda_{M \ba d}(X) = 2$, we have $r(X) + r^*_{M \ba d}(X) = 6$, so $X$ is a quad in $M \ba d$.
      By \cref{jjnew0}, there are $4$-element circuits containing $\{x,c'\}$ for $x \in \{x_1,x_2\}$, and $4$-element circuits containing $\{x',c\}$ for $x' \in \{x_1',x_2'\}$.  It follows, by orthogonality and since $c \notin \cl(X \cup c')$ and $c' \notin \cl(X \cup c)$, that any such %$4$-element
      circuit must contain $d$.

      Suppose that $X$ is a cocircuit of $M$.
      %If $X$ is a cocircuit,
      Then each of the $4$-element circuits containing $\{x,c,d\}$ or $\{x,c',d\}$ for $x \in X$ is contained in $X \cup \{c,d\}$ or $X \cup \{c',d\}$, by orthogonality.
      So $M$ has distinct circuits $\{x_1,\alpha,c',d\}$ and $\{x_1',\alpha',c,d\}$ for some $\alpha,\alpha' \in X$.
      %The circuits are either $\{x_1,x_2,c',d\}$ and $\{x_1',x_2',c,d\}$, or $\{x_1,x_1',c',d\}$ and $\{x_2,x_2',c,d\}$. (there cannot be any more meeting $X$ and containing $d$ and either $c$ or $c'$).
%
      Now these two circuits, together with the circuit $X$, imply that $r(X \cup \{c,c',d\})\le 4$.
      But $r^*(X \cup \{c,c',d\}) \le r^*(X) + 1 = 4$, so $\lambda_M(X \cup \{c,c',d\}) \le 1$; a contradiction, since $|W| \ge 4$.

      So we may assume that $d \in \cocl(X)$, and $X \cup d$ is a cocircuit of $M$.
      By \cref{jj0}, $\{x_0,c,c',d\}$ is a circuit for some $x_0 \in X$.
      Recall that each element in $X$ is in a $4$-element circuit containing $d$ and either $c$ or $c'$.
      Suppose one of these circuits is contained in $X \cup \{c,c',d\}$.
      Then, by circuit elimination, there is a circuit of $M \ba d$ contained in $X \cup \{c,c'\}$, and containing at most three elements of $X$; a contradiction.
      So, for each $x \in X$, there is a $4$-element circuit containing $x$, $d$, either $c$ or $c'$, and an element in $W$. % cannot be in any other $4$-element circuit contained in $X \cup \{c,c',d\}$.

      Without loss of generality, suppose that $\{x_1,c,c',d\}$ is a circuit.  Then, by orthogonality, $\{x_2,c',d,y\}$, $\{x_1',c,d,y_1'\}$, and $\{x_2',c,d,y_2'\}$ are circuits for some $y \in W$ and $y_1',y_2' \in W-c$.
      %Clearly $c \notin \{y_1',y_2'\}$.
      Note that if $c' \in \{y_1',y_2'\}$, then, by circuit elimination on $\{x_1,c,c',d\}$ and either $\{x_1',c,d,y_1'\}$ or $\{x_2',c,d,y_2'\}$, there is a circuit contained in $\{x_1,x_1',x_2',c,c'\}$; a contradiction.

      Since $X \cup d$ and $\{x_1',x_2',c',d\}$ are cocircuits of $M$, there is a cocircuit $C^*$ contained in $X \cup c'$, by cocircuit elimination. %, and this cocircuit certainly contains $c$.
      %Likewise, there is a cocircuit contained in $X \cup c'$ that contains $c'$.
      Since $c' \notin \{y_1',y_2'\}$, orthogonality with the circuit $\{x_1',c,d,y_1'\}$ implies that $x_1' \notin C^*$, and orthogonality with the circuit $\{x_2',c,d,y_2'\}$ implies that $x_2' \notin C^*$.  But then $\{x_1,x_2,c'\}$ contains a cocircuit; a contradiction.
    \end{sslproof}
    It now follows from \cref{jjnew2,jjnew3} that \cref{notcoclosed} holds.
  \end{slproof}

  With that, the proof of \cref{temple} is complete.
\end{proof}

\subsection*{Retaining an $N$-minor}

In this section, we consider specific outcomes of \cref{temple}, relative to a cyclic $3$-separation $(Y,\{d'\},Z)$ for which a $3$-connected $N$-minor is known to lie primarily in $Z$, with the goal of finding an $N$-detachable pair.

For the entirety of the section we work under the following assumptions.
Let $M$ be a $3$-connected matroid with an element $d$ such that $M\ba d$ is $3$-connected.
Let $N$ be a $3$-connected minor of $M$, % and $M \ba d$,
where every triangle or triad of $M$ is \unfortunate, and $|E(N)| \ge 4$.
Suppose that $M\ba d$ has a cyclic $3$-separation $(Y, \{d'\}, Z)$ with $|Y| \ge 4$, where $M\ba d \ba d'$ has an $N$-minor with $|Y \cap E(N)| \le 1$.
Let $X$ be a $3$-separating subset of $Y$ with $|X|\geq 4$, where
$X$ does not contain a triad of $M\ba d$, and, for each $x \in X$,
both $\co(M\ba d\ba x)$ and $M\ba d/x$ are $3$-connected,
and $x$ is doubly $N$-labelled in $M \ba d$.
Let $W = E(M \ba d)-X$, and observe that $r^*_{M \ba d}(W) \ge 3$.

Since every triangle or triad of $M$ is \unfortunate, each element in $X$ is not in a triangle or triad of $M$, by \cref{freegrounded}.
In particular, note that as $|X| \ge 4$ and $X$ does not contain any triangles, $r(X) \ge 3$ and therefore $r(M\ba d) \ge 4$.

We now consider the case where \cref{temple}\ref{templehardcase} holds.

\begin{lemma}
  \label{altar}
  Suppose that there are elements $c \in \cocl_{M \ba d}(X) \cap W$ and $w \in W-c$ such that 
  \begin{enumerate}[label=\rm(\alph*)]
    \item $\{x_1,x_2,c,w\}$ is a $4$-element circuit of $M \ba d$ where $\{x_1,x_2\} \subseteq X$, and $x_1$ and $x_2$ are in distinct triads of $M \ba d$ contained in $X \cup c$; and
    %\item $r(W-\{c,w\}) \ge 2$ and $r^*_{M \ba d}(W-\{c,w\}) \ge 2$.
    \item $|W-\{c,w\}| \ge 2$ and $W$ contains a circuit.
  \end{enumerate}
  Then either
  \begin{enumerate}
    \item $M$ has an $N$-detachable pair, or\label{altari}
    \item there exists a set $Q\subseteq W \cup d$ with $\{c,d\}\subseteq Q$ such that $X\cup Q$ is a \spider\ of $M$ with associated partition $\{X,Q\}$.\label{altarii}
  \end{enumerate}
\end{lemma}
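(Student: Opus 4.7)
My approach is to extract the quad structure of $X$ directly from the hypothesis, then build a second quad $Q$ containing $\{c,d\}$ by extending the given circuit $\{x_1,x_2,c,w\}$ via elimination arguments. At each stage I would route any deviation from the \spider\ structure into an $N$-detachable pair by invoking the doubly $N$-labelled elements of $X$ together with \cref{doublylabelII} and \cref{contractdistincttriads}.

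First I would write the two triads of $M\ba d$ in $X\cup c$ as $T_1^* = \{x_1,x_1',c\}$ and $T_2^* = \{x_2,x_2',c\}$. Since $X$ contains no triad, these triads share only $c$, so $x_1,x_1',x_2,x_2'$ are distinct. Cocircuit elimination on $T_1^*$ and $T_2^*$ gives a cocircuit of $M\ba d$ contained in $\{x_1,x_1',x_2,x_2'\}$, and the no-triad hypothesis on $X$ forces this set itself to be the cocircuit. Since \cref{freegrounded} prevents any element of $X$ from lying in a triad of $M$, the element $d$ must block each $T_i^*$, yielding cocircuits $\{x_1,x_1',c,d\}$ and $\{x_2,x_2',c,d\}$ of $M$, and $\{x_1,x_1',x_2,x_2'\}$ as a cocircuit of $M$ as well.

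Next I would show $X = \{x_1,x_1',x_2,x_2'\}$. For any $x_3 \in X - \{x_1,x_1',x_2,x_2'\}$, the element $x_3$ is doubly $N$-labelled and $M\ba d/x_3$ is $3$-connected; pairing $x_3$ with a suitable element of $T_1^*$ or $T_2^*$ via \cref{contractdistincttriads}, and using that $x_3$ lies in no triad contained in $X\cup c$ (so Bixby's Lemma forces $3$-connectedness of some contraction), I expect to exhibit an $N$-detachable pair of $M$. Once $|X|=4$, the cocircuit gives $r^*_{M\ba d}(X)=3$, and $\lambda_{M\ba d}(X)=2$ yields $r_{M\ba d}(X)=3$; since no element of $X$ lies in a triangle, $X$ is a $4$-element circuit, hence a quad of $M$.

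Finally I would construct $Q$. Working in the $3$-connected matroid $M\ba d /c$, in which the two triads collapse to series pairs $\{x_1,x_1'\}$ and $\{x_2,x_2'\}$ and $\{x_1,x_2,c,w\}$ becomes a triangle, the hypothesis $|W-\{c,w\}|\ge 2$ together with the existence of a circuit in $W$ gives room to apply \cref{doublylabelII} and orthogonality to locate a symmetric circuit $\{x_1',x_2',c,w\}$ of $M\ba d$, a circuit $\{x_1,x_2,d,e\}$ of $M$ for some $e \in W-\{c,w\}$, and thence the remaining circuits $\{x_1',x_2',d,e\}$ and cocircuits $\{x_1,x_1',w,e\}$, $\{x_2,x_2',w,e\}$ by circuit and cocircuit elimination and orthogonality against the quad $X$. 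Cocircuit elimination on $\{x_1,x_1',c,d\}$ and $\{x_1,x_1',w,e\}$ shows $Q = \{c,d,w,e\}$ is a cocircuit, and the dual argument shows it is a circuit, so $Q$ is a quad and $X\cup Q$ satisfies all the circuit/cocircuit incidences of the \spider\ definition.

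\textbf{Main obstacle.} The hardest step is the construction of $Q$: each additional circuit or cocircuit of the purported \spider\ must be wrung from the existing data, and for each, the alternative outcome must produce an $N$-detachable pair. The exact role of the hypothesis that $W$ contains a circuit and $|W-\{c,w\}|\ge 2$ is to guarantee enough structural room in $W$ either for the element $e$ to exist or for a detachable pair to emerge via \cref{doublylabelII}; pinning down which of these occurs while avoiding triangles through elements of $X$ will be the most delicate bookkeeping.
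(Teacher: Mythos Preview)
Your overall architecture---reduce to $X$ a quad, then build a second quad $Q$ containing $\{c,d\}$---matches the paper, but the key step where you locate the fourth element of $Q$ is not right.

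You propose to find $e$ directly as the fourth element of a circuit $\{x_1,x_2,d,e\}$ of $M$, then assemble the remaining circuits and cocircuits by elimination. But there is no obvious mechanism to produce such a circuit: circuits through $d$ arise here only when some $M\ba d/a/b$ is $3$-connected while $M/a/b$ is not, and $M\ba d/x_1/x_2$ already has the parallel pair $\{c,w\}$, so this route does not give you $e$. The paper instead first shows that $\co(M\ba d\ba w)$ is $3$-connected (this is where the hypotheses $|W-\{c,w\}|\ge 2$ and ``$W$ contains a circuit'' are actually used), so either $\{d,w\}$ is an $N$-detachable pair or $w$ lies in a triad of $M\ba d$. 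Orthogonality then forces that triad to be $\{c,w,c'\}$ with $c'\in W-\{c,w\}$; this element $c'$ is your $e$, and it is found on the cocircuit side, not the circuit side. Only after $c'$ is in hand does the paper prove $M\ba d/c'/x$ is $3$-connected for each $x\in\{x_1,x_1',x_2,x_2'\}$ and extract the circuits $\{x,x',c',d\}$ from failure of $3$-connectivity in $M$.

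Two smaller issues. First, your early argument that $|X|=4$ is too vague: an $x_3\in X-\{x_1,x_1',x_2,x_2'\}$ need not avoid all triads through $c$, and ``Bixby forces $3$-connectedness of some contraction'' is not a proof. The paper only obtains $|X|=4$ \emph{after} the second circuit $\{x_1',x_2',c,w\}$ is established, via orthogonality of the two circuits $\{x_1,x_2,c,w\}$ and $\{x_1',x_2',c,w\}$ against any triad in $X\cup c$. Second, your proposed eliminations to get $Q$ as a cocircuit and then a circuit do not pin down $Q$ without further orthogonality work; the paper instead gets $Q$ as a cocircuit because $\{c,w,c'\}$ is a triad of $M\ba d$ blocked by $d$, and gets the remaining cocircuits $\{x_1,x_1',c',w\}$, $\{x_2,x_2',c',w\}$ by showing $\co(M\ba w\ba x)$ is $3$-connected via \cref{r3cocirc2} and then using orthogonality and a rank count.
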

\begin{proof}
  Note that $(X,\{c\},W-c)$ is a cyclic $3$-separation of $M \ba d$.
  Let $x_1',x_2' \in X$ be such that $\{x_1,x_1',c\}$ and $\{x_2,x_2',c\}$ are distinct triads of $M \ba d$.
  Observe that the elements $x_1,x_1',x_2,x_2'$ are distinct, since otherwise the union of the two triads is a cosegment, in which case $X$ contains a contradictory triad.
  Moreover, by cocircuit elimination on the triads $\{x_1,x_1',c\}$ and $\{x_2,x_2',c\}$ of $M \ba d$, and since $X$ does not contain a triad, $\{x_1,x_2,x_1',x_2'\}$ is a cocircuit of $M \ba d$.
  By \cref{contractdistincttriads}, $M \ba d /x_1/x_2$ has an $N$-minor.
  Since $\{c,w\}$ is a parallel pair in this matroid, $M \ba d \ba w/x_1/x_2$ and $M \ba d \ba c/x_1/x_2$ have $N$-minors.

  %We first use that $M \ba d \ba w$ has an $N$-minor.
  We claim that $\co(M \ba d \ba w)$ is $3$-connected.
  Since $X \cup c$ is exactly $3$-separating, \cref{aggregatelemma} implies that $X \cup \{c,w\}$ is also exactly $3$-separating, and $w \in \cl(W-\{c,w\})$.
  If $r(W-\{c,w\}) \ge 3$, then $(X \cup c, \{w\}, W-\{c,w\})$ is a vertical $3$-separation, and, by Bixby's Lemma, $\co(M \ba d \ba w)$ is $3$-connected, as required.
  On the other hand, if $r(W-\{c,w\}) = 2$, then $W-c$ is a segment.  If $|W-c| \ge 4$, then $M \ba d \ba w$ is $3$-connected by \cref{rank2Remove2}.  So we may assume that $W-c$ is a triangle. But $W$ contains a cocircuit of $M \ba d$ that contains $c$, and $c$ is not in a triad, as $c$ is $N$-deletable.  So $W$ is a $4$-element cocircuit of $M \ba d$. Then $\co(M \ba d \ba w)$ is $3$-connected by \cref{r3cocirc2}, thus proving the claim.
  Since $M \ba d \ba w$ has an $N$-minor,
  either $\{d,w\}$ is an $N$-detachable pair and, in particular, \cref{altari} holds, or $w$ is in a triad of $M \ba d$.

  So we may assume that $w$ is in a triad $T^*$ of $M \ba d$.
  By orthogonality, $T^*$ meets $\{x_1,x_2,c\}$.
  Recall that $w \in \cl(W-\{c,w\})$, so that
    %Since $X \cup c$ and $X \cup \{c,w\}$ are exactly $3$-separating, 
    %and $w \in \cl_{M \ba d}(X \cup c)$, it follows that
  $w \notin \cocl_{M \ba d}(X \cup c)$.
  If, for some $x \in X$, we have $x \notin \cl(X-x)$, then $x \in \cocl_{M \ba d}(X-x)$ by \cref{exactSeps2}, in which case $(X-x,\{x\},W)$ is a cyclic $3$-separation of $M \ba d$.
  But then $\co(M \ba d \ba x)$ is not $3$-connected; a contradiction.  So each $x \in X$ is in a circuit contained in $X$.
  Thus, it follows from orthogonality that if $T^*$ meets $X$, then $w \in \cocl_{M \ba d}(X)$; a contradiction.
  So $(X \cup \{c,w\}) \cap T^* = \{c,w\}$.

  Let $T^*= \{c,w,c'\}$, where $c' \in W-\{c,w\}$.
  Recall that $M \ba d \ba c/x_1/x_2$ has an $N$-minor, and observe that $\{w,c'\}$ is a series pair in this matroid.
    %Thus $M \ba d/ c' /x_1$ and $M \ba d/ c' /x_2$ have an $N$-minor.
  Thus each of $M/c'/x_1$ and $M/c'/x_2$ has an $N$-minor, and,
   as $c'$ (and $w$) are $N$-contractible, $c'$ is not in a triangle (and neither is $w$).
  %Since $|W-c| \ge 3$, it follows that $r(W-c) \ge 3$.
  Thus $r(W-\{c,w\})=r(W-c) \ge 3$.
  Observe that $X \cup \{c,w\}$ is exactly $3$-separating and $|W-\{c,w\}| \ge 3$, so the dual of \cref{aggregatelemma} implies that $X \cup \{c,w,c'\}$ is also exactly $3$-separating and $c' \in \cocl_{M \ba d}(W-\{c,w,c'\})$.

  \begin{sublemma}
    \label{contractcandidates}
    $M \ba d/c'/x$ is $3$-connected for all $x \in \{x_1,x_2,x_1',x_2'\}$.
  \end{sublemma}
  \begin{slproof}
    Let $x \in \{x_1,x_2,x_1',x_2'\}$.
    Since $c' \in \cocl_{M \ba d}(W-\{c,w,c'\})$, we have that
    %Observe that $X \cup \{c,w\}$ is exactly $3$-separating and $|W-\{c,w\}| \ge 3$, so the dual of \cref{aggregatelemma} implies that $X \cup \{c,w,c'\}$ is also exactly $3$-separating and $c' \in \cocl_{M \ba d}(W-\{c,w,c'\})$.  Hence
    $r^*_{M \ba d}(W-\{c,w,c'\}) = r^*_{M \ba d}(W-\{c,w\}) \ge 2$.
    First we show that \cref{contractcandidates} holds when we have equality.
    In this case, $W-\{c,w\}$ is a cosegment in $M \ba d / x$.
    %Since $r^*_{M \ba d/x}(W-\{c,w\})=2$,
    If $|W-\{c,w\}| \ge 4$ then $M \ba d/c'/x$ is $3$-connected by the dual of \cref{rank2Remove2}, as required. 
    On the other hand, if $W-\{c,w\}$ is a triad, then it follows that $W-c$ is a corank-$3$ circuit, since $w$ is not in a triangle, and $\si(M \ba d/c'/x)$ is $3$-connected by applying the dual of \cref{r3cocirc2} in the matroid $M \ba d/x$.
    Moreover, if $\{c',x\}$ is in a $4$-element circuit in $M\ba d$, then, by orthogonality, this circuit meets $W-\{c,w,c'\}$ and $\{c,w\}$, implying $x \in \cl(W)$.
    But this contradicts that $\{x_1,x_1',x_2,x_2'\}$ is a cocircuit of $M \ba d$.

    So we may assume that $r^*(W-\{c,w,c'\}) \ge 3$.
    Since $M \ba d/x$ is $3$-connected, 
    it follows that $((X-x) \cup \{c,w\}, \{c'\}, W-\{c,w,c'\})$ is a cyclic $3$-separation.
    Hence $\si(M \ba d/c'/x)$ is $3$-connected by Bixby's Lemma.
    Recall that $c'$ is $N$-contractible, so it is not in an \unfortunate\ triangle in $M$.
    Thus, if $c'$ is in a triangle~$T$ in $M \ba d/x$, then it is in a $4$-element circuit $T \cup x$ in $M \ba d$.
    As $x$ is in a triad of $M \ba d$ contained in $X \cup c$,
    orthogonality implies that $T \cap ((X-x) \cup c)$ is non-empty.
    Since $c' \notin \cl_{M \ba d / x}((X-x) \cup \{c,w\})$, the triangle~$T$ also contains an element in $W-\{c,w,c'\}$.
    It then follows that $c \notin T$, as otherwise $x \in \cl(W)$, contradicting that $\{x_1,x_1',x_2,x_2'\}$ is a cocircuit of $M \ba d$.
    So $\{x,x',c',w'\}$ is a circuit for some $x' \in X-x$ and $w' \in W-\{c,w,c'\}$.
    But $\{c,w,c'\}$ is also a triad of $M \ba d$, so we obtain a contradiction to orthogonality.
    %Moreover, as $\{c,w,c'\}$ is a triad of $M \ba d$, we have that
    %$\{c,w\} \cap T$ is non-empty, again by orthogonality.
    %$\{c,w\} \cap T$ is non-empty, again by orthogonality.
    %But now $c' \in \cl(X \cup \{c,w\})$, which contradicts that $x \in \cocl(X \cup \{c,w\})$.
    We deduce that $M \ba d/c'/x$ is $3$-connected.
  \end{slproof}

    %Next we show that either $M \ba d /c/ x_1'$ is $3$-connected, or $\{x_1',x_2',c,w\}$ is a circuit.
  \begin{sublemma}
    \label{slcases}
    Either
    \begin{enumerate}[label=\rm(\Roman*)]
      \item $M \ba d /c/ x_1'$ and $M \ba d /c/ x_2'$ are $3$-connected, or\label{slcase1}
      \item $\{x_1',x_2',c,w\}$ is a circuit.\label{slcase2}
    \end{enumerate}
  \end{sublemma}
  \begin{slproof}
    Let $i \in \{1,2\}$.
    Since $M \ba d / x_i'$ is $3$-connected, and $c \in \cocl_{M \ba d}(X) \cap \cocl_{M \ba d}(W-c)$, the matroid $\si(M \ba d/c /x_i')$ is $3$-connected.
    If $c$ is neither in a triangle in $M \ba d/x_1'$, nor in $M \ba d/x_2'$, then \cref{slcase1} holds.
    So, without loss of generality, suppose that $c$ is in a triangle~$T$ in $M \ba d /x_1'$.
    Then $T$ meets $X$ and $W-c$.
    Moreover, as $\{c,w,c'\}$ is a triad in $M \ba d$, the triangle $T$ contains one of $w$ or $c'$.
    But if $c' \in T$, then $c' \in \cl(X\cup c)$, contradicting that $c' \in \cocl_{M \ba d}(W-\{c,w,c'\})$.
    So $T = \{x,c,w\}$ for some $x \in X$.
    Now $\{x_1',x,c,w\}$ is a circuit in $M \ba d$, since $x$ is not in a triangle. 
    As $\{x_1,x_2,c,w\}$ is also a circuit, the set $\{x_1',x,x_1,x_2,c\}$ contains a circuit.  But $c \notin \cl(X)$, and $X$ does not contain a triangle of $M$, so $\{x_1',x,x_1,x_2\}$ is a circuit for some $x \in X-\{x_1,x_2,x_1'\}$.
    By orthogonality, $x = x_2'$.
    This completes the proof of \cref{slcases}.
  \end{slproof}

  \begin{sublemma}
    \label{anextrasl}
    There is a cocircuit $C^*$ of $M$ such that $\{x_1,x_1',x_2,x_2'\} \subseteq C^* \subseteq \{x_1,x_1',x_2,x_2',c\}$.
  \end{sublemma}
  \begin{slproof}
    Observe that $\{x_1,x_1',c,d\}$ and $\{x_2,x_2',c,d\}$ are cocircuits of $M$ so, by cocircuit elimination, $\{x_1,x_1',x_2,x_2',c\}$ contains a cocircuit $C^*$ of $M$.
    Since no element of $X$ is in a triad of $M$, we have $|C^*| \ge 4$.
    Suppose $x_1 \notin C^*$, say.  Then $r^*_M(\{x_1',x_2,x_2',c\}) = 3$.  Due to the cocircuits $\{x_2,x_2',c,d\}$ and $\{x_1,x_1',c,d\}$, it follows that $r^*_M(\{x_1,x_1',x_2,x_2',c,d\}) = 3$, so $r^*_{M\ba d}(\{x_1,x_1',x_2,x_2'\}) = 2$; a contradiction.
    \cref{anextrasl} follows by symmetry.
  \end{slproof}

  \begin{sublemma}
    When \cref{slcases}\cref{slcase1} holds, $M$ has an $N$-detachable pair.
  \end{sublemma}
  \begin{slproof}
    We are now in the case where $M \ba d/c/x_1'$ and $M \ba d/c/x_2'$ are $3$-connected.
    For each $i \in \{1,2\}$, the matroid $M/c/x_i'$ has an $N$-minor, by \cref{contractdistincttriads}, so we may assume that there are elements $\alpha_1,\alpha_2 \in E(M) - \{c,d\}$ such that $\{x_1',\alpha_1,c,d\}$ and $\{x_2',\alpha_2,c,d\}$ are $4$-element circuits of $M$, for otherwise $M$ has an $N$-detachable pair.

    Let $x \in \{x_1,x_2\}$.
    Recall that $M \ba d / c'/ x $ is $3$-connected, by \cref{contractcandidates}.
    Suppose that $M /c'/ x$ is not $3$-connected.
    Then there is a $4$-element circuit of $M$ containing $\{d,x,c'\}$.
    By orthogonality with the cocircuit $C^*$ of \cref{anextrasl}, this circuit intersects $X$ in two elements.
    So there exists $x'' \in X-x$ such that $\{x,x'',c',d\}$ is a circuit of $M$.
    In particular $c' \in \cl(X \cup d)$.

    We work towards showing that $c \in \cl(X \cup d)$.
    Clearly this holds if $\alpha_1 \in X$ or $\alpha_2 \in X$, so we assume that $\alpha_1,\alpha_2 \in W-c$.
    By circuit elimination, there is a circuit of $M \ba d$ contained in $\{x_1',x_2',\alpha_1,\alpha_2,c\}$.
    Suppose that $\{\alpha_1,\alpha_2\} \cap \{w,c'\} = \emptyset$.
    Then, by orthogonality with the triad $\{c,w,c'\}$ of $M \ba d$, and since neither $x_1'$ nor $x_2'$ is in a triangle, we deduce that $\{x_1',x_2',\alpha_1,\alpha_2\}$ is a circuit.
    But this contradicts orthogonality with the cocircuit $\{x_1,x_1',c,d\}$ of $M$.
    So $\{\alpha_1,\alpha_2\} \cap \{w,c'\} \neq \emptyset$.

    Suppose that $\alpha_1 = c'$.
    Then, by circuit elimination on $\{x_1',c',c,d\}$ and $\{x,x'',c',d\}$, there is a circuit contained in $\{x,x_1',x'',c,c'\}$. 
    If this circuit contains $c'$, then $c' \in \cl(X \cup \{c,w\})$; a contradiction.
    Since no element of $X$ is in a triangle, $c$ is in a $4$-element circuit contained in $X \cup c$.  But then $c \in \cl(X)$; a contradiction.
    By symmetry, we deduce that $c' \notin \{\alpha_1,\alpha_2\}$.

    Without loss of generality we may now assume that $\alpha_1 = w$, so $\{x_1',w,c,d\}$ and $\{x,x'',c',d\}$ are circuits of $M$.
    By strong circuit elimination, there is a circuit contained in $\{x,x_1',x'',c,w,c'\}$ that contains $c'$.
    But then $c' \in \cl(X \cup \{c,w\})$; a contradiction.
    We deduce that either $\alpha_1$ or $\alpha_2$ is in $X$, so $c \in \cl(X \cup d)$.

    Thus $r(X \cup \{c,w,c',d\}) = r(X \cup d)$.
    Due to the cocircuits $\{c,w,c',d\}$ and $\{x_1,x_1',c,d\}$, we have $r(W-\{c,w,c'\}) = r(W-w) -2$.
    Since $w \in \cl(W-w)$, 
    \begin{align*}
      \lambda(X \cup \{c,w,c',d\}) &= r(X \cup d) + (r(W-w)-2) - r(M) \\
      &= \lambda(X \cup d) - 2 \le 1;
    \end{align*}
    a contradiction.

    We conclude that $M /c'/ x $ is $3$-connected for each $x \in \{x_1,x_2\}$.
    Since each of these matroids has an $N$-minor, $M$ has an $N$-detachable pair.
  \end{slproof}

  It remains to consider \cref{slcases}\cref{slcase2}; that is, the case where $\{x_1',x_2',c,w\}$ is a circuit.
  Note that
    %Suppose $\{x_1',x_2',c,w\}$ is also a circuit of $M \ba d$, where $\{x_1',x_2'\} \subseteq X$,
  the elements $x_1'$ and $x_2'$ are in distinct triads of $M \ba d$ contained in $X \cup c$.
  Repeating the earlier argument,
  since $M \ba d /x_1'/x_2'$ has an $N$-minor, it follows that
  $M \ba d\ba c/x_1'/x_2'$ has an $N$-minor, and hence
    %we deduce that either $M$ has an $N$-detachable pair, or
    %each of $M \ba d/ c' /x_1'$ and $M \ba d/ c' /x_2'$ has an $N$-minor.
  each of $M / c' /x_1'$ and $M / c' /x_2'$ has an $N$-minor.

  Let $x \in \{x_1,x_1',x_2,x_2'\}$.
  Now, it follows from \cref{contractcandidates} that either $M$ has an $N$-detachable pair, or there is a $4$-element circuit of $M$ containing $\{x,c',d\}$.
  We now consider the fourth element of this circuit.
  By orthogonality with the cocircuit $C^*$ of \cref{anextrasl}, for each $x \in \{x_1,x_1',x_2,x_2'\}$ there exists some $x' \in \{x_1,x_2,x_1',x_2'\}-x$ such that $\{x,x',c',d\}$ is a circuit of $M$.

  Recall that $M \ba d$ has circuits $\{x_1,x_2,c,w\}$ and $\{x_1',x_2',c,w\}$. 
  By orthogonality, any triad contained in $X \cup c$ contains an element in $\{x_1,x_2\}$ and an element in $\{x_1',x_2'\}$.  Thus $|X|=4$.
  By circuit elimination on the circuits $\{x_1,x_2,c,w\}$ and $\{x_1',x_2',c,w\}$, the sets $\{x_1,x_2,x_1',x_2',c\}$ and $\{x_1,x_2,x_1',x_2',w\}$ contain circuits.
  But $c,w \notin \cl(X)$, and no element in $\{x_1,x_2,x_1',x_2'\}$ is in a triangle, so $\{x_1,x_2,x_1',x_2'\}$ is a $4$-element circuit.
  Hence $X$ is a quad in $M \ba d$.

  Let $Q= \{c,w,c',d\}$.
  Observe that since $\{c,w,c'\}$ is a triad of $M$ and $w$ is $N$-deletable in $M \ba d$, this triad is blocked by $d$.  So $Q$ is a cocircuit of $M$.
  We will show that $X \cup Q$ is a \spider\ of $M$ with associated partition $\{X,Q\}$, as illustrated in \cref{spiderlabelling}.

  \begin{figure}[tbhp]
    \begin{tikzpicture}[rotate=90,scale=0.85,line width=1pt]
      \tikzset{VertexStyle/.append style = {minimum height=5,minimum width=5}}
      \clip (-2.5,-6) rectangle (3.0,2);
      \node at (-1,-1.4) {$E(M)-(X \cup Q)$};
      \draw (0,0) .. controls (-3,2) and (-3.5,-2) .. (0,-4);

      \draw (0,0) -- (2,-2) -- (0,-4);

      \draw (0,0) -- (2.5,0.5) -- (2,-2);
      \draw (0,0) -- (2.25,-0.75);
      \draw (2,-2) -- (1.25,0.25);

      \draw (0,-4) -- (2.5,-4.5) -- (2,-2);
      \draw (0,-4) -- (2.25,-3.25);
      \draw (2,-2) -- (1.25,-4.25);

      \Vertex[x=1.25,y=0.25,LabelOut=true,L=$x_1'$,Lpos=180]{c1}
      \Vertex[x=2.25,y=-0.75,LabelOut=true,L=$x_2$,Lpos=90]{c2}
      \Vertex[x=2.5,y=0.5,LabelOut=true,L=$x_1$,Lpos=180]{c3}
      \Vertex[x=1.5,y=-0.5,LabelOut=true,L=$x_2'$,Lpos=135]{c4}

      \Vertex[x=1.25,y=-4.25,LabelOut=true,L=$c'$]{c1}
      \Vertex[x=2.25,y=-3.25,LabelOut=true,L=$c$,Lpos=90]{c2}
      \Vertex[x=2.5,y=-4.5,LabelOut=true,L=$w$]{c3}
      \Vertex[x=1.5,y=-3.5,LabelOut=true,L=$d$,Lpos=45]{c4}

      \draw (0,0) -- (0,-4);

      %\SetVertexNoLabel
      %\tikzset{VertexStyle/.append style = {shape=rectangle,fill=white}}
      %\Vertex[x=0,y=0]{a1}
      %\Vertex[x=0,y=-4]{a2}
    \end{tikzpicture}
    \caption{The \spider~$X \cup Q$, when \cref{altar}\ref{altarii} holds.}
    \label{spiderlabelling}
  \end{figure}

  Since $d \in \cocl(Q-d)$, it follows that $d \notin \cocl(X)$, as otherwise $(X,\{d\},W)$ is a cyclic $3$-separation of $M$, contradicting that $M \ba d$ is $3$-connected.
  So $X$ a cocircuit of $M$.
  As $\{x_1,x_2,c,w\}$ and $\{x_1,x_2,c',d\}$ are circuits of $M$, the circuit elimination axiom implies that $\{x_2,c,w,c',d\}$ contains a circuit.  But since $X$ is a cocircuit, $x_2 \notin \cl(Q)$, and it follows that $Q$ is a circuit, since $c$ and $c'$ are $N$-contractible.
  Now $X$ and $Q$ are quads of $M$, and $r(X \cup Q) \le 5$ and $r^*(X \cup Q) \le 5$; it follows that $\lambda(X \cup Q) =2$ and $r(X \cup Q) = r^*(X \cup Q) =5$.

  Suppose $\{x_1,x_1',c',d\}$ is a circuit.
  Then $\cl((Q-w) \cup x_1') = X \cup Q$, so $r(X \cup Q) \le 4$; a contradiction.
  Similarly, $\{x_1,x_2',c',d\}$ is not a circuit.
  We deduce that $\{x_1,x_2,c',d\}$ and $\{x_1',x_2',c',d\}$ are circuits of $M$.

    It remains to show that $\{x_1,x_1',c',w\}$ and $\{x_2,x_2',c',w\}$ are cocircuits.
    Since $X$ and $Q$ are disjoint quads in $M$, and hence no element in $X$ is in the coclosure of $Q$, it follows from \cref{r3cocirc2} that $\co(M\ba w\ba x)$ is $3$-connected.
    Thus $\{w,x\}$ is contained in a $4$-element cocircuit $C_x^*$ for each $x\in X$.
    These cocircuits intersect $X$ and $Q$ in two elements each, by orthogonality.
    Suppose that for some $x\in X$, we have $c'\not\in C_x^*$.
    Now \[E(M)-(C_x^*\cup\{x_1,x_1',c,d\}\cup X\cup Q)\] is a flat of rank at most $r(M)-4$.
    But then $\lambda(X \cup Q) \le 1$; a contradiction.
    So $c'\in C_x^*$ for each $x\in X$.
    A similar argument shows that $x_1'\in C_{x_1}$ and $x_2'\in C_{x_2}$.
    Now $X\cup Q$ is now a \spider\ with associated partition $\{X, Q\}$, as required.
\end{proof}

  We now turn to the case where \cref{temple}\ref{templequadcase} holds.
  %In particular, in this case $|X|=4$. 
  In the analysis of this case, a $3$-separator similar to a \tvamoslike\ arises.
  Although the appearance of this $3$-separator does not prevent us from guaranteeing the existence of an $N$-detachable pair, unlike when we encounter a \tvamoslike, it does require special attention.

  \begin{figure}[bhp]
  %\begin{subfigure}{0.49\textwidth}
    \centering
    \begin{tikzpicture}[rotate=90,xscale=1.1,yscale=0.55,line width=1pt]
      \tikzset{VertexStyle/.append style = {minimum height=5,minimum width=5}}
      \clip (-1.5,2) rectangle (2.7,-6);
      \node at (-0.6,-1.4) {$E(M)-P$};
      \draw (0,0) .. controls (-1.6,2) and (-2,-2) .. (0,-4);

      \draw (0.8,-1) -- (0,0) -- (1.2,1);
      \draw (2,-4) -- (1.2,-3);
      \draw (1.2,1) -- (1.2,-3);
      \draw (0,-4) -- (1.2,-3);

      \draw[white,line width=5pt] (0.8,-1) -- (2,0);
      \draw[white,line width=5pt] (0.8,-1) -- (0.8,-5);
      \draw (2,0) -- (0.8,-1) -- (0.8,-5);
      \draw (1.2,-3) -- (0.8,-5);
      \draw (1.2,1) -- (0.8,-1);
      \draw (1.2,1) -- (2,0) -- (2,-4);
      \draw (0,-4) -- (0.8,-5) -- (2,-4);

      \Vertex[x=1.2,y=1,LabelOut=true,L=$q_1$,Lpos=180]{c1}
      \Vertex[x=2,y=0,LabelOut=true,L=$s_1$,Lpos=90]{c2}
      \Vertex[x=2,y=-4,LabelOut=true,L=$s_2$,Lpos=90]{c3}
      \Vertex[x=1.2,y=-3,LabelOut=true,L=$q_2$,Lpos=135]{c4}
      \Vertex[x=0.8,y=-1,LabelOut=true,L=$p_1$,Lpos=-45]{c5}
      \Vertex[x=0.8,y=-5,LabelOut=true,L=$p_2$,Lpos=0]{c6}

      \draw (0,0) -- (0,-4);

      %\SetVertexNoLabel
      %\tikzset{VertexStyle/.append style = {shape=rectangle,fill=white}}
      %\Vertex[x=0,y=0]{a1}
      %\Vertex[x=0,y=-4]{a2}
    \end{tikzpicture}
    \caption{A \vamoslike\ in $M$.}
    \label{vamosfig}
  %\end{subfigure}
\end{figure}

  Let $M$ be a matroid with a $6$-element, rank-$4$, corank-$4$, exactly $3$-separating set $P=\{p_1,p_2,q_1,q_2,s_1,s_2\}$ such that
  \begin{enumerate}[label=\rm(\alph*)]
    \item $\{p_1,p_2,s_1,s_2\}$, $\{q_1,q_2,s_1,s_2\}$, and $\{p_1,p_2,q_1,q_2\}$ are the circuits of $M$ contained in $P$; and
    \item $\{p_1,p_2,s_1,s_2\}$, $\{q_1,q_2,s_1,s_2\}$, 
      $\{p_1,p_2,q_1,q_2,s_1\}$ and $\{p_1,p_2,q_1,q_2,s_2\}$ 
      are the cocircuits of $M$ contained in $P$. %; and
    %\item $\lc(\{p_1,p_2\}, Y)=1$ and
      %$\lc(\{q_1,q_2\}, Y)=1$, but
      %$\lc(\{s_1,s_2\}, Y)=0$.
  \end{enumerate}
  Then we say $P$ is a \emph{\vamoslike} of $M$. %with {\em associated partition} $\{P,\{q_1,q_2\}\}$
  See \cref{vamosfig} for an illustration.

\begin{lemma}
  \label{crucifix}
  Suppose $X=\{x_1,x_1',x_2,x_2'\}$ is a quad in $M \ba d$,
  there exists an element $c \in W$ such that $\{x_1,x_1',c\}$ and $\{x_2,x_2',c\}$ are triads of $M \ba d$, and
  for each $x \in X$ there is a $4$-element circuit of $M$ containing $\{x,c,d\}$.
  Then either
  \begin{enumerate}
    \item $M$ has an $N$-detachable pair,\label{crucifixi}
    \item $X\cup\{c,d\}$ is an \pspider\ of $M$,%\ with associated partition $(X,\{c,d\})$,
      \label{crucifixii}
    \item $X\cup\{c,d\}$ is a \spikelike\ of $M$, or\label{crucifixiii}
    \item $X\cup\{c,d\}$ is a \tvamoslike\ of $M$.%
      %, where $M \ba w_1 \ba w_2$ does not have an $N$-minor for $w_1 \in \{x_1,x_1'\}$ and $w_2 \in \{x_2,x_2'\}$.
      \label{crucifixiv}
  \end{enumerate}
\end{lemma}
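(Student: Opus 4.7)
The plan is to pin down the circuits and cocircuits of $M$ inside $X \cup \{c,d\}$ and then match them against the three named $3$-separator types (the \pspider, the \spikelike, and the \tvamoslike), constructing an $N$-detachable pair in any residual case. Since each $x \in X$ is doubly $N$-labelled and every triangle of $M$ is \unfortunate, \cref{freegrounded} gives that no element of $X$ lies in a triangle of $M$. A $4$-element circuit of $M$ through $\{x, c, d\}$ must then meet each of the triads $\{x_1, x_1', c\}$ and $\{x_2, x_2', c\}$ in at least two elements by orthogonality, so it has the form $\{x_i, x_j, c, d\}$ with $x_i \in \{x_1, x_1'\}$ and $x_j \in \{x_2, x_2'\}$. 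If two such circuits were to share an element of $X$, iterated circuit elimination would produce all four such ``diagonal'' circuits simultaneously and force $r_M(X \cup \{c,d\}) = 3$; but $c \notin \cl_M(X)$ by \cref{gutsstayguts2} combined with $d \in \cl_M(\{x_1, x_2, c\})$ gives $r_M(X \cup \{c, d\}) = 4$, a contradiction. Thus, after relabelling, the only circuits of $M$ containing $\{c, d\}$ inside $X \cup \{c, d\}$ are $C_1 = \{x_1, x_2, c, d\}$ and $C_2 = \{x_1', x_2', c, d\}$, and $\lambda_M(X \cup \{c, d\}) = 2$ yields $r^*_M(X \cup \{c, d\}) = 4$.

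Because the triads are blocked by $d$, the sets $\{x_1, x_1', c, d\}$ and $\{x_2, x_2', c, d\}$ are cocircuits of $M$; cocircuit elimination between them (eliminating $c$) places a cocircuit of $M$ inside $X \cup d$, which, since $X$ has no triad of $M \ba d$, must be either $X$ (Case~A) or $X \cup d$ (Case~B). This dichotomy drives the remainder.

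In Case~A, if $\{x_1, x_1', c, d\}$ also happens to be a circuit of $M$ then a brief circuit/cocircuit-elimination argument against the quad $X$ forces $\{x_2, x_2', c, d\}$ to be a circuit too, and the pair-union partition $\{\{x_1, x_1'\}, \{x_2, x_2'\}, \{c, d\}\}$ exhibits $X \cup \{c, d\}$ as a \spikelike, giving \cref{crucifixiii}. Otherwise the circuits of $M$ inside $X \cup \{c,d\}$ are exactly $\{X, C_1, C_2\}$ and the cocircuits are exactly $\{X, \{x_1, x_1', c, d\}, \{x_2, x_2', c, d\}\}$, matching the \pspider\ definition with $Q = X$ and $\{p_1, p_2\} = \{c, d\}$, giving \cref{crucifixii}. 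In Case~B, $X$ is coindependent in $M$; a second cocircuit elimination between $X \cup d$ and $\{x_1, x_1', c, d\}$ (eliminating $d$) places a cocircuit of $M$ inside $X \cup c$, and any alternative to $X \cup c$ itself would contain a triad of $X$, which is forbidden by assumption. The cocircuit list $\{\{x_1, x_1', c, d\}, \{x_2, x_2', c, d\}, X \cup c, X \cup d\}$ paired with the circuits $\{X, C_1, C_2\}$ then matches the \tvamoslike\ definition under the pairing $\{x_1, x_2\}, \{x_1', x_2'\}, \{c, d\}$, giving \cref{crucifixiv}.

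Any remaining residual subconfiguration (for example, an extra circuit in Case~A beyond $\{X, C_1, C_2\}$ that would produce neither a \spikelike\ nor a \pspider, or an extra circuit in Case~B that would turn the \tvamoslike\ into a \vamoslike\ overlay) is handled by exhibiting an $N$-detachable pair. The candidate pairs $\{x_i, x_j\}$ with $x_i \in \{x_1, x_1'\}$ and $x_j \in \{x_2, x_2'\}$ are $N$-contractible in $M \ba d$ by \cref{contractdistincttriads}, and pairs $\{x, x'\} \subseteq X$ are $N$-deletable via \cref{triadsdelete} whenever the extra circuit structure places one element into $\cl_{M \ba d}(X - \{x, x'\})$. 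Three-connectivity of the resulting deletion or contraction in $M$ is then verified by using the known $4$-element cocircuits together with either $X$ (Case~A) or $X \cup d$ (Case~B) to extend any putative $2$-separation of the reduced matroid back into $M$, contradicting $3$-connectivity. The main obstacle is precisely this $3$-connectivity check: one must track, in each pathological subcase, exactly which $2$-separations of the reduced matroid can arise and verify that each is blocked by the relevant cocircuit structure --- especially delicate in Case~B, where the only available ``large'' cocircuit is the $5$-element set $X \cup d$.
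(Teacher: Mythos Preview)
Your opening orthogonality step is wrong, and the error propagates through the whole argument. You claim that a $4$-element circuit of $M$ through $\{x,c,d\}$ must meet each of the triads $\{x_1,x_1',c\}$ and $\{x_2,x_2',c\}$ in two elements, forcing the fourth point into $X$. But these are triads of $M\ba d$, not of $M$: since every $x\in X$ is doubly $N$-labelled, no element of $X$ lies in a triad of $M$, so both triads are blocked by $d$ and become $4$-element cocircuits $\{x_1,x_1',c,d\}$ and $\{x_2,x_2',c,d\}$ in $M$. A circuit containing $\{c,d\}$ already meets each of these cocircuits in two elements, so orthogonality imposes no constraint whatsoever on the fourth point. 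The paper spends most of its proof (\cref{crucsubby1} and the two halves of \cref{crucsubby2}) precisely on the case where the fourth point $y$ lies in $W-c$: it shows that matroids such as $M\ba x_2\ba y_1$ or $M\ba x_1'\ba y_2$ are $3$-connected with an $N$-minor, using the series/parallel structure created by the quad and the triads to chase the $N$-minor through.

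This same error causes you to miss an entire branch of the case analysis. Even once one knows the fourth point lies in $X$, your diagonal form $\{x_i,x_j,c,d\}$ with $x_i\in\{x_1,x_1'\}$ and $x_j\in\{x_2,x_2'\}$ is not forced: the paper's \cref{crucsubby2}(II) allows $\{x_1,x_1',c,d\}$ and $\{x_2,x_2',c,d\}$ to be circuits. Combined with $d\in\cocl(X)$ (your Case~B) this gives the \vamoslike, not the \tvamoslike, and here the paper does real work (\cref{vamos1,vamos2}) to produce an $N$-detachable pair $\{p,q\}$ with $p\in\{x_1,x_1'\}$ and $q\in\{x_2,x_2'\}$; you wave this off as a ``residual subconfiguration''. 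Finally, your appeal to \cref{triadsdelete} is illegitimate: that lemma lives in \cref{seclifetriad} under the standing hypothesis that $X$ contains a triad, which is explicitly false in the present section.
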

\begin{proof}
  Note that $(X,\{c\},W-c)$ is a cyclic $3$-separation of $M \ba d$.
  In what follows, we assume that \cref{crucifixi} does not hold, and show that one of the other cases holds.

  \begin{sublemma}
    \label{crucsubby1}
    There is a circuit $\{x,x',c,d\}$ for some distinct $x, x' \in X$.
  \end{sublemma}

  \begin{slproof}
    For each $x\in X$, the set $\{x,c,d\}$ is contained in a $4$-element circuit of $M$. 
    Suppose that \cref{crucsubby1} does not hold.  Then 
    $M$ has circuits
    $\{x_1,c,d,y_1\}$, $\{x_1',c,d,y_1'\}$, $\{x_2,c,d,y_2\}$, and $\{x_2',c,d,y_2'\}$, where $\{y_1,y_1',y_2,y_2'\} \subseteq W-c$.
    If $y_1 = y_2$, say, then $\{x_1,x_2,c,d\}$ is a circuit, by circuit elimination and since neither $x_1$ nor $x_2$ is in a triangle. But then \cref{crucsubby1} holds. 
    So we may assume that the elements $y_1$, $y_1'$, $y_2$, and $y_2'$ are distinct.
    Moreover, by orthogonality, $X \cup d$ is a cocircuit of $M$.

    Suppose that $\{y_1,y_1',y_2,y_2'\}$ is an independent set; then, as $c \in \cocl(X \cup d)$ and $X \cup d$ is a cocircuit of $M$, the set $\{y_1,y_1',y_2,y_2',c,d\}$ is independent, so $r(X \cup \{c,d\}) =r(\cl(X \cup \{c,d\})) \ge 6$; a contradiction.
    So we may assume, without loss of generality, that there is a circuit~$C$ of $M$ with $\{y_1,y_1'\} \subseteq C \subseteq \{y_1,y_1',y_2,y_2'\}$.
    Also, since $\{x_1,c,d,y_1\}$ and $\{x_1',c,d,y_1'\}$ are circuits of $M$, $\{x_1,x_1',c,y_1,y_1'\}$ contains a circuit, by circuit elimination.
    Due to the cocircuit $\{x_2,x_2',c,d\}$, %this circuit cannot contain $c$, and so
    it follows that $\{x_1,x_1',y_1,y_1'\}$ is a circuit.

    Next, we show that $M \ba x_2 \ba y_1$ has an $N$-minor.
    %By \cref{contractdistincttriads}, $M \ba d/x_1' / x_2$ has an $N$-minor, and since $\{x_1,x_2'\}$ is a parallel pair in this matroid, $M \ba d \ba x_2' /x_1'$ has an $N$-minor too.  Now $\{x_2,c\}$ is a series pair in this matroid, so $M \ba x_2' /x_1' /c$ has an $N$-minor.  As $\{d,y_1'\}$ is a parallel pair in this matroid, $M \ba x_2' \ba y_1'$ has an $N$-minor.
    By \cref{contractdistincttriads}, $M \ba d/x_1 / x_2'$ has an $N$-minor, and since $\{x_1',x_2\}$ is a parallel pair in this matroid, $M \ba d \ba x_2 /x_1$ has an $N$-minor too.  Now $\{x_2',c\}$ is a series pair in this matroid, so $M \ba x_2 /x_1 /c$ has an $N$-minor.  As $\{d,y_1\}$ is a parallel pair in this matroid, $M \ba x_2 \ba y_1$ has an $N$-minor, as claimed.
    By a similar argument, $M \ba x_2' \ba y_1'$ also has an $N$-minor.

    Now, if $M \ba x_2 \ba y_1$ is $3$-connected, then \cref{crucifixi} holds; so assume otherwise.
    Suppose $\{x_2,y_1\}$ is not contained in a $4$-element cocircuit of $M$.
    Since $y_1$ is $N$-deletable, it is not in an \unfortunate\ triad, so $M \ba x_2 \ba y_1$ has no series pairs or parallel pairs.
    Thus, if $M \ba x_2 \ba y_1$ is not $3$-connected, then it has a $2$-separation $(P,Q)$ for which $P$ is fully closed, by \cref{aquickaside1}.
    Without loss of generality, we may assume that $\{x_2',c,d\} \subseteq P$. %, and thus $y_2' \in P$.
    If $x_1 \in P$, then $x_1' \in P$, and $(P \cup \{x_2,y_1\},Q)$ is a $2$-separation of $M$; a contradiction.
    So $x_1 \in Q$, and, similarly, $x_1' \in Q$.
    If $y_1' \in P$, then $x_1' \in P$; %, contradicting that $P$ is fully closed.
    a contradiction.
    So $y_1' \in Q$.
    But now, since $\{x_1,x_1',y_1,y_1'\}$ is a circuit, $(P,Q \cup y_1)$ is a $2$-separation of $M \ba x_2$; a contradiction.
    We deduce that $\{x_2,y_1\}$ is contained in a $4$-element cocircuit of $M$.
    By a similar argument, $\{x_2',y_1'\}$ is contained in a $4$-element cocircuit of $M$.

    Recall that $\{x_1,x_1',x_2,x_2'\}$, $\{x_2,c,d,y_2\}$, $\{x_1,c,d,y_1\}$, and $C$ are circuits of $M$.
    Thus, by orthogonality, the $4$-element cocircuit containing $\{x_2,y_1\}$ is %$C_1^*=
    $\{x_1,x_2,y_1,y_2\}$.
    Similarly, the cocircuit containing $\{x_2',y_1'\}$ is %$C_2^*=
    $\{x_1',x_2',y_1',y_2'\}$.
    Since $\{c,d,x_1,x_1'\}$ and $\{c,d,x_2,x_2'\}$ are also cocircuits, we have $r^*(X \cup \{c,d,y_1,y_2,y_1',y_2'\}) = 6$, so $\lambda(X \cup \{c,d,y_1,y_2,y_1',y_2'\}) \le 5 + 6 - 10 = 1$.

    Now $|E(M)| \le 11$.
    If $r(X \cup \{c,d\}) = 4$, then \cref{crucsubby1} holds unless $(X-x) \cup \{c,d\}$ is a circuit for some $x \in X$.
    But no such circuit exists, by orthogonality with either $\{x_1,x_2,y_1,y_2\}$ or $\{x_1',x_2',y_1',y_2'\}$.
    So we may assume $r(X \cup \{c,d\}) = 5$ and $|E(M)| = 11$.
    Let $\{w\} = E(M)-(X \cup \{c,d,y_1,y_2,y_1',y_2'\})$.
    Since $\lambda(X \cup \{c,d\}) = 3$ and $\{y_1,y_2,y_1',y_2',w\}$ is coindependent, $r(\{y_1,y_2,y_1',y_2',w\})=3$.
    By orthogonality with the cocircuits $\{x_1,x_2,y_1,y_2\}$ and $\{x_1',x_2',y_1',y_2'\}$, it follows that $\{y_1,y_2,w\}$ and $\{y'_1,y'_2,w\}$ are triangles.
    But since $M \ba x_2 \ba y_1$ has an $N$-minor, and $\{x_1,y_2\}$ is a series pair in this matroid, $y_2$ is $N$-contractible.
    Similarly, $y_2'$ is $N$-contractible.
    So $\{y_1,y_2,w\}$ and $\{y'_1,y'_2,w\}$ are not \unfortunate\ triangles; a contradiction.
  \end{slproof}

  Observe now that $r(X \cup \{c,d\}) \le 4$.
  If $r(X \cup \{c,d\}) = 3$, then $\lambda(X \cup \{c,d\}) < 2$; a contradiction.
  So $r(X \cup \{c,d\}) = 4$ and $\lambda(X \cup \{c,d\}) = 2$. In particular, $d \notin \cl(X)$.

  \begin{sublemma}
    \label{crucsubby2}
    Either
    \begin{enumerate}[label=\rm(\Roman*)]
      \item $\{x_1,x_2,c,d\}$ and $\{x_1',x_2',c,d\}$ are circuits of $M$, up to swapping the labels on $x_2$ and $x_2'$; or
      \item $\{x_1,x_1',c,d\}$ and $\{x_2,x_2',c,d\}$ are circuits of $M$.
    \end{enumerate}
  \end{sublemma}
  \begin{slproof}
    By \cref{crucsubby1}, we may assume, up to labels, that either $\{x_1,x_2,c,d\}$ or $\{x_1,x_1',c,d\}$ is a circuit.

    First, suppose that $\{x_1,x_2,c,d\}$ is a circuit.
    The elements $x_1'$ and $x_2'$ are also in $4$-element circuits $\{x_1',c,d,y_1\}$ and $\{x_2',c,d,y_2\}$, respectively.
    If $y_1 \in X$ or $y_2 \in X$, then, by circuit elimination with $\{x_1,x_2,c,d\}$, and since $c \notin \cl(X)$ and $X$ does not contain a triangle, \cref{crucsubby2}(I) holds.
    %If $\{x_1',x_2',c,d\}$ is also a circuit, \cref{crucsubby2} holds immediately.
    So let %$\{x_1',c,d,y_1\}$ and $\{x_2',c,d,y_2\}$ be circuits, where
    $y_1,y_2 \in W-c$.
    %Note that, by orthogonality, $X$ is not a cocircuit of $M$, so $X \cup d$ is a cocircuit. Also,
    Note that $y_1 \neq y_2$, otherwise $\{x_1',x_2',c,d\}$ is a circuit, as required, by circuit elimination.

    We claim that $M \ba x_1' \ba y_2$ has an $N$-minor.
    By \cref{contractdistincttriads}, $M \ba d/x_1/x_2'$ has an $N$-minor, and since $\{x_2,x_1'\}$ is a parallel pair in this matroid, $M \ba d\ba x_1'/x_2'$ has an $N$-minor too.
    Now $\{x_1,c\}$ is a series pair in $M \ba d \ba x_1'/x_2'$, so the matroid $M \ba x_1'/x_2'/c$ has an $N$-minor.
    As $\{d,y_2\}$ is a parallel pair in $M \ba x_1'/x_2'/c$, the matroid $M \ba x_1'\ba y_2$ has an $N$-minor as required.

    Suppose $\{x_1',y_2\}$ is contained in a $4$-element cocircuit~$C^*$ of $M$.
    By orthogonality, $C^*$ meets $\{c,d,y_1\}$ and $\{x_2',c,d\}$, so
    if neither $c$ nor $d$ is in $C^*$, then $C^* = \{x_1',x_2',y_1,y_2\}$.
    But then $r^*(X \cup \{c,d,y_1,y_2\}) = 5$, and, as $r(X \cup \{c,d,y_1,y_2\})= 4$, we have $\lambda(X \cup \{c,d,y_1,y_2\})=1$.
    So $|E(M)| = 9$, in which case $E(M)-(X \cup \{c,d\})$ is a coindependent triangle containing $\{y_1,y_2\}$.
    But since $M \ba x_1'\ba y_2$ has an $N$-minor and $\{x_2',y_1\}$ is a series pair in this matroid, $y_1$ is $N$-contractible, so it is not in an \unfortunate\ triangle; a contradiction.
    So $C^*$ contains either $c$ or $d$.

    By orthogonality with $\{x_1,x_2,c,d\}$, the cocircuit $C^*$ either contains $\{c,d\}$, or meets $\{x_1,x_2\}$.
    Now $y_2$ is in the closure and the coclosure of the $3$-separating set $X\cup \{c,d\}$, so $|E(M)| = 8$.  But then $r^*_{M \ba d}(W) = 2$; a contradiction.
    We deduce that $\{x_1',y_2\}$ is not contained in a $4$-element cocircuit of $M$.
    Since $y_2$ is $N$-deletable, it is not in an \unfortunate\ triad, so $M \ba x_1' \ba y_2$ does not have any series pairs.

    Now, if $M \ba x_1' \ba y_2$ is not $3$-connected, then it has a $2$-separation $(P,Q)$ where we may assume $\{x_1,c,d\} \subseteq P$, and $P$ is fully closed by \cref{aquickaside1}.
    Then $x_2 \in P$, due to the circuit $\{x_1,x_2,c,d\}$, and $x_2' \in P$, due to the cocircuit $\{x_2,x_2',c,d\}$.
    But then $x_1',y_2 \in \cl(P)$, so $(P \cup \{x_1',y_2\},Q)$ is a $2$-separation of $M$; a contradiction.
    Thus $M \ba x_1' \ba y_2$ is $3$-connected, so \cref{crucifixi} holds.

    \medskip
    Now we may assume that $\{x_1,x_1',c,d\}$ is a circuit,
    and $\{x_2,c,d,y\}$ and $\{x_2',c,d,y'\}$ are circuits for some distinct $y,y' \in W-c$.
    By circuit elimination, $\{x_2,x_2',c,y,y'\}$ contains a circuit, and, by orthogonality with the triad $\{x_1,x_1',c\}$ of $M \ba d$, this circuit is $\{x_2,x_2',y,y'\}$.

    We will show that $M \ba x_1' \ba y'$ is $3$-connected and has an $N$-minor, using a similar approach as in the case where $\{x_1,x_2,c,d\}$ is a circuit.
    Firstly, observe that $M \ba x_1' \ba y'$ has an $N$-minor, using a similar argument as in this other case.

    Suppose that $\{x_1',y'\}$ is contained in a $4$-element cocircuit~$C^*$ of $M$.
    We claim that $C^* \subseteq X \cup \{c,d,y'\}$.
    By orthogonality, $C^*$ meets $X-x_1'$ and $\{x_1,c,d\}$.
    Thus if $C^* \nsubseteq X \cup \{c,d,y'\}$, then $\{x_1,x_1',y\} \subseteq C^*$.  But $C^*$ also meets $\{x_2',c,d\}$, so $C^* \subseteq X \cup \{c,d,y'\}$ as claimed.
    In particular, $y' \in \cocl(X \cup \{c,d\})$.
    Observe that since $r(X \cup \{c,d,y,y'\})=4$ and $\lambda(X \cup \{c,d,y,y'\}) \ge 2$, we have $r^*(X \cup \{c,d,y,y'\}) \ge 6$.  But $r^*(X \cup \{c,d\})=4$, so $y' \notin \cocl(X \cup \{c,d\})$; a contradiction.
    So $\{x_1',y'\}$ is not contained in a $4$-element cocircuit~$C^*$ of $M$.

    Now, if $M\ba x_1' \ba y'$ is not $3$-connected, then it has a $2$-separation $(P,Q)$ where we may assume $\{x_1,c,d\} \subseteq P$, and $P$ is fully closed.
    If $x_2' \in P$, then $y' \in \cl_{M \ba x_1'}(P)$, so $(P \cup y', Q)$ is a $2$-separation of $M \ba x_1'$; a contradiction.
    So $x_2' \in Q$, and it follows that $x_2 \in Q$, due to the cocircuit $\{x_2,x_2',c,d\}$, and $y \in Q$, due to the circuit $\{x_2,c,d,y\}$.
    Now $\{x_2,x_2',y\} \subseteq Q$, so $y' \in \cl(Q)$ and $(P, Q \cup y')$ is a $2$-separation of $M\ba x_1'$; a contradiction.
    Hence $M \ba x_1' \ba y'$ is $3$-connected, so \cref{crucifixi} holds.
  \end{slproof}

  We consider two cases, depending on whether or not $d$ fully blocks $(X,W)$.
  Since $d \notin \cl(X)$, these correspond to either $d \in \cocl(X)$, or $d \notin \cocl(X)$, respectively.

  \begin{sublemma}
    \label{subby7}
    If $d \notin \cocl(X)$, then \cref{crucifixii} or \cref{crucifixiii} holds.
  \end{sublemma}

  \begin{slproof}
    Suppose $d \notin \cocl(X)$.
    As $X$ is a quad of $M \ba d$, it follows that $X$ is also a quad of $M$.
    If \cref{crucsubby2}(I) holds, then $\{x_1,x_2,c,d\}$ and $\{x_1',x_2',c,d\}$ are circuits, up to swapping the labels on $x_2$ and $x_2'$, in which case $X\cup\{c,d\}$ is an \pspider, so \cref{crucifixii} holds. 
    On the other hand,
    if \cref{crucsubby2}(II) holds, then $X\cup\{c,d\}$ is a \spikelike\ of $M$, so \cref{crucifixiii} holds.
  \end{slproof}

  We may now assume that $d \in \cocl(X)$, so $X \cup d$ is a cocircuit of $M$.
  Moreover, as $\{x_1,x_1',c,d\}$ and $\{x_2,x_2',c,d\}$ are cocircuits of $M$, cocircuit elimination, and the fact that $X$ is not a cocircuit, implies that $X \cup c$ is a cocircuit.
  %Since $d \notin \cl(W-c)$, it follows that $\lc(W-c,\{c,d\})=0$;
  %whereas, for $\{i,j\} = \{1,2\}$, the cocircuit $\{x_j,x_j',c,d\}$ implies that $\lc(W-c,\{x_i,x_i'\})=1$.
%
  Hence, as illustrated in \cref{tptws}, $X \cup \{c,d\}$ is a \tvamoslike\ of $M$ when \cref{crucsubby2}(I) holds, or a \vamoslike\ of $M$ when \cref{crucsubby2}(II) holds.
  %In order to show that $X \cup \{c,d\}$ is a \vamoslike\ of $M$, it remains to prove that $\{x_i,x_i',c,d\}$ is a circuit for each $i \in \{1,2\}$.
%
  In the former case, \cref{crucifixiv} holds.

  \begin{figure}[htb]
    \begin{subfigure}{0.47\textwidth}
      \centering
      \begin{tikzpicture}[rotate=90,xscale=1.21,yscale=0.605,line width=1pt]
        \tikzset{VertexStyle/.append style = {minimum height=5,minimum width=5}}
        \clip (-1.5,2) rectangle (2.7,-6);
        \node at (-0.6,-1.4) {$W-c$};
        \draw (0,0) .. controls (-1.6,2) and (-2,-2) .. (0,-4);

        \draw (0.8,-1) -- (0,0) -- (1.2,1) -- (0.8,-1);
        \draw (0.8,-5) -- (0,-4) -- (1.2,-3);

        \draw (1.2,1) -- (2.2,-1) -- (1.2,-3);
        \draw (2.2,-1) -- (2.0,-3);

        \draw (1.2,1) -- (1.2,-3);

        \draw (0,0) -- (0,-4);

        \draw[white,line width=5pt] (0.8,-1) -- (2.0,-3) -- (0.8,-5);
        \draw[white,line width=5pt] (0.8,-1) -- (0.8,-5);
        \draw (0.8,-1) -- (2.0,-3) -- (0.8,-5);
        \draw (0.8,-1) -- (0.8,-5);
        \draw (1.2,-3) -- (0.8,-5);

        \Vertex[x=2.0,y=-3,LabelOut=true,L=$c$,Lpos=45]{c5}
        \Vertex[x=2.2,y=-1,LabelOut=true,L=$d$,Lpos=45]{c6}
        \Vertex[x=0.8,y=-1,LabelOut=true,L=$x_2$,Lpos=-45]{c5}
        \Vertex[x=1.2,y=1,LabelOut=true,L=$x_1$,Lpos=180]{c1}
        \Vertex[x=0.8,y=-5,LabelOut=true,L=$x_2'$,Lpos=0]{c6}
        \Vertex[x=1.2,y=-3,LabelOut=true,L=$x_1'$,Lpos=45]{c4}
      \end{tikzpicture}
      \caption{\Tvamoslike\ of $M$.}
      \label{tptw1}
    \end{subfigure}
    \begin{subfigure}{0.47\textwidth}
      \centering
      \begin{tikzpicture}[rotate=90,xscale=1.21,yscale=0.605,line width=1pt]
        \tikzset{VertexStyle/.append style = {minimum height=5,minimum width=5}}
        \clip (-1.5,2) rectangle (2.7,-6);
        \node at (-0.6,-1.4) {$W-c$};
        \draw (0,0) .. controls (-1.6,2) and (-2,-2) .. (0,-4);

        \draw (0.8,-1) -- (0,0) -- (1.2,1);
        \draw (2,-4) -- (1.2,-3);
        \draw (1.2,1) -- (1.2,-3);
        \draw (0,-4) -- (1.2,-3);

        \draw[white,line width=5pt] (0.8,-1) -- (2,0);
        \draw[white,line width=5pt] (0.8,-1) -- (0.8,-5);
        \draw (2,0) -- (0.8,-1) -- (0.8,-5);
        \draw (1.2,-3) -- (0.8,-5);
        \draw (1.2,1) -- (0.8,-1);
        \draw (1.2,1) -- (2,0) -- (2,-4);
        \draw (0,-4) -- (0.8,-5) -- (2,-4);

        \Vertex[x=1.2,y=1,LabelOut=true,L=$x_1$,Lpos=180]{c1}
        \Vertex[x=2,y=0,LabelOut=true,L=$c$,Lpos=90]{c2}
        \Vertex[x=2,y=-4,LabelOut=true,L=$d$,Lpos=90]{c3}
        \Vertex[x=1.2,y=-3,LabelOut=true,L=$x_1'$,Lpos=135]{c4}
        \Vertex[x=0.8,y=-1,LabelOut=true,L=$x_2$,Lpos=-45]{c5}
        \Vertex[x=0.8,y=-5,LabelOut=true,L=$x_2'$,Lpos=0]{c6}

        \draw (0,0) -- (0,-4);

        %\SetVertexNoLabel
        %\tikzset{VertexStyle/.append style = {shape=rectangle,fill=white}}
        %\Vertex[x=0,y=0]{a1}
        %\Vertex[x=0,y=-4]{a2}
      \end{tikzpicture}
      \caption{\vamoslike\ of $M$.}
      \label{tptw2}
    \end{subfigure}
    \caption{The labellings of the \tvamoslike\ or \vamoslike\ when \cref{crucifix}\cref{crucifixiv} holds.}
    \label{tptws}
  \end{figure}

  So we may now assume that \cref{crucsubby2}(II) holds, and $X \cup \{c,d\}$ is a \vamoslike\ of $M$.
  We will show that $M$ has an $N$-detachable pair.

\begin{sublemma}
  \label{vamos1}
  There exists some $p \in \{x_1,x_1'\}$ and $q \in \{x_2,x_2'\}$ such that $M \ba p \ba q$ has an $N$-minor.
\end{sublemma}
\begin{slproof}
  Since $M \ba d$ has an $N$-minor with $|X \cap E(N)| \le 1$, we may assume, without loss of generality, that $X \cap E(N) = \{x_1\}$.
  Suppose that $x_1'$ is $N$-deletable in $M \ba d$.
  If either $x_2$ or $x_2'$ is $N$-deletable in $M \ba d \ba x_1'$, then \cref{vamos1} holds, so we may assume that $x_2$ and $x_2'$ are $N$-contractible in $M \ba d \ba x_1'$.
  Since $\{x_1,c\}$ is a series pair in $M \ba d \ba x_1'$, the matroid $M \ba d \ba x_1'/ c / x_2'$ has an $N$-minor, as does $M \ba x_1'/c/x_2'$.  As $\{d,x_2\}$ is a parallel pair in the latter matroid, $M \ba x_1' \ba x_2$ has an $N$-minor, as required.

  So we may assume that $x_1'$ is $N$-contractible in $M \ba d$.
  If $x_2$ is $N$-contractible in $M \ba d / x_1'$, then, as $\{x_1,x_2'\}$ is a parallel pair in $M \ba d / x_1' /x_2$, the matroid $M \ba d /x_1' \ba x_2'$ has an $N$-minor.
  Similarly, if $x_2'$ is $N$-contractible in $M \ba d / x_1'$, then $M \ba d /x_1' \ba x_2$ has an $N$-minor.
  So there is some $q \in \{x_2,x_2'\}$ such that $q$ is $N$-deletable in $M \ba d / x_1'$.
  Let $\{x_2,x_2'\}-q = \{q'\}$.
  Since $M \ba d \ba q/x_1'$ has an $N$-minor and $\{c,q'\}$ is a series pair in this matroid, $M \ba q/x_1'/c$ has an $N$-minor.  But $\{d,x_1\}$ is a parallel pair in this matroid, so $M \ba q \ba x_1$ has an $N$-minor, as required.
\end{slproof}

\begin{sublemma}
  \label{vamos2}
  If $p \in \{x_1,x_1'\}$ and $q \in \{x_2,x_2'\}$, then $M \ba p \ba q$ is $3$-connected.
\end{sublemma}
\begin{slproof}
  Pick $p'$ and $q'$ such that $\{p,p'\} = \{x_1,x_1'\}$ and $\{q,q'\} = \{x_2,x_2'\}$.
  First, observe that since $\{c,d,q,q'\}$ is a quad, and $q$ is not in a triad, $M \ba q$ is $3$-connected by \cref{r3cocirc2}.

  Suppose $\co(M \ba p \ba q)$ is not $3$-connected.
  Then $M \ba p \ba q$ has a $2$-separation $(P,Q)$ where we may assume either $P$ or $Q$ is fully closed, by \cref{aquickaside1}.  Thus, without loss of generality, we may assume that the triad $\{p',c,d\}$ is contained in $P$.
  Since $\{p,p',c,d\}$ is a circuit, $(P \cup p,Q)$ is a $2$-separation of $M \ba q$; a contradiction.
  So $\co(M \ba p \ba q)$ is $3$-connected.

  Suppose that $M \ba p \ba q$ is not $3$-connected.
  Then $M$ has a $4$-element cocircuit~$C^*$ containing $\{p,q\}$.
  Since $X \cup d$ and $X \cup c$ are cocircuits, $C^*$ is not contained in $X \cup d$ or $X \cup c$.
  So $C^*$
  meets $W'=E(M)-(X \cup \{c,d\})$.
  Suppose $C^* \cap (X \cup \{c,d\}) = \{p,q\}$.
  Then $p \in \cocl(W' \cup q)$, % \subseteq \cocl(E(M)-\{p',q',c,d\})$,
  so $p \notin \cl(\{p',q',c,d\})$.
  Since $r(X \cup \{c,d\}) = 4$, it follows that
  $\{p',q',c,d\}$ is a circuit of $M$; a contradiction.

  Now we may assume that $C^* \cap W'=\{w\}$, in which case $w \in \cocl(X \cup \{c,d\}) = \cocl(X)$. 
  So $X \cup w$ contains a cocircuit.  Since each $x \in X$ is not contained in an \unfortunate\ triad, this cocircuit contains at least three elements of $X$.  Then, by orthogonality, $X \cup w$ is a cocircuit.
  Now $(X,\{w\},\{c\},\{d\},E(M)-(X \cup \{w,c,d\}))$ is a path of $3$-separations where $w,c,d \in \cocl(X)$.
  But then $\{w,c,d\}$ is a triad in $M$; a contradiction.
\end{slproof}
Now $\{p,q\}$ is an $N$-detachable pair by \cref{vamos1,vamos2}, thus completing the proof.
\end{proof}

Putting the results of this section together we have:

\begin{theorem}
  \label{mosque}
  Let $M$ be a $3$-connected matroid with an element $d$ such that $M\ba d$ is $3$-connected.
  Let $N$ be a $3$-connected minor of $M$, % and $M \ba d$,
  where every triangle or triad of $M$ is \unfortunate, and $|E(N)| \ge 4$.
  Suppose that $M\ba d$ has a cyclic $3$-separation $(Y, \{d'\}, Z)$ with $|Y| \ge 4$, where $M\ba d \ba d'$ has an $N$-minor with $|Y \cap E(N)| \le 1$.
  Let $X$ be a minimal $3$-separating subset of $Y$ such that $|X|\geq 4$ and, for each $x \in X$,
  \begin{enumerate}[label=\rm(\alph*)]
    \item $\co(M\ba d\ba x)$ is $3$-connected, 
    \item $M\ba d/x$ is $3$-connected, and
    \item $x$ is doubly $N$-labelled in $M \ba d$.
  \end{enumerate}
  Suppose $X$ does not contain a triad of $M\ba d$.
  Then, either $M$ has an $N$-detachable pair, or there exists some $c \in \cocl_{M \ba d}(X)-X$ such that
  one of the following holds:
  \begin{enumerate}
    %\item $M$ has an $N$-detachable pair; or
    \item $X\cup\{c,d\}$ is a \spikelike;\label{out1}
    \item $X\cup\{c,d\}$ is an \pspider;\label{out2}
    \item $X\cup\{c,d\}$ is a \tvamoslike\ of $M$; or\label{out3}
    \item there exists a set $Q\subseteq E(M)-X$ with $\{c,d\}\subseteq Q$ such that $X \cup Q$ is a \spider\ with associated partition $\{X,Q\}$.\label{out4}
  \end{enumerate}
\end{theorem}
\begin{proof}
  If there is some element $y \in \cocl_{M \ba d}(Y \cup d') \cap Z$, then $(Y \cup y, \{d'\}, Z-y)$ is a cyclic $3$-separation with $|(Y \cup y) \cap E(N)| \le 1$, since $|E(N)| \ge 4$, so,
  without loss of generality, we may assume that $Y \cup d'$ is coclosed in $M \ba d$.

  First, we remark that $r(Z \cup d') \ge 3$.
  Indeed, if not, then since $d' \notin \cl(Z)$, we have $r(Z) \le 1$; a contradiction.
  Note also that
  each $x \in X$ is not contained in a triangle or triad of $M$,
  since each $x \in X$ is $N$-contractible and $N$-deletable in $M \ba d$.
  Let $W = E(M\ba d)-X$.
  Now $(X, W)$ is a $3$-separation of $M \ba d$ that satisfies the criteria of \cref{temple}.
  So one of \cref{temple}\cref{templefirstcase,templequadcase,templecontractpair,templehardcase} holds.

  %Note that if $c =d'$, then $X=Y$, and $W = Z \cup d'$.
  %On the other hand, when $c \neq d'$, either $c \in Y - X$, or $X = Y$ and $\{c,d'\} \subseteq \cocl(X) - X$.

  It is clear that if \cref{temple}\cref{templefirstcase} holds, then $M$ has an $N$-detachable pair by (c).
  If \cref{temple}\cref{templecontractpair} holds, then %again $M$ has an $N$-detachable pair by \cref{contractdistincttriads}.
  there exist elements $s$ and $t$ such that $M/s/t$ is $3$-connected, and $M/s/t$ has an $N$-minor by \cref{contractdistincttriads}; in particular, if $s = d'$, then (ii) of the lemma applies since $d'$ is in a triad meeting $X$ that does not contain $t$.
  So, again, $M$ has an $N$-detachable pair in this case.

  Suppose \cref{temple}\cref{templehardcase} holds.
  %In order to apply \cref{altar}, we require that $r(W - \{c,w\}) \ge 3$ and $r^*(W-\{c,w\}) \ge 2$.
%
  Since $c \in \cocl_{M \ba d}(X)$ and $Y \cup d'$ is coclosed, $c \in Y \cup d'$.
  Thus, if $w \in Z$, then $w \notin \cocl(Y \cup d')$, so $w \in \cl(Z-w)$.
  Hence $r(W - \{c,w\}) \ge r(Z-w) = r(Z) \ge 2$.
  Also, $r^*_{M \ba d}(W-\{c,w\}) \ge r^*_{M \ba d}(Z-w) \ge 2$.
  Now, by \cref{altar}, either $M$ has an $N$-detachable pair, or \cref{out4} holds.

  Finally, if \cref{temple}\cref{templequadcase} holds, then, by \cref{crucifix}, either $M$ has an $N$-detachable pair, or \cref{out1}, \cref{out2}, or \cref{out3} holds.
  %Finally, if \cref{temple}\cref{templespikecase} holds, then \cref{out1} holds immediately.
\end{proof}

\section{Conclusion}
Combining the two main results of this paper with the main result of \cite{paper1}, we have the following:

\begin{theorem}
  \label{usefulonep2}
  Let $M$ be a $3$-connected matroid and let $N$ be a $3$-connected minor of $M$ where $|E(N)| \ge 4$, and every triangle or triad of $M$ is \unfortunate.
  Suppose, for some $d \in E(M)$, that $M \ba d$ is $3$-connected and has a cyclic $3$-separation $(Y, \{d'\}, Z)$ with $|Y| \ge 4$, where $M \ba d \ba d'$ has an $N$-minor with $|Y \cap E(N)| \le 1$.
  Then either
  \begin{enumerate}
    \item $M$ has an $N$-detachable pair; or\label{ppc1}
    \item there is a subset $X$ of $Y$ such that
      %each $x \in X$ is doubly $N$-labelled, and,
      for some $c \in \cocl_{M \ba d}(X)-X$, one of the following holds:
      \begin{enumerate}[label=\rm(\alph*)]
        \item $X \cup \{c,d\}$ is a \twisted\ of $M$,
        \item $X \cup \{c,d\}$ is a \spikelike\ of $M$,
        %\item $X \cup \{c,d\}$ is a \vamoslike\ of $M$,
        \item $X \cup \{c,d\}$ is a \tvamoslike\ of $M$ or $M^*$,
        \item $X \cup \{c,d\}$ is an \pspider\ of $M$, or % with associated partition $(X, \{c,d\})$, or
        \item $X \cup \{a,b,c,d\}$ is a \spider\ of $M$ with associated partition $\{X,\{a,b,c,d\}\}$ for some distinct $a,b \in E(M) - (X \cup \{c,d\})$.
      \end{enumerate}\label{ppc2}
  \end{enumerate}
\end{theorem}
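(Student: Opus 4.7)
The plan is to assemble \cite[Theorem~7.4]{paper1} with the two main results of this paper, \cref{basilica,mosque}; essentially all of the real work is done in the preceding material, and \cref{usefulonep2} is an assembly step.

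First, I would apply \cite[Theorem~7.4]{paper1} to the setup in the hypothesis. This produces a $3$-separating subset $X \subseteq Y$ with $|X| \ge 4$ such that one of two alternatives holds: either $X \cup \{c,d\}$ is one of three \psep s listed in \cref{problem3s} (for some $c \in \cocl(X \cup d)$), or, for every $x \in X$, the matroid $M \ba d \ba x$ is $3$-connected up to series classes, $M \ba d / x$ is $3$-connected, and both $M \ba d \ba x$ and $M \ba d / x$ have $N$-minors. In the first alternative, conclusion~\cref{ppc2} holds outright, since the \psep\ produced matches one of the types listed in (a)--(e).

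So I would focus on the second alternative. Note that ``$3$-connected up to series classes'' for $M \ba d \ba x$ is equivalent to $\co(M \ba d \ba x)$ being $3$-connected, and the condition that $M \ba d \ba x$ and $M \ba d / x$ each have an $N$-minor says precisely that $x$ is doubly $N$-labelled in $M \ba d$. Hence $X$ satisfies hypotheses~(a)--(c) of both \cref{basilica,mosque}. Taking $X$ minimal subject to these conditions, I would split on whether $X$ contains a triad of $M \ba d$. If it does, \cref{basilica} directly produces an $N$-detachable pair, giving conclusion~\cref{ppc1}. If it does not, \cref{mosque} gives either an $N$-detachable pair or one of the four structured outcomes---$X \cup \{c,d\}$ is a \spikelike, an \pspider, or a \tvamoslike\ of $M$, or $X \cup \{a,b,c,d\}$ is a \spider\ with associated partition $\{X,\{a,b,c,d\}\}$---matching, respectively, (b), (d), the $M$ half of (c), and (e) in the statement.

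The union of the three \psep\ types output by \cite[Theorem~7.4]{paper1} and the four output by \cref{mosque} must then exhaust (a)--(e). The \tvamoslike\ of $M^*$ in (c), being the only non-self-dual type in the taxonomy of \cref{problem3s}, does not arise from \cref{mosque} (which is framed with respect to a fixed deleted element~$d$) and so must be contributed by \cite[Theorem~7.4]{paper1}. Beyond this routine bookkeeping to match the two lists of \psep s against (a)--(e), there is no substantive obstacle: the two previous theorems do all the hard work, and the final theorem simply marshals their conclusions.
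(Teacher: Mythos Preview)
Your proposal is correct and follows essentially the same approach as the paper: invoke \cite[Theorem~7.4]{paper1}, then, in the structured alternative, take $X$ minimal and split on whether $X$ contains a triad of $M\ba d$, applying \cref{basilica} in the triad case and \cref{mosque} otherwise. Your additional paragraph matching the three \psep s from \cite{paper1} and the four from \cref{mosque} against the list (a)--(e) is sound bookkeeping that the paper leaves implicit; in particular, your observation that the \tvamoslike\ of $M^*$ must be contributed by \cite[Theorem~7.4]{paper1} rather than by \cref{mosque} is correct.
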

\begin{proof}
  By \cite[Theorem~7.4]{paper1}, if neither \cref{ppc1} nor \cref{ppc2} holds, then $Y$ contains a $3$-separating subset $X$ such that $|X| \ge 4$ and for every $x \in X$, the matroids $\co(M \ba d \ba x)$ and $M \ba d / x$ are $3$-connected, and $x$ is doubly $N$-labelled in $M \ba d$.
  Let $X$ be minimal subject to these properties.
  If $X$ contains a triad, then \cref{ppc1} holds by \cref{cathedral}.
  On the other hand, if $X$ does not contain a triad, then, by \cref{mosque}, either \cref{ppc1} or \cref{ppc2} holds.
\end{proof}

\section*{Acknowledgements}
    We thank the referees for their very careful and thorough reading of the paper, and for their corrections and helpful comments.

\bibliographystyle{abbrv}
\bibliography{lib}

\end{document}